\newtheorem{thm}{Theorem}[section]
\newtheorem{exam}[thm]{Example}
\newtheorem{defn}[thm]{Definition}
\newtheorem{prop}[thm]{Proposition}
\newtheorem{lem}[thm]{Lemma}
\newtheorem{cor}[thm]{Corollary}
\newtheorem{rem}[thm]{Remark}
\numberwithin{equation}{section}
\author{B. Adamczewski}
\address{
CNRS, Universit\'e de Lyon, Universit\'e Lyon 1\\
Institut Camille Jordan  \\
43 boulevard du 11 novembre 1918 \\
69622 Villeurbanne Cedex, France}
\email{Boris.Adamczewski@math.cnrs.fr}
\author{Jason P.~Bell}
\address{
Department of Pure Mathematics\\
University of Waterloo\\
Waterloo, ON, Canada\\
 N2L 3G1}
\email{jpbell@uwaterloo.ca}
\author{E. Delaygue}
\address{
Universit\'e de Lyon, Universit\'e Lyon 1\\
Institut Camille Jordan  \\
43 boulevard du 11 novembre 1918 \\
69622 Villeurbanne Cedex, France}
\email{delaygue@math.univ-lyon1.fr}
\title{Algebraic independence of $G$-functions and congruences ``\`a la Lucas" }
\date{}
\thanks{This project has received funding from the European Research Council (ERC) under the European Union's Horizon 2020 research and innovation programme 
under the Grant Agreement No 648132. }
\begin{document}


\begin{abstract}  We develop a new method for proving algebraic independence of $G$-functions.  
Our approach rests on the following observation:  $G$-functions do not always come with a single linear differential equation,  
but also sometimes with an infinite family of linear difference equations associated with the Frobenius that are obtained by reduction modulo prime ideals. 
When these linear difference equations have order one, the coefficients of the $G$-function satisfy  congruences reminiscent of a classical theorem of Lucas 
on binomial coefficients. We use this to  
derive a Kolchin-like algebraic independence criterion.  We show the relevance of this criterion by proving, using $p$-adic tools, that many classical families of $G$-functions 
turn out to satisfy congruences ``\`a la Lucas". 
\end{abstract}
\maketitle
\tableofcontents

\section{Introduction}\label{sec: intro}

This paper is the fourth of a series started by the first two authors \cite{AB12,AB13,AB15} concerning several number theoretical problems 
involving linear difference equations, called Mahler's equations, as well as underlying structures associated with automata theory. 
We investigate here a class of analytic functions  introduced by Siegel \cite{Si} in his landmark 1929 paper under the name of $G$-functions.  
Let us recall that  $f(z) := \sum_{n=0}^{\infty} a(n) z^n$ is a $G$-function if it satisfies   
the following conditions. Its coefficients $a(n)$ are algebraic numbers and there exists a positive real number $C$ such that for every non-negative integer $n$:   
\begin{itemize}

\medskip

\item[\rm{(i)}] The moduli of all Galois conjugates of $a(n)$ are at most $C^n$.

\medskip

\item[\rm{(ii)}] There exists a sequence of positive integers $d_n<C^n$ such that $d_na_m$ is an algebraic integer for all $m$, $0\leq m\leq n$.  

\medskip

\item[\rm{(iii)}] The function $f$ satisfies a linear differential equation with coefficients in $\overline{\mathbb Q}(z)$. 
\end{itemize}
\medskip

\noindent
Their study leads to a remarkable interplay between number theory, algebraic geometry, combinatorics, and the study of linear differential equations 
(see \cite{An89,DGS,KZ,Miw06}).  

In this paper, we focus on the algebraic relations over $\overline{\mathbb C}(z)$ that may or may not exist 
between $G$-functions. In this respect, our main aim is to develop a new method for proving algebraic independence of such functions.   
Our first motivation is related to transcendence theory. A large part of the theory is actually devoted to the study of algebraic 
relations over $\overline{\mathbb Q}$ between periods\footnote{A period is a complex number whose real and imaginary parts are
 values of absolutely convergent integrals of rational fractions over domains of $\mathbb R^n$ defined by polynomial inequalities with rational coefficients. 
 Most complex numbers of interest to arithmeticians turn out to be periods.}.  
Unfortunately, this essentially remains {\it terra incognita}.  
At least conjecturally, $G$-functions may be thought of as  their functional counterpart  
  (smooth algebraic deformations of periods).  Understanding algebraic relations among $G$-functions 
  thus appears  to be a first step in this direction and, first of all,  a  
  much more tractable problem. For instance, a conjecture of 
  Kontsevich \cite{Ko99} (see also \cite{KZ}) claims that any algebraic relation between periods can be derived from the three fundamental operations associated with integration:   
  additivity, change of variables, and Stokes' formula.  It is considered completely out of reach by specialists, but recently Ayoub \cite{Ay} proved a 
  functional version of the conjecture (see also \cite{An15}). 
Despite the depth of this result, it does not help that much in deciding whether given $G$-functions are or are not algebraically independent. 
  
  A second motivation finds its source in enumerative combinatorics. Indeed, most generating series that have been studied so far by combinatorists 
  turn out to be $G$-functions.  To some extent, the nature of a generating series reflects the underlying structure of the objects it counts  
  (see \cite{BM06}).  By nature, we mean whether the generating series is  rational, algebraic, or $D$-finite. 
 In the same line,  algebraic independence of generating series can be considered as a reasonable way to  
   measure how distinct families of combinatorial objects may be (un)related.    
  Though combinatorists have a very long tradition of proving transcendence of  
  generating functions, it seems that algebraic independence has never been studied so far in this setting.

\medskip

Our approach rests on the following observation: a $G$-function often comes with not just a single linear differential equation,  
but also sometimes with an infinite family of linear difference equations obtained by reduction modulo prime ideals. Let us formalize this claim somewhat. 
 Let $K$ be a number field, $f(z):=\sum_{n=0}^{\infty}a(n)z^{n}$ be a $G$-function in $K\{z\}$, and let us denote by $\mathcal O_K$  the ring of integers of $K$.  
For prime ideals $\mathfrak p$ of $\mathcal O_K$  such that all coefficients of $f$ belong to the localization of $\mathcal O_K$ at $\mathfrak p$, it makes sense to 
consider the reduction of $f$ modulo $\mathfrak p$: 
$$
f_{|\mathfrak p}(z):=\sum_{n=0}^{\infty}\big(a(n)\bmod \mathfrak p\big)z^{n}\in 
(\mathcal O_K/\mathfrak p)[[z]] \, .
$$
When $\mathfrak p$ is above the prime $p$, the residue field $\mathcal O_K/\mathfrak p$ is a finite field of characteristic $p$, and the linear difference equation 
mentioned above is of the form: 
\begin{equation}\label{eq: lde}
a_0(z)f_{\vert \mathfrak p}(z) +a_1(z)f_{\vert \mathfrak p}(z^p) + \cdots + a_d(z)f_{\vert \mathfrak p}(z^{p^d}) = 0 \, ,
\end{equation}
where $a_i(z)$ belong to $(\mathcal O_K/\mathfrak p)(z)$. That is, a linear difference equation associated with the Frobenius endomorphism 
$\sigma_p: z\mapsto z^p$.  Note that $f_{|\mathfrak p}$ satisfies an equation of the form \eqref{eq: lde} if, and only if, it is algebraic 
over $(\mathcal O_K/\mathfrak p)(z)$. 
A theorem of Furstenberg \cite{Fu67} and Deligne \cite{De} shows that this holds true for all diagonals of multivariate algebraic power series 
and almost every prime ideals\footnote{Diagonals of algebraic power series form a distinguished class of $G$-functions (see for instance \cite{AB13,Christol,Ch86}).}.    
Furthermore, classical conjectures of Bombieri and Dwork 
would imply that this should also be the case for any globally bounded $G$-functions (see \cite{Ch88}). Note that even when a $G$-function is not globally bounded, 
but can still be reduced modulo $\mathfrak p$ 
for infinitely many prime ideals $\mathfrak p$, a similar situation may be expected. For instance, the hypergeometric function  
$$
{}_{2}F_{1}\left[\begin{array}{cc}(1/2),(1/2)\\ (2/3)\end{array}; z\right] 
$$
is not globally bounded but satisfies 
a relation of the form \eqref{eq: lde} for all prime numbers congruent to $1$ modulo $6$ (see Section \ref{sec: hyp}). 

A case of specific interest is when $f_{|\mathfrak p}$ satisfies a linear difference equation of order one with respect to a power of the Frobenius. 
Then one obtains a simpler equation of the form:
\begin{equation}\label{eq: lde2}
f_{\vert \mathfrak p}(z) = a(z)f_{\vert \mathfrak p}(z^{p^k})  \, ,
\end{equation}
for some positive integer $k$ and some rational fraction $a(z)$ in $(\mathcal O_K/\mathfrak p)(z)$.  
As explained in Section \ref{sec: ldks}, these equations lead to congruences for the coefficients of $f$ that are reminiscent to a classical theorem of Lucas \cite{Lu} on 
binomial coefficients and the so-called $p$-Lucas congruences. At first glance, it may seem somewhat miraculous that a $G$-function could satisfy such 
congruences for infinitely many prime ideals. Surprisingly enough, we will show that this situation occurs remarkably often. For instance, motivated by the search of differential 
operators associated with particular families of Calabi-Yau varieties, Almkvist {\it et al.} \cite{AESZ} gave a list of more than 400 differential operators selected as potential candidates. 
They all have a unique solution 
analytic at the origin and it turns out that more than fifty percent of these analytic solutions  do satisfy Lucas-type congruences (see the discussion in Section~\ref{section: tables}).  
Another interesting example is due to Samol and van Straten \cite{SS}. Consider a Laurent polynomial 
$$
\Lambda(\mathbf{x})=\sum_{i=1}^k\alpha_i\mathbf{x}^{\mathbf{a}_i}\in \mathbb Z[x_1^\pm,\dots,x_d^\pm],
$$
where $\mathbf{a}_i\in\mathbb{Z}^d$ and $\alpha_i\neq 0$ for $i$ in $\{1,\dots,k\}$. Then the Newton polyhedron of $\Lambda$ is the convex hull 
of $\{\mathbf{a}_1,\dots,\mathbf{a}_k\}$ in $\mathbb{R}^d$. In \cite{SS}, it is proved that 
if $\Lambda(\mathbf{x})$ is a Laurent polynomial in $\mathbb{Z}[x_1^\pm,\dots,x_d^\pm]$ such that the origin is the only interior integral point of the Newton polyhedron of $\Lambda$, 
then the sequence of the constant terms of its powers $([\Lambda(\mathbf{x})^n]_{\mathbf{0}})_{n\geq 0}$ has the $p$-Lucas property for all primes $p$.  
More generally, there is a long tradition (and a corresponding extensive literature) in proving that 
some sequences of natural numbers satisfy the $p$-Lucas property or some related congruences.  
Most classical sequences which are known to enjoy the $p$-Lucas property turn out to be multisums of products 
of binomial coefficients such as, for example, 
the Ap\'ery numbers
$$
\sum_{k=0}^n\binom{n}{k}^2\binom{n+k}{k}\quad\textup{and}\quad\sum_{k=0}^n\binom{n}{k}^2\binom{n+k}{k}^2.
$$ 
Other classical examples are ratios of factorials such as 
$$
\frac{(3n)!}{n!^3}\quad\textup{and}\quad\frac{(10n)!}{(5n)!(3n)!n!^2} \, \cdot
$$
However, the known proofs are quite different and strongly depend on the particular forms of the binomial coefficients 
and of the number of sums involved in those sequences. We note that some attempts to obtain more systematic results can be found in \cite{McIntosh2} and 
more recently in \cite{MS}. We also refer the reader to  \cite{Mestrovic} for a recent survey, including  many references, about $p$-Lucas congruences. 
In Section \ref{sec: mgh}, we provide a way to unify many proofs, as well as to obtain a lot of new examples. 
We introduce a family of multivariate hypergeometric series and study their $p$-adic properties. 
Using specializations of these series, we are able to prove in Section \ref{sec: appli}  
that a large variety of classical families of $G$-functions actually satisfy such congruences.  
This includes families of hypergeometric series and generating series associated with multisums of products of binomial coefficients. 
This also includes more exotic examples such as 
$$
\sum_{n=0}^{\infty}\left(\underset{k\equiv n\mod 2}{\sum_{k=0}^{\lfloor n/3\rfloor}}2^k3^{\frac{n-3k}{2}}\binom{n}{k}\binom{n-k}{\frac{n-k}{2}}\binom{\frac{n-k}{2}}{k}\right)z^n \, .
$$

Let $f_1(z),\ldots,f_s(z)$ be power series such that \eqref{eq: lde2} holds for all elements in an infinite 
set $S$ of prime ideals and for rational fractions $a(z)$ whose height  is in $O(p^k)$\footnote{See Section \ref{sec: not} for a precise definition of the height 
of a rational fraction and Section \ref{Sec: ldks} for precise conditions imposed upon $f_1(z),\ldots,f_s(z)$.}.  
In Section \ref{sec: kolchin} we prove an algebraic independence criterion, Theorem \ref{thm: ind}, saying 
that  $f_1(z),\ldots,f_n(z)$ are algebraically dependent over $\mathbb C(z)$ if, and only if, there exist integers $a_1,\ldots,a_s$, not all zero, such that: 
$$
f_1(z)^{a_1}\cdots f_s(z)^{a_s} \in \overline{\mathbb Q}(z) \, .
$$
Thus if $f_1(z),\ldots,f_n(z)$ are algebraically dependent, they  
should satisfy a very special kind of relation: a Laurent monomial is equal to a rational fraction. 
This kind of result is usually attached to the name of Kolchin.  
At this point, 
it is often possible to apply asymptotic techniques and analysis of singularities, as described in Section \ref{sec: sing}, to easily deduce a contradiction 
and finally prove that the functions $f_1(z),\ldots,f_s(z)$ are algebraically independent over $\mathbb C(z)$.  

\begin{rem}\emph{
Since $G$-functions do satisfy linear differential equations,  
differential Galois theory provides a natural framework to look at these 
questions. For instance, it leads to strong results concerning hypergeometric functions \cite{BH}. 
However, the major drawback of this approach is that 
 things become increasingly tricky when working with differential equations of higher orders.  
 Given some $G$-functions $f_1(z),\ldots,f_s(z)$, it may be non-trivial to determine the  differential Galois group associated with a 
 differential operator annihilating these functions.  
The method developed in this paper follows a totally different road. 
An important feature is that, contrary to what would happen using differential Galois theory, 
we do not have to care about the derivatives of the functions $f_1(z),\ldots,f_s(z)$.  
It is also worth mentioning that in order to apply Theorem \ref{thm: ind}, we do not even need that the functions $f_1(z),\ldots,f_s(z)$ satisfy linear differential equations. 
We only need properties about their reduction modulo prime ideals.  As described in Sections \ref{sec: ldks} and \ref{sec: kolchin}, 
all of this makes perfect sense in the general framework of a Dedekind domain. However, we only focus in this paper on applications of our method to $G$-functions.}
\end{rem}

The present work was initiated with the following concrete example.   
Given a positive integer $r$,  the function 
$$
\mathfrak f_r(z):= \sum_{n= 0}^{\infty} {2n \choose n}^r z^n
$$ 
is a $G$-function annihilated by the differential operator $\mathcal L_r:= \theta^r - 4^rz(\theta+1/2)^r$, where $\theta = z\frac{d}{dz}$. 
In 1980, Stanley \cite{St80} conjectured that the $\mathfrak f_r$'s  
are transcendental over $\mathbb C(z)$ unless for $r=1$, in which case we have 
$\mathfrak f_1(z)=(\sqrt{ 1 - 4z})^{-1}$.
 He also proved the transcendence in the case where 
 $r$ is even. The conjecture was solved independently by Flajolet \cite{Fl} and by Sharif and Woodcock \cite{SW2} with totally different methods. 
 Incidentally, this result is also a consequence of work of Beukers and Heckman \cite{BH} concerning generalized hypergeometric series. 
 Let us briefly describe these different proofs. We assume in the sequel that $r>1$. 
 
 \begin{itemize}
 \medskip
 
 \item[(i)] The proof of Flajolet is based on asymptotics. Indeed, it is known that for an algebraic function 
$f(z)=\sum_{n=0}^\infty a(n)z^n\in\mathbb{Q}\{z\}$, one has: 
$$
a(n)=\frac{\alpha^nn^s}{\Gamma(s+1)}\sum_{i=0}^mC_i\omega_i^n+\underset{n\rightarrow\infty}{O}(\alpha^nn^t),
$$
where $s\in\mathbb{Q}\setminus\mathbb{Z}_{<0}$, $t<s$, $\alpha$ is an algebraic number and the $C_i$'s and $\omega_i$'s are 
algebraic with $|\omega_i|=1$.  On the other hand, Stirling formula leads to the following asymptotics
$$
\binom{2n}{n}^r \underset{n\rightarrow\infty}\sim \frac{2^{(2n+1/2)r}}{(2\pi n)^{r/2}} \, \cdot
$$
A simple comparison between these two asymptotics shows that  
$\mathfrak f_r$ cannot be algebraic when $r$ is even, as already observed by Stanley in \cite{St80}.  
Flajolet \cite{Fl} shows that it also leads to the same conclusion for odd $r$, but then it requires the transcendence of $\pi$. 
 
 \medskip 
 
 \item[(ii)]  The proof of 
Sharif and Woodcock is based on the Lucas theorem previously mentioned. Indeed, Lucas' theorem on binomial coefficients 
implies that 
 $$
 {2(np+m) \choose (np+m)}^r \equiv {2n\choose n}^r{2m\choose m}^r \bmod p 
 $$ 
 for all prime numbers $p$, all non-negative integers $n$ and all $m$, $0\leq m\leq p-1$. 
 This leads to the algebraic equation:
 $$
\mathfrak f_{r\vert  p}(z) = A_p(z) \mathfrak f_{r\vert  p}(z)^p
 $$ 
 where $A_p(z):= \sum_{n=0}^{p-1}\big( {2n\choose n}\bmod p\big)z^n$.  
In \cite{SW2}, Sharif and Woodcock prove that the degree of algebraicity of $\mathfrak f_{r\vert  p}$ cannot remains bounded 
when $p$ runs along the primes, which ensures the transcendence of $\mathfrak f_r$. 

\medskip

 \item[(iii)]  The proof based on the work of Beukers and Heckman used the fact that 
 $$\mathfrak f_r(z)=
{}_{r+1}F_{r}\left[\begin{array}{cc}(1/2),\ldots,(1/2)\\ 1,\dots,1\end{array}; 2^{2r}z\right]
$$ 
is a hypergeometric function. Then it is easy to see that $\mathfrak f_r$ fails the beautiful interlacing criterion proved in \cite{BH}. 
In consequence, the differential Galois groups associated with the $\mathfrak f_r$'s are all infinite and 
these functions are thus transcendental. 
 
 \medskip
 \end{itemize}

Though there are three different ways to obtain the transcendence of $\mathfrak f_r$,  
 not much was apparently known about their algebraic independence.  
Roughly, our approach can be summed up by saying that (ii) + (i) leads to algebraic independence in a rather straightforward manner, while (iii) would 
be the more usual 
method.  In this line, we will complete in Section~\ref{sec: sing} the result of \cite{AB13}, proving that the 
functions $\mathfrak f_r$ are all algebraically independent.

To give a flavor of the kind of results we can obtain, we just add the following two examples. They correspond respectively to Theorems \ref{thm: appli1} and 
\ref{thm: prop3} proved in the sequel.  
The first one concerns several families of generating series associated 
with Ap\'ery numbers, Franel numbers, and some of their generalizations. The second one involves a mix of hypergeometric series and 
generating series associated with factorial ratios and  Apery numbers.  

\begin{prop}\label{prop: appli1}
Let $\mathcal F$ be the set formed by the union of the three following sets: 
$$
\left\{\sum_{n=0}^\infty\left(\sum_{k=0}^n\binom{n}{k}^r\right)z^n\,: \,r\geq 3\right\},\quad\left\{\sum_{n=0}^\infty
\left(\sum_{k=0}^n\binom{n}{k}^r\binom{n+k}{k}^r\right)z^n\,: \,r\geq 2\right\}
$$
and
$$
\left\{\sum_{n=0}^\infty\left(\sum_{k=0}^n\binom{n}{k}^{2r}\binom{n+k}{k}^r\right)z^n\,: \,r\geq 1\right\} \, .
$$
Then all elements of $\mathcal F$ are algebraically independent over $\mathbb C(z)$.
\end{prop}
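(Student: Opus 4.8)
The plan is to combine the Kolchin-type criterion of Theorem \ref{thm: ind} with an analysis of singularities: once algebraic independence is reduced to the nonexistence of monomial relations, I would rule those out by inspecting the dominant singularities of the functions involved. First I would check that every element of $\mathcal F$ meets the hypotheses of Theorem \ref{thm: ind}. Each coefficient sequence is a single sum of products of binomial coefficients, namely $\sum_k\binom nk^r$, $\sum_k\binom nk^r\binom{n+k}k^r$, and $\sum_k\binom nk^{2r}\binom{n+k}k^r$, so it falls within the class of multisums of products of binomials treated in Section \ref{sec: appli}. Invoking those results, each $f\in\mathcal F$ satisfies a relation of the form \eqref{eq: lde2} for infinitely many primes with the required control on the height. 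Applying Theorem \ref{thm: ind} to an arbitrary finite subset of $\mathcal F$, algebraic independence over $\mathbb C(z)$ follows once I show that no nontrivial Laurent monomial $\prod_f f^{a_f}$ lies in $\overline{\mathbb Q}(z)$.

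Suppose for contradiction that $M:=\prod_f f^{a_f}=R(z)\in\overline{\mathbb Q}(z)$ with finitely many nonzero integer exponents $a_f$. The second step is to compute, by Laplace's method and Stirling's formula, the exponential growth rate $\mu_f$ of the coefficients of each $f$. Locating the relevant maxima, one finds that the three families grow like $2^{rn}$, $(3+2\sqrt2)^{rn}$, and $\big((1+\sqrt5)/2\big)^{5rn}$ respectively, so that the growth rates are rational for the first family and lie in $\mathbb Q(\sqrt2)\setminus\mathbb Q$ and in $\mathbb Q(\sqrt5)\setminus\mathbb Q$ for the other two. Since each base exceeds $1$ and $\mathbb Q(\sqrt2)\cap\mathbb Q(\sqrt5)=\mathbb Q$, the numbers $\mu_f$ are pairwise distinct; in particular, among the finitely many $f$ with $a_f\neq0$ there is a unique one, say $f_0$, of maximal growth rate $\mu_0$.

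The third and decisive step is a local analysis at $z=1/\mu_0$. The same estimates give $[z^n]f\sim C_f\,\mu_f^{\,n}n^{s_f}$ with $s_f=(1-m_f)/2$, where $m_f\ge 3$ is the number of binomial factors; hence $s_f\le -1$, and $f$ has a non-meromorphic dominant singularity (a branch point when $s_f\notin\mathbb Z$, a logarithmic singularity when $s_f\in\mathbb Z_{<0}$). It follows that $f_0'/f_0$ is non-meromorphic at $1/\mu_0$. On the other hand, every other $f$ with $a_f\neq0$ has radius of convergence $1/\mu_f>1/\mu_0$ and strictly positive coefficients, so it is analytic and non-vanishing at the real point $1/\mu_0$, whence $f'/f$ is analytic there. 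Therefore $M'/M=\sum_f a_f\,f'/f=a_{f_0}\,f_0'/f_0+(\text{analytic at }1/\mu_0)$ is non-meromorphic at $1/\mu_0$. But if $M=R$ is rational then $M'/M=R'/R$ is rational, hence meromorphic everywhere, a contradiction. Thus all $a_f=0$, no monomial relation exists, and the elements of $\mathcal F$ are algebraically independent over $\mathbb C(z)$.

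The main obstacle is the second step: the entire localization argument rests on the dominant growth rates being pairwise distinct, which requires evaluating the saddle-point constants exactly and recognizing that the three families produce growth rates in three different quadratic fields. Were two of these rates to coincide for some pair $(r,r')$, the two non-meromorphic contributions to $M'/M$ at the common singularity could a priori cancel, and one would then have to compare the precise singular exponents, and possibly the transcendental asymptotic constants $C_f$ (which involve powers of $\pi$ coming from Stirling's formula), in order to exclude such a conspiracy. The clean separation into $\mathbb Q$, $\mathbb Q(\sqrt2)$, and $\mathbb Q(\sqrt5)$ is precisely what prevents this and makes the singularity argument uniform across the three families.
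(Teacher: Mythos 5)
Your overall strategy is the one the paper follows for Theorem \ref{thm: appli1}: establish the $p$-Lucas property for every member of $\mathcal F$ via the multivariate factorial-ratio specializations of Section \ref{sec: spe}, invoke Theorem \ref{thm: ind} to reduce any algebraic dependence to a Laurent-monomial relation, and then contradict that relation using McIntosh's asymptotics. Your first two steps are sound; in fact your justification that the dominant growth rates are pairwise distinct (rational for the first family, in $\mathbb Q(\sqrt2)\setminus\mathbb Q$ for the second, in $\mathbb Q(\sqrt5)\setminus\mathbb Q$ for the third, and $\mathbb Q(\sqrt2)\cap\mathbb Q(\sqrt5)=\mathbb Q$) is a correct and more explicit version of what the paper merely asserts before applying Proposition \ref{prop: sing1}.

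The gap is in your third step, namely the implication ``the dominant singularity of $f_0$ is a branch point or logarithmic, hence $f_0'/f_0$ is non-meromorphic at $1/\mu_0$''. This implication is false in general: $f(z)=(1-4z)^{-1/2}$ has a branch point at $z=1/4$, yet $f'/f=2/(1-4z)$ is rational. More generally, whenever $f$ is locally of the pure-power form $(z_0-z)^{\alpha}h(z)$ with $h$ analytic and non-vanishing, the logarithmic derivative is meromorphic at $z_0$; and a local form $(z_0-z)^{1/2}h(z)$ is compatible with coefficient asymptotics $C\mu^n n^{-3/2}$ (your second family with $r=2$), so nothing you have proved excludes it. (A second, related soft spot: inferring the local singularity type from coefficient asymptotics alone is a reverse transfer theorem and is not automatic.) The missing input is the positivity of the coefficients of $f_0$ \emph{itself}, not only of the other factors: positivity forces $f_0(1/\mu_0)\geq 1$ (or a logarithmic divergence when $s_{f_0}=-1$), which rules out the pure-power local form. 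The cleanest repair avoids logarithmic derivatives altogether and is exactly the paper's Proposition \ref{prop: sing1}: from the monomial relation isolate
$$
f_0(z)^{a_{f_0}}=R(z)\prod_{f\neq f_0}f(z)^{-a_f},
$$
whose right-hand side is meromorphic at $1/\mu_0$ since the remaining factors are analytic and non-vanishing there, and then invoke the fact that no non-zero integer power of $f_0$ admits a meromorphic continuation to a neighborhood of $1/\mu_0$. That fact is precisely what Remark \ref{rem: W} (membership in $\mathfrak W$) extracts from the positivity of the coefficients together with the bound $[z^n]f_0=O(\mu_0^n/n)$, which holds here because $s_{f_0}\leq -1$. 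With that substitution your argument closes and coincides with the paper's proof.
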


Observe that the restriction made on the parameter $r$ in each case is optimal since the functions 
$$
\sum_{n=0}^\infty\left(\sum_{k=0}^n\binom{n}{k}\right) z^n= \frac{1}{1-2z}, \quad\sum_{n=0}^\infty\left(\sum_{k=0}^n\binom{n}{k}^2\right)z^n= \frac{1}{\sqrt{1-4z}} 
$$
and 
$$
\quad\sum_{n=0}^\infty\left(\sum_{k=0}^n\binom{n}{k}\binom{n+k}{k}\right)z^n=\frac{1}{\sqrt{1-6z+z^2}}
$$ 
are all algebraic over $\mathbb Q(z)$.

\begin{prop}
The functions 
$$
f(z) :=\sum_{n=0}^\infty\frac{(4n)!}{(2n)!n!^2}z^n, \; g(z):=\sum_{n=0}^\infty\left(\sum_{k=0}^n\binom{n}{k}^2\binom{n+k}{k}^2\right)z^n, \; 
h(z) := \sum_{n=0}^\infty\frac{(1/6)_n(1/2)_n}{(2/3)_nn!}z^n
$$
and
$$
i(z):=\sum_{n=0}^\infty\frac{(1/5)_n^3}{(2/7)_nn!^2}z^n 
$$
are algebraically independent over $\mathbb{C}(z)$.  
\end{prop}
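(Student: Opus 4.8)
The plan is to verify the hypotheses of the Kolchin-type criterion of Theorem~\ref{thm: ind} for the four series and then to refute every possible monomial relation by a hierarchical analysis of singularities, in the spirit of Section~\ref{sec: sing}. First I would record that all four functions are hypergeometric: computing the ratio of consecutive coefficients gives $f(z)={}_2F_1[1/4,3/4;1;64z]$, while $g$ is the generating series of the Apéry numbers $B_n=\sum_k\binom nk^2\binom{n+k}k^2$, $h(z)={}_2F_1[1/6,1/2;2/3;z]$ and $i(z)={}_3F_2[1/5,1/5,1/5;2/7,1;z]$. The factorial-ratio and multisum results of Section~\ref{sec: appli} provide an order-one Frobenius equation \eqref{eq: lde2} for $f$ and for $g$ at every prime $p$, while the hypergeometric analysis of Sections~\ref{sec: mgh} and \ref{sec: hyp} provides \eqref{eq: lde2} for $h$ at all $p\equiv1\bmod6$ and for $i$ at all $p\equiv1\bmod35$ (the moduli being the least common multiples of the denominators of the parameters). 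Setting $S=\{p:\ p\equiv1\bmod210\}$, which is infinite by Dirichlet's theorem, all four series simultaneously satisfy \eqref{eq: lde2} over $S$ with a rational fraction of height $O(p)$, so the conditions of Section~\ref{sec: ldks} hold and Theorem~\ref{thm: ind} applies.

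By Theorem~\ref{thm: ind} it then suffices to show that there are no integers $a_1,a_2,a_3,a_4$, not all zero, with $f^{a_1}g^{a_2}h^{a_3}i^{a_4}=R(z)\in\overline{\mathbb Q}(z)$. I would argue by contradiction, exploiting that the four functions have pairwise distinct dominant singularities: $f$ is singular only at $z=1/64$, $g$ only at $z=17-12\sqrt{2}$, and $h,i$ only at $z=1$, with $\tfrac1{64}<17-12\sqrt{2}<1$. Since each series has nonnegative coefficients, every function is analytic and takes a strictly positive (in particular nonzero) value at any positive real point inside its disc of convergence; this lets me isolate one singular factor at a time.

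Near $z=1/64$ the function $f={}_2F_1[1/4,3/4;1;64z]$ has $c=a+b$, hence a logarithmic singularity $f\sim-\tfrac1{\pi\sqrt2}\log(1-64z)$, whereas $g,h,i$ are analytic and nonzero there; thus $f^{a_1}g^{a_2}h^{a_3}i^{a_4}$ has a logarithmic branch point at $z=1/64$ unless $a_1=0$, which is incompatible with $R$ being rational, forcing $a_1=0$. With $a_1=0$, the asymptotics $B_n\sim C\,(17+12\sqrt2)^n n^{-3/2}$ show that $g$ has a square-root singularity $g=\varphi+c\,(1-z/(17-12\sqrt2))^{1/2}$ ($\varphi$ analytic, $c\neq0$) at $z=17-12\sqrt2$, where $h,i$ are analytic and nonzero; the half-integer exponent then forces $a_2=0$. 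Finally, at $z=1$ both remaining functions are singular but of different type: $h$ again has $c=a+b$ and is logarithmic, while $i$ has noninteger excess $s=(2/7+1)-3/5=24/35$, so $i=\psi+c'(1-z)^{24/35}\chi$ with $\psi,\chi$ analytic and $\psi(1),c'\chi(1)$ nonzero. The expansion of $h^{a_3}i^{a_4}$ carries a factor $(\log(1-z))^{a_3}$ removable only by $a_3=0$, after which the leading fractional term $a_4\,\psi(1)^{a_4-1}c'\chi(1)\,(1-z)^{24/35}$ of $i^{a_4}$ is nonzero unless $a_4=0$. All the $a_j$ vanish, a contradiction, so $f,g,h,i$ are algebraically independent over $\mathbb C(z)$.

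The hard part, beyond the $p$-adic input of Step~1 (which is supplied by the multivariate hypergeometric machinery of Section~\ref{sec: mgh}), is the precise determination of the dominant singularity of each function together with the nonvanishing of its leading singular coefficient. The logarithmic nature for $f$ and $h$ comes cleanly from the relation $c=a+b$, and the fractional exponent for $i$ from the connection formula for ${}_3F_2$ at $z=1$; the most delicate case is the Apéry series $g$, where I must confirm that the dominant singularity at $17-12\sqrt2$ is a pure square root with no logarithmic contamination (so that no integer power of $g$ is meromorphic there), which follows from the absence of a $\log n$ factor in the coefficient asymptotics and the transfer theorems of Section~\ref{sec: sing}.
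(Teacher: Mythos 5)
Your strategy coincides with the paper's own proof in all essentials: the same congruence input ($f,g$ have the $p$-Lucas property for all primes, $h$ for $p\equiv 1\bmod 6$, $i$ for $p\equiv 1\bmod 35$, hence all four lie in $\mathfrak{L}(\mathcal{S}_1)$ with $\mathcal{S}_1=\{p\equiv 1\bmod 210\}$), the same reduction via Theorem~\ref{thm: ind} to a monomial identity $f^{a_1}g^{a_2}h^{a_3}i^{a_4}=r(z)$ with $r$ rational, and the same hierarchical elimination of exponents using $\rho_f=4^{-3}<\rho_g=(1+\sqrt 2)^{-4}<\rho_h=\rho_i=1$, ending with a logarithm-versus-fractional-power contrast between $h$ and $i$ at $z=1$. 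Where you differ is only in how the singular behaviour is certified: the paper never determines the precise singularity type of $f$ or $g$ in this proof; it invokes Remark~\ref{rem: W} (nonnegative coefficients together with $a(n)=O(\rho^{-n}/n)$ imply membership in $\mathfrak{W}$) to conclude that no positive integer power of $f$ or of $g$ is meromorphic on its circle of convergence, and it disposes of the pair $h,i$ by Proposition~\ref{prop: sing2}, which only uses real-axis asymptotics ($h(t)\to\infty$ logarithmically as $t\to 1^-$, while $i(t)\to i(1)\in\mathbb{C}^*$).

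One step of your write-up is not valid as stated: you claim that the pure square-root nature of the Ap\'ery series $g$ at $17-12\sqrt{2}$ \emph{follows from the absence of a $\log n$ factor in the coefficient asymptotics and the transfer theorems}. Transfer theorems go in the opposite direction, from a local singular expansion to coefficient asymptotics, and coefficient asymptotics alone do not determine the singularity type; to get the local expansion rigorously you would need, for instance, the $G$-function decomposition \eqref{eq: decg} at the dominant singularity together with a matching argument, or an analysis of the underlying differential equation. Fortunately, the statement you actually need --- that no integer power of $g$ admits a meromorphic continuation at $\rho_g$ --- is exactly what Remark~\ref{rem: W} yields from the positivity of the Ap\'ery numbers and the bound $B_n\sim C\rho_g^{-n}n^{-3/2}=O\bigl(\rho_g^{-n}/n\bigr)$. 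Replacing your transfer argument by this remark (and, if desired, replacing the ${}_3F_2$ connection formula for $i$ by an appeal to Proposition~\ref{prop: sing2}) closes the gap and makes your proof identical in substance to the paper's.
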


\section{Notation}\label{sec: not} 

Let us introduce some notation that will be used throughout this paper.  
Let $d$ be a positive integer. Given $d$-tuples of real numbers $\mathbf{m}=(m_1,\dots,m_d)$ and $\mathbf{n}=(n_1,\dots,n_d)$, 
 we set $\mathbf{m}+\mathbf{n}:=(m_1+n_1,\dots,m_d+n_d)$ and  
$\mathbf{m}\cdot\mathbf{n}:=m_1n_1+\cdots+m_dn_d$. If moreover  $\lambda$ is a real number, then we set 
$\lambda\mathbf{m}:=(\lambda m_1,\dots, \lambda m_d)$. 
We write $\mathbf{m}\geq\mathbf{n}$ if   we have 
$m_k\geq n_k$ for all $k$ in $\{1,\dots,d\}$. We also set $\mathbf{0}:=(0,\dots,0)$ and $\mathbf{1}:=(1,\dots,1)$. 
We let $\mathcal P$ denote the set of all prime numbers. 

\medskip

\subsubsection{Polynomials.} Given a   $d$-tuple of natural numbers $\mathbf{n}=(n_1,\dots,n_d)$ and a vector of 
indeterminates  $\mathbf{x}=(x_1,\dots,x_d)$, 
we will denote by ${\bf x}^{\bf n}$  the monomial $x_1^{n_1}\cdots x_d^{n_d}$.  The (total) degree 
of such a monomial is the non-negative integer $n_1+\cdots + n_d$.  
Given a ring $R$ and a 
polynomial $P$ in $R[{\bf x}]$, we denote by $\deg P$ the (total) degree of $P$, that is 
the maximum of the total degrees of the monomials appearing in $P$ with non-zero coefficient.   
The partial degree of $P$ with respect to the indeterminate $x_i$ 
is denoted by $\deg_{x_i}(P)$. 
Given a polynomial $P(Y)$ in $R[{\bf x}][Y]$, we define the height of $P$ as 
the maximum of the total degrees (in ${\bf x}$) of its coefficients.  

\subsubsection{Algebraic functions.} Let $K$ be a field.  We denote by $K[[{\bf x}]]$ the ring of formal power series with coefficients 
in $K$ and associated with the vector of indeterminates ${\bf x}$. We denote by $K[[{\bf x}]]^{\times}$ the group of units of $K[[{\bf x}]]$, 
that is the subset of $K[[{\bf x}]]$ formed by all power series with non-zero constant coefficients. 
We say that a power series 
$$
f({\bf x})=\sum_{{\bf n} \in \mathbb N^d} a({\bf n}) {\bf x}^{\bf n}\in K[[{\bf x}]]
$$
is algebraic if it is algebraic over the field of rational functions $K({\bf x})$, 
that is,  if there exist polynomials $A_0,\ldots, A_m$ in ${K}[{\bf x}]$, not all zero, 
such that
\begin{displaymath}
\sum_{i=0}^{m} A_i({\bf x})f({\bf x})^i \ = \ 0\, .
\end{displaymath}
Otherwise, $f$ is said to be transcendental. 
The degree of an algebraic power series $f$, denoted by $\deg f$, is defined as the degree of the minimal polynomial of 
$f$, or equivalently, as the minimum of the natural numbers $m$ for which such a relation holds. 
The (naive) height of $f$, denoted by $H(f)$,   
 is then defined as the height of the minimal polynomial of $f$, or equivalently, 
 as the minimum of the heights of the non-zero polynomials $P(Y)$ in $K[{\bf x}][Y]$ 
 that vanish at $f$. For a rational function $f$, written as $A({\bf x})/B({\bf x})$ with $A$ and $B$ two coprime polynomials, 
 then one has  $H(f)= \max (\deg A, \deg B)$.  Note that we just introduced two different notions: the degree of a polynomial and the degree 
of an algebraic function. Since polynomials are also algebraic functions we have to be careful. 
For instance, the polynomial $x^2y^3$ in $K[x,y]$ has degree $5$ but viewed as an element of $K[[x,y]]$ it is an 
algebraic power series of degree $1$.    
In the sequel, this should not be a source of confusion.

\subsubsection{Algebraic independence.} Let $f_1,\ldots,f_n$ be in $K[[{\bf x}]]$. We say that $f_1,\ldots,f_n$ are algebraically dependent 
if they are algebraically dependent over the field $K({\bf x})$, that is, if there exists a non-zero polynomial 
$P(Y_1,\ldots,Y_n)$ in $K[{\bf x}][Y_1,\ldots,Y_n]$ 
such that $P(f_1,\ldots,f_n)=0$. This is also equivalent to declaring that the field extension $K({\bf x})(f_1,\ldots,f_n)$ of $K({\bf x})$ 
has transcendence degree less than $n$.  When the degree of such a polynomial $P$ (here, the total degree with respect to $Y_1,\ldots, Y_n$) 
is at most $d$, then we say that $f_1,\ldots,f_n$ satisfy a polynomial (or an algebraic) relation of degree at most $d$. 
When there is no algebraic relation between them, the power series $f_1,\ldots,f_n$ are said to be algebraically independent 
(over $K({\bf x})$). A set or family $S$ of power series is said to 
be algebraically independent if all finite subsets of $S$ consist of algebraically independent elements.

\subsubsection{Dedekind domains.} We recall here some basic facts about Dedekind domains (see for instance \cite{Se}). 
Let $R$ be a Dedekind domain; that is, $R$ is Noetherian, integrally closed,  
and every non-zero prime ideal of $R$ is a maximal ideal. Let  $K$ denote the field of fractions of $R$. 
The localization of $R$ at a maximal ideal $\mathfrak p$ is denoted by $R_{\mathfrak p}$. 
Recall here that $R_{\mathfrak p}$ can be seen as the following subset of $K$: 
$$
R_{\mathfrak p}= \left\{a/b \,:\, a\in R, b\in R\setminus  \mathfrak p\right\}.
$$ 
Then $R_{\mathfrak p}$ is a discrete valuation ring and the residue field $R_{\mathfrak p}/\mathfrak p$ is equal to $R/\mathfrak p$. Furthermore, 
any non-zero element of $R$ belongs to at most a finite number of maximal ideals of $R$. 
In other words, given an infinite set $\mathcal S$ of maximal ideals of $R$, then one always has 
$\bigcap_{\mathfrak p\in \mathcal S} \mathfrak p =\{0\}$. This property 
implies that any non-zero element of $K$ belongs to $R_{\mathfrak p}$ for all but 
finitely many maximal ideal $\mathfrak p$ of $R$. Furthermore, we also have $\bigcap_{\mathfrak{p}\in\mathcal{S}}\mathfrak{p}R_{\mathfrak{p}}=\{0\}$. For every power series $f(\mathbf{x})=\sum_{\mathbf{n}\in\mathbb{N}^d} a(\mathbf{n})\mathbf{x}^{\mathbf{n}}$ with coefficients in $R_{\mathfrak p}$, we set
$$f_{|\mathfrak p}(\mathbf{x}):=\sum_{\mathbf{n}\in\mathbb{N}^d}\big(a(\mathbf{n})\bmod \mathfrak p\big)\mathbf{x}^{\mathbf{n}}\in 
(R/\mathfrak p)[[\mathbf{x}]] \, .$$ 
The power series $f_{|\mathfrak p}$ is called the reduction of $f$ modulo $\mathfrak p$.

\section{Lucas-type congruences and two special sets of power series}\label{sec: ldks}

\subsection{The set $\mathcal L_d(R,\mathcal{S})$}\label{Sec: ldks}

We define a special set of power series that will  play a key role in this paper.  

\medskip

\begin{defn}\label{def: ldrs} {\em Let $R$ be a Dedekind domain and $K$ be its field of fractions. Let $\mathcal{S}$ be a set of prime ideals of $R$.   
Let $d$ be a positive integer and ${\bf x} = (x_1,\ldots,x_d)$ be a vector of indeterminates.  
We let $\mathcal{L}_d(R,{\mathcal S})$ denote the set of all power series 
$f({\bf x})$ in $K[[{\bf x}]]$ with constant term equal to $1$ and such that for every $\mathfrak p$ in $\mathcal{S}$: 
\begin{itemize}

\medskip

\item[{\rm(i)}]  $f(\mathbf{x})\in R_{\mathfrak p}[[\mathbf{x}]]$;

\medskip

\item[{\rm(ii)}]  The residue field $R/\mathfrak p$ is finite and has characteristic $p$\,;

\medskip

\item[{\rm(iii)}]  There exist a positive integer $k$ 
and a rational fraction $A$ in $K({\bf x})\cap R_{\mathfrak p}[[{\bf x}]]$  satisfying
\begin{equation*}\label{eq cond}
f(\mathbf{x})\equiv A(\mathbf{x})f\big(\mathbf{x}^{p^{k}}\big)\mod \mathfrak pR_{\mathfrak p}[[\mathbf{x}]];
\end{equation*}

\medskip

\item[{\rm(iv)}]   $H(A) = O(p^{k})$, where the constant involved in the Landau notation does not depend on $\mathfrak p$. 
\end{itemize}
}
\end{defn}

\begin{rem}\label{rem: a(0)} {\rm 
If $f({\bf x})$ is a formal power series that belongs to 
$\mathcal{L}_d(R,\mathcal{S})$,  then the constant coefficient $A({\bf 0})$ of the rational fraction $A$ involved in ${\rm(iii)}$ must be equal to $1$.  
In particular, $A({\bf x})$ belongs to the group of units 
$R_{\mathfrak{p}}[[{\bf x}]]^{\times}$. 
}
\end{rem}

\begin{rem}\label{rem: itere} {\rm 
Let $f(\mathbf{x})$ be a power series in $\mathcal{L}_d(R,\mathcal{S})$. Let 
$\mathfrak p$ be a prime in $\mathcal{S}$ such that $f(\mathbf{x})\equiv A(\mathbf{x})f\big(\mathbf{x}^{p^k}\big)
\mod \mathfrak p R_{\mathfrak p}[[\mathbf{x}]]$ 
with $H(A(\mathbf{x}))\leq Cp^k$. Iterating Congruence (iii), we observe that for all natural numbers $m$, we also have 
$$
f(\mathbf{x})\equiv A(\mathbf{x})A\big(\mathbf{x}^{p^k}\big)\cdots A\big(\mathbf{x}^{p^{mk}}\big)
f\big(\mathbf{x}^{p^{(m+1)k}}\big)\mod \mathfrak p R_{\mathfrak p}[[\mathbf{x}]],
$$
with 
\begin{align*}
H\big(A(\mathbf{x})A\big(\mathbf{x}^{p^k}\big)\cdots A\big(\mathbf{x}^{p^{mk}}\big)\big)&\leq Cp^k(1+p^{k}+\cdots+p^{km})\\
&\leq Cp^k\frac{p^{(m+1)k}-1}{p^k-1}\\
&\leq 2Cp^{(m+1)k}.
\end{align*}}
\end{rem}

\begin{rem}\label{rem: Q}{\rm 
In  our applications, we will focus on the fundamental case where $K$ is a number field. In that case, $K$ is the fraction field of its ring of integers $R=\mathcal{O}_K$ which is a Dedekind domain. Furthermore, for every prime ideal $\mathfrak{p}$ in $\mathcal{O}_K$ above a rational prime $p$, the residue field $\mathcal{O}_K/\mathfrak{p}$ is finite of characteristic $p$. In particular, we will consider the case $K=\mathbb{Q}$. In that case, we have $R=\mathcal{O}_K=\mathbb{Z}$ and, for every prime number $p$, the localization $\mathbb{Z}_{(p)}$ is the set of rational numbers whose denominator is not 
divisible by $p$.  
If there is no risk of confusion,  
we will simply write $\mathcal L_d(\mathcal S)$ 
instead of $\mathcal L_d(\mathbb Z, \mathcal S)$ and $\mathcal L(\mathcal S)$ instead of $\mathcal L_1(\mathbb Z, \mathcal S)$.  
}
\end{rem}

As we will see in the sequel, a good way to prove that some power series $f$ belongs to $\mathcal L_1(R, \mathcal S)$ 
is to show that $f$ arises as some specialization of a multivariate power series known to belong to  
$\mathcal L_d(R, \mathcal S)$. In this direction, we give the following useful result.

\begin{prop}\label{prop: special}
Let $R$ be a Dedekind domain with field of fractions $K$. Let $S$ and $S'$ be two sets of prime ideals of $R$ of finite index. Let $f$ be in $\mathcal{L}_d(R, \mathcal{S})$ and $g$ in $\mathcal{L}_e(R, \mathcal{S'})$. Then the following hold. 

\begin{itemize}
\item [\rm{(i)}] Let $a_1,\dots,a_d$ be non-zero elements of $K$ and $n_1,\dots,n_d$ be positive integers. Then $f(a_1x_1^{n_1},\dots,a_dx_d^{n_d})$ belongs to $\mathcal{L}_d(R, \mathcal T)$, where $\mathcal T$ is the set of primes $\mathfrak{p}$ in $\mathcal{S}$ such that $a_1,\dots,a_d$ belong to $R_{\mathfrak p}$.

\item[\rm{(ii)}] If $x$ is an indeterminate, then $f(x,x,x_3,\dots,x_d)$ belongs to $\mathcal{L}_{d-1}(R, \mathcal{S})$. 

\item[\rm{(iii)}]  If ${\bf x}$ and ${\bf y}$ are two vectors of indeterminates, then 
$f({\bf x}) \cdot h({\bf y})$ is in $\mathcal L_{d+e}(R,\mathcal{S}\cap\mathcal{S}')$. 
\end{itemize}
\end{prop}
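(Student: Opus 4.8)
The plan is to verify, for each of the three series, the four conditions of Definition~\ref{def: ldrs} relative to the claimed set of primes. In every case the normalization of the constant term to $1$, the integrality condition~(i), and the residue-field condition~(ii) are inherited immediately: substituting indeterminates or constants lying in $R_{\mathfrak p}$, and forming products, all preserve membership in $R_{\mathfrak p}[[\cdot]]$ and send constant term $1$ to constant term $1$ (here one uses that the $n_i$ are positive, so the substitution $x_i\mapsto a_ix_i^{n_i}$ kills no constant term). The whole content therefore lies in producing the functional equation~(iii) together with a rational fraction whose height satisfies the uniform bound~(iv). The single tool I would use throughout is the iteration of Congruence~(iii) recorded in Remark~\ref{rem: itere}, which replaces the exponent $k$ by any multiple $mk$ while keeping the height of the resulting fraction $O(p^{mk})$ with the same uniform constant up to a factor $2$.

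For part~(ii) I would note that the diagonal substitution $x_1=x_2=x$ defines a continuous $R_{\mathfrak p}$-algebra homomorphism that commutes with every Frobenius $\mathbf{x}\mapsto\mathbf{x}^{p^k}$, since it merely identifies two indeterminates. Applying it to $f(\mathbf{x})\equiv A(\mathbf{x})f(\mathbf{x}^{p^k})$ gives at once $\tilde f(\mathbf{y})\equiv \tilde A(\mathbf{y})\,\tilde f(\mathbf{y}^{p^k})$, with $\mathbf{y}=(x,x_3,\dots,x_d)$ and $\tilde A(\mathbf{y})=A(x,x,x_3,\dots,x_d)$, for the \emph{same} exponent $k$. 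As identifying two variables cannot raise the total degree of a numerator or denominator, one has $H(\tilde A)\le H(A)=O(p^k)$, so~(iv) is preserved. This case requires no iteration and is the easiest.

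For part~(iii) the only obstruction is that $f$ and $g$ come a priori with different exponents $k_1,k_2$. I would first use Remark~\ref{rem: itere} to lift both congruences to the common exponent $k=\operatorname{lcm}(k_1,k_2)$, obtaining $f(\mathbf{x})\equiv A'(\mathbf{x})f(\mathbf{x}^{p^k})$ and $g(\mathbf{y})\equiv B'(\mathbf{y})g(\mathbf{y}^{p^k})$ with $H(A'),H(B')=O(p^k)$. Multiplying these two congruences, and using that $\mathbf{x}$ and $\mathbf{y}$ are disjoint so that the Frobenius acts coordinatewise on $(\mathbf{x},\mathbf{y})$, yields $f(\mathbf{x})g(\mathbf{y})\equiv A'(\mathbf{x})B'(\mathbf{y})\,f(\mathbf{x}^{p^k})g(\mathbf{y}^{p^k})$. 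The fraction $A'(\mathbf{x})B'(\mathbf{y})$ lies in $K(\mathbf{x},\mathbf{y})\cap R_{\mathfrak p}[[\mathbf{x},\mathbf{y}]]$, and because the variable sets are disjoint its numerator and denominator degrees are bounded by the sums of those of $A'$ and $B'$, so $H(A'B')\le H(A')+H(B')=O(p^k)$ with a uniform constant. Condition~(ii) holds on $\mathcal S\cap\mathcal S'$ by hypothesis.

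Part~(i) is where I expect the real difficulty, and it is the reason the residue fields are required to be finite in Definition~\ref{def: ldrs}. Writing $\phi$ for the substitution $x_i\mapsto a_ix_i^{n_i}$, the trouble is that $\phi$ does \emph{not} commute with the Frobenius on the constants: applying $\phi$ to $f(\mathbf{x}^{p^k})$ produces $f(a_1^{\,p^k}x_1^{n_1p^k},\dots)$, whereas $\phi(f)(\mathbf{x}^{p^k})$ equals $f(a_1x_1^{n_1p^k},\dots)$, and modulo $\mathfrak p$ the identity $\bar a_i^{\,p^k}=\bar a_i$ fails in general. To repair this I would exploit that $R/\mathfrak p$ is finite, say of cardinality $p^{f}$, so that $\bar a_i^{\,p^{f}}=\bar a_i$ for every $i$, hence $\bar a_i^{\,p^{k'}}=\bar a_i$ whenever $f\mid k'$. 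Using Remark~\ref{rem: itere} I would first replace $k$ by the multiple $k'=fk$; after this adjustment $\phi$ and the Frobenius $\mathbf{x}\mapsto\mathbf{x}^{p^{k'}}$ effectively commute modulo $\mathfrak p$, and one obtains $\phi(f)(\mathbf{x})\equiv \phi(A')(\mathbf{x})\,\phi(f)(\mathbf{x}^{p^{k'}})$. Finally, condition~(i) forces $\mathfrak p\in\mathcal T$, so that $a_i\in R_{\mathfrak p}$ and $\phi(A')$ lands in $K(\mathbf{x})\cap R_{\mathfrak p}[[\mathbf{x}]]$; and the substitution $x_i\mapsto a_ix_i^{n_i}$ multiplies total degrees by at most $N=\max_i n_i$, whence $H(\phi(A'))\le N\,H(A')=O(p^{k'})$ with a constant independent of $\mathfrak p$. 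Since each nonzero $a_i$ lies in $R_{\mathfrak p}$ for all but finitely many $\mathfrak p$, passing from $\mathcal S$ to $\mathcal T$ discards only finitely many primes, so the finiteness of the index is preserved. The delicate twisting of the constants under Frobenius, and the use of finiteness of the residue field to absorb it, is the one genuinely nontrivial point of the proposition.
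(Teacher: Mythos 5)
Your proof is correct and takes essentially the same route as the paper, which gives no written argument beyond stating that the proposition is a straightforward consequence of Definition~\ref{def: ldrs} and Remark~\ref{rem: itere} --- precisely the two tools you use. The subtle point you isolate in part (i), namely that the substitution $x_i\mapsto a_ix_i^{n_i}$ fails to commute with Frobenius on the constants and must be repaired by passing (via Remark~\ref{rem: itere}) to an exponent divisible by the residue degree $d_{\mathfrak p}$ so that $\bar a_i^{\,p^{k'}}=\bar a_i$ in $R/\mathfrak p$, is exactly the device the paper itself uses in the proof of Theorem~\ref{thm: ind}, where $k$ is taken to be $\mathrm{lcm}(d_{\mathfrak p},k_1,\dots,k_n)$.
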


The proof of Proposition \ref{prop: special} is a straightforward consequence of Definition \ref{def: ldrs} 
and of Remark \ref{rem: itere}. 

\subsection{The set $\mathfrak L_d(R,\mathcal S)$ and $p^k$-Lucas congruences} 

As we will see in the sequel, it often happens that elements of $\mathcal L_d(R,\mathcal S)$ satisfy a stronger form of Condition $\rm{(iii)}$. 
Typically, the rational fraction $A({\bf x})$ can just be a polynomial with even further restriction on its degree. 
This gives rise to stronger congruences that are of interest 
in combinatorics, and leads us  to define the following distinguished subset of $\mathcal L_d(R,\mathcal S)$.

\begin{defn}
 {\em
Let us define $\mathfrak{L}_d(R,\mathcal{S})$ as the subset of $\mathcal{L}_d(R, \mathcal{S})$ formed by 
the series $f(\mathbf{x})$ for which Condition $\rm{(iii)}$ is satisfied for a fixed $k$ (\textit{i.\,e.} independent of $\mathfrak p$) 
and a polynomial $A({\bf x})\in R_{\mathfrak p}[{\bf x}]$ with 
$\deg_{x_i}(A(\mathbf{x}))\leq p^k-1$ for all $i$ in $\{1,\dots,d\}$.  
}
\end{defn}

Again,  if there is no risk of confusion,  
we will simply write $\mathfrak L_d(\mathcal S)$ 
instead of $\mathfrak L_d(\mathbb Z, \mathcal S)$ and $\mathfrak L(\mathcal S)$ instead of $\mathfrak L_1(\mathbb Z,\mathcal S)$.   

\begin{rem}\label{rem: itere2} {\rm 
Let $f(\mathbf{x})\in\mathfrak{L}_d(R,\mathcal{S})$. Let 
$\mathfrak p$ be a prime ideal in $\mathcal{S}$ such that  $f(\mathbf{x})\equiv A(\mathbf{x})f\big(\mathbf{x}^{p^k}\big)
\mod \mathfrak p \mathcal R_{\mathfrak p}[[\mathbf{x}]]$ where $A(\mathbf x)$ belongs to $R_{\mathfrak p}[\mathbf x]$ with $\deg_{x_i}(A(\mathbf{x}))\leq p^k-1$ for all $i$ in $\{1,\dots,d\}$. Iterating Condition (iii), we observe that for all natural numbers $m$, we also have 
$$
f(\mathbf{x})\equiv A(\mathbf{x})A\big(\mathbf{x}^{p^k}\big)\cdots A\big(\mathbf{x}^{p^{mk}}\big)
f\big(\mathbf{x}^{p^{(m+1)k}}\big)\mod \mathfrak p \mathcal R_{\mathfrak p}[[\mathbf{x}]],
$$
with 
\begin{align*}
\deg_{x_i}\big(A(\mathbf{x})A\big(\mathbf{x}^{p^k}\big)\cdots A\big(\mathbf{x}^{p^{mk}}\big)\big)&\leq (p^k-1)(1+p^{k}+\cdots+p^{km})\\
&\leq  p^{(m+1)k}-1.
\end{align*}}
\end{rem}

We have the following practical characterization of power series in $\mathfrak{L}_d(R,\mathcal{S})$ in terms of congruences 
satisfied by their  coefficients. 

\begin{defn}\label{def: pkl}
 {\em
We  say that the family $(a({\bf n}))_{\bf n\in\mathbb N^d}$ with values in $K^d$ 
satisfies the $p^k$-\emph{Lucas property} with respect to $\mathcal S$ if for all maximal ideal $\mathfrak p$ in $\mathcal S$, 
$(a({\bf n}))_{\bf n\in\mathbb N^d}$ takes values in $R_{\mathfrak p}$ and 
$$
a(\mathbf{v}+\mathbf{m}p^k)\equiv a(\mathbf{v})a(\mathbf{m})\mod \mathfrak pR_{\mathfrak p},
$$
for all $\mathbf{v}$ in $\{0,\dots,p^k-1\}^d$ and $\mathbf{m}$ in $\mathbb{N}^d$.   
When  $\mathcal S$ is the set of all maximal ideals of $R$, then we say that $(a({\bf n}))_{\bf n\in\mathbb N^d}$, 
which takes thus values in $R$,   
satisfies the $p^k$-Lucas property.  When $k=1$, we simply say that  $(a({\bf n}))_{\bf n\in\mathbb N^d}$ 
satisfies the $p$-\emph{Lucas property} (or the $p$-Lucas property with respect to $\mathcal S$).  
}
\end{defn}

\begin{prop}\label{prop: plucas}
A power series $f({\bf x}) :=\sum_{\bf n \in \mathbb N^d} a({\bf n}) {\bf x}^{\bf n}$ belongs to $\mathfrak L_d(R,\mathcal S)$  
if and only if there exists a positive integer $k$ such that the family $(a({\bf n}))_{\bf n\in\mathbb N^d}$ satisfies $a(\bf 0)=1$ and has the $p^k$-Lucas property with respect to 
$\mathcal S$.
\end{prop}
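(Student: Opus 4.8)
Proof strategy for Proposition 2.13 (characterization of $\mathfrak{L}_d(R,\mathcal{S})$ via $p^k$-Lucas property).

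The plan is to prove both implications by unwinding the congruence $f(\mathbf{x})\equiv A(\mathbf{x})f(\mathbf{x}^{p^k})\bmod\mathfrak{p}R_\mathfrak{p}[[\mathbf{x}]]$ at the level of coefficients, exploiting the key degree constraint $\deg_{x_i}(A)\leq p^k-1$ to separate the "low-order digits" from the "high-order digits" of the exponent vectors. Throughout, fix $\mathfrak{p}\in\mathcal{S}$ lying above $p$, and write $A(\mathbf{x})=\sum_{\mathbf{v}}c(\mathbf{v})\mathbf{x}^\mathbf{v}$, where the sum runs over $\mathbf{v}\in\{0,\dots,p^k-1\}^d$ because of the degree hypothesis.

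For the forward implication, suppose $f\in\mathfrak{L}_d(R,\mathcal{S})$. The idea is to show that the coefficients $c(\mathbf{v})$ of $A$ are forced to equal $a(\mathbf{v})$, and then that the congruence reads off exactly as the $p^k$-Lucas property. First I would observe that $a(\mathbf{0})=1$ is part of the definition of $\mathcal{L}_d$. The crucial step is to extract coefficients from both sides of $f(\mathbf{x})\equiv A(\mathbf{x})f(\mathbf{x}^{p^k})\bmod\mathfrak{p}$. On the right-hand side, the coefficient of $\mathbf{x}^{\mathbf{v}+\mathbf{m}p^k}$ in $A(\mathbf{x})f(\mathbf{x}^{p^k})$ is determined by writing the exponent in a mixed-radix expansion: since every monomial of $A$ has each partial degree at most $p^k-1$, the decomposition of any exponent $\mathbf{n}$ as $\mathbf{w}+\mathbf{j}p^k$ with $\mathbf{w}\in\{0,\dots,p^k-1\}^d$ is \emph{unique}. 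Consequently the coefficient of $\mathbf{x}^{\mathbf{v}+\mathbf{m}p^k}$ picks out exactly the single term $c(\mathbf{v})a(\mathbf{m})$, giving $a(\mathbf{v}+\mathbf{m}p^k)\equiv c(\mathbf{v})a(\mathbf{m})\bmod\mathfrak{p}$ for all $\mathbf{v}\in\{0,\dots,p^k-1\}^d$ and all $\mathbf{m}$. Setting $\mathbf{m}=\mathbf{0}$ and using $a(\mathbf{0})=1$ yields $c(\mathbf{v})\equiv a(\mathbf{v})\bmod\mathfrak{p}$, and substituting back gives precisely the $p^k$-Lucas congruence.

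For the converse, suppose $(a(\mathbf{n}))$ has the $p^k$-Lucas property with respect to $\mathcal{S}$ and $a(\mathbf{0})=1$. I would simply \emph{define} $A(\mathbf{x}):=\sum_{\mathbf{v}\in\{0,\dots,p^k-1\}^d}a(\mathbf{v})\mathbf{x}^\mathbf{v}$, which is a polynomial with $\deg_{x_i}(A)\leq p^k-1$ and coefficients in $R_\mathfrak{p}$. Running the uniqueness-of-mixed-radix-decomposition argument in reverse, the coefficient of $\mathbf{x}^\mathbf{n}$ in $A(\mathbf{x})f(\mathbf{x}^{p^k})$, after writing $\mathbf{n}=\mathbf{v}+\mathbf{m}p^k$ uniquely, is $a(\mathbf{v})a(\mathbf{m})$, which is congruent to $a(\mathbf{n})$ modulo $\mathfrak{p}$ by hypothesis. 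This is exactly Condition (iii) for a polynomial $A$ of the required degree, with $k$ independent of $\mathfrak{p}$; together with $a(\mathbf{0})=1$ this places $f$ in $\mathfrak{L}_d(R,\mathcal{S})$. The main obstacle, though it is more of a bookkeeping point than a genuine difficulty, is making the mixed-radix uniqueness argument fully rigorous in the multivariate setting and confirming that the degree bound $\deg_{x_i}(A)\leq p^k-1$ is precisely what guarantees the base-$p^k$ digit decomposition behaves coordinatewise; once this combinatorial fact is isolated, both directions follow by matching coefficients.
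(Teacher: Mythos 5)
Your proposal is correct and follows essentially the same route as the paper's proof: both directions proceed by matching coefficients through the unique decomposition $\mathbf{n}=\mathbf{v}+\mathbf{m}p^k$ with $\mathbf{v}\in\{0,\dots,p^k-1\}^d$ (guaranteed by the degree bound on $A$), identifying $A$'s coefficients with $a(\mathbf{v})$ via the choice $\mathbf{m}=\mathbf{0}$ and $a(\mathbf{0})=1$, and in the converse simply defining $A(\mathbf{x}):=\sum_{\mathbf{v}}a(\mathbf{v})\mathbf{x}^{\mathbf{v}}$. No gaps; this matches the paper's argument.
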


We will also say  that a power series $f({\bf x}) :=\sum_{\bf n \in \mathbb N^d} a({\bf n}) {\bf x}^{\bf n}$ satisfies the $p^k$-Lucas property 
with respect to $\mathcal S$ when the family $(a({\bf n}))_{\bf n\in\mathbb N^d}$ satisfies the $p^k$-Lucas property with respect to~$\mathcal S$.

\begin{proof}[Proof of Proposition \ref{prop: plucas}] 
Let $f({\bf x}) :=\sum_{\bf n \in \mathbb N^d} a({\bf n}) {\bf x}^{\bf n}$ belong to $\mathfrak L_d(R,\mathcal S)$. 
By definition,  there exists a positive integer $k$ such that, for every $\mathfrak p$ in $\mathcal S$, one has 
\begin{equation}\label{eq: fcong} 
f(\mathbf{x})\equiv A(\mathbf{x})f\big(\mathbf{x}^{p^k}\big)\mod \mathfrak p \mathcal R_{\mathfrak p}[[\mathbf{x}]],
\end{equation}
where $A(\mathbf x)$ belongs to $R_{\mathfrak p}[\mathbf x]$ and $\deg_{x_i}(A(\mathbf x))\leq p^k-1$ for all $1\leq i\leq d$. 
Then we can write
$$
A(\mathbf{x})=\sum_{\mathbf{0}\leq\mathbf{v}\leq(p^k-1)\mathbf{1}}b(\mathbf{v})\mathbf{x}^{\mathbf{v}}
$$
and thus 
$$
A(\mathbf{x})f\big(\mathbf{x}^{p^k}\big) = \sum_{\bf m \in \mathbb N^d}\sum_{\mathbf{0}\leq\mathbf{v}\leq(p^k-1)\mathbf{1}} 
b({\bf v})a({\bf m}) {\bf x}^{\bf v + \bf mp^k}.
$$
The congruence satisfied by $f$ now implies that  
\begin{equation}\label{eq: ab} 
a(\mathbf{v}+\mathbf{m}p^k)\equiv b(\mathbf{v})a(\mathbf{m})\mod \mathfrak pR_{\mathfrak p},
\end{equation} 
for all ${\bf m}$ in $\mathbb N^d$ and all ${\bf 0} \leq {\bf v}\leq (p^k-1)\bf 1$. 
Choosing ${\bf m}=\bf 0$, we obtain that $a({\bf v}) \equiv b({\bf v})\mod \mathfrak pR_{\mathfrak{p}}$ 
for all ${\bf 0} \leq {\bf v} \leq (p^k-1)\bf 1$ because $a(\bf 0)=1$. 
This shows that the family $(a({\bf n}))_{\bf n\in\mathbb{N}^d}$ satisfies the $p^k$-Lucas property with respect to $\mathcal S$. 
\medskip

Reciprocally, assume that $(a({\bf n}))_{\bf n\in\mathbb N^d}$ is a family with $a(\bf 0)=1$ that 
satisfies the $p^k$-Lucas property with respect to $\mathcal S$. Then setting 
$$
A({\bf x}) := \sum_{\mathbf{0}\leq\mathbf{v}\leq(p^k-1)\mathbf{1}}a(\mathbf{v})\mathbf{x}^{\mathbf{v}} \in R_{\mathfrak p}[{\bf x}]\, 
$$
and $f({\bf x}) :=\sum_{\bf n \in \mathbb N^d} a({\bf n}) {\bf x}^{\bf n}$, 
we immediately obtain that 
$$
f(\mathbf{x})\equiv A(\mathbf{x})f\big(\mathbf{x}^{p^k}\big)\mod \mathfrak p R_{\mathfrak p}[[\mathbf{x}]],
$$
which shows that $f$ belongs to $\mathfrak L_d(R,\mathcal S)$. 
\end{proof}

\medskip

Contrary to elements of $\mathcal L_d(R,\mathcal{S})$, those of $\mathfrak L_d(R,\mathcal{S})$ satisfy the following two additional useful properties. 
Recall that given two power series $f(\mathbf{x}):=\sum_{\mathbf{n}\in\mathbb{N}^d}a(\mathbf{n})\mathbf{x}^{\mathbf{n}}$ and 
$g(\mathbf{x})=\sum_{\mathbf{n}\in\mathbb{N}^d}b(\mathbf{n})\mathbf{x}^{\mathbf{n}}$ with coefficients in an arbitrary ring, 
one can define the Hadamard product of $f$ and $g$ by 
$$
f \odot g := \sum_{\mathbf{n}\in\mathbb{N}^d}a(\mathbf{n})b({\bf n}) \mathbf{x}^{\mathbf{n}} \, 
$$
and the diagonal of $f$ by
$$
\Delta(f) := \sum_{n=0}^\infty a(n,\dots,n)x^n \, . 
$$

\begin{prop}\label{prop: special2}
Let $f(\mathbf{x})$ and $g({\bf x})$ belong to $\mathfrak{L}_d(R,\mathcal{S})$. 
Then the following hold.  
\begin{enumerate}

\item[\rm{(i)}]  $f\odot g \in {\mathfrak{L}}_d(R,\mathcal{S})$.

\item[\rm{(ii)}]  $\Delta(f) \in {\mathfrak{L}}_1(R,\mathcal{S})$.

\end{enumerate}
\end{prop}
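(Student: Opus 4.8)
The plan is to work entirely through the combinatorial characterization of Proposition \ref{prop: plucas}, which reduces membership in $\mathfrak{L}_d(R,\mathcal{S})$ to the $p^k$-Lucas property of the coefficient family. Write $f(\mathbf{x}) = \sum_{\mathbf{n}} a(\mathbf{n}) \mathbf{x}^{\mathbf{n}}$ and $g(\mathbf{x}) = \sum_{\mathbf{n}} b(\mathbf{n}) \mathbf{x}^{\mathbf{n}}$, and let $k_1, k_2$ be positive integers such that $(a(\mathbf{n}))_{\mathbf{n}\in\mathbb{N}^d}$ has the $p^{k_1}$-Lucas property and $(b(\mathbf{n}))_{\mathbf{n}\in\mathbb{N}^d}$ the $p^{k_2}$-Lucas property with respect to $\mathcal{S}$, both coefficient families having constant term $1$.

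First I would establish the auxiliary fact that the $p^k$-Lucas property implies the $p^{\ell k}$-Lucas property for every positive integer $\ell$. This is the digit-expansion step already latent in Remark \ref{rem: itere2}: given $\mathbf{w} \in \{0, \dots, p^{\ell k}-1\}^d$, expand it in base $p^k$ as $\mathbf{w} = \sum_{j=0}^{\ell - 1} \mathbf{v}_j p^{jk}$ with each $\mathbf{v}_j \in \{0,\dots,p^k-1\}^d$, and iterate the congruence $a(\mathbf{v} + \mathbf{m} p^k) \equiv a(\mathbf{v}) a(\mathbf{m})$ to obtain $a(\mathbf{w} + \mathbf{m} p^{\ell k}) \equiv a(\mathbf{w}) a(\mathbf{m}) \pmod{\mathfrak{p} R_{\mathfrak{p}}}$ (a short induction on $\ell$ suffices). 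With this in hand, setting $k := \operatorname{lcm}(k_1, k_2)$ makes both families satisfy the $p^k$-Lucas property simultaneously.

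For part (i) I would then simply multiply the congruences: for $\mathbf{v} \in \{0, \dots, p^k-1\}^d$ and $\mathbf{m} \in \mathbb{N}^d$,
\[
a(\mathbf{v} + \mathbf{m} p^k)\, b(\mathbf{v} + \mathbf{m} p^k) \equiv a(\mathbf{v}) a(\mathbf{m}) b(\mathbf{v}) b(\mathbf{m}) = \big(a(\mathbf{v}) b(\mathbf{v})\big)\big(a(\mathbf{m}) b(\mathbf{m})\big) \pmod{\mathfrak{p} R_{\mathfrak{p}}},
\]
so the coefficient family of $f \odot g$ has the $p^k$-Lucas property, and its constant term equals $1\cdot 1 = 1$; Proposition \ref{prop: plucas} then gives $f \odot g \in \mathfrak{L}_d(R,\mathcal{S})$. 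For part (ii), set $c(n) := a(n,\dots,n) = a(n\mathbf{1})$, the coefficient family of $\Delta(f)$, and take $k = k_1$. For $v \in \{0,\dots,p^k-1\}$ one has $v\mathbf{1} \in \{0,\dots,p^k-1\}^d$, so applying the $p^k$-Lucas property of $(a(\mathbf{n}))$ to $\mathbf{v} = v\mathbf{1}$ and $\mathbf{m} = m\mathbf{1}$ yields $c(v + m p^k) = a(v\mathbf{1} + (m\mathbf{1})p^k) \equiv a(v\mathbf{1}) a(m\mathbf{1}) = c(v) c(m)$, with $c(0) = a(\mathbf{0}) = 1$; thus $\Delta(f) \in \mathfrak{L}_1(R,\mathcal{S})$, again by Proposition \ref{prop: plucas}.

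The only genuinely delicate point is the alignment of exponents in part (i): since $f$ and $g$ \emph{a priori} satisfy Lucas congruences for different powers of the Frobenius, the argument does not go through without first upgrading the $p^{k_i}$-Lucas property to the $p^k$-Lucas property for the common value $k = \operatorname{lcm}(k_1,k_2)$. Everything else is a direct multiplication or restriction of congruences and requires no estimate on heights, the degree bounds defining $\mathfrak{L}_d(R,\mathcal{S})$ being automatically encoded in the equivalence of Proposition \ref{prop: plucas}.
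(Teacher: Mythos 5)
Your proof is correct and follows exactly the route the paper intends: the paper dismisses Proposition \ref{prop: special2} as a straightforward consequence of the $p^k$-Lucas characterization in Proposition \ref{prop: plucas}, and you have supplied precisely those details (multiplying the coefficient congruences for the Hadamard product, restricting to $\mathbf{v}=v\mathbf{1}$, $\mathbf{m}=m\mathbf{1}$ for the diagonal). The alignment step, upgrading the $p^{k_1}$- and $p^{k_2}$-Lucas properties to a common $p^{\mathrm{lcm}(k_1,k_2)}$-Lucas property by the base-$p^k$ digit expansion, is the one point the paper leaves implicit (it is the coefficient-level counterpart of Remark \ref{rem: itere2}), and you state and justify it correctly.
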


The proof of Proposition \ref{prop: special2} is straightforward using that the coefficients of $f$ and $g$ satisfy the 
$p^k$-Lucas property with respect to $\mathcal S$.


\section{A criterion for algebraic independence }\label{sec: kolchin}

In this section, we show that any algebraically dependent power series $f_1,\ldots,f_n$ 
that belong to $\mathcal L_d(R,\mathcal S)$, for an infinite set of primes $\mathcal S$, should in fact 
satisfy a very special kind of relation: a Laurent monomial is equal to a rational fraction. 
This provides a simple and useful algebraic independence criterion for elements of $\mathcal L_d(R,\mathcal S)$. 

\begin{thm}\label{thm: ind}
Let $R$ be a Dedekind domain and $f_1(\mathbf{x}),\dots,f_n(\mathbf{x})$ be power series in $\mathcal{L}_d(R,\mathcal{S})$ where $\mathcal S$ 
is an infinite set of primes ideals of $R$ of finite index. Let $K$ be the fraction field of $R$. Then the power series 
$f_1(\mathbf{x}),\dots,f_n(\mathbf{x})$ are algebraically 
dependent over $K(\mathbf{x})$ if and only if there exist $a_1,\dots,a_s\in\mathbb{Z}$ not all zero, such that
$$
f_1(\mathbf{x})^{a_1}\cdots f_n(\mathbf{x})^{a_s}\in K(\mathbf{x}).
$$
\end{thm}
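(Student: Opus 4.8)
The forward implication is immediate: if $f_1^{a_1}\cdots f_n^{a_n}=r(\mathbf x)\in K(\mathbf x)$ with the $a_i$ not all zero, then, since each $f_i$ is a unit of $K[[\mathbf x]]$ (its constant term is $1$), one clears the negative exponents and the denominator of $r$ to obtain a genuine nonzero polynomial relation over $K[\mathbf x]$. So the content lies in the converse, which I would prove as follows. Assume $P(f_1,\dots,f_n)=0$ for some nonzero $P$, and write it as $\sum_{\mathbf j\in E}c_{\mathbf j}(\mathbf x)g_{\mathbf j}=0$ with $g_{\mathbf j}:=f_1^{j_1}\cdots f_n^{j_n}$, $c_{\mathbf j}\in K[\mathbf x]\setminus\{0\}$, and $E\subset\mathbb N^n$ the finite support. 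After replacing the $k$ attached to each $\mathfrak p$ by a common multiple (Remark \ref{rem: itere}, which preserves $H=O(p^k)$), I may assume that for every $\mathfrak p$ in $\mathcal S$ all the $f_i$ satisfy one equation $f_i\equiv A_i\,f_i(\mathbf x^{p^k})\bmod\mathfrak p$. Reducing the relation modulo $\mathfrak p$ for all but finitely many $\mathfrak p$, each $g_{\mathbf j}$ inherits $g_{\mathbf j}\equiv B_{\mathbf j}\,g_{\mathbf j}(\mathbf x^{p^k})$ with $B_{\mathbf j}=\prod_iA_i^{j_i}$ a unit, and $(\bar c_{\mathbf j})_{\mathbf j\in E}$ is a nonzero element of the relation space $V_{\mathfrak p}\subseteq(R/\mathfrak p)(\mathbf x)^{E}$ of the family $(g_{\mathbf j|\mathfrak p})_{\mathbf j\in E}$.

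The structural key is that $V_{\mathfrak p}$ is stable under the $\sigma$-semilinear map $\tau\colon(\gamma_{\mathbf j})\mapsto(\gamma_{\mathbf j}(\mathbf x^{p^k})B_{\mathbf j}^{-1})$, where $\sigma\colon\mathbf x\mapsto\mathbf x^{p^k}$: applying $\sigma$ to a relation and then using $g_{\mathbf j}(\mathbf x^{p^k})\equiv B_{\mathbf j}^{-1}g_{\mathbf j}$ sends a relation to a relation. Since $\tau$ scales each coordinate by a unit and precomposes with the injective $\sigma$, it preserves supports exactly. Taking a nonzero $v\in V_{\mathfrak p}$ of minimal support $F$ and forming, for $\mathbf j_0\in F$, the combination $\tau(v)_{\mathbf j_0}v-v_{\mathbf j_0}\tau(v)\in V_{\mathfrak p}$, whose support is strictly smaller than $F$ and hence which vanishes, I obtain $\tau(v)=\lambda v$: every $V_{\mathfrak p}$ contains a $\tau$-eigenvector, necessarily with $|F|\geq2$ (a one-term relation would annihilate a unit). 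For two indices $\mathbf j\neq\mathbf j'\in F$ the eigenvalue identity reads $\rho(\mathbf x^{p^k})B_{\mathbf j'}B_{\mathbf j}^{-1}=\rho$ with $\rho:=v_{\mathbf j}/v_{\mathbf j'}$, and substituting $B_{\mathbf j'}B_{\mathbf j}^{-1}\equiv h(\mathbf x^{p^k})/h(\mathbf x)$ for $h:=f_1^{j_1-j_1'}\cdots f_n^{j_n-j_n'}$ turns this into the assertion that $\rho\,h$ is invariant under $\sigma$; as an invariant power series is forced to be constant, $h_{|\mathfrak p}\in(R/\mathfrak p)(\mathbf x)$. Because $E$ is finite, a pigeonhole choice fixes one support $F$ and one pair $\mathbf j\neq\mathbf j'$ occurring for infinitely many $\mathfrak p$; thus a single Laurent monomial $h=\prod_i f_i^{j_i-j_i'}$, with the exponents not all zero, has a rational reduction modulo infinitely many $\mathfrak p$.

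It then remains to prove the crux: a power series $h$ with $h(\mathbf 0)=1$ satisfying the $\mathcal L_d$-type congruence $h\equiv B\,h(\mathbf x^{p^k})\bmod\mathfrak p$ with $H(B)=O(p^k)$, and whose reduction is rational modulo infinitely many $\mathfrak p$, must already lie in $K(\mathbf x)$. This descent is where I expect the real difficulty to be, and it is precisely where the quantitative hypothesis $H(A)=O(p^k)$ is used. My plan is first to bound the height of the rational reductions \emph{uniformly}: writing $h_{|\mathfrak p}=N/D$ in lowest terms and $B=P/Q$ with $\deg P,\deg Q\leq Cp^k$, the congruence gives $NQ\,D(\mathbf x^{p^k})=PD\,N(\mathbf x^{p^k})$; since the residue field is finite and hence perfect, $N(\mathbf x^{p^k})$ and $D(\mathbf x^{p^k})$ are $p^k$-th powers $\widetilde N^{p^k},\widetilde D^{p^k}$ with $\gcd(\widetilde N,\widetilde D)=1$, and comparing divisibilities forces $p^k\deg N\leq\deg N+\deg Q$ and $p^k\deg D\leq\deg D+\deg P$, so $\deg N,\deg D\leq C'$ for a constant $C'$ independent of $\mathfrak p$.

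With this uniform bound in hand, rationality of height at most $C'$ is equivalent to the vanishing of a fixed finite family of Kronecker-type determinants in the coefficients of $h$. These determinants are fixed elements of $K$ lying in $R_{\mathfrak p}$ for $\mathfrak p\in\mathcal S$, and they reduce to zero modulo the infinitely many $\mathfrak p$ at hand; since $\bigcap_{\mathfrak p\in\mathcal S}\mathfrak p R_{\mathfrak p}=\{0\}$ for an infinite set of primes, they vanish in $K$, whence $h\in K(\mathbf x)$. This produces the required monomial relation $\prod_i f_i^{j_i-j_i'}\in K(\mathbf x)$ with nontrivial exponents, completing the converse. The one point demanding care beyond what is sketched is the passage to invariant$\Rightarrow$constant and the clearing of denominators of $\rho$, both of which reduce to comparing coefficients after iterating $\mathbf x\mapsto\mathbf x^{p^{mk}}$ and letting $m\to\infty$; I expect the perfectness-based height bound, rather than these bookkeeping steps, to be the genuine obstacle.
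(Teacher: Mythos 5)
Your proposal is correct and follows the same global strategy as the paper's proof: reduce a minimal polynomial relation modulo $\mathfrak p$, run a Kolchin-type minimal-support argument on the resulting difference equations, extract a single Laurent monomial $h=\prod_i f_i^{\,j_i-j_i'}$ whose reductions modulo infinitely many $\mathfrak p$ are rational of \emph{uniformly} bounded height, and descend to $K(\mathbf x)$ by a linear-algebra argument over the Dedekind domain. Your $\tau$-eigenvector computation is exactly Proposition~\ref{prop: Kolchin} in semilinear dress, and your Kronecker determinants are exactly the minors used to prove Lemma~\ref{lem: 0}. Where you genuinely diverge is in the middle steps. The paper includes $d_{\mathfrak p}$ in the least common multiple defining $k$, so that $\sigma(g)=g(\mathbf x^{p^k})=g^{p^k}$ is the honest Frobenius on $(R/\mathfrak p)[[\mathbf x]]$; then $\sigma(r)/r=r^{p^k-1}$, and both the rationality of $h_{|\mathfrak p}$ (equal $(p^k-1)$-st powers plus equal constant terms force $h_{|\mathfrak p}=r$) and the height bound $H(r)\leq 2Cdn$ fall out of the Kolchin output at once. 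You instead keep $\sigma$ as the bare substitution $\mathbf x\mapsto\mathbf x^{p^k}$, pay for it with an ``invariant $\Rightarrow$ constant'' lemma, and recover the height bound by a separate perfectness argument ($N(\mathbf x^{p^k})=\widetilde N^{\,p^k}$, coprimality, compare degrees). That height bound is correct and rather elegant: it isolates precisely where the quantitative hypothesis $H(A)=O(p^k)$ of Definition~\ref{def: ldrs} enters, and it does not require $\sigma$ to act as Frobenius on coefficients.

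One caution on the step you dismiss as bookkeeping. Iterating $\mathbf x\mapsto\mathbf x^{p^{mk}}$ and comparing coefficients as $m\to\infty$ proves that $\sigma$-invariance of $\rho h$ forces rationality of $h_{|\mathfrak p}$ only when the numerator and denominator of $\rho$ are nonzero at $\mathbf 0$; if both vanish there (think of $\rho$ behaving like $x_1/x_2$), every coefficient comparison in the limit reads $0=0$ and you learn nothing. The statement is still true, but needs an actual argument: write $\rho h=P/Q$ in lowest terms in the UFD $(R/\mathfrak p)[[\mathbf x]]$; since the residue field is finite, $\sigma$ preserves coprimality, so invariance gives $\sigma(P)=uP$ and $\sigma(Q)=uQ$ for a unit $u$, and comparing orders at the origin (using $\mathrm{ord}\,\sigma(P)=p^{k}\,\mathrm{ord}\,P$) forces $P$ and $Q$ to be units, after which one more order comparison gives constancy. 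Alternatively, simply adopt the paper's normalization $d_{\mathfrak p}\mid k$, after which invariance reads $F^{p^k}=F$ and constancy is immediate. With that point repaired, your argument is complete.
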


\begin{rem}\label{rem: quant}{\rm
We actually prove a slightly more precise version of Theorem \ref{thm: ind}: if $f_1(\mathbf{x}),\dots,f_n(\mathbf{x})$ 
satisfy a polynomial relation of 
degree at most $d$ over $K({\bf x})$, then 
$$
f_1(\mathbf{x})^{a_1}\cdots f_n(\mathbf{x})^{a_s} = A({\bf x})\, ,
$$
where $|a_1+\cdots+a_n|\leq d$, $|a_i|\leq d$ for $1\leq i\leq n$, and $A(\bf x)$ is a rational fraction of height at most $2Cdn$. 
Here $C$ denotes the 
constant involved in the Landau notation of Condition (iv) in Definition \ref{def: ldrs}.
}
\end{rem}

\subsection{A Kolchin-like proposition}

Statements of the type of Proposition \ref{prop: Kolchin} below often appear in the study of systems of homogeneous linear 
differential/difference equations of order one. They are usually associated with the name of Kolchin who first proved one 
version in the differential case. 
We give here a rather general quantitative version in the case of difference equations associated 
with an injective endomorphism. We provide the simple proof below for the sake of completeness. 

\begin{prop}\label{prop: Kolchin}
Let $A$ be an integral domain, let $\sigma$ be an injective endomorphism of $A$, and let $B$ be a subring of $A$ such that $\sigma(B)\subset B$.  
Let $L$ denote the field of fractions of $B$. We also write $\sigma$ for the canonical extension 
of $\sigma$ to $L$. Let $f_1,\dots,f_n$ be non-zero elements of $A$ satisfying a non-trivial polynomial relation of degree $d$ with coefficients in $L$. 
If there exist $a_1,\dots,a_n$ in $B$ such that $f_i=a_i\sigma(f_i)$ for all $i$ in $\{1,\dots,n\}$, then there exist 
$m_1,\dots,m_n\in\mathbb{Z}$, not all zero, and $r\in L^*$ such that
$$
a_1^{m_1}\cdots a_n^{m_n}=\frac{\sigma(r)}{r}.
$$ 
Furthermore,  $|m_1+\cdots+m_n|\leq d$ and $|m_i|\leq d$ for $1\leq i\leq n$.
\end{prop}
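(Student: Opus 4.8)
The plan is to reduce the polynomial relation among $f_1,\dots,f_n$ to a single multiplicative relation by exploiting the difference equations $f_i = a_i\sigma(f_i)$ together with the fact that $\sigma$ acts as the identity on scalars in a controlled way. First I would take a non-trivial relation $\sum_{\boldsymbol\mu} c_{\boldsymbol\mu} f_1^{\mu_1}\cdots f_n^{\mu_n} = 0$ with coefficients $c_{\boldsymbol\mu}\in L$, where the sum ranges over a finite set of exponent vectors $\boldsymbol\mu = (\mu_1,\dots,\mu_n)\in\mathbb{N}^n$ with $|\boldsymbol\mu| = \mu_1+\cdots+\mu_n\leq d$, and I may assume this relation has the \emph{minimal number of nonzero terms} among all such relations. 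The key observation is that applying $\sigma$ to this relation and using $f_i = a_i\sigma(f_i)$, i.e. $\sigma(f_i) = f_i/a_i$ inside the fraction field, produces a second relation $\sum_{\boldsymbol\mu} \sigma(c_{\boldsymbol\mu}) a_1^{-\mu_1}\cdots a_n^{-\mu_n} f_1^{\mu_1}\cdots f_n^{\mu_n} = 0$ with the same set of monomials $f_1^{\mu_1}\cdots f_n^{\mu_n}$.

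Next I would compare the two relations. Fix one monomial appearing with nonzero coefficient, say indexed by $\boldsymbol\nu$, and normalize: divide the first relation by $c_{\boldsymbol\nu}$ and the second by $\sigma(c_{\boldsymbol\nu})a_1^{-\nu_1}\cdots a_n^{-\nu_n}$, so that both become monic at the $\boldsymbol\nu$-monomial. Subtracting the two normalized relations cancels the $\boldsymbol\nu$-term and yields a new relation in the same monomials with strictly fewer nonzero terms. By minimality this difference must vanish identically, which forces, for every index $\boldsymbol\mu$ with $c_{\boldsymbol\mu}\neq 0$,
$$
\frac{c_{\boldsymbol\mu}}{c_{\boldsymbol\nu}} = \frac{\sigma(c_{\boldsymbol\mu})}{\sigma(c_{\boldsymbol\nu})}\, a_1^{\nu_1-\mu_1}\cdots a_n^{\nu_n-\mu_n}\, .
$$
Rearranging, and writing $r := c_{\boldsymbol\mu}/c_{\boldsymbol\nu}\in L^*$, this reads
$$
a_1^{\mu_1-\nu_1}\cdots a_n^{\mu_n-\nu_n} = \frac{\sigma(r)}{r}\, .
$$
Setting $m_i := \mu_i - \nu_i$ gives exactly the desired conclusion, provided the exponent vector $(m_1,\dots,m_n)$ is not identically zero. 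Since the original relation has at least two distinct nonzero terms (it is non-trivial and the $f_i$ are non-zero, so it cannot consist of a single monomial), one may choose $\boldsymbol\mu\neq\boldsymbol\nu$ among the nonzero terms, guaranteeing that not all $m_i$ vanish. The size bounds follow immediately: since $\mu_i,\nu_i\geq 0$ and $|\boldsymbol\mu|,|\boldsymbol\nu|\leq d$, we get $|m_i| = |\mu_i-\nu_i|\leq d$ and $|m_1+\cdots+m_n| = \big||\boldsymbol\mu|-|\boldsymbol\nu|\big|\leq d$.

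The step I expect to be the main obstacle is justifying rigorously that the minimal relation has at least two terms and that the subtraction genuinely reduces the term count without introducing a contradiction — in particular verifying that the normalizing denominators $c_{\boldsymbol\nu}$ and $\sigma(c_{\boldsymbol\nu})a_1^{-\nu_1}\cdots a_n^{-\nu_n}$ are nonzero (this uses injectivity of $\sigma$ and that the $a_i$ are nonzero, which holds since $f_i=a_if_i^\sigma$ with $f_i\neq0$ forces $a_i\neq0$). The argument is the standard ``minimal length relation'' technique familiar from Kolchin-type theorems, and the only real care needed is in the bookkeeping of exponents to extract the quantitative bounds on the $m_i$.
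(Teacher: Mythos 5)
Your proof is correct and follows essentially the same route as the paper's: take a vanishing relation with a minimal number of monomials, apply $\sigma$ using $\sigma(f_i)=f_i/a_i$, subtract a suitably scaled copy of the original relation so that one term cancels, invoke minimality to force the difference to vanish identically, and read off the multiplicative relation $a_1^{m_1}\cdots a_n^{m_n}=\sigma(r)/r$ from two distinct exponent vectors in the support (which exists since the $f_i$ are non-zero), with the same exponent bounds. The only cosmetic difference is that you normalize both relations to be monic at the chosen base monomial where the paper cross-multiplies by the corresponding coefficients; this is the same manipulation.
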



\begin{proof}
Let $P$ be a polynomial with a minimal number of monomials among the non-zero polynomials 
in $L[X_1,\dots,X_n]$ of degree at most $d$ satisfying $P(f_1,\dots,f_n)=0$. 
We write
$$
P(X_1,\dots,X_n)=\sum_{(i_1,\dots,i_n)\in I}r_{i_1,\dots,i_n}X_1^{i_1}\cdots X_n^{i_n},
$$
with $r_{i_1,\dots,i_n}$ in $L\setminus\{0\}$. 
By assumption, we have
\begin{align}
0=\sigma\big(P(f_1,\dots,f_n)\big)&=\sum_{(i_1,\dots,i_n)\in I}\sigma(r_{i_1,\dots,i_n})\sigma(f_1)^{i_1}\cdots \sigma(f_n)^{i_n}\notag\\
&=\sum_{(i_1,\dots,i_n)\in I}\sigma(r_{i_1,\dots,i_n})\frac{f_1^{i_1}\cdots f_n^{i_n}}{a_1^{i_1}\cdots a_n^{i_n}}\label{E1}.
\end{align}
Let fix ${\bf i_0}=(s_1,\dots,s_n)$ in $I$. We also have
\begin{equation}\label{E2}
\sigma(r_{\bf{i_0}})P(f_1,\dots,f_n)=0.
\end{equation}
By multiplying \eqref{E1} by $r_{\bf{i_0}}a_1^{s_1}\cdots a_n^{s_n}$ and subtracting \eqref{E2}, 
we obtain a new polynomial in $L[X_1,\dots,X_n]$ of degree less than or equal to $d$, vanishing at 
$(f_1,\dots,f_n)$, but with a smaller number of monomials, so this polynomial has to be zero. 
Since all the $f_i$'s are non-zero, the cardinality of $I$ is at least equal to $2$.  
It follows that there exists 
${\bf i_1}=(t_1,\dots,t_n)$ in $I$, $\bf{i_1}\neq\bf{i_0}$, such that
$$
r_{\bf{i_0}}\sigma(r_{\bf{i_1}})a_1^{s_1-t_1}\dots a_n^{s_n-t_n}=\sigma(r_{\bf{i_0}})r_{\bf{i_1}},
$$
which leads to
$$
a_1^{s_1-t_1}\dots a_n^{s_n-t_n}=\frac{\sigma(r_{\bf{i_0}})r_{\bf{i_1}}}{r_{\bf{i_0}}\sigma(r_{\bf{i_1}})}.
$$
Hence it suffices to take $m_i=s_i-t_i$ and $r=r_{\bf{i_0}}/r_{\bf{i_1}}$. Furthermore, since $P$ has total degree at most $d$, we have $|m_1+\cdots+m_n|\leq d$ and $|m_i|\leq d$, for $1\leq i\leq n$. 
\end{proof}

\subsection{Reduction modulo prime ideals}

We can now proceed with the proof of Theorem~\ref{thm: ind}. We first recall the following simple lemma.

\begin{lem}
\label{lem: 0} Let $R$ be a Dedekind domain, $K$ its field of fractions and $f_1,\ldots, f_n$ power series in $K[[{\bf x}]]$. Let 
${\mathcal S}$ denote an infinite set of prime ideals of $R$ such that $f_1,\ldots, f_n$ belong to $R_{\mathfrak p}[[{\bf x}]]$ 
for every $\mathfrak p$ in $\mathcal S$. If 
$f_{1 \vert \mathfrak p},\ldots, f_{n \vert \mathfrak p}$ are linearly dependent over 
$R/\mathfrak p$ for all ideals $\mathfrak p$ in $\mathcal S$,  
then $f_1,\ldots, f_n$ are linearly dependent over $K$. 
\end{lem}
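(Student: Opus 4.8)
The plan is to argue by contraposition: assuming $f_1,\dots,f_n$ are linearly \emph{independent} over $K$, I would produce a single $n\times n$ minor of their coefficient matrix which is nonzero in $K$ yet vanishes modulo every $\mathfrak p$ in $\mathcal S$, and then invoke the Dedekind property $\bigcap_{\mathfrak p\in\mathcal S}\mathfrak p R_{\mathfrak p}=\{0\}$ to reach a contradiction. Concretely, write $f_i(\mathbf{x})=\sum_{\mathbf{n}\in\mathbb N^d}a_i(\mathbf{n})\mathbf{x}^{\mathbf{n}}$ and form the (infinite) matrix $M=(a_i(\mathbf{n}))$ with rows indexed by $i\in\{1,\dots,n\}$ and columns by $\mathbf{n}\in\mathbb N^d$. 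A relation $\sum_i c_if_i=0$ with $c_i\in K$ amounts, coefficient by coefficient, to a $K$-linear relation among the rows of $M$. Hence linear independence of $f_1,\dots,f_n$ over $K$ is equivalent to $M$ having rank $n$, which is equivalent to the existence of multi-indices $\mathbf{n}_1,\dots,\mathbf{n}_n$ with
$$
\Delta:=\det\big(a_i(\mathbf{n}_j)\big)_{1\le i,j\le n}\neq 0
$$
in $K$.

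Next I would reduce this determinant prime by prime. Fix $\mathfrak p\in\mathcal S$. Since each $f_i$ lies in $R_{\mathfrak p}[[\mathbf{x}]]$, every entry $a_i(\mathbf{n}_j)$ lies in $R_{\mathfrak p}$, so $\Delta\in R_{\mathfrak p}$, and its image under the ring homomorphism $R_{\mathfrak p}\to R_{\mathfrak p}/\mathfrak p R_{\mathfrak p}=R/\mathfrak p$ equals $\det(a_i(\mathbf{n}_j)\bmod\mathfrak p)$, the corresponding minor of the reduced matrix $\overline M=(a_i(\mathbf{n})\bmod\mathfrak p)$. By hypothesis $f_{1|\mathfrak p},\dots,f_{n|\mathfrak p}$ are linearly dependent over $R/\mathfrak p$, which is precisely a linear relation among the rows of $\overline M$; thus $\overline M$ has rank strictly less than $n$, and therefore \emph{all} of its $n\times n$ minors vanish. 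In particular $\Delta\equiv 0 \pmod{\mathfrak p}$, that is $\Delta\in\mathfrak p R_{\mathfrak p}$.

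Finally I would combine these congruences. As the previous step applies to every $\mathfrak p\in\mathcal S$, we obtain $\Delta\in\bigcap_{\mathfrak p\in\mathcal S}\mathfrak p R_{\mathfrak p}$. Since $\mathcal S$ is infinite, the Dedekind domain property recalled in Section~\ref{sec: not} gives $\bigcap_{\mathfrak p\in\mathcal S}\mathfrak p R_{\mathfrak p}=\{0\}$, so $\Delta=0$, contradicting $\Delta\neq 0$. Hence $f_1,\dots,f_n$ are linearly dependent over $K$, as claimed.

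I expect no serious obstacle: the whole argument is elementary linear algebra over $K$ together with one structural fact about Dedekind domains. The two points needing a little care are the equivalence between linear (in)dependence of the $f_i$ and the rank of the coefficient matrix $M$, which rests only on comparing coefficients of $\mathbf{x}^{\mathbf{n}}$, and the compatibility of the determinant with reduction modulo $\mathfrak p$, which holds because $R_{\mathfrak p}\to R/\mathfrak p$ is a ring homomorphism and determinants are polynomial in the entries.
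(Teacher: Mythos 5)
Your proof is correct and follows essentially the same route as the paper's: both reduce to the coefficient matrix, use the fact that linear dependence of the reductions over $R/\mathfrak p$ forces every $n\times n$ minor into $\mathfrak pR_{\mathfrak p}$, and then invoke the Dedekind property that an element of $K$ lying in $\mathfrak pR_{\mathfrak p}$ for infinitely many $\mathfrak p$ must vanish. The only difference is presentational (you argue by contraposition from a single nonzero minor, while the paper shows all minors vanish and then extracts a linear relation), which does not change the substance of the argument.
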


\begin{proof}
Let $a_i(n)$ denote the $n$-th coefficient of the power series $f_i$. Let us consider 
$$
 \begin{pmatrix} a_1(0)&a_1(1)&a_1(2)&\cdots \\
a_2(0)&a_2(1)&a_2(2)& \cdots\\
\vdots &\vdots &\vdots &\ldots\\
a_n(0)&a_n(1)&a_n(2)&\cdots
\end{pmatrix},
$$
 the $n\times \infty$ matrix whose coefficient in position $(i,j)$ is $a_i(j-1)$. By assumption, 
 $f_{1\vert \mathfrak p},\ldots, f_{n\vert \mathfrak p}$ are linearly dependent over $R/\mathfrak p$ for all $\mathfrak p$ in $\mathcal S$.   
This implies that, for such a prime ideal, every $n\times n$ minor has determinant that vanishes modulo $\mathfrak p$. In other words, every  
$n\times n$ minor has determinant  that belongs to $\mathfrak pR_{\mathfrak p}$.  
Since $R$ is a Dedekind domain, a non-zero element in $K$ belongs to only finitely many ideals $\mathfrak pR_{\mathfrak p}$. 
Since $\mathcal S$ is infinite, we obtain that all $n\times n$ minors are actually equal to zero in $K$.  
This means that the set of all column vectors of our matrix generate a vector space $E$ of dimension less than $n$. Hence there is a non-zero linear form on $K^n$ which vanishes on $E$ and we obtain a non-zero vector $(b_1,\dots,b_n)$ in $K^n$ such that $\sum_{i=1}^n b_if_i = 0$. Thus $f_1,\ldots, f_n$ are linearly dependent over $K$.  
\end{proof}

We can now complete the proof of Theorem \ref{thm: ind}. 

\begin{proof}[Proof of Theorem \ref{thm: ind}] 
Let $f_1(\mathbf{x}),\dots,f_n(\mathbf{x})$ in $\mathcal{L}_d(R,\mathcal{S})$ be algebraically dependent over $K({\bf x})$. Let $Q(\mathbf{x},y_1,\dots,y_n)$ be a non-zero polynomial in $R[\mathbf{x}][y_1,\dots,y_n]$ of total degree at most 
$d$ in $y_1,\dots,y_n$ such that 
$$
Q\big(\mathbf{x},f_1(\mathbf{x}),\dots,f_n(\mathbf{x})\big)=0. 
$$
With all $\mathfrak p$ in $\mathcal S$, we associate a prime number $p$ such that the residue field 
$R/\mathfrak p$ is a finite field of characteristic $p$. Let $d_{\mathfrak p}$ be the degree of the field extension 
$R/\mathfrak p$ over $\mathbb F_p$.  
By Definition \ref{def: ldrs}, for all $i$ in $\{1,\dots,n\}$, there exists a positive real number $C_i$ such that, 
for all $\mathfrak p$ in $\mathcal S$, there are positive integers $k_{i}$ and $A_{i}(\mathbf{x})$ in $K({\bf x})\cap R_{\mathfrak p}[[\bf x]]$ satisfying
$$
f_i(\mathbf{x})\equiv A_{i}(\mathbf{x})f_i\big(\mathbf{x}^{p^{k_{i}}}\big)\mod \mathfrak pR_{\mathfrak p}[[{\bf x}]],
$$
with $H(A_{i})\leq C_i p^{k_{i}}$. 
We set $C:=2\max(C_1,\dots,C_n)$ and $k:=\mathrm{lcm}(d_{\mathfrak p},k_{1},\dots,k_{n})$. 
Hence by Remark \ref{rem: itere}, for all $i$ in $\{1,\dots,n\}$ and all $\mathfrak p$ in $\mathcal S$, there exists 
$B_{i}(\mathbf{x})$ in $K({\bf x})\cap R_{\mathfrak p}[[\mathbf{x}]]$ satisfying
$$
f_i(\mathbf{x})\equiv B_{i}(\mathbf{x})f_i\big(\mathbf{x}^{p^{k}}\big)\mod \mathfrak pR_{\mathfrak p}[[\mathbf{x}]],
$$
with $H(B_{i})\leq C p^{k}$. 

Since $Q$  is non-zero and $R$ is a Dedekind domain, the coefficients of $Q$ belong 
to at most finitely many prime ideals  $\mathfrak p$ of $\mathcal S$. There thus exists an infinite subset $\mathcal S'$ of $\mathcal S$ such 
that, for every $\mathfrak p$ in $\mathcal S'$, $Q_{|\mathfrak p}$ is a non-zero polynomial in $(R/\mathfrak p)[\mathbf{x}][y_1,\dots,y_n]$ of total degree at 
most $d$ in $y_1,\ldots,y_n$ that vanishes at $(f_{1| \mathfrak p}(\mathbf{x}),\dots,f_{n|\mathfrak p}(\mathbf{x}))$. 
We can thus apply Proposition~\ref{prop: Kolchin} to $f_{1|\mathfrak p},\dots,f_{s|\mathfrak p}$ with $A:=(R/\mathfrak p)[[\mathbf{x}]]$, $B:= (R/\mathfrak p)(\mathbf{x})\cap A$, $L=(R/\mathfrak p)(\bf x)$ and $\sigma$ the injective endomorphism of $A$ defined by 
$$
\sigma\big(g(\mathbf{x})\big)=g\big(\mathbf{x}^{p^k}\big)=g(\mathbf{x})^{p^k},\;(g(\mathbf{x})\in A),
$$
where the last equality holds because $d_{\mathfrak p}$ divides $k$. Then Proposition~\ref{prop: Kolchin} implies that  there exist integers $m_{1},\dots,m_{s}$, not all zero, 
and a non-zero rational fraction $r(\mathbf{x})$ in $(R/\mathfrak p)(\mathbf{x})$ such that
\begin{equation}\label{eq: Bi}
B_{1}(\mathbf{x})^{m_{1}}\cdots B_{n}(\mathbf{x})^{m_{n}}=
\frac{r\big(\mathbf{x}^{p^k}\big)}{r(\mathbf{x})}=r(\mathbf{x})^{p^k-1}.
\end{equation}
By Remark \ref{rem: a(0)}, the constant coefficient in the left-hand side of \eqref{eq: Bi} is equal to $1$. It thus follows that the constant coefficient of the power series 
$r$ is non-zero. We can thus assume without any loss of generality that the constant coefficient of $r$ is equal to $1$. 
Furthermore, we have $|m_1+\cdots+m_n|\leq d$ and $|m_i|\leq d$ for $1\leq i\leq n$.
Note that the rational fractions $B_i$, $r$ and the integers $m_i$ all depend on $\mathfrak p$. 
However, since all the $m_i$'s belong to a finite set, the pigeonhole principle implies the existence 
of  an infinite subset $\mathcal{S}''$ of $\mathcal{S}'$ and of integers $t_1,\dots,t_n$ independent of $\mathfrak p$ 
such that, for all $\mathfrak p$ in $\mathcal{S}''$, we have $m_i=t_i$ for $1\leq i \leq n$.  
Assume now that $\mathfrak p$ is a prime ideal in $\mathcal{S}''$  
and write $r(\mathbf{x})=s(\mathbf{x})/t(\mathbf{x})$ with $s(\mathbf{x})$ and $t(\mathbf{x})$ in $(R/\mathfrak p)[{\bf x}]$ and coprime.  
Since $H(B_{i})\leq C p^k$, the degrees of $s({\bf x})$ and $t({\bf x})$ are bounded by 
$$
\frac{p^k}{p^k-1}C\big(|t_1|+\cdots+|t_n|\big)\leq 2Cdn.
$$
Set 
$$
h(\mathbf{x}):=f_1(\mathbf{x})^{-t_1}\cdots f_n(\mathbf{x})^{-t_n}\in  K[[\mathbf{x}]].
$$
Then, for every $\mathfrak p$ in $\mathcal S''$, we obtain that 
\begin{align*}
h_{|\mathfrak p}\big(\mathbf{x}^{p^k}\big)&\equiv f_{1|\mathfrak p}\big(\mathbf{x}^{p^k}\big)^{-t_1}\cdots f_{n|\mathfrak p}\big(\mathbf{x}^{p^k}\big)^{-t_n} 
\mod \mathfrak pR_{\mathfrak p}[[\mathbf{x}]]\\
&\equiv f_{1|\mathfrak p}(\mathbf{x})^{-t_1}\cdots f_{n|\mathfrak p}(\mathbf{x})^{-t_n}B_{1}(\mathbf{x})^{t_1}\cdots B_{n}(\mathbf{x})^{t_n}
\mod \mathfrak pR_{\mathfrak p}[[\mathbf{x}]]\\
&\equiv h_{|\mathfrak p}(\mathbf{x})r(\mathbf{x})^{p^k-1} \mod \mathfrak pR_{\mathfrak p}[[\mathbf{x}]] .
\end{align*}
Since $h_{|\mathfrak p}$ is not zero, we obtain that 
$h_{|\mathfrak p}(\mathbf{x})^{p^k-1} \equiv r(\mathbf{x})^{p^k-1}$ and there is $a$ in a suitable algebraic extension of 
$R/\mathfrak p$  
such that $h_{|\mathfrak p}(\mathbf{x})=ar(\mathbf{x})$.  But,  the constant coefficients of both 
$h_{|\mathfrak p}$ and $r$ are equal to $1$, and hence   
$h_{|\mathfrak p}(\mathbf{x})= r(\mathbf{x})$.   
Thus, for infinitely many prime ideals $\mathfrak p$, the reductions modulo $\mathfrak p$ of the power series  
$x_i^mh(\mathbf{x})$ and $x_i^m$, $1\leq i\leq d$, $0\leq m\leq 2Cdn$, are linearly independent 
over $R/\mathfrak p$.  Since $R$ is a Dedekind domain, Lemma~\ref{lem: 0}  
implies that these power series are linearly dependent over 
$K$, which means that $h(\mathbf{x})$ belong to $K({\bf x})$. This ends the proof. 
\end{proof}

\section{Algebraic functions in $\mathcal{L}_d(R,\mathcal{S})$ and $\mathfrak{L}_d(R,\mathcal{S})$}\label{sec: Allouche}

 The aim of this section is to describe which power series among $\mathcal{L}_d(R,\mathcal{S})$ and $\mathfrak{L}_d(R,\mathcal{S})$ 
 are algebraic over $K({\bf x})$.  Here, we keep the notation of Section \ref{sec: ldks}, we fix a Dedekind domain $R$ and $K$ still denotes the fraction field of $R$. For every prime ideal $\mathfrak{p}$ of $R$ of finite index, we write $d_{\mathfrak{p}}$ for the degree of the field extension $R/\mathfrak{p}$ over $\mathbb{F}_p$. As a consequence of Theorem~\ref{thm: ind}, we deduce the following generalization of the main result of \cite{AGS}. In their  Theorem 1,  Allouche, Gouyou-Beauchamps and Skordev  \cite{AGS} characterize the algebraic power series of one variable with rational coefficients that have the $p$-Lucas property with respect to primes in an arithmetic progression of the form $1+ s\mathbb N$.

\begin{prop}\label{prop: alg1}
Let $f(\mathbf{x})$ be in $\mathcal{L}_d(R,\mathcal{S})$ for an infinite set $\mathcal{S}$. Assume that $f(\mathbf{x})$ is algebraic over $K(\mathbf{x})$ of degree less than or equal to $\kappa$. Then there exists a rational fraction $r(\mathbf{x})$ in $K(\mathbf{x})$, with $r({\bf 0})=1$, and a positive integer $a\leq \kappa$ such that $f(\mathbf{x})=r(\mathbf{x})^{1/a}$.

Reciprocally, if $f(\mathbf{x})=r(\mathbf{x})^{1/a}$ where $r(\mathbf{x})$ is in $K(\mathbf{x})$, with $r({\bf 0})=1$, and $a$ is a positive integer, then $f(\mathbf{x})$ belongs to 
$\mathcal{L}_d(R,\mathcal{S})$ where $\mathcal{S}$ is the set of all prime ideals $\mathfrak{p}$ of $R$ such that $r(\mathbf{x})\in R_{\mathfrak{p}}(\mathbf{x})$ and $R/\mathfrak{p}$ is a finite field of characteristic in $1+a\mathbb{N}$.
\end{prop}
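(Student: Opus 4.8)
The statement splits into two implications, and the forward one is almost immediate from what has already been proved. The plan is to apply Theorem~\ref{thm: ind} to the single function $f_1:=f$, that is with $n=1$. Since $f$ is algebraic over $K(\mathbf{x})$ of degree at most $\kappa$, the singleton $\{f\}$ is algebraically dependent through a non-trivial polynomial relation of degree at most $\kappa$, and the hypotheses of the theorem are met: $\mathcal{S}$ is infinite and each of its primes has finite index by Condition~(ii) of Definition~\ref{def: ldrs}. The conclusion, together with its quantitative refinement in Remark~\ref{rem: quant}, yields a non-zero integer $a_1$ with $|a_1|\leq\kappa$ and $f^{a_1}\in K(\mathbf{x})$. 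Replacing the resulting fraction by its inverse when $a_1<0$, I may take $a:=|a_1|$ and $r\in K(\mathbf{x})$ with $f^{a}=r$; since $f(\mathbf{0})=1$ forces $r(\mathbf{0})=1$, this gives $f=r^{1/a}$ with $1\leq a\leq\kappa$, as required.

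For the converse I fix $\mathfrak{p}\in\mathcal{S}$ and let $p$ and $q=p^{d_{\mathfrak{p}}}$ be the characteristic and cardinality of $R/\mathfrak{p}$. By definition of $\mathcal{S}$ one has $p\equiv 1\pmod a$, hence $a\mid p-1$, so $p\nmid a$ and $a$ is a unit in the discrete valuation ring $R_{\mathfrak{p}}$. Condition~(ii) holds by the very definition of $\mathcal{S}$, so the first task is Condition~(i). Here I would extract the $a$-th root coefficientwise rather than through the binomial series: writing $f=1+g$ and expanding $f^{a}=r$, the coefficient of $\mathbf{x}^{\mathbf{n}}$ reads $a\,c_{\mathbf{n}}=r_{\mathbf{n}}-P_{\mathbf{n}}$, where $P_{\mathbf{n}}$ is a polynomial with integer coefficients in the $c_{\mathbf{m}}$ for $\mathbf{m}$ of strictly lower order. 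An induction on the order of $\mathbf{n}$ then shows that every $c_{\mathbf{n}}$ lies in $R_{\mathfrak{p}}$, using that $r\in R_{\mathfrak{p}}[[\mathbf{x}]]$ and that $a^{-1}\in R_{\mathfrak{p}}$; thus $f\in R_{\mathfrak{p}}[[\mathbf{x}]]$.

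It remains to produce the Frobenius relation of Conditions~(iii)--(iv). I take $k:=d_{\mathfrak{p}}$, so that $p^{k}=q$ and, in $\mathbb{F}_{q}[[\mathbf{x}]]$, triviality of the Frobenius on coefficients gives $f_{|\mathfrak{p}}(\mathbf{x}^{p^{k}})=f_{|\mathfrak{p}}(\mathbf{x})^{p^{k}}=f_{|\mathfrak{p}}(\mathbf{x})^{q}$. Since $p\equiv 1\pmod a$ yields $q\equiv 1\pmod a$, the exponent $(q-1)/a$ is a non-negative integer, and I set $A:=(1/r)^{(q-1)/a}$, a rational fraction lying in $R_{\mathfrak{p}}[[\mathbf{x}]]$ because $r(\mathbf{0})=1$ makes $r$ a unit of $R_{\mathfrak{p}}[[\mathbf{x}]]$. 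Reducing $f^{a}=r$ modulo $\mathfrak{p}$ and combining with the Frobenius identity,
\[
A_{|\mathfrak{p}}\,f_{|\mathfrak{p}}(\mathbf{x}^{p^{k}})=\big(f_{|\mathfrak{p}}^{\,a}\big)^{-(q-1)/a}f_{|\mathfrak{p}}^{\,q}=f_{|\mathfrak{p}}^{\,-(q-1)}f_{|\mathfrak{p}}^{\,q}=f_{|\mathfrak{p}},
\]
which is exactly Congruence~(iii). For Condition~(iv), writing $r=u/v$ with $u,v\in R_{\mathfrak{p}}[\mathbf{x}]$ of degree at most $H(r)$, I get $H(A)\leq\frac{q-1}{a}H(r)\leq\frac{H(r)}{a}\,p^{k}$, so that $H(A)=O(p^{k})$ with a constant $H(r)/a$ independent of $\mathfrak{p}$. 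This establishes $f\in\mathcal{L}_d(R,\mathcal{S})$.

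The one genuinely delicate point is the $\mathfrak{p}$-integrality in Condition~(i): the naive binomial expansion of $r^{1/a}$ carries factorials, hence denominators divisible by $p$, and so does not obviously reduce modulo $\mathfrak{p}$. The hard part is therefore to realize that the root should be read off from the linear coefficient recursion, in which the only division performed is by $a$. This is precisely where the arithmetic hypothesis $p\equiv 1\pmod a$ (equivalently $p\nmid a$) enters, and it is the same hypothesis that forces the exponent $(q-1)/a$ in $A$ to be an integer.
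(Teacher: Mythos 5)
Your proof is correct and follows essentially the same route as the paper's: the forward direction is the single-function case of Theorem \ref{thm: ind} (with $a\le\kappa$ coming from Remark \ref{rem: quant}), and in the converse your choice $k=d_{\mathfrak p}$ and $A=r^{-(p^{d_{\mathfrak p}}-1)/a}$ is exactly the paper's $A=r^{-b(1+p+\cdots+p^{d_{\mathfrak p}-1})}$ with $b=(p-1)/a$, which the paper obtains by iterating the identity $f=r^{-b}f^{p}$ and reducing modulo $\mathfrak p$ via Frobenius. Your explicit verification of Condition (i) through the coefficient recursion (dividing only by $a$, a unit in $R_{\mathfrak p}$ since $p\equiv 1\bmod a$) fills in a detail the paper leaves implicit, but it does not change the approach.
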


\begin{proof}[Proof of Proposition \ref{prop: alg1}]
Let us first assume that there is an infinite set $\mathcal S$ such that $f$ belongs to $\mathcal{L}_d(R,\mathcal{S})$ and is algebraic. We can apply Theorem 
\ref{thm: ind} in the case of a single function. We obtain that there exists
a positive integer $a\leq \kappa$ and a rational fraction $r(\mathbf{x})$ in $K(\mathbf{x})$  
such that $f(\mathbf{x})^a=r(\mathbf{x})$, and $r({\bf 0})=1$ as expected.

\medskip

Conversely, assume that there is a positive integer $a\leq\kappa$ such that $f(\mathbf{x})=r(\mathbf{x})^{1/a}$ with $r(\bf x)$ in $K(\bf x)$ and $r(\mathbf{0})=1$. Of course, $f$ is algebraic over $K({\bf x})$ with degree at most $\kappa$. Let $\mathfrak{p}$ be a prime ideal of $R$ such that $r(\mathbf{x})\in R_{\mathfrak{p}}(\mathbf{x})$ and $R/\mathfrak{p}$ is a finite field of characteristic $p$ in $1+a\mathbb{N}$. Note that there exists a natural number $b$ such that $p=1+ab$, and thus  
we have $f(\mathbf{x})^{p-1}=f(\mathbf{x})^{ab}=r(\mathbf{x})^{b}$. This gives   
$f(\mathbf{x})=r(\mathbf{x})^{-b} f(\mathbf{x})^p$ and thus 
$$
f(\mathbf{x})=r(\mathbf{x})^{ -b(1+p+\cdots + p^{d_{ \mathfrak p }-1} ) }f(\mathbf{x})^{ p^{ d_{\mathfrak p } }} .
$$   
It follows that
$$
f(\mathbf{x})\equiv A(\mathbf{x}) f\big(\mathbf{x}^{p^{d_{\mathfrak{p}}}}\big)\mod \mathfrak pR_{\mathfrak p}[[\mathbf{x}]],
$$
with $A(\mathbf{x})$ in $K(\mathbf{x})\cap R_{\mathfrak p}[[\mathbf{x}]]$ and
$H(A )\leq 2bp^{d_{\mathfrak{p}}-1} H(r)\leq 2H(r) p^{d_{\mathfrak{p}}}$. 
This shows that $f$ and $\mathfrak{p}$ satisfy Conditions $(\rm{i})$--$(\rm{iv})$ in Definition \ref{def: ldrs}, as expected.
\end{proof}

We also have the following similar characterization of algebraic formal power series in $\mathfrak{L}_d(R,\mathcal{S})$.

\begin{prop}\label{prop: alg2}
Let $f(\mathbf{x})$ be in $\mathfrak{L}_d(R,\mathcal{S})$ for an infinite set $\mathcal{S}$. Assume that $f(\mathbf{x})$ is algebraic over $K(\mathbf{x})$ of degree less than or equal to $\kappa$. Then there exists a polynomial $P(\mathbf{x})$ in $K[\mathbf{x}]$, with $P(\mathbf{0})=1$, and a positive integer $a\leq\kappa$ such that $f(\mathbf{x})=P(\mathbf{x})^{-1/a}$ with $\deg_{x_i}(P(\mathbf{x}))\leq a$ for all $i$ in $\{1,\dots,d\}$. 

Reciprocally, if $f(\mathbf{x})=P(\mathbf{x})^{-1/a}$ where $P(\mathbf{x})$ is in $K[\mathbf{x}]$, with $P(\mathbf{0})=1$, and $a$ is a positive integer such that $\deg_{x_i}(P(\mathbf{x}))\leq a$ for all $i$ in $\{1,\dots,d\}$, then for every prime ideal $\mathfrak{p}$ in $R$ such that $P(\mathbf{x})$ is in $R_{\mathfrak{p}}[\mathbf{x}]$ and $R/\mathfrak{p}$ is a finite field of characteristic $p$ in $1+a\mathbb{N}$, $f(\mathbf{x})$ satisfies the $p^{d_{\mathfrak{p}}}$-Lucas property.
\end{prop}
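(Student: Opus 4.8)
The plan is to treat the two implications separately, deriving the forward direction as a refinement of Proposition~\ref{prop: alg1} and obtaining the converse by the same iteration used there, the whole point being to keep track of \emph{partial} degrees throughout.

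For the forward implication I would first observe that $\mathfrak{L}_d(R,\mathcal{S})\subseteq\mathcal{L}_d(R,\mathcal{S})$, so Proposition~\ref{prop: alg1} already gives a positive integer $a\leq\kappa$ and a rational fraction $r(\mathbf{x})\in K(\mathbf{x})$ with $r(\mathbf{0})=1$ and $f(\mathbf{x})^{a}=r(\mathbf{x})$. Writing $r=N/D$ with $N,D\in K[\mathbf{x}]$ coprime and normalized so that $N(\mathbf{0})=D(\mathbf{0})=1$, the entire content of the claim is that $N=1$ and $\deg_{x_i}(D)\leq a$ for all $i$. To extract this, I would use that membership in $\mathfrak{L}_d$ supplies a \emph{fixed} $k$ (independent of $\mathfrak{p}$) and, for each $\mathfrak{p}\in\mathcal{S}$, a polynomial $A_{\mathfrak{p}}$ with $\deg_{x_i}(A_{\mathfrak{p}})\leq p^{k}-1$ satisfying $f\equiv A_{\mathfrak{p}}\,f(\mathbf{x}^{p^{k}})\bmod\mathfrak{p}R_{\mathfrak{p}}[[\mathbf{x}]]$. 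Raising this congruence to the $a$-th power and using $f^{a}=r$ yields, for all but finitely many $\mathfrak{p}\in\mathcal{S}$, the identity of rational functions $A_{\mathfrak{p}}^{\,a}=\overline{r}(\mathbf{x})/\overline{r}(\mathbf{x}^{p^{k}})$ in $(R/\mathfrak{p})(\mathbf{x})$.

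The heart of the argument, and the step I expect to be the main obstacle, is reading off the shape of $N$ and $D$ from this identity. For all but finitely many $\mathfrak{p}\in\mathcal{S}$ the reductions $\overline{N},\overline{D}$ remain coprime with unchanged partial degrees (a standard reduction fact), and then $\overline{N}(\mathbf{x}^{p^{k}})$ and $\overline{D}(\mathbf{x}^{p^{k}})$ are coprime as well. Since $A_{\mathfrak{p}}^{\,a}=\overline{N}(\mathbf{x})\overline{D}(\mathbf{x}^{p^{k}})\big/\big(\overline{D}(\mathbf{x})\overline{N}(\mathbf{x}^{p^{k}})\big)$ is a polynomial, the denominator must divide the numerator; coprimality forces $\overline{N}(\mathbf{x}^{p^{k}})\mid\overline{N}(\mathbf{x})$, and comparing total degrees (so that $p^{k}\deg\overline{N}\leq\deg\overline{N}$) forces $\overline{N}$, hence $N$, to be constant, i.e. $N=1$. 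With $N=1$ the identity becomes $A_{\mathfrak{p}}^{\,a}=\overline{D}(\mathbf{x}^{p^{k}})/\overline{D}(\mathbf{x})$, and since partial degrees are additive over products in the domain $(R/\mathfrak{p})[\mathbf{x}]$ this gives $(p^{k}-1)\deg_{x_i}(\overline{D})=\deg_{x_i}(A_{\mathfrak{p}}^{\,a})\leq a(p^{k}-1)$, whence $\deg_{x_i}(D)\leq a$. Setting $P:=D=f^{-a}$ finishes this direction.

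For the converse I would mimic the reverse part of Proposition~\ref{prop: alg1}, keeping the degree bound in sight. Given $f=P^{-1/a}$ and a prime $\mathfrak{p}$ with $P\in R_{\mathfrak{p}}[\mathbf{x}]$ and $R/\mathfrak{p}=\mathbb{F}_{p^{d_{\mathfrak{p}}}}$ with $p=1+ab$, reducing modulo $\mathfrak{p}$ gives $\overline{f}^{\,p-1}=\overline{P}^{\,-b}$, hence $\overline{f}=\overline{P}^{\,b}\,\overline{f}^{\,p}$. Iterating $k:=d_{\mathfrak{p}}$ times and using $\overline{f}^{\,p^{k}}=\overline{f}(\mathbf{x}^{p^{k}})$ (valid precisely because $k=d_{\mathfrak{p}}$) produces $\overline{f}(\mathbf{x})=\overline{P}(\mathbf{x})^{\,b(1+p+\cdots+p^{k-1})}\overline{f}(\mathbf{x}^{p^{k}})$. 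The exponent is controlled exactly: the partial degree of the coefficient is at most $ab\,\frac{p^{k}-1}{p-1}=p^{k}-1$, since $ab=p-1$ and $\deg_{x_i}(P)\leq a$. This is Condition~(iii) defining $\mathfrak{L}_d$ for the fixed exponent $k=d_{\mathfrak{p}}$, so by Proposition~\ref{prop: plucas} (applied as in its proof, using $f(\mathbf{0})=1$) the coefficients of $f$ satisfy the $p^{d_{\mathfrak{p}}}$-Lucas property. The only mild care needed is the passage from the congruence to the coefficientwise Lucas relation, which is exactly the computation already carried out in the proof of Proposition~\ref{prop: plucas}.
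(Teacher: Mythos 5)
Your proof is correct and follows essentially the same route as the paper's: the forward direction invokes Proposition~\ref{prop: alg1}, writes $r$ as a coprime ratio with constant terms $1$, reduces modulo all but finitely many primes of $\mathcal{S}$, and uses coprimality plus a partial-degree count to force the numerator to be $1$ and the denominator to have partial degrees at most $a$, while the converse is the same iteration $f = P^{b}f^{p}$ with the degree bound $ab(1+p+\cdots+p^{d_{\mathfrak{p}}-1}) = p^{d_{\mathfrak{p}}}-1$ followed by the coefficientwise argument of Proposition~\ref{prop: plucas}. The only cosmetic deviation is that you raise the congruence $f\equiv A_{\mathfrak{p}}f(\mathbf{x}^{p^{k}})$ to the $a$-th power, thereby avoiding the paper's use of Remark~\ref{rem: itere2} and the Frobenius identity $f_{|\mathfrak{p}}(\mathbf{x}^{p^{k}})=f_{|\mathfrak{p}}(\mathbf{x})^{p^{k}}$, whereas the paper inverts $f$ to obtain $t(\mathbf{x})^{p^{k}-1}\equiv s(\mathbf{x})^{p^{k}-1}A(\mathbf{x})^{a}$; both manipulations lead to the same coprimality-and-degree conclusion.
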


\begin{proof} 
Let us first assume that there is an infinite set $\mathcal{S}$ such that $f$ belongs to $\mathfrak{L}_d(R,\mathcal{S})$ and is algebraic. By Proposition \ref{prop: alg1}, there are a positive integer $a\leq \kappa$ and a rational fraction
$r(\mathbf{x})$ in $K(\mathbf{x})$, with $r(\mathbf{0})=1$, such that $f(\mathbf{x})=r(\mathbf{x})^{1/a}$. 
We write $r(\mathbf{x})=s(\mathbf{x})/t(\mathbf{x})$ with $s(\mathbf{x})$ coprime to $t(\mathbf{x})$ and $s(\mathbf{0})=t(\mathbf{0})=1$. 
Since the resultant of $s(\mathbf{x})$ and $t(\mathbf{x})$ is a non-zero element of $K$, there exists an infinite subset 
$\mathcal{S}'$ of $\mathcal{S}$ such that $s_{|\mathfrak p}(\mathbf{x})$ and $t_{|\mathfrak p}(\mathbf{x})$ are coprime and non-zero 
for all prime ideals $\mathfrak p$ in $\mathcal{S}'$. 
We fix $\mathfrak p$ in $\mathcal{S}'$ and we let $p$ be the characteristic of $R/\mathfrak p$. By assumption, there exist a positive integer $k$ and $A(\mathbf{x})$ in $R_{\mathfrak p}[\mathbf{x}]$, with $\deg_{x_i}(A)\leq p^k-1$ 
 for all $i$ in $\{1,\ldots,d\}$, such that
$$
f(\mathbf{x})\equiv A(\mathbf{x})f(\mathbf{x}^{p^k}) \mod \mathfrak pR_{\mathfrak p}[[ {\bf x} ]].
$$
By Remark \ref{rem: itere2}, we can assume that $d_{\mathfrak p}$ divides $k$. 
This yields
$$
f(\mathbf{x})^{p^k-1}\equiv  A(\mathbf{x})^{-1}\mod \mathfrak pR_{\mathfrak p}[[\bf x]] 
$$
and
\begin{equation}\label{eq: degA}
t(\mathbf{x})^{p^k-1} \equiv s(\mathbf{x})^{p^k-1}A(\mathbf{x})^{a} \mod \mathfrak p R_{\mathfrak p}[[\bf x]] \, . 
\end{equation}
Since $t_{|\mathfrak p}(\mathbf{x})$ is coprime to $s_{|\mathfrak p}(\mathbf{x})$ and $s(\mathbf{0})=1$, we deduce that $s_{|\mathfrak p}(\mathbf{x})=1$. Since $\mathcal{S}'$ is infinite, we obtain $s(\mathbf{x})=1$, as expected. Finally, we have $\deg_{x_i}(t) \leq a$ for all $i$ in $\{1,\dots,d\}$. Indeed, Congruence \eqref{eq: degA} implies that $\deg_{x_i}(t_{|\mathfrak p}) \leq a$ for all $i$ in $\{1,\dots,d\}$ and all $\mathfrak p$ in $\mathcal S'$. 
\medskip

Conversely, assume that $f(\mathbf{x})=P(\mathbf{x})^{-1/a}$ where $P(\mathbf{x})$ is in $K[\mathbf{x}]$, with $P(\mathbf{0})=1$, and $a$ is a positive integer such that $\deg_{x_i}(P(\mathbf{x}))\leq a$ for all $i$ in $\{1,\dots,d\}$. Let $\mathfrak{p}$ be a prime ideal in $R$ such that $P(\mathbf{x})$ is in $R_{\mathfrak{p}}[\mathbf{x}]$ and $R/\mathfrak{p}$ is a finite field of characteristic $p$ in $1+a\mathbb{N}$. Following the proof of Proposition \ref{prop: alg1}, we obtain that
$$
f(\mathbf{x})=P(\mathbf{x})^{b(1+p+\cdots+p^{d_{\mathfrak{p}}-1})}f(\mathbf{x})^{p^{d_{\mathfrak{p}}}},
$$
where $b$ satisfies $p=1+ab$. It follows that 
$$
f(\mathbf{x})\equiv A(\mathbf{x})f\big(\mathbf{x}^{p^{d_{\mathfrak{p}}}}\big)\mod \mathfrak{p}R_{\mathfrak{p}}[[\mathbf{x}]],
$$
with $A(\mathbf{x})$ in $R_{\mathfrak{p}}[\mathbf{x}]$ and $\deg_{x_i}A(\mathbf{x})\leq p^{d_{\mathfrak{p}}}-1$ as expected.
\end{proof}

\begin{rem}{\rm  With these first principles in hand, one can already deduce non-trivial results.  
As a direct consequence of  Proposition \ref{prop: alg2} with $R=\mathbb{Z}$, we get that the bivariate power series $\frac{1}{1-x(1+y)}$ satisfies the 
$p$-Lucas property for all prime numbers $p$, which is equivalent to Lucas' theorem since we have 
$$\frac{1}{1-x(1+y)} = \sum_{n,k\geq 0} {n\choose k} x^ny^k \, .$$ 
Using Proposition \ref{prop: special2} and Proposition \ref{prop: alg2}, we also obtain  
that given any polynomial $P(\mathbf{x})$ in $\mathbb{Q}[\mathbf{x}]$ with $P({\bf 0})=1$, then  we have 
$$
\Delta\left(\frac{1}{P(\mathbf{x})^{1/a}}\right)\in \mathfrak{L}(\mathcal{S}),
$$
for every positive integer $a\geq\max\{\deg_{x_i}(P(\mathbf{x}))\,:\,1\leq i\leq d\}$ and 
$$
\mathcal{S}=\big\{p\in\mathfrak{P}\,:\, p\equiv 1\mod a \textup{ and } P({\bf x})\in \mathbb Z_{(p)}[{\bf x}]\big\}.
$$
This has interesting consequences. Choosing for instance 
$P(x_1,\ldots,x_d)= 1- (x_1+\cdots+x_d)$ and $a=1$, we deduce from Proposition  \ref{prop: special2} 
that for every positive integer $t$ the power series 
$\sum_{n=0}^{\infty} {dn \choose n,n,\ldots,n}^t x^n$ satisfies the $p$-Lucas property for all prime numbers $p$. Choosing 
$P(x_1,x_2,x_3,x_4) = (1-x_1-x_2)(1-x_3-x_4)-x_1x_2x_3x_4$ and $a=1$, we recover a classical result of Gessel \cite{Ge82}: the sequence of Apr\'ery numbers 
$$\left(\sum_{k=0}^n {n\choose k}^2{n+k\choose k}^2  \right)_{n\geq 0}$$ 
satisfies the $p$-Lucas property for all prime numbers $p$\footnote{The fact that the diagonal of $1/P$ is the generating series of the Ap\'ery numbers can be found in  \cite{Str}.}.  
}
\end{rem}

\section{From asymptotics and singularity analysis to algebraic independence}\label{sec: sing}

In this section,  we emphasize the relevance of Theorem \ref{thm: ind} for proving algebraic independence by using complex analysis. We fix a Dedekind domain $R\subset\mathbb{C}$ and an infinite set $\mathcal{S}$ of prime ideals of $R$ of finite index. We still write $K$ for the fraction field of $R$. Let $f_1(\mathbf{x}),\dots,f_n(\mathbf{x})$ be power series in $\mathcal{L}_d(R,\mathcal{S})$ algebraically dependent over $\mathbb{C}(\mathbf{x})$. By Proposition \ref{prop: descente} below, those power series are algebraically dependent over $K(\mathbf{x})$. Then Theorem \ref{thm: ind} yields integers $a_1,\dots,a_n$, not all zero, and a rational fraction $r(\mathbf{x})$ in $K(\mathbf{x})$ such that
\begin{equation}\label{eq contr}
f_1(\mathbf{x})^{a_1}\cdots f_n(\mathbf{x})^{a_n}=r(\mathbf{x}) \, . 
\end{equation}
We will describe in this section some basic principles that allow to reach a contradiction with \eqref{eq contr} and that thus lead to the algebraic independence of the 
$f_i$'s. The key feature when dealing with one-variable complex functions is that one can 
derive a lot of information from the study of their singularities and asymptotics for their coefficients. 
It is well-known that asymptotics of coefficients of analytic functions and analysis of their singularities are intimately related and there are strong transference theorems 
that allow to go from one to the other viewpoint. This connection is for instance described in great detail in the book of Flajolet and Sedgewick \cite{FS}.

\begin{rem}{\em If the multivariate 
functions $f_1(\mathbf{x}),\dots,f_n(\mathbf{x})$ in $\mathcal L_d(R,\mathcal S)$ are algebraically dependent over $K(\mathbf{x})$, then 
 the univariate power series $f_i(\lambda_1x^{n_1},\dots,\lambda_dx^{n_d})$, $1\leq i\leq n$, where  
 $\lambda_i\in \mathbb{C}^*$ and $n_i\geq 1$,  are algebraically dependent over $K(x)$.  We thus stress that  the principles described in this section 
 could also be used to prove the algebraic independence of multivariate functions.  }
\end{rem}

As announced above, we have the following result.

\begin{prop}\label{prop: descente}
Let $K$ be a subfield of $\mathbb{C}$ and $f_1(\mathbf{x}),\dots,f_n(\mathbf{x})$ be power series in $K(\mathbf{x})$ algebraically dependent over $\mathbb{C}(\mathbf{x})$. Then those power series are algebraically dependent over $K(\mathbf{x})$.
\end{prop}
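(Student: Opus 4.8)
The statement is a descent (specialization) result: algebraic dependence of power series with coefficients in a subfield $K\subset\mathbb{C}$ cannot be created by enlarging the coefficient field of the rational functions to $\mathbb{C}$. The plan is to reformulate dependence as the existence of a nonzero point in a $K$-defined linear (or affine) system and then invoke the fact that a system of linear equations with coefficients in $K$ has a nonzero solution over $\mathbb{C}$ if and only if it has one over $K$. Concretely, to say $f_1,\dots,f_n$ are algebraically dependent over $\mathbb{C}(\mathbf{x})$ means there is a nonzero polynomial $P(Y_1,\dots,Y_n)$ with coefficients in $\mathbb{C}(\mathbf{x})$ (equivalently, clearing denominators, in $\mathbb{C}[\mathbf{x}]$) such that $P(f_1,\dots,f_n)=0$. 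First I would fix a degree bound: if such a $P$ exists of total degree at most $d$ in the $Y_i$ and of degree at most $e$ in $\mathbf{x}$, then the coefficients of $P$ are finitely many unknowns, say $c_1,\dots,c_N$, ranging over $\mathbb{C}$.

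The key step is to observe that the condition $P(f_1,\dots,f_n)=0$ is \emph{linear} in these unknown coefficients $c_j$. Indeed, substituting $f_1,\dots,f_n$ (which lie in $K(\mathbf{x})\subset K[[\mathbf{x}]]$) into $P$ and expanding as a power series in $\mathbf{x}$, each coefficient of each monomial $\mathbf{x}^{\mathbf{n}}$ in $P(f_1,\dots,f_n)$ is a \emph{$K$-linear} combination of the $c_j$, since the Taylor coefficients of the $f_i$ and their products lie in $K$. Thus $P(f_1,\dots,f_n)=0$ is equivalent to an infinite homogeneous linear system $M\mathbf{c}=\mathbf{0}$, where $M$ is a matrix with entries in $K$ (its rows indexed by monomials $\mathbf{x}^{\mathbf{n}}$, its columns by the $c_j$). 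The hypothesis gives a nonzero solution $\mathbf{c}\in\mathbb{C}^N$. The conclusion I want is a nonzero solution in $K^N$.

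The descent itself is then pure linear algebra over the field extension $K\subset\mathbb{C}$: a homogeneous linear system with coefficients in $K$ has the same solution-space dimension over $K$ as over any extension field, because the dimension of the kernel is $N$ minus the rank, and the rank of a matrix over $K$ does not change under field extension (it is governed by the vanishing of minors, which are elements of $K$). Hence $\dim_{K}\ker M = \dim_{\mathbb{C}}\ker M_{\mathbb{C}}$; since the latter is positive by hypothesis, so is the former, yielding a nonzero $\mathbf{c}\in K^N$ and a nonzero $P$ over $K[\mathbf{x}]$ with $P(f_1,\dots,f_n)=0$. To make this rigorous with an \emph{infinite} system I would note that $M$ has only $N$ columns, so its rank is at most $N$; choosing finitely many rows realizing the rank reduces everything to a finite $K$-matrix, after which the standard rank-is-field-independent fact applies. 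Concretely, pick a $\mathbb{C}$-basis of $\mathbb{C}$ over $K$ and write each $c_j=\sum_\ell \lambda_\ell\, c_{j,\ell}$ with $c_{j,\ell}\in K$; since $M$ has entries in $K$, the relation $M\mathbf{c}=\mathbf{0}$ splits coordinatewise into $M\mathbf{c}_{\ell}=\mathbf{0}$ for each $\ell$, and at least one $\mathbf{c}_\ell$ is nonzero, giving the desired $K$-rational solution.

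The only mildly delicate point, and the one I expect to be the main obstacle to write cleanly rather than conceptually hard, is the bookkeeping of degree bounds: one must first argue that if a $\mathbb{C}(\mathbf{x})$-relation exists at all, then one exists with \emph{bounded} degree in both $\mathbf{Y}$ and $\mathbf{x}$, so that the space of candidate polynomials $P$ is finite-dimensional over the relevant field and the linear-algebra argument applies. This is harmless: one simply fixes the degrees of a witnessing relation over $\mathbb{C}$ at the outset and searches for a $K$-relation of the \emph{same} degrees. With that reduction in place, the splitting argument over a $K$-basis of $\mathbb{C}$ (equivalently, the field-independence of matrix rank) finishes the proof immediately.
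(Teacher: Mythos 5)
Your proof is correct, and it takes a genuinely different, more elementary route than the paper's. The paper fixes a witnessing relation $P$ over $\mathbb{C}[\mathbf{x}]$, notes that its coefficients generate a finitely generated extension of $K$, writes that extension as $K(t_1,\dots,t_s,\varphi)$ with $t_1,\dots,t_s$ algebraically independent over $K$ and $\varphi$ algebraic of degree $h$ over $K(t_1,\dots,t_s)$ (primitive element theorem, characteristic zero), and then kills the relation in two stages: the coefficients of the expanded power series are polynomials of degree at most $h-1$ in $\varphi$ over $K(t_1,\dots,t_s)$, hence vanish identically; then, since $t_1,\dots,t_s$ remain algebraically independent over $K(\mathbf{x})$, extracting the coefficient of a suitable monomial $\mathbf{t}^{\mathbf{j}}\varphi^{\ell}$ produces a nonzero relation with coefficients in $K[\mathbf{x}]$. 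You replace this field-theoretic decomposition by pure linear algebra: after fixing the degrees of a witnessing relation, the condition $P(f_1,\dots,f_n)=0$ is a homogeneous linear system over $K$ in the unknown coefficients of $P$ (because the Taylor coefficients of the $f_i$ and of their products lie in $K$), and a $K$-defined system with a nontrivial solution over $\mathbb{C}$ has one over $K$ --- by rank invariance under field extension, with your finite-rank reduction correctly disposing of the infinitely many equations, or equivalently by splitting the complex solution along a $K$-basis. At bottom both proofs split the given relation along a $K$-linearly independent family; the paper manufactures that family from a transcendence basis and a primitive element, while you take an arbitrary vector-space basis. Your version is shorter, needs no separability or characteristic-zero input, and works verbatim for an arbitrary field extension in place of $K\subseteq\mathbb{C}$. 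One cosmetic point: rather than invoking a basis of all of $\mathbb{C}$ over $K$ (which needs Zorn's lemma), it is cleaner to use, as you yourself suggest, a basis of the finite-dimensional $K$-span of the finitely many unknown coefficients.
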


\begin{proof}
Let $P(Y_1,\dots,Y_n)$ be a non-zero polynomial with coefficients in $\mathbb{C}[\mathbf{x}]$ such that $P(f_1(\mathbf{x}),\dots,f_n(\mathbf{x}))=0$. Let $w_1(\mathbf{x}),\dots,w_r(\mathbf{x})$ be the coefficients of $P$. Since the $w_i$'s are polynomials, their coefficients span a finitely generated field extension of $K$, say $L=K(t_1,\dots,t_s,\varphi)$ where $t_1,\dots,t_s$ are complex numbers algebraically independent over $K$ and $\varphi$ is a complex number algebraic over $K(t_1,\dots,t_s)$ of degree $h$. Let $m$ be the maximum of the partial degrees of the $w_i$'s. By multiplying $P$ by a suitable polynomial in $K[t_1,\dots,t_s]$, we may assume that
$$
w_i(\mathbf{x})=\sum_{\mathbf{0}\leq\mathbf{k}\leq m\mathbf{1}}\left(\sum_{\ell=0}^{h-1}\sum_{\mathbf{j}\in J_{\mathbf{k},\ell}}\lambda_i(\mathbf{j},\mathbf{k},\ell)\mathbf{t}^{\mathbf{j}}\varphi^\ell\right)\mathbf{x}^{\mathbf{k}},
$$
where the sets $J_{\mathbf{k},\ell}$ are finite and the coefficients $\lambda_i(\mathbf{j},\mathbf{k},\ell)$ belong to $K$ and are not all zero. Hence there are pairwise distinct $\mathbf{a}_1,\dots,\mathbf{a}_r$ in $\mathbb{N}^n$ such that
\begin{equation}\label{eq: phi}
\sum_{\ell=0}^{h-1}\left(\sum_{i=1}^r\sum_{\mathbf{0}\leq\mathbf{k}\leq m\mathbf{1}}\sum_{\mathbf{j}\in J_{\mathbf{k},\ell}}\lambda_i(\mathbf{j},\mathbf{k},\ell)\mathbf{t}^{\mathbf{j}}\mathbf{x}^{\mathbf{k}}\mathbf{f}(\mathbf{x})^{\mathbf{a}_i}\right)\varphi^\ell=0,
\end{equation}
where $\mathbf{f}(\mathbf{x})$ denotes $(f_1(\mathbf{x}),\dots,f_n(\mathbf{x}))$. By expanding the left-hand side of \eqref{eq: phi}, we obtain a power series in $\mathbf{x}$ whose coefficients are polynomial of degree at most $h-1$ in $\varphi$ with coefficients in $K(t_1,\dots,t_s)$. Hence the latter coefficients are zero and, for every $\ell$, we have
\begin{equation}\label{eq: t_i}
\sum_{\mathbf{0}\leq\mathbf{k}\leq m\mathbf{1}}\sum_{\mathbf{j}\in J_{\mathbf{k},\ell}}\left(\sum_{i=1}^r\lambda_i(\mathbf{j},\mathbf{k},\ell)\mathbf{x}^{\mathbf{k}}\mathbf{f}(\mathbf{x})^{\mathbf{a}_i}\right)\mathbf{t}^{\mathbf{j}}=0.
\end{equation}
Since the variables $x_1,\dots,x_d$ are algebraically independent over $\mathbb{C}$, the complex numbers $t_1,\dots,t_s$ are still algebraically independent over $K(\mathbf{x})$ and Equation \eqref{eq: t_i} yields a non-trivial polynomial relation over $K(\mathbf{x})$ for the $f_i$'s.
\end{proof}

 \subsection{General principle}

We write $\mathcal{L}(R,\mathcal{S})$ for $\mathcal{L}_1(R,\mathcal{S})$ and we let $\mathbb{C}\{z\}$ denote the set of complex functions that are analytic at the origin. 
For such a function $f(z)$, we denote by $\rho_f$  its radius of convergence. We recall that when $\rho_f$ is finite, 
 $f$ must have a singularity on the circle $|z|=\rho_f$. 

\begin{defn}{\rm 
Let $\mathfrak W$ denote the set of all analytic functions $f(z)$ in $\mathbb{C}\{z\}$ whose radius of convergence 
is finite and for which there exists $z\in\mathbb{C}$, $|z|=\rho_f$, such that no positive integer power of $f$ admits a meromorphic 
continuation to a neighborhood of $z$. }
\end{defn}

\begin{prop}\label{prop: sing1}
 Let $f_1(z),\dots,f_n(z)$ be functions that belong to $\mathcal{L}(R,\mathcal{S})\cap\mathfrak{W}$ for an infinite set $\mathcal{S}$, and 
 such that $\rho_{f_1},\dots,\rho_{f_n}$ are pairwise distinct. Then $f_1(z),\ldots,f_n(z)$ are algebraically independent over $\mathbb{C}(z)$.
\end{prop}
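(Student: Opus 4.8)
The plan is to argue by contradiction, routing the hypothetical algebraic relation through the material already set up at the beginning of this section. Suppose that $f_1(z),\dots,f_n(z)$ are algebraically dependent over $\mathbb{C}(z)$. Since each $f_i$ belongs to $\mathcal{L}(R,\mathcal{S})$, it is a power series with coefficients in $K$, so Proposition \ref{prop: descente} shows that the $f_i$ are in fact already algebraically dependent over $K(z)$. Theorem \ref{thm: ind} then yields integers $a_1,\dots,a_n$, not all zero, and a rational fraction $r(z)$ in $K(z)$ such that
$$
f_1(z)^{a_1}\cdots f_n(z)^{a_n}=r(z).
$$
It therefore suffices to contradict this single relation using the analytic hypotheses $f_i\in\mathfrak{W}$ and the fact that the radii $\rho_{f_1},\dots,\rho_{f_n}$ are pairwise distinct.

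The key step is to isolate the factor whose radius of convergence is smallest among those that actually occur. Let $i_0$ be an index minimizing $\rho_{f_i}$ over the nonempty set $\{i:a_i\neq 0\}$; since the radii are pairwise distinct, $i_0$ is unique and $\rho_{f_j}>\rho_{f_{i_0}}$ for every $j\neq i_0$ with $a_j\neq 0$. Rewriting the relation so as to isolate $f_{i_0}$, I obtain
$$
f_{i_0}(z)^{a_{i_0}} = r(z)\prod_{j\neq i_0} f_j(z)^{-a_j}.
$$
Because $f_{i_0}\in\mathfrak{W}$, I may choose a point $z_0$ with $|z_0|=\rho_{f_{i_0}}$ at which no positive integer power of $f_{i_0}$ admits a meromorphic continuation. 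For each $j\neq i_0$ with $a_j\neq 0$ one has $|z_0|=\rho_{f_{i_0}}<\rho_{f_j}$, so $f_j$ is analytic on a neighborhood of $z_0$; since $r$ is rational, the right-hand side is a finite product of functions meromorphic near $z_0$, hence itself meromorphic near $z_0$. Thus $f_{i_0}(z)^{a_{i_0}}$ extends meromorphically to a neighborhood of $z_0$.

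To conclude, I would turn this into a statement about a genuine positive power of $f_{i_0}$. If $a_{i_0}>0$ this is immediate and directly contradicts the defining property of $\mathfrak{W}$. If $a_{i_0}<0$, note that $f_{i_0}(0)=1$ (the constant term is $1$ by the definition of $\mathcal{L}(R,\mathcal{S})$), so $f_{i_0}^{-a_{i_0}}$ is not identically zero and equals the reciprocal of the meromorphic function $f_{i_0}^{a_{i_0}}$; hence $f_{i_0}^{-a_{i_0}}$, a positive power of $f_{i_0}$, is also meromorphic near $z_0$, the same contradiction. Either way the relation is impossible, so $f_1(z),\dots,f_n(z)$ are algebraically independent over $\mathbb{C}(z)$.

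The substantive content is entirely carried by Theorem \ref{thm: ind}; the only points requiring care in this final step are ensuring that the distinguished point $z_0$ lies strictly inside the disc of convergence of every competing factor—this is exactly where the pairwise distinctness of the radii is used, to force $\rho_{f_j}>\rho_{f_{i_0}}$ on the support of the exponent vector—and the routine bookkeeping with negative exponents via reciprocals of meromorphic functions. I do not anticipate any genuine obstacle beyond these observations.
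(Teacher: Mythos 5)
Your proof is correct and follows essentially the same route as the paper's: reduce to $K(z)$ via Proposition \ref{prop: descente}, apply Theorem \ref{thm: ind}, isolate the factor with smallest radius of convergence among those with nonzero exponent, and derive a contradiction with membership in $\mathfrak{W}$ via meromorphic continuation of the right-hand side. The only cosmetic difference is your handling of a negative exponent $a_{i_0}$ by taking reciprocals of meromorphic functions, where the paper simply inverts the relation; both are equally routine.
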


\begin{proof}
Let us assume by contradiction that  $f_1,\ldots,f_n$ are algebraically dependent over $\mathbb{C}(z)$. By Proposition \ref{prop: descente}, 
they are algebraically dependent over $K(z)$. 
Since the $f_i$'s belong to $\mathcal{L}(R,\mathcal{S})$, we can first apply Theorem \ref{thm: ind}.  
We obtain that there exist integers $a_1,\dots,a_n$, not all zero, and a rational fraction $r(z)$ in $K(x)$ such that
\begin{equation}\label{eq: imposs}
f_1(z)^{a_1}\cdots f_n(z)^{a_n}=r(z) \, .
\end{equation}
Thereby, it suffices to prove that \eqref{eq: imposs} leads to a contradiction. 
We can assume without loss of generality that $\rho_{f_1}<\cdots <\rho_{f_n}$. Let $j$ be the smallest index for which 
 $a_j \not =0$.   
We obtain that
\begin{equation}\label{members}
f_j(z)^{a_j}=r(z) f_{j+1}(z)^{-a_{j+1}}\cdots f_n(z)^{-a_n} \, .
\end{equation}
We can assume that $a_j$ is positive since otherwise we could write 
$$
f_j(z)^{-a_j}= r(z)^{-1}f_{j+1}(z)^{a_{j+1}}\cdots f_n(z)^{a_n}.
$$ 
By assumption, the function $f_j(z)$ belongs to $\mathfrak W$ and has thus a singularity at a point $z_0\in\mathbb{C}$ with 
$|z_0|=\rho_{f_j}$ and such that $f_j^{a_j}$ has no 
meromorphic continuation to a neighborhood of $z_0$. But the right-hand side is clearly meromorphic in a neighborhood of $z_0$. 
Hence we have a contradiction. This ends the proof. 
\end{proof}

\begin{rem}\label{rem: W}{\rm The set $\mathfrak{W}$ contains all functions 
$f(z)$ in $\mathbb{C}\{z\}$ with a finite radius of convergence and whose coefficients $a(n)$ satisfy:    
$$
a(n)\in\mathbb{R}_{\geq 0} \quad\mbox{and} \quad a(n)=O\left(\frac{\rho_f^{-n}}{n}\right) \, \cdot
$$
Indeed,  for such a function there exist positive constants $C_1$ and $C_2$ such that, for all $z$ in $\mathbb{C}$ satisfying $\frac{\rho_f}{2}<|z|<\rho_f$, we have 
\begin{eqnarray*}
|f(z)| & \leq& |a(0)|+C_1\sum_{n=1}^\infty\frac{(|z|/\rho_f)^n}{n}\\ 
& \leq &-C_2\log\left(1-\frac{|z|}{\rho_f}\right) \, .
\end{eqnarray*}
By Pringsheim's theorem, $f$ has a singularity at $\rho_f$. If $f$ is not in $\mathfrak W$, then there exists a positive integer $r$ such that $f^r$ is meromorphic at $\rho_f$. 
On the other hand, the inequality above shows that $f^r$ cannot have a pole at $\rho_f$. Thus $f^r$ is analytic at $\rho_f$ and $f^r(z)$ has a limit as $z$ tends to $\rho_f$. But if this limit is non-zero, then $f$ would be also analytic at $\rho_f$, a contradiction.  
It follows that 
$$
\lim_{z\to \rho_f} f(z)  =0\, ,
$$
but this is impossible because the coefficients of $f$ are non-negative (and not all zero since $\rho_f$ is finite). Hence $f(z)$ belongs to $\mathfrak W$. 
}
\end{rem}

Here is a first example illustrating the relevance of Proposition \ref{prop: sing1}.  

\begin{thm}\label{thm: 2nn}
Set
$$
\mathcal F := \left\{f_r(z):= \sum_{n=0}^{\infty}\binom{2n}{n}^r z^n\,:\, r\geq 2 \right\}.
$$ 
Then all elements of $\mathcal F$ are algebraically independent over $\mathbb C(z)$.
\end{thm}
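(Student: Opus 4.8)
The plan is to apply Proposition \ref{prop: sing1} to the functions $f_r(z)=\sum_{n\geq 0}\binom{2n}{n}^r z^n$ for $r\geq 2$. To do this I must verify the two hypotheses of that proposition for any finite subfamily: that each $f_r$ belongs to $\mathcal L(R,\mathcal S)\cap\mathfrak W$ for a common infinite set $\mathcal S$ of primes, and that the radii of convergence $\rho_{f_r}$ are pairwise distinct. First I would establish membership in $\mathcal L(\mathcal S)$ (with $R=\mathbb Z$). The classical Lucas theorem gives $\binom{2n}{n}\equiv\binom{2(n\bmod p)}{n\bmod p}\binom{2\lfloor n/p\rfloor}{\lfloor n/p\rfloor}\bmod p$, which raised to the $r$-th power shows that the sequence $\binom{2n}{n}^r$ has the $p$-Lucas property for every prime $p$; equivalently, the bivariate series $1/\sqrt{1-4x(1+y)}$ or rather the one-variable diagonal argument places $f_r$ in $\mathfrak L(\mathcal S)\subset\mathcal L(\mathcal S)$ with $k=1$ for $\mathcal S$ the set of all primes. (Alternatively one uses Proposition \ref{prop: alg2}/\ref{prop: special2} with $\mathfrak f_1=(1-4z)^{-1/2}$ and Hadamard powers, since $f_r=\mathfrak f_1^{\odot r}$.)

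Next I would verify the radius of convergence and the condition defining $\mathfrak W$. Stirling's formula gives $\binom{2n}{n}^r\sim 4^{rn}/(\pi n)^{r/2}$, so $\rho_{f_r}=4^{-r}$, and these values $4^{-2}>4^{-3}>\cdots$ are manifestly pairwise distinct, settling the second hypothesis. For the $\mathfrak W$ condition I would invoke Remark \ref{rem: W}: the coefficients $\binom{2n}{n}^r$ are non-negative reals and, since $r\geq 2$, we have $\binom{2n}{n}^r=O(\rho_{f_r}^{-n}n^{-r/2})=O(\rho_{f_r}^{-n}/n)$ because $r/2\geq 1$. Hence each $f_r$ with $r\geq 2$ lies in $\mathfrak W$. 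This is exactly where the restriction $r\geq 2$ is used and is essential: for $r=1$ one has $f_1=(1-4z)^{-1/2}$, which is algebraic, and the bound $O(\rho^{-n}/n)$ fails since $r/2=1/2<1$.

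With both hypotheses in hand, Proposition \ref{prop: sing1} applies directly to any finite subset $\{f_{r_1},\dots,f_{r_m}\}$ with distinct $r_i\geq 2$ and yields their algebraic independence over $\mathbb C(z)$; since algebraic independence of the family $\mathcal F$ means algebraic independence of every finite subfamily, this proves the theorem.

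I expect the main obstacle to be the careful justification of the $\mathfrak W$ condition, that is, showing that no positive integer power of $f_r$ extends meromorphically across its dominant singularity $z=\rho_{f_r}$. The cleanest route is to reduce this to Remark \ref{rem: W}, so the real work is checking the coefficient bound $\binom{2n}{n}^r=O(\rho_{f_r}^{-n}/n)$ from Stirling and confirming that the non-negativity and Pringsheim argument of that remark apply. Everything else (the Lucas congruence giving membership in $\mathcal L(\mathcal S)$, and the distinctness of the $\rho_{f_r}$) is routine once the asymptotics are recorded.
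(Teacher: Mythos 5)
Your proposal is correct and follows essentially the same route as the paper's own proof: Lucas' theorem gives the $p$-Lucas property so that $f_r\in\mathfrak L(\mathcal P)$, Stirling's formula gives the pairwise distinct radii $\rho_{f_r}=4^{-r}$ and the bound $O(\rho_{f_r}^{-n}/n)$ needed for Remark \ref{rem: W}, and Proposition \ref{prop: sing1} then yields the conclusion. No gaps to report.
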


 In \cite{AB13}, the first two authors proved that all functions 
$f_r(z)$, $r\geq 4$, are algebraically independent. Note that Theorem \ref{thm: 2nn} is optimal since $f_1(z) = 1/\sqrt{1-4z}$ is an algebraic function.

\begin{proof}
Let $r\geq 2$ be an integer. 
We first observe, as a direct consequence of Lucas' theorem, that the functions $f_r$ all satisfy the $p$-Lucas property for all primes. 
Hence $f_r$ belongs to $\mathfrak L(\mathcal P)$. 
Using Stirling formula, 
we also get that 
$$
\binom{2n}{n}^r \underset{n\rightarrow\infty}\sim \frac{2^{(2n+1/2)r}}{(2\pi n)^{r/2}} \, \cdot
$$
This shows that $\rho_{f_r} = 2^{-2r}$. 
By Remark \ref{rem: W}, it follows that $f_r(z)\in \mathfrak W$ since $r\geq 2$ implies that  
$$
\frac{2^{(2n+1/2)r}}{(2\pi n)^{r/2}}= O\left(\frac{\rho_{f_r}^{-n}}{n}\right) \, .
$$
The result then follows from a direct application of Proposition \ref{prop: sing1}.
\end{proof}

In Proposition \ref{prop: sing1} and the application above, we use the fact that the radius of convergence of the involved functions 
are all distinct. We observe below that this condition is not necessary to apply our method.

\begin{prop}\label{prop: sing2}
 Let $f_1(z)$ and $f_2(z)$ be two transcendental functions in $\mathcal L(R,\mathcal S)$ with same finite positive radius of 
 convergence $\rho$. Assume that $f_1$ and $f_2$ have a singularity at a point $z_0\in\mathbb{C}$, with $|z_0|=\rho$, such that 
 the following hold. 
\begin{enumerate}
\item[\rm (i)] There is no $(C,\alpha)$ in $\mathbb{C^*}\times\mathbb{Q}$ such that $f_1(tz_0)\underset{\substack{t \to 1 \\ t\in(0,1)}}{\sim}C(t-1)^{\alpha}$. 
\item[\rm (ii)] $\lim\limits_{\substack{t \to 1 \\ t\in(0,1)}} f_2(tz_0) = l \in \mathbb{C^*}$.
\end{enumerate}
Then $f_1(z)$ and $f_2(z)$ are algebraically independent over $\mathbb{C}(z)$.
\end{prop}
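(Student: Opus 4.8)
The plan is to argue by contradiction, combining the Kolchin-type rigidity of Theorem~\ref{thm: ind} with a local analysis along the radial segment $z=tz_0$, $t\in(0,1)$, $t\to 1$. Suppose $f_1$ and $f_2$ were algebraically dependent over $\mathbb{C}(z)$. By Proposition~\ref{prop: descente} they would then be algebraically dependent over $K(z)$, and since both lie in $\mathcal{L}(R,\mathcal{S})$ for the infinite set $\mathcal{S}$, Theorem~\ref{thm: ind} would furnish integers $a_1,a_2$, not both zero, and a rational fraction $r(z)\in K(z)$ with $f_1(z)^{a_1}f_2(z)^{a_2}=r(z)$. First I would observe that neither exponent can vanish: if $a_2=0$ then $f_1^{a_1}=r$ forces $f_1$ to be algebraic, contradicting its transcendence, and symmetrically for $a_1=0$. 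Hence $a_1,a_2\in\mathbb{Z}\setminus\{0\}$.

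Next I would feed the two analytic hypotheses into this monomial identity evaluated at $z=tz_0$, which is legitimate since $tz_0$ lies in the open disk of convergence for $t\in(0,1)$. By Condition~(ii), $f_2(tz_0)^{a_2}\to l^{a_2}\in\mathbb{C}^*$ as $t\to 1^-$. Since $r$ is a rational fraction, writing $r(z)=(z-z_0)^{\beta}\tilde{r}(z)$ with $\beta\in\mathbb{Z}$ and $\tilde{r}(z_0)\in\mathbb{C}^*$ gives $r(tz_0)\sim C''(t-1)^{\beta}$ for some $C''\in\mathbb{C}^*$ (for $t$ close to $1$ the point $tz_0$ avoids the other poles and zeros of $r$). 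Solving the identity for $f_1$ then yields $f_1(tz_0)^{a_1}\sim C'''(t-1)^{\beta}$ with $C'''\in\mathbb{C}^*$; in particular $f_1(tz_0)\neq 0$ for $t$ close to $1$.

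The hard part will be to descend from this asymptotic for the power $f_1(tz_0)^{a_1}$ to the forbidden asymptotic for $f_1(tz_0)$ itself, since extracting an $a_1$-th root leaves an ambiguity among roots of unity that could a priori oscillate with $t$. To control this, I would set $h(t):=f_1(tz_0)/(1-t)^{\beta/a_1}$, using the positive base $1-t>0$ so that the rational power is unambiguous; then $h$ is continuous near $t=1$ and satisfies $h(t)^{a_1}\to D$ for some $D\in\mathbb{C}^*$. Because $D\neq 0$, its $a_1$ distinct $a_1$-th roots are separated by a fixed positive distance, so as $h(t)^{a_1}\to D$ the continuous curve $h(t)$ is eventually trapped near this finite root set and, by connectedness, cannot jump between the separated neighborhoods; hence $h(t)$ converges to a single root $\omega\in\mathbb{C}^*$. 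This gives $f_1(tz_0)\sim\omega(1-t)^{\beta/a_1}$, equivalently $f_1(tz_0)\sim C(t-1)^{\alpha}$ with $C\in\mathbb{C}^*$ and $\alpha=\beta/a_1\in\mathbb{Q}$ after absorbing the branch factor $(-1)^{\beta/a_1}$ into $C$ (when $a_1<0$ one runs the same argument after inverting, replacing $(\beta,a_1)$ by $(-\beta,-a_1)$). This directly contradicts Condition~(i), so $f_1$ and $f_2$ must be algebraically independent over $\mathbb{C}(z)$.
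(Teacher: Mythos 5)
Your proof is correct and follows essentially the same route as the paper's: reduce to $K(z)$ via Proposition~\ref{prop: descente}, apply Theorem~\ref{thm: ind} to obtain $f_1^{a_1}f_2^{a_2}=r$, note $a_1a_2\neq 0$ by transcendence, and extract a radial asymptotic $f_1(tz_0)\sim C(t-1)^{\beta/a_1}$ contradicting Condition~(i). The only difference is that you carefully justify, via the root-separation and connectedness argument, the passage from an asymptotic for $f_1(tz_0)^{a_1}$ to one for $f_1(tz_0)$ itself, a step the paper compresses into a single displayed equivalence.
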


\begin{proof} 
Let us assume that $f_1$ and $f_2$ are algebraically dependent over $\mathbb{C}(z)$ and hence over $K(z)$ by Proposition \ref{prop: descente}. 
Since $f_1(z)$ and $f_2(z)$ belong to $\mathcal L_1(R,{\mathcal S})$, we can apply Theorem~\ref{thm: ind}. 
We obtain that there exist $a_1,a_2\in\mathbb{Z}$, not both equal to $0$, and a rational function $r(z)$ such that
\begin{equation}\label{imposs}
f_1(z)^{a_1}f_2(z)^{a_2}=r(z) \,.
\end{equation}
Thereby, it suffices to prove that \eqref{imposs} leads to a contradiction. Note that since $f_1$ and $f_2$ are transcendental, 
we have $a_1a_2\not=0$. Without loss of generality  we can assume that $a_1\geq 1$. 
Hence, we have
$$
f_1(tz_0)\underset{\substack{t \to 1 \\ t\in(0,1)}}{\sim}r(tz_0)^{1/a_1}\ell^{-a_2/a_1} \, ,
$$
which contradicts Assertion $(i)$. This ends the proof. 
\end{proof}

Let us give a first example of application of Proposition \ref{prop: sing2}. 

\begin{thm}
The functions 
$$
\sum_{n=0}^\infty\frac{(4n)!}{(2n)!n!^2}z^n\quad\textup{and}\quad\sum_{n=0}^\infty\left(\sum_{k=0}^n\binom{n}{k}^6\right)z^n
$$
are algebraically independent over $\mathbb{Q}(z)$.
\end{thm}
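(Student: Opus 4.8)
The plan is to apply Proposition~\ref{prop: sing2}, since a quick computation of radii of convergence shows that $f$ and $g$ share the same dominant singularity, so that the cruder Proposition~\ref{prop: sing1} does not apply. Write $f(z)=\sum_n a(n)z^n$ and $g(z)=\sum_n b(n)z^n$, where $a(n)=(4n)!/((2n)!\,n!^2)=\binom{4n}{2n}\binom{2n}{n}$ and $b(n)=\sum_{k=0}^n\binom{n}{k}^6$. First I would check that both series lie in $\mathfrak{L}(\mathcal{P})\subseteq\mathcal{L}(\mathbb{Z},\mathcal{P})$. For $g$ this is immediate from Lucas' theorem: writing $n=v+mp$ and $k=w+\ell p$ with $v,w\in\{0,\dots,p-1\}$, the congruence $\binom{n}{k}\equiv\binom{v}{w}\binom{m}{\ell}\bmod p$ gives, after raising to the sixth power and summing over $k$ (the terms with $w>v$ or $\ell>m$ vanishing), the relation $b(v+mp)\equiv b(v)b(m)\bmod p$; hence $(b(n))_n$ has the $p$-Lucas property for every prime and Proposition~\ref{prop: plucas} applies. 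For $f$ I would invoke the $p$-Lucas property of the factorial ratio $a(n)$, valid for all primes, which is the content of the factorial-ratio estimates developed later in the paper (Landau's integrality criterion ensures the relevant step function is everywhere nonnegative).

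Next I would pin down the analytic behaviour at the dominant singularity. By Stirling, $\binom{4n}{2n}\sim 16^n/\sqrt{2\pi n}$ and $\binom{2n}{n}\sim 4^n/\sqrt{\pi n}$, so
\[
a(n)\underset{n\to\infty}{\sim}\frac{1}{\pi\sqrt2}\,\frac{64^{\,n}}{n},
\]
whereas a Laplace (local central limit) estimate for the sum $\sum_k\binom{n}{k}^6$, concentrated near $k=n/2$, yields
\[
b(n)\underset{n\to\infty}{\sim}\ c\,\frac{64^{\,n}}{n^{5/2}},\qquad c=\frac{4}{\sqrt3\,\pi^{5/2}}.
\]
In particular $\rho_f=\rho_g=1/64$, and since both coefficient sequences are positive, Pringsheim's theorem locates the dominant singularity at the real point $z_0=1/64$. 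These asymptotics translate into a logarithmic singularity for $f$ (decay in $64^n/n$) and a convergent $(1-64z)^{3/2}$-type singularity for $g$ (decay in $64^n n^{-5/2}$), so that $g(tz_0)\to\sum_n b(n)64^{-n}=:\ell$, a finite positive real number, as $t\to1^-$.

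It then remains to verify the hypotheses of Proposition~\ref{prop: sing2} with $f_1=f$ and $f_2=g$. Both functions are transcendental: were either algebraic, Proposition~\ref{prop: alg2} would force it to equal $P(z)^{-1/a}$ for some polynomial $P$ with $P(0)=1$, whose only singularities are negative-power blow-ups at the roots of $P$; this is incompatible with the purely logarithmic blow-up of $f$ at $z_0$ and with the finite nonzero limit $\ell$ of $g$ at $z_0$. Condition~(i) holds for $f$ because its logarithmic growth $f(tz_0)\sim -C\log(1-t)$ is not asymptotic to any $C'(t-1)^\alpha$ with $\alpha\in\mathbb{Q}$, and condition~(ii) holds for $g$ by the computation of $\ell\in\mathbb{C}^*$ above. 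Proposition~\ref{prop: sing2} then yields algebraic independence over $\mathbb{C}(z)$, a fortiori over $\mathbb{Q}(z)$.

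The hard part will be the asymptotic analysis of the Franel-type sum $\sum_{k=0}^n\binom{n}{k}^6$: one must justify the saddle-point/Laplace approximation carefully enough to extract the exact exponent $n^{-5/2}$ (and not merely the exponential rate $64^n$), since it is precisely the sign of the correction exponent $3/2>0$ that secures the finite limit required by condition~(ii). A secondary point needing care is confirming that the singularity of $f$ is genuinely logarithmic, i.e.\ that $64^n/n$-decay of the coefficients forbids every power-law asymptotic; this is exactly what separates the roles of $f$ and $g$ in Proposition~\ref{prop: sing2}.
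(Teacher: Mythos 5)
Your proposal is correct and follows the same skeleton as the paper's proof: both hinge on Proposition~\ref{prop: sing2}, with $f$ playing the role of the function with a logarithmic (hence non-power-law) singularity at $z_0=1/64$ and $g$ the function with a finite non-zero limit there, and both rely on the same coefficient asymptotics (your constants are in fact the precise ones; the $n^{-5/2}$ asymptotic you state agrees exactly with McIntosh's formula). The differences are in the sub-arguments. First, for transcendence the paper invokes Flajolet's theorem \cite{Flajolet} on asymptotics of coefficients of algebraic series together with the transcendence of $\pi$, whereas you use Proposition~\ref{prop: alg2} (algebraic elements of $\mathfrak{L}$ are of the form $P(z)^{-1/a}$) plus the singularity behaviour; your route is more self-contained within the paper's framework and is equally valid. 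Second, for the $p$-Lucas property of $g$ you give the direct Sharif--Woodcock-style argument (raise Lucas' congruence to the sixth power and sum, the out-of-range terms vanishing), while the paper routes this through the multivariate machinery of Section~\ref{sec: appli} (the series $\sum\binom{n}{k}^6$ being a diagonal specialization of a bivariate factorial ratio); for $f$ you and the paper both use the factorial-ratio criterion (Proposition~\ref{prop: fr}, with $e=(4)$, $f=(2,1,1)$). Finally, the "hard part" you flag — justifying the saddle-point analysis giving $b(n)\sim c\,64^n n^{-5/2}$ — need not be redone at all: the paper simply cites McIntosh \cite{McIntosh}, whose published asymptotic formula is exactly what you wrote, so your argument closes without any additional analysis.
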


\begin{proof}
Using Stirling formula, 
we obtain that 
\begin{equation}\label{eq: 1/n}
\frac{(4n)!}{(2n)!n!^2} \underset{n\rightarrow\infty}{\sim}\frac{2^{6n}}{\pi n} \, ,
\end{equation}
while a result of McIntosh \cite{McIntosh} (stated in the proof of Theorem \ref{thm: appli1}) gives that  
\begin{equation}\label{eq: 5/2}
\sum_{k=0}^n\binom{n}{k}^6\underset{n\rightarrow\infty}{\sim}\frac{2^{6n}}{\sqrt{6(\pi n/2)^{5}}}\,.
\end{equation}
By Flajolet's asymptotic for algebraic functions (see \cite{Flajolet}), we know that if a power series $\sum_{n=0}^\infty a(n)z^n$ in $\mathbb{Q}[[z]]$ is algebraic over $\mathbb{Q}(z)$, then we have
$$
a(n)\underset{n\rightarrow\infty}{\sim}\frac{\alpha^nn^s}{\Gamma(s+1)}\sum_{i=0}^mC_i\omega_i^n,
$$
where $s\in\mathbb{Q}\setminus\mathbb{Z}_{<0}$ and $\alpha$, the $C_i$'s and the $\omega_i$'s are algebraic numbers. Since $\Gamma(-3/2)$ is a rational multiple of $\sqrt{\pi}$ and 
$\pi$ is a transcendental number, we obtain that  
$$
f_1(z):= \sum_{n=0}^\infty\frac{(4n)!}{(2n)!n!^2}z^n\quad\textup{and}\quad f_2(z) := \sum_{n=0}^\infty\left(\sum_{k=0}^n\binom{n}{k}^6\right)z^n
$$ 
are both transcendental over $\mathbb{Q}(z)$.
It follows that $f_1$ and $f_2$ have the same radius of convergence 
$\rho = (1/2)^6$. Furthermore, \eqref{eq: 1/n} shows that $f_1$ does satisfy Assumption (i) of Proposition~\ref{prop: sing2}. Indeed, 
\eqref{eq: 1/n} shows that $f_1$ has a logarithmic singularity at $\rho$ which is not compatible with an asymptotic of the form $C(z-\rho)^{\alpha}$.   
On the other hand,  \eqref{eq: 5/2} shows that $f_2$ satisfies Assumption (ii) of Proposition~\ref{prop: sing2}.  
As we will prove in Section \ref{sec: appli}, $f_1$ and $f_2$ both belong to $\mathfrak L(\mathcal P)$, 
we can apply Proposition~\ref{prop: sing2} to conclude. 
\end{proof}

\subsection{Singularities of $G$-functions and asymptotics of their coefficients}

We are mainly interested in $G$-functions so we will focus on elements in sets of the form $\mathcal L_1(\mathcal{O}_K,\mathcal S)$ (also denoted by  $\mathcal L(\mathcal{O}_K,\mathcal S)$), where $\mathcal{O}_K$ is the ring of integers of a number field $K$ assumed to be embedded in $\mathbb C$. In this case, it is well-known that $K$ is the fraction field of $\mathcal{O}_K$ which is a Dedekind domain. In this section, we briefly recall some background about the kind of singularities a $G$-function may have. 
As we will see, those are subject to severe restrictions. In particular, this explains why the same kind of 
asymptotics always come up when studying the coefficients of 
$G$-functions. 
\medskip

Let $f$ be a $G$-function and $\mathcal{L}$ be a non-zero 
differential operator in $\overline{\mathbb{Q}}[z,d/dz]$ of minimal order such that $\mathcal{L}\cdot f(z)=0$. Then it is known that $\mathcal{L}$ is a Fuchsian operator, 
that is all its singularities are regular. Furthermore,  
its exponents at each singularity are rational numbers. This follows from results of Chudnovsky \cite{Chudnovsky}, 
Katz \cite{Katz2}, and Andr\'e \cite{An00} (see \cite{An89} for a discussion).  In particular, these results have the following consequence. 
Let $\rho$ be a singularity of $\mathcal{L}$ at finite distance and consider a closed half-line 
$\Delta$ starting at $\rho$ and ending at infinity. Then there is a simply connected open set 
$U\supset\{0,\rho\}$ such that $f$ admits an analytic continuation to $V:=U\setminus\Delta$ (again 
denoted $f$)   which is annihilated by $\mathcal{L}$. In a neighborhood $W$ of $\rho$ in $V$, 
there exist rational numbers $\lambda_1,\dots,\lambda_s$, natural integers $k_1,\dots,k_s$ and functions $f_{i,k}(z)$ in $\mathbb{C}\{z-\rho\}$ such that
\begin{equation}\label{eq: decg}
f(z)=\sum_{i=1}^s\sum_{k=0}^{k_i}(z-\rho)^{\lambda_i}\log(z-\rho)^kf_{i,k}(z),
\end{equation}
and where $\lambda_i-\lambda_j\in\mathbb{Z}$ implies that $\lambda_i=\lambda_j$. 
By grouping terms with $\lambda_i=\lambda_j$, we may assume that if $\lambda_i-\lambda_j$ is an integer, then $i=j$. 

The following result shows that  a $G$-function that does not belong to the set $\mathfrak W$ should have  
a decomposition of a very restricted form on its circle of convergence.  
Roughly speaking, this means that transcendental $G$-functions usually tend to belong to $\mathfrak W$. 

\begin{prop}\label{propo singularity}
Let $f$ be a $G$-function and let $\rho$ be a singularity of $f$.   
Then  there is a positive integer $m$ such that  $f^m$ has an analytic continuation to a neighborhood of $\rho$ which is meromorphic at $\rho$
if, and only if, in any decomposition of the form \eqref{eq: decg}, we have $s=1$ and $k_1=0$, that is $f(z)=(z-\rho)^\lambda g(z)$ for $z\in W$,
 where $\lambda\in\mathbb{Q}$ and $g(z)\in\mathbb{C}\{z-\rho\}$.
\end{prop}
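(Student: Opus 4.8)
The plan is to read off both implications from the local monodromy of $f$ at $\rho$, that is, from the operator $T$ sending a multivalued analytic germ to its continuation once counterclockwise around $\rho$. Writing $L:=\log(z-\rho)$ and $w_i:=(z-\rho)^{\lambda_i}$, the decomposition \eqref{eq: decg} becomes $f=\sum_{i=1}^s w_iP_i(L)$, where $P_i(L):=\sum_{k=0}^{k_i}L^kf_{i,k}(z)$ is a polynomial in $L$ of degree $k_i$ with coefficients in $\mathbb C\{z-\rho\}$; after discarding vanishing terms I may normalize the decomposition so that each leading coefficient $f_{i,k_i}\not\equiv 0$, which is the content of the phrase ``in any decomposition''. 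Under $T$ one has $L\mapsto L+2\pi i$, $w_i\mapsto e^{2\pi i\lambda_i}w_i$, and $f_{i,k}\mapsto f_{i,k}$ since the $f_{i,k}$ are single-valued. Two standard facts will be used: because the $\lambda_i$ are pairwise distinct modulo $\mathbb Z$, the numbers $e^{2\pi i\lambda_i}$ are pairwise distinct, and the germs $\{w_iL^k\}_{i,k}$ are linearly independent over $\mathbb C\{z-\rho\}$ (independence of the Nilsson/Frobenius basis at a regular singular point), so a relation $\sum_i w_iQ_i(L)=0$ with $Q_i\in\mathbb C\{z-\rho\}[L]$ forces every $Q_i=0$. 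For the easy implication, if $f(z)=(z-\rho)^\lambda g(z)$ with $\lambda=a/b\in\mathbb Q$ ($b>0$) and $g\in\mathbb C\{z-\rho\}$, then $f^b=(z-\rho)^a g^b$ is single-valued with at worst a pole at $\rho$, hence meromorphic there, and $m=b$ works; this is where rationality of $\lambda$ enters.

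For the converse, assume $f^m$ is meromorphic at $\rho$ for some positive integer $m$. Then $f^m$ is single-valued near $\rho$, so $T(f^m)=f^m$, and since continuation respects products, $(Tf)^m=f^m$. As $f$ is a $G$-function with a singularity at $\rho$ it is not identically zero, so its zeros are isolated and $Tf/f$ is analytic with $(Tf/f)^m=1$; being a continuous map into the finite set of $m$-th roots of unity on a connected punctured neighborhood, it equals a constant root of unity $\zeta$, giving $Tf=\zeta f$. Applying $T$ to $f=\sum_i w_iP_i(L)$ and using $Tf=\zeta f$ yields
$$
\sum_{i=1}^s w_i\big(e^{2\pi i\lambda_i}P_i(L+2\pi i)-\zeta P_i(L)\big)=0,
$$
so by the linear independence noted above, for every index $i$ that occurs one has $e^{2\pi i\lambda_i}P_i(L+2\pi i)=\zeta P_i(L)$.

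It remains to extract $s=1$ and $k_1=0$. Shifting $L$ does not change the coefficient of $L^{k_i}$, so comparing top $\log$-degree coefficients gives $e^{2\pi i\lambda_i}f_{i,k_i}=\zeta f_{i,k_i}$, whence $e^{2\pi i\lambda_i}=\zeta$ because $f_{i,k_i}\not\equiv 0$. Since the values $e^{2\pi i\lambda_i}$ are pairwise distinct while $\zeta$ is a single fixed scalar, at most one index can occur, so $s=1$. For that index the identity reduces to $P_1(L+2\pi i)=P_1(L)$, and a polynomial in $L$ invariant under $L\mapsto L+2\pi i$ must be constant, forcing $k_1=0$; thus $f(z)=(z-\rho)^{\lambda_1}f_{1,0}(z)$ with $\lambda_1\in\mathbb Q$ and $f_{1,0}\in\mathbb C\{z-\rho\}$, as claimed. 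The crux of the whole argument is the passage from meromorphy of the single power $f^m$ to the scalar relation $Tf=\zeta f$, together with the clean separation of the $\lambda_i$-components provided by the independence of the Nilsson basis; once these are secured, the comparison of leading $\log$-coefficients and the shift-invariance are purely formal.
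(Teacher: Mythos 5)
Your proof is correct, but it takes a genuinely different route from the paper's. The paper argues by asymptotic comparison near $\rho$: it picks the pair $(i,\kappa)$ minimizing $\lambda_j+\mathrm{ord}_{\rho}f_{j,\ell}$ (unique because $\lambda_i-\lambda_j\in\mathbb{Z}$ forces $i=j$), shows that $f(z)^m$ is asymptotically equivalent to $(z-\rho)^{m\lambda_i}\log(z-\rho)^{m\kappa}f_{i,\kappa}(z)^m$, and deduces from meromorphy of $f^m$ that $\kappa=0$ and $m\lambda_i\in\mathbb{Z}$; it then subtracts this leading term and runs the same comparison on the next-smallest term to reach a contradiction when $s\geq 2$ or $k_1\geq 1$. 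You instead read everything off the local monodromy: single-valuedness of $f^m$ gives $(Tf/f)^m=1$, hence $Tf=\zeta f$ for a fixed $m$-th root of unity $\zeta$, and then the relation $e^{2\pi i\lambda_i}P_i(L+2\pi i)=\zeta P_i(L)$, obtained from the linear independence of the germs $(z-\rho)^{\lambda_i}\log(z-\rho)^k$ over $\mathbb{C}\{z-\rho\}$, eliminates all but one exponent (since the $e^{2\pi i\lambda_i}$ are pairwise distinct) and all logarithms (since a polynomial invariant under $L\mapsto L+2\pi i$ is constant). Your argument is shorter and more conceptual---it identifies the local monodromy of $f$ as a scalar---but it leans on one external ingredient, the independence of the Nilsson basis at a regular singular point, which you cite as standard rather than prove; the paper's order-minimization argument is more pedestrian but entirely self-contained, in effect re-deriving by hand exactly the special case of that independence it needs. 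Two smaller points in your favor: you state explicitly the normalization $f_{i,k_i}\not\equiv 0$ that gives meaning to ``in any decomposition'' (the paper leaves this implicit), and you write out the easy converse direction $f=(z-\rho)^{a/b}g\Rightarrow f^b$ meromorphic, which the paper omits.
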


\begin{proof}
Let $f$ be a $G$-function, $\rho$ be a singularity of $f$, and let us fix a decomposition for $f$ of the form \eqref{eq: decg}. 
Assume that there is a positive integer $m$ such that $f^m$ has a meromorphic continuation $h$ to a neighborhood $V_\rho$ of $\rho$. Let $(i,k)$ be a pair such that
\begin{equation}\label{mini}
\lambda_i+\mathrm{ord}_{\rho}f_{i,k}=\min\{\lambda_j+\mathrm{ord}_{\rho}f_{j,\ell}\,\mid\,1\leq j\leq s,\,0\leq\ell\leq k_j\} \, .
\end{equation}
Observe that $i$ is unique because, if $\lambda_i+\mathrm{ord}_{\rho}f_{i,k}=\lambda_j+\mathrm{ord}_{\rho}f_{j,\ell}$, 
then $\lambda_i-\lambda_j$ is an integer so that by assumption $i=j$. 
Let $\kappa$ be the greatest $\ell\in\{0,\dots,k_i\}$ such that
$$
\lambda_i+\mathrm{ord}_{\rho}f_{i,\ell}=\lambda_i+\mathrm{ord}_{\rho}f_{i,k}\, .
$$
For $z$ in $W$ we set
$$
w(z):=f(z)^m-(z-\rho)^{m\lambda_i}\log(z-\rho)^{m\kappa}f_{i,\kappa}(z)^m\, .
$$
Hence, as $z$ tends to $\rho$ in $W$, we have
$$
\frac{w(z)}{(z-\rho)^{m\lambda_i}\log(z-\rho)^{m\kappa}f_{i,\kappa}(z)^{m}}\longrightarrow 0\, ,
$$
so that
$$
\frac{f(z)^m}{(z-\rho)^{m\lambda_i}\log(z-\rho)^{m\kappa}f_{i,\kappa}(z)^{m}}\longrightarrow 1\, ,
$$
which shows that $\kappa=0$ and $\lambda_i\in\frac{1}{m}\mathbb{Z}$. 
In particular, this shows that the pair $(i,k)$ is uniquely defined by \eqref{mini} and that $w(z)$ admits a meromorphic continuation 
to a neighborhood of $\rho$.

\medskip

We finish the proof by contradiction while assuming that $s\geq 2$ or $k_1\geq 1$. Hence, there is a pair $(i',k')$ which minimizes $\lambda_{i'}+\mathrm{ord}_\rho f_{i',k'}$ while satisfying 
\begin{equation}\label{mini2}
\lambda_{i}+\mathrm{ord}_\rho f_{i,0}<\lambda_{i'}+\mathrm{ord}_\rho f_{i',k'}.
\end{equation}
Again, one can show that $i'$ is unique. Let $\kappa'$ be the greatest $\ell\in\{0,\dots,k_{i'}\}$ such that
$$
\lambda_{i'}+\mathrm{ord}_{\rho}f_{i',\ell}=\lambda_{i'}+\mathrm{ord}_{\rho}f_{i',k'}.
$$
For $z$ in $W$ we set
$$
v(z):=w(z)-(z-\rho)^{(m-1)\lambda_i+\lambda_{i'}}\log(z-\rho)^{\kappa'}f_{i,0}(z)^{m-1}f_{i',\kappa'}(z).
$$
Hence, as $z$ tends to $\rho$ in $W$, we have
$$
\frac{v(z)}{(z-\rho)^{(m-1)\lambda_i+\lambda_{i'}}\log(z-\rho)^{\kappa'}f_{i,0}(z)^{m-1}f_{i',\kappa'}(z)}\longrightarrow 0\, ,
$$
so that
$$
\frac{w(z)}{(z-\rho)^{(m-1)\lambda_i+\lambda_{i'}}\log(z-\rho)^{\kappa'}f_{i,0}(z)^{m-1}f_{i',\kappa'}(z)}\longrightarrow 1\, .
$$
Since $w(z)$ has a meromorphic continuation to a neighborhood of $\rho$, this shows that $\kappa'=0$ and $(m-1)\lambda_i+\lambda_{i'}\in\mathbb{Z}$. 
We have $m\lambda_i\in\mathbb{Z}$ so $\lambda_i-\lambda_{i'}$ is an integer and $i=i'$. Thus $(i',\kappa')=(i,0)$ which contradicts \eqref{mini2}. This ends the proof. 
\end{proof}

\subsection{$G$-functions with integer coefficients}

In this section we focus on $G$-functions with integer coefficients. We first introduce the following set of analytic functions. 

 \begin{defn}{\rm 
Let $\mathfrak G$ denote the set of all analytic functions $f(z)$ in $\mathbb{C}\{z\}$ satisfying the following conditions. 
\begin{itemize}

\medskip

\item[{\rm (i)}] $f(z)$ satisfies a non-trivial linear differential equation with coefficients in $\mathbb Q[z]$.   

\medskip

\item[{\rm (ii)}] $f(z)$ belongs to $\mathbb Z[[z]]$. 

\end{itemize}}
\end{defn}

We observe that elements of $\mathfrak G$ are $G$-functions. The transcendental elements of $\mathfrak G$ have specific singularities.

\begin{prop}\label{prop: trans}
Every transcendental $f$ in $\mathfrak G$ has a singularity $\beta\in\mathbb{C}$ with $|\beta|<1$ such that no non-zero power of $f$ admits a meromorphic continuation at $\beta$.
\end{prop}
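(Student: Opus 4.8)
The plan is to argue by contradiction: suppose that at \emph{every} singularity $\beta$ of $f$ lying in the open unit disk $\mathbb{D}$ some nonzero power of $f$ does admit a meromorphic continuation, and deduce that $f$ is algebraic, contradicting transcendence. First I would record the structural input. Since $f\in\mathfrak G$ is $D$-finite, it has only finitely many singularities in $\mathbb{C}$, all algebraic numbers, as they lie among the roots of the leading coefficient of a minimal annihilating operator in $\mathbb{Q}[z,d/dz]$. Let $\beta_1,\dots,\beta_r$ be those with $|\beta_j|<1$. Applying Proposition~\ref{propo singularity} at each $\beta_j$, the negated hypothesis says precisely that the local decomposition \eqref{eq: decg} has the simple shape $f(z)=(z-\beta_j)^{\lambda_j}g_j(z)$ with $\lambda_j\in\mathbb{Q}$ and $g_j\in\mathbb{C}\{z-\beta_j\}$ (a single exponent, no logarithms).

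Next I would build one global power of $f$ that is meromorphic on all of $\mathbb{D}$. Let $N$ be a common denominator of $\lambda_1,\dots,\lambda_r$ and set $F:=f^N$. Then $F\in\mathbb{Z}[[z]]$ (a product of integer series) and $F$ is again $D$-finite. The crucial point is that $F$ is \emph{single-valued} and meromorphic on $\mathbb{D}$: the pure-power local form shows that continuing $f$ once around $\beta_j$ multiplies it by the scalar $e^{2\pi i\lambda_j}$ (equivalently, $f'/f$ is meromorphic on $\mathbb{D}$ with simple poles of residue $\lambda_j$), so continuing $F$ around $\beta_j$ multiplies it by $e^{2\pi iN\lambda_j}=1$. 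As these loops generate $\pi_1(\mathbb{D}\setminus\{\beta_1,\dots,\beta_r\})$, the continuation of $F$ is single-valued there, while near each $\beta_j$ we have $F=(z-\beta_j)^{N\lambda_j}g_j^{N}$ with $N\lambda_j\in\mathbb{Z}$, so $F$ is meromorphic at $\beta_j$. Thus $F$ is a single-valued meromorphic function on $\mathbb{D}$ whose only possible poles are the $\beta_j$. I expect this monodromy and global-continuation step to be the main obstacle, the one needing the most care.

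Finally I would clear the poles and invoke a rigidity theorem for integer power series. Taking $Q\in\mathbb{Z}[z]$ to be a high enough power of the product of the (denominator-cleared) minimal polynomials of the $\beta_j$, so that $Q$ kills every pole of $F$ in $\mathbb{D}$, the series $H:=Q\cdot F$ lies in $\mathbb{Z}[[z]]$, is holomorphic on $\mathbb{D}$ (hence has radius of convergence at least $1$), and is $D$-finite. If that radius exceeds $1$, the integer coefficients of $H$ are eventually zero and $H$ is a polynomial; if it equals $1$, then by the P\'olya--Carlson theorem $H$ is either rational or has $\{|z|=1\}$ as a natural boundary, the latter being impossible since a $D$-finite function has only finitely many singularities. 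In either case $H$ is rational, so $F=H/Q$ is rational and $f$, being a root of $X^N-F$, is algebraic, contradicting transcendence. This produces a singularity $\beta$ of $f$ with $|\beta|<1$ admitting no meromorphic power, as claimed; the degenerate case where $f$ has no singularity in $\mathbb{D}$ is already covered by the same argument with $N=1$ and $Q=1$, which would force $f$ itself to be rational.
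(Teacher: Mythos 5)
Your proof is correct and follows essentially the same route as the paper's: argue by contradiction, produce a power $f^N$ whose product with a suitable integer polynomial is holomorphic on the open unit disk with integer coefficients, and conclude via the P\'olya--Carlson theorem together with the fact that a $D$-finite function has only finitely many singularities. The only difference is one of detail: you justify the single-valued meromorphic continuation of $f^N$ on the disk (via Proposition~\ref{propo singularity} and the monodromy/eigenvector argument), a step the paper compresses into the single assertion that such $P$ and $N$ exist.
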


\begin{proof}
Let argue by contradiction. Let $f$ be an element of $\mathfrak G$ such that, for every complex numbers $\beta$ with $|\beta|<1$, there is a positive integer $n=n(\beta)$ such that $f^n$ admits a meromorphic continuation at $\beta$. Since $f$ is a $G$-function, it has only finitely many singularities and they all are at algebraic points. It implies that there exists a polynomial $P(z)$ in $\mathbb{Z}[z]$ and a positive integer $N$ such that $g(z):=P(z)f(z)^N$ is holomorphic in the open unit disk. Hence $g$ is a power series with integer coefficients such that $\rho_g\geq 1$. By the P\`olya-Carlson theorem, $g$ is either a rational fraction or admits the unit circle as a natural boundary. Since $g$ has only finitely many singularities, we obtain that $g$ is a rational fraction and $f$ is algebraic, which is a contradiction.
\end{proof}

We have the following generalization of Theorem \ref{thm: 2nn} concerning algebraic independence of Hadamard powers 
of elements of $\mathfrak G$. 

\begin{thm}\label{thm: had}
 Let $f(z):=\sum_{n=0}^{\infty}a(n)z^n$ be a transcendental function in  $\mathcal L(\mathcal S)\cap\mathfrak G$ and such that 
 $$
 a(n) \geq 0 \quad\mbox{and} \quad a(n)=O\left(\frac{\rho_f^{-n}}{n}\right) \, \cdot
 $$  
 Then the functions $f_1:=f,f_2:=f\odot f,f_3:=f\odot f \odot f,\ldots$ are algebraically independent over $\mathbb C(z)$. 
\end{thm}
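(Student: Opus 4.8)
The plan is to deduce everything from the singularity criterion of Proposition~\ref{prop: sing1}. Since a family is algebraically independent precisely when each of its finite subfamilies is, it suffices to fix finitely many exponents $k_1<\cdots<k_m$ and to verify that $f^{\odot k_1},\dots,f^{\odot k_m}$ meet the hypotheses of Proposition~\ref{prop: sing1}: they must lie in $\mathcal L(\mathcal S')\cap\mathfrak W$ for a common infinite set $\mathcal S'$ of primes, and have pairwise distinct radii of convergence. I would check these three points in turn, the first two being routine and the third being the crux.

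First I would pin down the radii. Because $f$ is a \emph{transcendental} element of $\mathfrak G$, Proposition~\ref{prop: trans} yields a singularity $\beta$ with $|\beta|<1$, whence $0<\rho_f\le|\beta|<1$. Writing $\rho:=\rho_f$, the Hadamard power $f^{\odot k}=\sum_n a(n)^k z^n$ satisfies $\limsup_n|a(n)^k|^{1/n}=\rho^{-k}$, so $\rho_{f^{\odot k}}=\rho^{k}$; since $0<\rho<1$, the sequence $(\rho^{k})_{k\ge 1}$ is strictly decreasing and the radii $\rho^{k_1},\dots,\rho^{k_m}$ are pairwise distinct. Next, membership in $\mathfrak W$: each $f^{\odot k}$ has non-negative coefficients $a(n)^k$, finite radius $\rho^k$, and from $a(n)=O(\rho^{-n}/n)$ one gets $a(n)^k=O(\rho^{-kn}/n^{k})=O(\rho_{f^{\odot k}}^{-n}/n)$ for $k\ge 1$, so Remark~\ref{rem: W} applies and places $f^{\odot k}$ in $\mathfrak W$, with a genuine (non-meromorphic even after powering) singularity at the real point $\rho^{k}$.

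The heart of the matter, and the step I expect to be the real obstacle, is the membership $f^{\odot k}\in\mathcal L(\mathcal S')$, which is exactly what Proposition~\ref{prop: sing1} needs in order to invoke the Kolchin-type Theorem~\ref{thm: ind}. This is a closure-under-Hadamard-product statement, and it is \emph{not} automatic for $\mathcal L$. Indeed, for $\mathfrak p\in\mathcal S$ with residue field $\mathbb F_p$, Condition~(iii) of Definition~\ref{def: ldrs} together with the Frobenius identity $f_{|\mathfrak p}(z^{p^{k}})=f_{|\mathfrak p}(z)^{p^{k}}$ shows that $f_{|\mathfrak p}$ is a radical, $f_{|\mathfrak p}^{\,p^{k}-1}=1/A$; and a direct computation with the Cartier operators $\Lambda_i$ shows that $\Lambda_i(f_{|\mathfrak p}^{\odot 2})=(\Lambda_i f_{|\mathfrak p})^{\odot 2}=(A_i f_{|\mathfrak p})^{\odot 2}$, where $A_i=\Lambda_i(A)$. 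The $p^{k}$-Cartier kernel of $f_{|\mathfrak p}^{\odot 2}$ stays one-dimensional over $\mathbb F_p(z)$ (equivalently, $f^{\odot 2}$ again satisfies an order-one Frobenius equation) exactly when the $A_i$ are \emph{constant}, i.e.\ when $A$ is a polynomial with $\deg_{x_i}A\le p^{k}-1$. That is precisely the $p^{k}$-Lucas situation defining $\mathfrak L$, and there Proposition~\ref{prop: special2} gives $f^{\odot k}\in\mathfrak L(\mathcal S')\subset\mathcal L(\mathcal S')$ for free.

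Consequently my plan for the hard step is to reduce to the polynomial (Lucas) form: I would argue that a transcendental $f\in\mathcal L(\mathcal S)\cap\mathfrak G$ has, on an infinite subset $\mathcal S'\subseteq\mathcal S$, reductions of the stronger $\mathfrak L$-type, exploiting the integrality and global boundedness coming from $\mathfrak G$ to force the coefficient $A$ modulo $\mathfrak p$ to be a polynomial of degree $<p^{k}$; then $f\in\mathfrak L(\mathcal S')$ and Proposition~\ref{prop: special2} delivers all the Hadamard powers inside $\mathcal L(\mathcal S')$. With the three ingredients in hand, Proposition~\ref{prop: sing1} applied to $f^{\odot k_1},\dots,f^{\odot k_m}$ concludes the argument. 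The genuinely delicate point, which I would isolate as a separate lemma, is this passage from the order-one \emph{rational} structure of an $\mathcal L$-function to the \emph{polynomial} structure needed for Hadamard closure; everything else is bookkeeping with radii and the set $\mathfrak W$.
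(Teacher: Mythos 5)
Your argument has the same skeleton as the paper's proof, and the routine parts are carried out correctly: Proposition \ref{prop: trans} gives $0<\rho_f<1$, so the Hadamard powers have pairwise distinct radii $\rho_f^{k}$; the growth hypothesis gives $a(n)^k=O\big(\rho_{f^{\odot k}}^{-n}/n\big)$, so Remark \ref{rem: W} puts every $f^{\odot k}$ in $\mathfrak W$; and one concludes by Proposition \ref{prop: sing1}. You are also right that this last step requires each $f^{\odot k}$ to lie in $\mathcal L(\mathcal S')$ for a common infinite set $\mathcal S'$, and that the paper establishes Hadamard closure only for the Lucas-type class $\mathfrak L_d$ (Proposition \ref{prop: special2}), not for $\mathcal L_d$; the paper's own proof is silent on this point, i.e.\ it tacitly uses the stronger reading $f\in\mathfrak L(\mathcal S)$.

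The gap is in your repair of this step, which is not only unproven (you only announce that you ``would argue'' it) but unprovable: it is false that every transcendental $f\in\mathcal L(\mathcal S)\cap\mathfrak G$ lies in $\mathfrak L(\mathcal S')$ for some infinite $\mathcal S'\subseteq\mathcal S$. Concretely, let $b(n):=\binom{2n}{n}^2$, $g(z):=\sum_{n\geq 0}b(n)z^n$, and $f(z):=(1+2z)g(z)=\sum_{n\geq 0}c(n)z^n$, so $c(n)=b(n)+2b(n-1)$. This $f$ satisfies every hypothesis of the theorem as stated: it is transcendental (a non-zero rational multiple of the transcendental function $g$), it lies in $\mathfrak G$, its coefficients are non-negative with $c(n)=O(16^{n}/n)$ and $\rho_f=1/16$, and it belongs to $\mathcal L(\mathcal S)$ for $\mathcal S$ the set of odd primes, since $g\equiv A_g(z)\,g(z^{p})\bmod p$ with $A_g=\sum_{v=0}^{p-1}b(v)z^{v}$ yields
$$
f(z)\equiv \frac{(1+2z)A_g(z)}{1+2z^{p}}\,f(z^{p}) \bmod p\,\mathbb{Z}_{(p)}[[z]],
$$
a rational fraction of height at most $p$. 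Yet $f$ does not belong to $\mathfrak L(\mathcal S')$ for any set $\mathcal S'$ containing an odd prime: the $p$-Lucas property of $b$ gives $b(p^{k})\equiv b(1)=4$ and $b(p^{k}-1)\equiv b(p-1)^{k}\equiv 0 \bmod p$ (Lucas' theorem gives $\binom{2p-2}{p-1}\equiv 0\bmod p$), hence $c(p^{k})\equiv 4\bmod p$ for every $k\geq 1$, whereas the $p^{k}$-Lucas property with $v=0$, $m=1$ would force $c(p^{k})\equiv c(0)c(1)=6\bmod p$; by Proposition \ref{prop: plucas} this excludes membership in $\mathfrak L$ for every fixed $k$. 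So the bridge from the rational ($\mathcal L$) structure to the polynomial ($\mathfrak L$) structure on which your whole plan hinges does not exist, and the proof cannot be completed along the announced lines.

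The workable fix goes in the opposite direction: take the hypothesis to be $f\in\mathfrak L(\mathcal S)$ (which is how the statement must be read for the paper's own short proof to be complete). Then Proposition \ref{prop: special2} immediately places all Hadamard powers $f^{\odot k}$ in $\mathfrak L(\mathcal S)\subseteq\mathcal L(\mathcal S)$, and the three steps you share with the paper finish the argument.
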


\begin{proof}
By Proposition \ref{prop: trans}, the radius of convergence of $f(z)$ satisfies $0<\rho_f<1$. It follows that all the $f_r$'s have distinct radius of convergence since  $\rho_{f_r} = \rho_f^r$. On the other hand, Remark \ref{rem: W} 
implies that each $f_r$ belongs to $\mathfrak W$. We can thus apply Proposition \ref{prop: sing1} to conclude the proof. 
\end{proof}

 In all previous applications of Theorem \ref{thm: ind}, we used some knowledge about asymptotics of coefficients and/or 
 about the singularities  of the functions involved. We give here a general result that does not require any {\it a priori} knowledge of this kind. It applies to any transcendental element of $\mathfrak G$ which satisfies some Lucas-type congruences. 

\begin{prop}\label{prop: sing3} Let $f(z)$ be a transcendental function in  $\mathcal L(\mathcal S)\cap\mathfrak G$.   
Then the following hold. 
\begin{itemize}

\item[{\rm (i)}] Let $\lambda_1,\ldots,\lambda_n$ be non-zero rational numbers with distinct absolute values. Then the series $f(\lambda_1z),\ldots,f(\lambda_nz)$ are algebraically independent over $\mathbb Q(z)$. 

\item[\rm (ii)] The family $\left\{f(z),f(z^2),f(z^3),\ldots\right\}$ is algebraically independent over $\mathbb Q(z)$.
\end{itemize}
\end{prop}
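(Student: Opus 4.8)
The plan is to handle (i) and (ii) uniformly, since they differ only in the substitution applied to $f$: the map $z\mapsto\lambda_i z$ in (i) and $z\mapsto z^{j}$ in (ii). Write $\varphi_i$ for the relevant substitution, so that we must prove every finite family $\varphi_1,\dots,\varphi_n$ of such functions is algebraically independent over $\mathbb{Q}(z)$. First I would place all of them in one set $\mathcal{L}(\mathbb{Z},\mathcal{S}')$ with $\mathcal{S}'$ infinite: by Proposition~\ref{prop: special}(i) each $f(\lambda_i z)$ lies in $\mathcal{L}(\mathbb{Z},\mathcal{T}_i)$, where $\mathcal{T}_i$ is $\mathcal{S}$ minus the finitely many primes dividing a denominator of $\lambda_i$, while each $f(z^{j})$ lies in $\mathcal{L}(\mathbb{Z},\mathcal{S})$ directly; intersecting finitely many of these keeps the index set infinite. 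Arguing by contradiction, I would assume a finite subfamily is algebraically dependent and invoke Theorem~\ref{thm: ind} to produce integers $a_1,\dots,a_n$, not all zero, and $r(z)\in\mathbb{Q}(z)$ with $\prod_{i}\varphi_i(z)^{a_i}=r(z)$. Everything then reduces to contradicting this single monomial identity by complex analysis.

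The analytic engine is Proposition~\ref{prop: trans}: as $f$ is transcendental in $\mathfrak{G}$, the set $\Sigma_b$ of its \emph{bad} singularities (those at which no non-zero power of $f$ is meromorphic) is non-empty, and it is finite because a $G$-function has finitely many singularities, all algebraic. Put $\rho_b:=\min\{|\gamma|:\gamma\in\Sigma_b\}$ and fix $\gamma_0\in\Sigma_b$ with $|\gamma_0|=\rho_b$; here $0<\rho_b<1$. The decisive step is to pinpoint a single point $z_*\neq0$ at which the distinguished factor is genuinely bad while no other factor is. In case (i) I would take $i^*$ maximizing $|\lambda_i|$ over $\{i:a_i\neq0\}$ and set $z_*=\gamma_0/\lambda_{i^*}$; in case (ii) I would take $j^*$ minimal in $\{j:a_j\neq0\}$ and let $z_*$ be a $j^*$-th root of $\gamma_0$. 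In either case the map sending $z_*$ to $\gamma_0$ is a local biholomorphism, so $\varphi_{i^*}$ inherits a bad singularity at $z_*$.

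The crux, and the point demanding the most care, is to guarantee that badness cannot be cancelled by another factor, that is, that no $\varphi_i$ with $i\neq i^*$ is bad at $z_*$. This is exactly what the extremal choice buys: for $i\neq i^*$ with $a_i\neq0$ one checks that the point to which $z_*$ is sent, namely $\lambda_i z_*$ in (i) and $z_*^{\,j}$ in (ii), has modulus strictly below $\rho_b$ — using $|\lambda_i|<|\lambda_{i^*}|$ in (i), and $j>j^*$ together with $\rho_b<1$ in (ii). Consequently, if $\varphi_i$ is singular at $z_*$ at all, the corresponding singularity of $f$ lies strictly inside the minimal bad radius, hence is nice, so some power of $\varphi_i$ is meromorphic at $z_*$. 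Choosing one positive integer $M$ (an lcm over the finitely many such factors) that makes every $\varphi_i(z)^{M}$, $i\neq i^*$, meromorphic at $z_*$, and noting that $r(z)^{M}$ is meromorphic, I would raise the identity to the power $M$ and solve for the distinguished factor,
$$
\varphi_{i^*}(z)^{Ma_{i^*}}=r(z)^{M}\prod_{i\neq i^*}\varphi_i(z)^{-Ma_i},
$$
whose right-hand side is meromorphic at $z_*$. As $Ma_{i^*}\neq0$, this forces a non-zero power of $f$ to be meromorphic at $\gamma_0$, contradicting $\gamma_0\in\Sigma_b$. I expect the only real subtlety to be this separation of the bad, nice, and analytic local behaviours and the verification that the extremal factor alone realizes the minimal bad modulus; the manipulation of meromorphic germs afterwards is routine.
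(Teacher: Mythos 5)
Your proposal is correct and takes essentially the same route as the paper: reduce via Theorem~\ref{thm: ind} to a monomial identity $\prod_i \varphi_i(z)^{a_i}=r(z)$, then use Proposition~\ref{prop: trans} to locate a ``bad'' singularity of smallest modulus and exploit the extremal factor (largest $|\lambda_i|$ in (i), smallest exponent in (ii)) so that all other factors are sent strictly inside the bad radius, forcing a non-zero power of $f$ to be meromorphic at a bad point. The only cosmetic differences are that you work with the attained minimum $\rho_b$ of the (finite) set of bad moduli, where the paper uses the infimum $\alpha$ and picks a bad point sufficiently close to it, and that you correctly take an lcm of the exponents where the paper's text misprints ``gcd''.
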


\begin{proof} Let us first prove Assertion (i). We first note that, for all but finitely many primes $p$ in $\mathcal S$, the rational numbers 
$\lambda_1,\ldots,\lambda_n$ belong to $\mathbb Z_{(p)}$. We can thus replace $\mathcal S$ by an infinite subset $\mathcal S'$ 
such that this holds for all primes in $\mathcal S'$. Set $g_i(z):=f(\lambda_iz)$ for every $i$ in $\{1,\ldots,n\}$.  Taking $p$ in 
$\mathcal S'$ and using that $f(z)$ belongs to $\mathcal L(\mathcal S)$, we obtain that there exist a rational fraction $A(z)$ and a positive integer $k$ 
such that $f(z) \equiv A(z) f(z^{p^k}) \bmod p\mathbb Z_{(p)}[[z]]$. This gives: 
\begin{align*}
g_i(z)\equiv f(\lambda_iz)&\equiv A(\lambda_iz) f\big((\lambda_iz)^{p^k}\big) \bmod p\mathbb Z_{(p)}[[z]] \\
& \equiv  A(\lambda_iz) f\big(\lambda_iz^{p^k}\big) \bmod p\mathbb Z_{(p)}[[z]]  \\
&\equiv A(\lambda_iz)g_i\big(z^{p^k}\big) \bmod p\mathbb Z_{(p)}[[z]] . 
\end{align*}
We thus have that $g_1(z),\ldots,g_n(z)$ all belong to $\mathcal L(\mathcal S')$. Let us assume by contradiction that they 
are algebraically dependent over $\mathbb Q(z)$. Then  Theorem \ref{thm: ind} ensures the existence of integers 
$a_1,\ldots,a_n$, not all zero, and of a rational fraction $r(z)$ in $\mathbb{Q}(z)$ such that 
\begin{equation}\label{eq: gi}
g_1(z)^{a_1}\cdots g_n(z)^{a_n} = r(x).
\end{equation}
Without any loss of generality, we can assume that $\vert \lambda_1\vert < \cdots < \vert \lambda_n\vert$. 
Let $j$ be the largest index for which $a_j\neq 0$.  

\medskip

Let $\alpha$ denote the infimum of all $|\beta|$, where 
$\beta$ ranges over all complex numbers such that, for all $n\ge 1$, $f(z)^n$ has no meromorphic continuation at  $\beta$.    
By Proposition \ref{prop: trans}, we have that $0<\alpha<1$.  We pick now a complex number $\beta$ such that,  
for every positive integer $n$, $f(z)^n$ is not meromorphic at $\beta$ and such that $\vert (\lambda_i/\lambda_j)\beta\vert< \alpha$ for all $i < j$.  
Then Equation~\eqref{eq: gi} 
can be rewritten as 
$$
g_j(z)^{a_j} = r(z)\prod_{i=1}^{j-1} g_i(z)^{-a_i}.
$$  
We assume that $a_j>0$, otherwise we would write $g_j(z)^{-a_j} = r(z)^{-1}\prod_{i=1}^{j-1} g_i(z)^{a_i}$. 
Our choice of $\beta$ ensures, for every $i=1,\ldots,j-1$, the existence of a positive integer $n_i$ such that $g_i(z)^{n_i}$ is meromorphic at 
$z=\beta/\lambda_j$. Taking $n:= \gcd (n_1,\ldots,n_{j-1})$, we obtain that $g_j(z)^{na_j}$ is meromorphic at $\beta/\lambda_j$. 
This provides a contradiction since no power of $f(z)$ is meromorphic at $\beta$. This proves Assertion $(i)$. 
\medskip

A similar argument handles Assertion (ii). In that case, we have to choose $j$ to be the smallest index for which $a_j\neq 0$ 
and  $\beta$ to be such that for every positive integer $n$, $f(z)^n$ is not meromorphic at $\beta$ and such that$\vert \beta^{i/j}\vert< \alpha$ 
for all $i>j$. The rest of the proof remains unchanged. 
\end{proof}


\section{A family of multivariate generalized hypergeometric series and their $p$-adic properties}\label{sec: mgh}

In this section, we introduce a family of multivariate generalized hypergeometric series denoted by $F_{\mathbf{u},\mathbf{v}}({\bf x})$ and already    
mentioned in Section \ref{sec: intro}. Our aim is to deduce from 
$p$-adic properties of their coefficients an efficient condition on the parameters ${\bf u}$ and ${\bf v}$ that forces 
$F_{\mathbf{u},\mathbf{v}}({\bf x})$ to belong to $\mathfrak{L}_d(\mathcal{S})$ for an infinite set of primes $\mathcal{S}$.   
A key idea is then that classical examples of sequences with the $p$-Lucas property 
arise from specializations of particular multivariate generalized hypergeometric series. 
As we will see in Section \ref{sec: appli}, 
various specializations of the parameters or of the variables of functions of type $F_{\mathbf{u},\mathbf{v}}$ 
lead us to prove that interesting families of $G$-functions belong to $\mathfrak{L}_1(\mathcal{S})$. 
This includes generating series associated with factorial ratios, with some sums and products of binomials, and 
generalized hypergeometric series. 
In this direction, we stress that Propositions \ref{prop: fuv}, \ref{theo Lucas Hypergeom}, and~\ref{prop: spe}  allow to recover most examples in the literature 
of sequences known  to satisfy  $p$-Lucas congruences.  They also  provide a lot of new examples.

\medskip

Let us give now the definition of our family of multivariate generalized hypergeometric series. 
For every tuples $\mathbf{u}=((\alpha_1,\mathbf{e}_1),\dots,(\alpha_r,\mathbf{e}_r))$ and 
$\mathbf{v}=((\beta_1,\mathbf{f}_1),\dots,(\beta_s,\mathbf{f}_s))$ of elements in $\mathbb{Q} \times\mathbb{N}^d$, and every vector ${\bf n}$ in $\mathbb N^d$, 
we set  
$$
\mathcal{Q}_{\mathbf{u},\mathbf{v}}(\mathbf{n}) :=
\frac{(\alpha_1)_{\mathbf{e}_1\cdot\mathbf{n}}\cdots(\alpha_r)_{\mathbf{e}_r\cdot\mathbf{n}}}
{(\beta_1)_{\mathbf{f}_1\cdot\mathbf{n}}\cdots(\beta_s)_{\mathbf{f}_s\cdot\mathbf{n}}} \, ,
$$
where $(x)_n:=x(x+1)\cdots(x+n-1)$ if $n\geq 1$, and $(x)_0=1$ denote the Pochhammer symbol. 
Then the multivariate power series $F_{\mathbf{u},\mathbf{v}}({\bf x})$ is defined by:
$$
F_{\mathbf{u},\mathbf{v}}({\bf x}) := \sum_{{\bf n}\in \mathbb N^d} \mathcal{Q}_{\mathbf{u},\mathbf{v}}(\mathbf{n}) {\bf x}^{\bf n} \in \mathbb Q[[{\bf x}]]\, .
$$
We also set 
$$
\mathcal H_d := \Bigg\{F_{\mathbf{u},\mathbf{v}}({\bf x}) \,:\, \alpha_i,\beta_i\in\mathbb{Q}\cap(0,1] \mbox{ and }   
\sum_{i=1}^r\mathbf{e}_i=\sum_{j=1}^s\mathbf{f}_j \Bigg\} \, .
$$
The class of power series $\mathcal H:= \cup_{d\geq 1} \mathcal H_d$ contains of course the generalized hypergeometric series, but  also the multivariate 
Apell and Lauricella hypergeometric series.

\subsection{Lucas-type congruences for elements of $\mathcal H_d$} 

Our aim is to provide an efficient condition for $F_{\mathbf{u},\mathbf{v}}({\bf x})$ to satisfy the $p^k$-Lucas property for infinitely many primes $p$. 
In this direction, our main result is Proposition \ref{theo Lucas Hypergeom}. However, the statement of this result is a bit technical and 
requires some notation. We thus choose to first state it in a particular case, as Proposition \ref{prop: fuv}, that avoids most of these technical aspects.

Our first condition is based on the analytical properties of the following simple step function defined from $\mathbb{R}^d$ to $\mathbb{Z}$:
$$
\xi_1(\mathbf{x}):=\sum_{i=1}^r\lfloor\mathbf{e}_i\cdot\mathbf{x}-\alpha_i\rfloor-\sum_{j=1}^s\lfloor\mathbf{f}_j\cdot\mathbf{x}-\beta_j\rfloor+r-s \,.
$$
Note that if $F_{\mathbf{u},\mathbf{v}}(\mathbf{x})$ belongs to $\mathcal{H}_d$, then the behavior of $\xi_1$ on $\mathbb{R}^d$ is determined by its values on $[0,1)^d$ because we have
$$
\xi_1(\mathbf{x})=\xi_1\big(\{\mathbf{x}\}\big)+\Bigg(\sum_{i=1}^r\mathbf{e}_i-\sum_{j=1}^s\mathbf{f}_j\Bigg)\cdot\lfloor\mathbf{x}\rfloor=\xi_1\big(\{\mathbf{x}\}\big) \,.
$$
Let $\mathcal{D}_{\mathbf{u},\mathbf{v}}^1$ be the semi-algebraic set defined by:
$$
\mathcal{D}_{\mathbf{u},\mathbf{v}}^1:=\big\{\mathbf{x}\in[0,1)^d\,: \,\mathbf{e}\cdot\mathbf{x}\geq\alpha
\textup{ for some coordinate $(\alpha,\mathbf{e})$ of either $\mathbf{u}$ or $\mathbf{v}$}\big\} \,.
$$
Note that outside $\mathcal{D}_{\mathbf{u},\mathbf{v}}^1$ the step function $\xi_1$ trivially vanishes when the $\alpha_i$'s and $\beta_i$'s belong to $(0,1]$. 
 We need a last notation before stating our first result. We let $d_{\boldsymbol{\alpha},\boldsymbol{\beta}}$ stand for the least common 
 multiple of the  denominators of the rational numbers $\alpha_1,\ldots,\alpha_r,\beta_1,\ldots,\beta_s$, written in lowest form. 
 Then our first proposition reads as follows.

\medskip

\begin{prop}\label{prop: fuv} 
Let $F_{\mathbf{u},\mathbf{v}}({\bf x})$ be in $\mathcal H_d$ and 
let us assume that $\xi_1(\mathbf{x})\geq 1$ for all $\mathbf{x}$ in $\mathcal{D}_{\mathbf{u},\mathbf{v}}^1$. 
Then $F_{\mathbf{u},\mathbf{v}}({\bf x})$ belongs to $\mathfrak L_d(\mathcal{S})$, where 
$$
\mathcal{S}:=\{p \in \mathcal P \,:\, p\equiv 1\mod d_{\boldsymbol{\alpha},\boldsymbol{\beta}} \} \, . 
$$
More precisely,  $F_{\mathbf{u},\mathbf{v}}({\bf x})$ satisfies the $p$-Lucas property for all primes $p$ in $\mathcal{S}$. 
\end{prop}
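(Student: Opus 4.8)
The plan is to use the characterization of Proposition~\ref{prop: plucas}: it suffices to prove that, for every prime $p$ in $\mathcal{S}$, the coefficients $\mathcal{Q}_{\mathbf{u},\mathbf{v}}(\mathbf{n})$ lie in $\mathbb{Z}_{(p)}$ and satisfy the $p$-Lucas property, that is $\mathcal{Q}_{\mathbf{u},\mathbf{v}}(\mathbf{v}_0+p\mathbf{m})\equiv \mathcal{Q}_{\mathbf{u},\mathbf{v}}(\mathbf{v}_0)\mathcal{Q}_{\mathbf{u},\mathbf{v}}(\mathbf{m})\bmod p$ for all $\mathbf{v}_0$ in $\{0,\dots,p-1\}^d$ and $\mathbf{m}$ in $\mathbb{N}^d$. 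Fix such a $p$. Because $p\equiv 1\bmod d_{\boldsymbol{\alpha},\boldsymbol{\beta}}$, the denominators of the $\alpha_i$ and $\beta_j$ all divide $p-1$, so these rationals are $p$-adic units; writing $\alpha=a/c$ in lowest terms, the only solution of $\alpha+j\equiv 0\bmod p^{\ell}$ in $\{0,\dots,p^{\ell}-1\}$ is the integer $j=\alpha(p^{\ell}-1)$. This explicit residue is the bridge between the Pochhammer symbols and the step function $\xi_1$.

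The first step is a valuation formula. Counting for each $\ell\ge 1$ the indices $j<\mathbf{e}_i\cdot\mathbf{n}$ with $p^{\ell}\mid\alpha_i+j$ yields a Legendre-type expression for $v_p\big((\alpha_i)_{\mathbf{e}_i\cdot\mathbf{n}}\big)$, in which a floor term $\lfloor\mathbf{e}_i\cdot\mathbf{n}/p^{\ell}\rfloor$ is corrected by $1$ exactly when $\mathbf{e}_i\cdot\mathbf{n}\bmod p^{\ell}$ exceeds $\alpha_i(p^{\ell}-1)$. Summing these contributions over all numerator and denominator factors, the balancing condition $\sum_i\mathbf{e}_i=\sum_j\mathbf{f}_j$ of $\mathcal{H}_d$ lets one replace each $\mathbf{e}_i\cdot\mathbf{n}/p^{\ell}$ by $\mathbf{e}_i\cdot\{\mathbf{n}/p^{\ell}\}$, and each floor-plus-correction then equals $\lfloor\mathbf{e}_i\cdot\{\mathbf{n}/p^{\ell}\}-\alpha_i\rfloor+1$ (using $0<\alpha_i\le 1$ and the integrality of $\alpha_i(p^{\ell}-1)$), giving the identity $v_p\big(\mathcal{Q}_{\mathbf{u},\mathbf{v}}(\mathbf{n})\big)=\sum_{\ell\ge 1}\xi_1\big(\{\mathbf{n}/p^{\ell}\}\big)$. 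Since $\xi_1$ vanishes outside $\mathcal{D}_{\mathbf{u},\mathbf{v}}^1$ and is at least $1$ on it by hypothesis, one has $\xi_1\ge 0$ everywhere, hence $v_p\big(\mathcal{Q}_{\mathbf{u},\mathbf{v}}(\mathbf{n})\big)\ge 0$ and the coefficients are $p$-integral.

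For the congruence itself I would split on whether $\mathbf{v}_0/p$ belongs to $\mathcal{D}_{\mathbf{u},\mathbf{v}}^1$. If it does, the hypothesis gives $\xi_1(\mathbf{v}_0/p)\ge 1$; since $\{\mathbf{v}_0/p\}=\{(\mathbf{v}_0+p\mathbf{m})/p\}=\mathbf{v}_0/p$, the $\ell=1$ term of the valuation formula forces $v_p\big(\mathcal{Q}_{\mathbf{u},\mathbf{v}}(\mathbf{v}_0)\big)\ge 1$ and $v_p\big(\mathcal{Q}_{\mathbf{u},\mathbf{v}}(\mathbf{v}_0+p\mathbf{m})\big)\ge 1$, so both sides of the congruence vanish modulo $p$. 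It is here that the strict bound $\xi_1\ge 1$ on $\mathcal{D}_{\mathbf{u},\mathbf{v}}^1$, and not merely $\xi_1\ge 0$, is essential. Otherwise every coordinate satisfies $\mathbf{e}_i\cdot\mathbf{v}_0<\alpha_i p$ and $\mathbf{f}_j\cdot\mathbf{v}_0<\beta_j p$, so no factor of $\mathcal{Q}_{\mathbf{u},\mathbf{v}}(\mathbf{v}_0)$ is divisible by $p$. Writing $A_i=\mathbf{e}_i\cdot\mathbf{v}_0$ and $B_i=\mathbf{e}_i\cdot\mathbf{m}$ and applying the identity $(\alpha)_M=(-1)^{M}p^{T}(\alpha)_{T}\,\Gamma_p(\alpha+M)/\Gamma_p(\alpha)$ for Morita's $p$-adic Gamma function, where $T$ is the number of $p$-divisible factors, one finds $T=B_i$ in this regime, whence $(\alpha_i)_{A_i+pB_i}=(-1)^{A_i+pB_i}p^{B_i}(\alpha_i)_{B_i}\,\Gamma_p(\alpha_i+A_i+pB_i)/\Gamma_p(\alpha_i)$. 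Forming the quotient $\mathcal{Q}_{\mathbf{u},\mathbf{v}}(\mathbf{v}_0+p\mathbf{m})$, the balancing condition cancels all powers of $p$ and all signs between numerator and denominator, leaving the factor $\mathcal{Q}_{\mathbf{u},\mathbf{v}}(\mathbf{m})$; the continuity congruence $\Gamma_p(\alpha_i+A_i+pB_i)\equiv\Gamma_p(\alpha_i+A_i)\bmod p$ together with $\Gamma_p(\alpha_i+A_i)/\Gamma_p(\alpha_i)=(-1)^{A_i}(\alpha_i)_{A_i}$ then collapses the remaining $\Gamma_p$-part to $\mathcal{Q}_{\mathbf{u},\mathbf{v}}(\mathbf{v}_0)$, which yields the desired relation modulo $p$.

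Proposition~\ref{prop: plucas} (with $k=1$) then shows $F_{\mathbf{u},\mathbf{v}}\in\mathfrak{L}_d(\mathcal{S})$. I expect the main obstacle to be the computation in this last ``unit'' regime: one must keep exact track of the powers of $p$ and of the signs produced by the $p$-divisible factors of each Pochhammer symbol, and it is precisely the balancing condition $\sum_i\mathbf{e}_i=\sum_j\mathbf{f}_j$ that forces them to cancel between numerator and denominator--- without it the $\Gamma_p$-bookkeeping would leave an uncontrolled power of $p$ and the Lucas congruence would break down.
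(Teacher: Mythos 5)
Your proof is correct, and it is worth comparing with the paper's route. The paper does not prove Proposition~\ref{prop: fuv} separately: it deduces it as the case $A=\{1\}$ of the more general Proposition~\ref{theo Lucas Hypergeom}, whose proof runs through the Dwork map $\mathfrak{D}_p$ and the key Lemma~\ref{Lemma DecompProd}, namely the decomposition
$\mathcal{Q}_{\mathbf{u},\mathbf{v}}(\mathbf{a}+\mathbf{m}p)\in\mathcal{Q}_{\mathbf{u},\mathbf{v}}(\mathbf{a})\,\mathcal{Q}_{\mathfrak{D}_p(\mathbf{u}),\mathfrak{D}_p(\mathbf{v})}(\mathbf{m})\,(1+p\mathbb{Z}_{(p)})$
when $\mathbf{a}/p\notin\mathcal{D}_{\mathbf{u},\mathbf{v}}^{\iota(p)}$, proved by an elementary block decomposition of the Pochhammer symbols together with Wilson's theorem. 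Your skeleton is the same --- the valuation formula $v_p(\mathcal{Q}_{\mathbf{u},\mathbf{v}}(\mathbf{n}))=\sum_{\ell\ge 1}\xi_1(\{\mathbf{n}/p^\ell\})$ giving $p$-integrality, then the dichotomy on whether $\mathbf{v}_0/p$ lies in $\mathcal{D}_{\mathbf{u},\mathbf{v}}^1$ --- but you exploit the hypothesis $p\equiv 1\bmod d_{\boldsymbol{\alpha},\boldsymbol{\beta}}$ from the start, which makes $\mathfrak{D}_p(\alpha)=\alpha$ and lets you bypass the Dwork-map equivariance and the recursion entirely; and in the unit regime you replace the Wilson-theorem computation by Morita's $p$-adic Gamma function, using $(\alpha)_M=(-1)^Mp^T(\alpha)_T\,\Gamma_p(\alpha+M)/\Gamma_p(\alpha)$ and the Lipschitz property $\Gamma_p(x)\equiv\Gamma_p(y)\bmod p$ for $x\equiv y\bmod p$. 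The two devices are equivalent in content (the functional equation and mod-$p$ continuity of $\Gamma_p$ encode exactly the Wilson-type cancellation the paper performs by hand), so what you gain is a tidier bookkeeping of signs and powers of $p$ at the cost of invoking $\Gamma_p$ machinery; what the paper's formulation gains is self-containedness and, more importantly, a statement (Lemma~\ref{Lemma DecompProd} with general $\mathfrak{D}_p$) that can be iterated $k$ times to yield the $p^k$-Lucas property for the larger prime sets of Proposition~\ref{theo Lucas Hypergeom}, which your specialization $\mathfrak{D}_p(\alpha)=\alpha$ cannot reach. Your intermediate claims check out: since $\alpha(p^\ell-1)$ is an integer and $(\alpha(p^\ell-1),\alpha p^\ell)$ contains no integer, the condition $\mathbf{e}_i\cdot\mathbf{v}_0<\alpha_ip$ indeed forces $\mathbf{e}_i\cdot\mathbf{v}_0\le\alpha_i(p-1)$, which is what makes $T=B_i$ and keeps all factors $\alpha_i+j$, $j<A_i$, prime to $p$; and the balancing condition $\sum_i\mathbf{e}_i=\sum_j\mathbf{f}_j$ cancels both the powers of $p$ and the signs, exactly as you say.
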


Let us illustrate Proposition \ref{prop: fuv} with the following example. Set ${\bf u}:=((1,(2,1),(1,(1,1)))$ and 
${\bf v}:=((1,(1,0)),(1,(1,0)),(1,(1,0)),(1,(0,1)) ,(1,(0,1)))$. 
Then  
\begin{align*}
F_{{\bf u},{\bf v}}(x,y) &= \sum_{(n,m)\in\mathbb N^2} \frac{(1)_{2n+m}(1)_{n+m}}{(1)_n^3(1)_m^2} x^ny^m  \\ 
&=  \sum_{(n,m)\in\mathbb N^2} \frac{(2n+m)!(n+m)!}{n!^3m!^2} x^ny^m \, . 
\end{align*}
For all $x_1$ and $x_2$ in $[0,1)$, we have
\begin{align*}
\xi_1(x_1,x_2)&=\lfloor 2x_1+x_2-1\rfloor+\lfloor x_1+x_2-1\rfloor-3\lfloor x_1-1\rfloor-2\lfloor x_2-1\rfloor-3\\
&=\lfloor 2x_1+x_2\rfloor+\lfloor x_1+x_2\rfloor-3\lfloor x_1\rfloor-2\lfloor x_2\rfloor\\
&=\lfloor 2x_1+x_2\rfloor+\lfloor x_1+x_2\rfloor \, .
\end{align*}
Clearly, we have that $\sum_{i=1}^r\mathbf{e}_i=\sum_{j=1}^s\mathbf{f}_j$, $\boldsymbol{\alpha}$ and 
$\boldsymbol{\beta}$ are tuples of elements in $(0,1]$ 
and $d_{\boldsymbol{\alpha},\boldsymbol{\beta}}=1$. Furthermore, we have
$$
\mathcal{D}_{\mathbf{u},\mathbf{v}}^1=\Big\{(x_1,x_2)\in[0,1)^2\,: \,2x_1+x_2\geq 1\textup{ or }x_1+x_2\geq 1\Big\} \,,
$$
so that  $\xi_1(x_1,x_2)\geq 1$, for all $(x_1,x_2)$ in $\mathcal{D}_{\mathbf{u},\mathbf{v}}^1$. 
Hence we infer from Proposition \ref{prop: fuv}  that $F_{\mathbf{u},\mathbf{v}}$ satisfies the $p$-Lucas property for all prime numbers $p$.
\medskip

We are now going to generalize Proposition \ref{prop: fuv} in order to possibly enlarge the set of primes 
$p$ for which an element of $\mathcal H_d$ does satisfy the $p^k$-Lucas property.  
Towards that goal, we consider the following generalizations of $\xi_1$ and $\mathcal{D}_{\mathbf{u},\mathbf{v}}^1$. 
For all $x\in\mathbb{R}$, we denote by $\langle x\rangle$ the unique element in $(0,1]$ such that $x-\langle x\rangle$ is an integer. 
In other words, we have $\langle x\rangle=1-\{1-x\}$, or equivalently:
$$
\langle x\rangle=\begin{cases}
\{x\}&\textup{ if $x\notin\mathbb{Z}$,}\\
1&\textup{ otherwise}.
\end{cases}.
$$
Let $a$ denote an element of $\{1,\dots,d_{\boldsymbol{\alpha},\boldsymbol{\beta}}\}$ which is coprime to 
$d_{\boldsymbol{\alpha},\boldsymbol{\beta}}$. Then we consider the following step function from $\mathbb{R}^d$ to $\mathbb{Z}$: 
$$
\xi_a(\mathbf{x}) :=\sum_{i=1}^r\big\lfloor\mathbf{e}_i\cdot\mathbf{x}-\langle a\alpha_i\rangle \big\rfloor-\sum_{j=1}^s
\big\lfloor\mathbf{f}_j\cdot\mathbf{x} -\langle a\beta_j\rangle\big\rfloor+r-s \,.
$$
Furthermore, we consider the semi-algebraic set
$$
\mathcal{D}_{\mathbf{u},\mathbf{v}}^a:=\Big\{\mathbf{x}\in[0,1)^d\,:\,\mathbf{e}\cdot\mathbf{x}\geq\langle a\alpha\rangle\textup
{ for some coordinate  $(\alpha,\mathbf{e})$ of either $\mathbf{u}$ or $\mathbf{v}$}\Big\} \,.
$$
If $\alpha$ belongs to $\mathbb{Q}\cap(0,1]$, then we have $\langle \alpha\rangle=\alpha$, so that we recover our initial definition of $\xi_1$ when $a=1$. 
Our generalization of Proposition \ref{prop: fuv} reads as follows.

\begin{prop}\label{theo Lucas Hypergeom} Let $F_{\mathbf{u},\mathbf{v}}({\bf x})$ be in $\mathcal H_d$. 
Let us assume that there exists a subset $A$ of $\{1,\dots,d_{\boldsymbol{\alpha},\boldsymbol{\beta}}\}$ such that the following hold. 

\medskip

\begin{itemize}
\item[{\rm (i)}] $\{a\mod d_{\boldsymbol{\alpha},\boldsymbol{\beta}}\,:\,  a\in A\}$ is a subgroup of 
$(\mathbb{Z}/d_{\boldsymbol{\alpha},\boldsymbol{\beta}}\mathbb{Z})^\times$.

\medskip

\item[{\rm (ii)}] For all $a$ in $A$ and all $\mathbf{x}$ in $\mathcal{D}_{\mathbf{u},\mathbf{v}}^a$, we have $\xi_a(\mathbf{x})\geq 1$.
\end{itemize}
\medskip

Set $k:=\mbox{\rm Card } A$. Then $F_{\mathbf{u},\mathbf{v}}({\bf x})$ belongs to $\mathfrak L_d(\mathcal{S})$, where  
$$
\mathcal{S}:= \left \{ p \in \mathcal P \,:\, p\equiv a\mod d_{\boldsymbol{\alpha},\boldsymbol{\beta}} 
\mbox{ for some } a\in A, \mbox{ and } p> d_{\boldsymbol{\alpha},\boldsymbol{\beta}} \right\} \, . 
$$
More precisely,  $F_{\mathbf{u},\mathbf{v}}({\bf x})$ satisfies the $p^k$-Lucas property for all primes $p$ in $\mathcal{S}$. 
\end{prop}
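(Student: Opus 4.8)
The plan is to prove the $p^k$-Lucas congruence directly, since by Proposition~\ref{prop: plucas} this is equivalent to $F_{\mathbf{u},\mathbf{v}}\in\mathfrak{L}_d(\mathcal{S})$ (note that $\mathcal{Q}_{\mathbf{u},\mathbf{v}}(\mathbf{0})=1$). Fix $p\in\mathcal{S}$. First I would extract the group-theoretic content of the hypotheses: writing $H=\{a\bmod d_{\boldsymbol{\alpha},\boldsymbol{\beta}}:a\in A\}$, condition~(i) says $H$ is a subgroup of order $k$ containing the class of $p$, so Lagrange gives $p^k\equiv 1\pmod{d_{\boldsymbol{\alpha},\boldsymbol{\beta}}}$ and, for each $\ell\ge 1$, an element $a_\ell\in A$ with $p^\ell\equiv a_\ell$. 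As every $\alpha_i,\beta_j$ has denominator dividing $d_{\boldsymbol{\alpha},\boldsymbol{\beta}}$ and $p>d_{\boldsymbol{\alpha},\boldsymbol{\beta}}$, this makes each parameter a $p$-adic unit and yields $\langle p^\ell\alpha_i\rangle=\langle a_\ell\alpha_i\rangle$; this identity is exactly what lets the functions $\xi_{a_\ell}$ govern the $\ell$-th \emph{digit} and couples condition~(ii) to the Frobenius.

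The analytic input is a valuation formula. Counting, for a $p$-adic unit $\alpha$, the $j\in[0,N)$ with $v_p(\alpha+j)\ge\ell$ and summing over $\ell$ expresses $v_p((\alpha)_N)$ through floor functions; applied to each factor of $\mathcal{Q}_{\mathbf{u},\mathbf{v}}$ with $N=\mathbf{e}_i\cdot\mathbf{n}$, the linear (integer-part) contributions cancel because $\sum_i\mathbf{e}_i=\sum_j\mathbf{f}_j$ on $\mathcal{H}_d$, and I expect the formula to collapse to
$$
v_p\big(\mathcal{Q}_{\mathbf{u},\mathbf{v}}(\mathbf{n})\big)=\sum_{\ell\ge 1}\xi_{a_\ell}\!\left(\left\{\frac{\mathbf{n}}{p^\ell}\right\}\right).
$$
Each summand is nonnegative: off $\mathcal{D}_{\mathbf{u},\mathbf{v}}^{a_\ell}$ every floor equals $-1$ and $\xi_{a_\ell}$ vanishes identically (using $\langle a_\ell\alpha\rangle\in(0,1]$), while on $\mathcal{D}_{\mathbf{u},\mathbf{v}}^{a_\ell}$ condition~(ii) gives $\xi_{a_\ell}\ge 1$. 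Hence $\mathcal{Q}_{\mathbf{u},\mathbf{v}}(\mathbf{n})\in\mathbb{Z}_{(p)}$, and we read off exactly when it is a unit.

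For the congruence I would fix $\mathbf{v}\in\{0,\dots,p^k-1\}^d$, $\mathbf{m}\in\mathbb{N}^d$ and split $\mathbf{n}=\mathbf{v}+\mathbf{m}p^k$. For $\ell\le k$ one has $\{\mathbf{n}/p^\ell\}=\{\mathbf{v}/p^\ell\}$, and for $\ell=k+j$ (using $a_{k+j}=a_j$) the vector $\{\mathbf{n}/p^{k+j}\}$ equals $\{\mathbf{m}/p^j\}$ perturbed coordinatewise upward by the tiny term $\mathbf{v}/p^{k+j}$. The easy half is then transparent: if $\mathcal{Q}_{\mathbf{u},\mathbf{v}}(\mathbf{v})$ or $\mathcal{Q}_{\mathbf{u},\mathbf{v}}(\mathbf{m})$ is $\equiv 0\pmod p$, some term $\xi_{a_\ell}(\{\mathbf{v}/p^\ell\})$ or $\xi_{a_j}(\{\mathbf{m}/p^j\})$ is $\ge 1$, and since $\mathcal{D}_{\mathbf{u},\mathbf{v}}^{a}$ is defined by inequalities $\mathbf{e}\cdot\mathbf{x}\ge\langle a\alpha\rangle$ stable under coordinatewise increase, the matching term in the formula for $v_p(\mathcal{Q}_{\mathbf{u},\mathbf{v}}(\mathbf{n}))$ is also $\ge 1$; thus both sides vanish modulo $p$. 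In the remaining case $\mathcal{Q}_{\mathbf{u},\mathbf{v}}(\mathbf{v})$ and $\mathcal{Q}_{\mathbf{u},\mathbf{v}}(\mathbf{m})$ are units, and I must show $\mathcal{Q}_{\mathbf{u},\mathbf{v}}(\mathbf{n})$ is a unit equal modulo $p$ to their product: this splits into showing that the upward perturbation creates no new crossing of a threshold $\langle a_j\alpha\rangle$, and that the surviving unit factors match, the latter by organising the prime-to-$p$ part of each Pochhammer through Morita's $p$-adic Gamma function $\Gamma_p$, whose reduction modulo $p$ is compatible with the base-$p^k$ digit decomposition.

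The hard part will be the no-spurious-carry estimate in the unit case: although $\mathbf{e}\cdot\{\mathbf{m}/p^j\}$ and $\mathbf{e}\cdot\mathbf{v}/p^{k+j}$ are each strictly below $\langle a_j\alpha\rangle$, their sum could in principle reach it, and ruling this out requires a sharp gap estimate combining the integrality of $p^j\,\mathbf{e}\cdot\{\mathbf{m}/p^j\}$ with the fact that the thresholds have denominator dividing $d_{\boldsymbol{\alpha},\boldsymbol{\beta}}$ and $p\equiv a_j\pmod{d_{\boldsymbol{\alpha},\boldsymbol{\beta}}}$. Making this uniform in $p$ is the crux, since $p$ may be only slightly larger than $d_{\boldsymbol{\alpha},\boldsymbol{\beta}}$ while the $\mathbf{e}_i$ are fixed. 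Finally, I note that Proposition~\ref{prop: fuv} is the special case $A=\{1\}$, $k=1$ (so $a_\ell=1$ for all $\ell$ and no Frobenius twist occurs), and the same argument proves it at once.
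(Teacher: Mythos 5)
Your overall frame is the same as the paper's: reduce to the $p^k$-Lucas congruence for $\mathcal{Q}_{\mathbf{u},\mathbf{v}}$ via Proposition \ref{prop: plucas}, exploit hypothesis (i) to get $p^k\equiv 1\bmod d_{\boldsymbol{\alpha},\boldsymbol{\beta}}$ and to keep all powers of $p$ inside the image of $A$, use the Christol--Dwork valuation formula to get integrality, and handle the ``vanishing'' case by upward-closedness of the sets $\mathcal{D}_{\mathbf{u},\mathbf{v}}^a$. Two problems, though. First, a technical slip: the valuation formula is indexed by the \emph{inverse} of $p^\ell$ modulo $d_{\boldsymbol{\alpha},\boldsymbol{\beta}}$, i.e.\ $v_p(\mathcal{Q}_{\mathbf{u},\mathbf{v}}(\mathbf{n}))=\sum_{\ell\ge1}\xi_{\iota(p^\ell)}(\{\mathbf{n}/p^\ell\})$ with $\iota(p^\ell)p^\ell\equiv 1$, because $\mathfrak{D}_p^\ell(\alpha)=\langle\iota(p^\ell)\alpha\rangle$, not $\langle p^\ell\alpha\rangle$; your choice $a_\ell\equiv p^\ell$ is the wrong twist. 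Hypothesis (i) makes this harmless for nonnegativity and for the vanishing case, but it corrupts the digit bookkeeping exactly where precision is needed.

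Second, and decisively, the step you yourself flag as ``the crux'' --- the unit case --- is not proved, and the route you sketch does not close. A threshold crossing is excluded by your denominator argument only when $\mathbf{e}\cdot\mathbf{v}/p^{k+j}<1/(d_{\boldsymbol{\alpha},\boldsymbol{\beta}}\,p^{j})$, i.e.\ $\mathbf{e}\cdot\mathbf{v}<p^k/d_{\boldsymbol{\alpha},\boldsymbol{\beta}}$; but the unit-case hypothesis only gives $\mathbf{e}\cdot\mathbf{a}_i\le p-1$ for each digit, hence $\mathbf{e}\cdot\mathbf{v}\le p^k-1$, so the estimate fails whenever $d_{\boldsymbol{\alpha},\boldsymbol{\beta}}>1$; and the $\Gamma_p$-matching of unit factors is asserted, not carried out. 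The paper's proof avoids both issues by never working in base $p^k$ at once: its key tool is an exact multiplicative lemma for a \emph{single} base-$p$ digit (Lemma \ref{Lemma DecompProd}): if $\mathbf{a}/p\notin\mathcal{D}_{\mathbf{u},\mathbf{v}}^{\iota(p)}$, then
$$
\mathcal{Q}_{\mathbf{u},\mathbf{v}}(\mathbf{a}+\mathbf{m}p)\in\mathcal{Q}_{\mathbf{u},\mathbf{v}}(\mathbf{a})\,\mathcal{Q}_{\mathfrak{D}_p(\mathbf{u}),\mathfrak{D}_p(\mathbf{v})}(\mathbf{m})\,(1+p\mathbb{Z}_{(p)})\,,
$$
proved by cutting each Pochhammer symbol into complete blocks of $p$ consecutive terms (Wilson's theorem makes each block contribute $(-p)\,$ times the Dwork-shifted parameter, up to $1+p\mathbb{Z}_{(p)}$) plus a short initial segment that is prime to $p$ \emph{precisely because} the digit lies off $\mathcal{D}_{\mathbf{u},\mathbf{v}}^{\iota(p)}$; the signs and powers of $p$ cancel since $\sum_i\mathbf{e}_i=\sum_j\mathbf{f}_j$. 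Iterating this $k$ times --- the parameters become $\mathfrak{D}_p^i(\mathbf{u}),\mathfrak{D}_p^i(\mathbf{v})$, which satisfy the same hypotheses because the image of $A$ is a group, and return to $\mathbf{u},\mathbf{v}$ at step $k$ since $p^k\equiv 1$ --- gives the congruence with no carry analysis at all: the carries you are trying to estimate are absorbed at each step into the Dwork shift of the parameters. Without this lemma (or a genuine substitute), your Case 2 remains open, and that case is the heart of the proposition.
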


\begin{rem}{\rm Proposition \ref{prop: fuv} is simply obtained by  taking $A=\{1\}$.  
It is worth mentioning that in order to apply Proposition \ref{theo Lucas Hypergeom}, 
one always has to check that $\xi_1(\mathbf{x})\geq 1$ for $\mathbf{x}$ in $\mathcal{D}_{\mathbf{u},\mathbf{v}}^1$.  
Hence Proposition \ref{prop: fuv} applies too. This explains why Proposition \ref{prop: fuv} 
is sufficient for all our applications concerning algebraic independence. Indeed, for using our algebraic independence criterion, 
one only needs to work with infinitely many primes, so that we do not really care about 
a precise description of the set $\mathcal{S}$.}
\end{rem}

\medskip

Let us illustrate Proposition \ref{theo Lucas Hypergeom} with the following one-variable hypergeometric series. We set
$$
F(x):=\sum_{n=0}^\infty\frac{(1/5)_n^2}{(2/7)_n(1)_n}x^n \, .
$$
For all $x$ in $[0,1)$, we have 
$$
\xi_1(x)=2\lfloor x-1/5\rfloor-\lfloor x-2/7\rfloor+1\quad\textup{and}\quad\xi_6(x)=2\lfloor x-1/5\rfloor-\lfloor x-5/7\rfloor+1.
$$
Furthermore, we have
$$
\mathcal{D}^1=\mathcal{D}^6=\big\{x\in[0,1)\,:\,x\geq 1/5\big\}\, .
$$
We thus deduce  that both $\xi_1(x)$ and $\xi_6(x)$ are greater than or equal to $1$ for all $x$ in $\mathcal{D}^1$. 
Since 
$d_{\boldsymbol{\alpha},\boldsymbol{\beta}}=35$ and $6^2\equiv 1\mod 35$, 
we can apply Proposition \ref{theo Lucas Hypergeom} with $A=\{1\}$ and $A=\{1,6\}$. 
We thus deduce that for almost all primes $p\equiv 1\mod 35$, $F(z)$ satisfies the $p$-Lucas property, while, 
for almost all primes $p\equiv 6\mod 35$, $F(z)$ satisfies the $p^2$-Lucas property.

\medskip

In order to transfer the $p^k$-Lucas property from multivariate series of type $F_{{\bf u},{\bf v}}$ to 
one-variable formal power series, we will use the following useful complement to Proposition \ref{theo Lucas Hypergeom}. 

\begin{prop}\label{prop: spe} Let $F_{\mathbf{u},\mathbf{v}}({\bf x})$ be in $\mathcal H_d$.  
We keep all assumptions and notation of Proposition \ref{theo Lucas Hypergeom}.   
Set 
$$
\mathcal N := \big\{ \mathbf{n}\in\mathbb{N}^d \,:\, \forall a\in A, \forall \mathbf{x}\in [0,1)^d \mbox{ with } \mathbf{n}\cdot\mathbf{x}\geq 1,  \mbox{ one has } 
\xi_a(\mathbf{x})\geq 1\big\}\, .
$$
Let     $\mathbf{n}=(n_1,\dots,n_d)$ be in $\mathcal N$ and $(b_1,\dots,b_d)$ be a vector of non-zero rational numbers.  
Then $F_{\mathbf{u},\mathbf{v}}(b_1x^{n_1},\dots,b_dx^{n_d})$ belongs to $\mathfrak L_1(\mathcal{S}')$, where 
$$
\mathcal{S}' := \Big\{ p\in \mathcal{S} \,:\, (b_1,\dots,b_d)\in\mathbb{Z}_{(p)}^d\Big\} \, .
$$
\end{prop}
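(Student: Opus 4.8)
The plan is to prove directly that the one-variable series
$$g(x) := F_{\mathbf{u},\mathbf{v}}(b_1x^{n_1},\dots,b_dx^{n_d}) = \sum_{N\ge 0} c(N)\,x^N$$
satisfies the $p^k$-Lucas property with respect to $\mathcal{S}'$, where $k := \mathrm{Card}\,A$, and then to invoke Proposition~\ref{prop: plucas}. Writing $a(\mathbf{m}) := \mathcal{Q}_{\mathbf{u},\mathbf{v}}(\mathbf{m})$ and $\mathbf{b}^{\mathbf{m}} := b_1^{m_1}\cdots b_d^{m_d}$, the coefficients are given by the \emph{finite} sum
$$c(N) = \sum_{\substack{\mathbf{m}\in\mathbb{N}^d\\ \mathbf{n}\cdot\mathbf{m}=N}} a(\mathbf{m})\,\mathbf{b}^{\mathbf{m}},$$
the fibers $\{\mathbf{n}\cdot\mathbf{m}=N\}$ being finite precisely because all $n_i$ are positive (a tacit requirement for the substitution to define a power series in $x$). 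In particular $c(0)=a(\mathbf{0})=1$, and for $p\in\mathcal{S}'$ each $c(N)$ lies in $\mathbb{Z}_{(p)}$, since $a(\mathbf{m})\in\mathbb{Z}_{(p)}$ (because $F_{\mathbf{u},\mathbf{v}}\in\mathfrak{L}_d(\mathcal{S})$ by Proposition~\ref{theo Lucas Hypergeom}) and $b_i\in\mathbb{Z}_{(p)}$. Fixing $p\in\mathcal{S}'$ and setting $q:=p^k$, it remains to establish that $c(v+Nq)\equiv c(v)c(N)\bmod p$ for all $v\in\{0,\dots,q-1\}$ and $N\in\mathbb{N}$.

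First I would decompose each $\mathbf{m}$ with $\mathbf{n}\cdot\mathbf{m}=v+Nq$ uniquely in base $q$ as $\mathbf{m}=\mathbf{w}+q\mathbf{m}'$, with $\mathbf{w}\in\{0,\dots,q-1\}^d$ and $\mathbf{m}'\in\mathbb{N}^d$. The multivariate $p^k$-Lucas property of $F_{\mathbf{u},\mathbf{v}}$ gives $a(\mathbf{w}+q\mathbf{m}')\equiv a(\mathbf{w})a(\mathbf{m}')\bmod p$, while the Frobenius on $\mathbb{F}_p$ yields $\mathbf{b}^{\mathbf{w}+q\mathbf{m}'}=\mathbf{b}^{\mathbf{w}}(\mathbf{b}^{\mathbf{m}'})^{q}\equiv\mathbf{b}^{\mathbf{w}}\mathbf{b}^{\mathbf{m}'}\bmod p$, since each $b_i\bmod p$ lies in $\mathbb{F}_p$. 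Hence, modulo $p$,
$$c(v+Nq)\equiv\sum_{\mathbf{w}\in\{0,\dots,q-1\}^d}\ \sum_{\substack{\mathbf{m}'\in\mathbb{N}^d\\ \mathbf{n}\cdot\mathbf{w}+q\,\mathbf{n}\cdot\mathbf{m}'=v+Nq}}a(\mathbf{w})a(\mathbf{m}')\,\mathbf{b}^{\mathbf{w}}\mathbf{b}^{\mathbf{m}'}.$$
If $\mathbf{n}\cdot\mathbf{w}<q$, then combining $0\le\mathbf{n}\cdot\mathbf{w}<q$ with $0\le v<q$ forces $\mathbf{n}\cdot\mathbf{w}=v$ and $\mathbf{n}\cdot\mathbf{m}'=N$; these terms sum to $c(v)c(N)$, the range $\mathbf{w}\in\{0,\dots,q-1\}^d$ being automatic in $c(v)$ because $n_iw_i\le v<q$. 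Thus everything reduces to the following claim: \emph{if $\mathbf{w}\in\{0,\dots,q-1\}^d$ satisfies $\mathbf{n}\cdot\mathbf{w}\ge q$, then $p\mid a(\mathbf{w})$.}

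To prove the claim, set $\mathbf{x}:=\mathbf{w}/q\in[0,1)^d$; then $\mathbf{n}\cdot\mathbf{x}=(\mathbf{n}\cdot\mathbf{w})/q\ge 1$, so $\mathbf{n}\in\mathcal{N}$ gives $\xi_a(\mathbf{w}/q)\ge 1$ for every $a\in A$. To convert this into $p$-divisibility of $a(\mathbf{w})=\mathcal{Q}_{\mathbf{u},\mathbf{v}}(\mathbf{w})$, I would use the $p$-adic valuation estimate underpinning Proposition~\ref{theo Lucas Hypergeom}: for $p\in\mathcal{S}$ one has, writing $v_p$ for the $p$-adic valuation,
$$v_p\big(\mathcal{Q}_{\mathbf{u},\mathbf{v}}(\mathbf{w})\big)=\sum_{j\ge 1}\xi_{a_j}\big(\{\mathbf{w}/p^j\}\big),$$
where $a_j$ is the representative in $\{1,\dots,d_{\boldsymbol{\alpha},\boldsymbol{\beta}}\}$ of the residue of $p^{j}$ modulo $d_{\boldsymbol{\alpha},\boldsymbol{\beta}}$. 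Since $\{a\bmod d_{\boldsymbol{\alpha},\boldsymbol{\beta}}:a\in A\}$ is a subgroup containing $p\bmod d_{\boldsymbol{\alpha},\boldsymbol{\beta}}$, each $a_j$ lies in $A$; and by hypothesis (ii) of Proposition~\ref{theo Lucas Hypergeom} — together with the fact that $\xi_a$ vanishes identically off $\mathcal{D}_{\mathbf{u},\mathbf{v}}^{a}$ — every $\xi_{a_j}$ is non-negative on $[0,1)^d$. The sum is therefore bounded below by its $j=k$ term. As the subgroup has order $k=\mathrm{Card}\,A$ and contains $p$, Lagrange's theorem gives $p^{k}\equiv 1\bmod d_{\boldsymbol{\alpha},\boldsymbol{\beta}}$, whence $a_k=1$ and $\xi_{a_k}=\xi_1$; moreover $\mathbf{w}<p^k$ yields $\{\mathbf{w}/p^k\}=\mathbf{w}/q$. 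Consequently
$$v_p\big(\mathcal{Q}_{\mathbf{u},\mathbf{v}}(\mathbf{w})\big)\ge\xi_1(\mathbf{w}/q)\ge 1,$$
which proves the claim. Combined with the reduction above, this establishes the $p^k$-Lucas property of $g$ with respect to $\mathcal{S}'$, and Proposition~\ref{prop: plucas} concludes that $g\in\mathfrak{L}_1(\mathcal{S}')$.

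The main obstacle is the valuation identity invoked above and its interaction with the group $A$: one must verify that each Galois twist $a_j$ remains inside $A$ (so that the non-negativity furnished by hypothesis (ii) applies) and that the index $j=k$ returns exactly the untwisted $\xi_1$ evaluated at $\mathbf{w}/q$ — here the conditions $p>d_{\boldsymbol{\alpha},\boldsymbol{\beta}}$ and $p\bmod d_{\boldsymbol{\alpha},\boldsymbol{\beta}}\in A$ built into $\mathcal{S}$ are what make the estimate available. By contrast, the base-$q$ decomposition, the Frobenius identity for the $b_i$, and the separation of carry from non-carry terms are routine bookkeeping; the only delicate point there is the positivity of the $n_i$, which simultaneously guarantees the finiteness of the fibers $\{\mathbf{n}\cdot\mathbf{m}=N\}$ and the automatic digit bound $w_i<q$ used in identifying the non-carry contribution with $c(v)$.
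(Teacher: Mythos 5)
Your proof is correct and is essentially the paper's argument recast at the level of coefficients: the paper's own proof substitutes $(b_1x^{n_1},\dots,b_dx^{n_d})$ into the series congruence $F(\mathbf{x})\equiv A(\mathbf{x})F\big(\mathbf{x}^{p^k}\big)$ furnished by Proposition \ref{theo Lucas Hypergeom}, replaces $b_i^{p^k}$ by $b_i$ via Fermat, and discards the monomials with $\mathbf{n}\cdot\mathbf{a}\geq p^k$ using exactly the bound you prove, namely $v_p\big(\mathcal{Q}_{\mathbf{u},\mathbf{v}}(\mathbf{a})\big)\geq\xi_{\iota(p^k)}\big(\mathbf{a}/p^k\big)=\xi_1\big(\mathbf{a}/p^k\big)\geq 1$, so your base-$q$ decomposition together with Proposition \ref{prop: plucas} is the same mechanism in different clothing. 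One harmless slip: in the valuation formula the twist at level $j$ is $\iota(p^j)$, the \emph{inverse} of $p^j$ modulo $d_{\boldsymbol{\alpha},\boldsymbol{\beta}}$, not the residue of $p^j$ itself; since $A$ is a subgroup, both lie in $A$ and both equal $1$ when $j=k$, so the two facts you extract from the formula are unaffected.
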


Let us illustrate this result with the example given just after Proposition \ref{prop: fuv}, that is with the function 
$$
F_{{\bf u},{\bf v}}(x,y)=\sum_{(n,m)\in\mathbb{N}^2}\frac{(2n+m)!(n+m)!}{n!^3m!^2}x^ny^m \, .
$$
We consider the specialization given by $\mathbf{n}=(1,1)$ and $b_1=b_2=1$. Then Proposition \ref{prop: spe} applies with 
$A=\{1\}$ and $d_{\boldsymbol{\alpha},\boldsymbol{\beta}}=1$, because we already observed that $\xi_1(x_1,x_2)\geq 1$ for all $(x_1,x_2)$ in $[0,1)^2$ 
satisfying $x_1+x_2\geq 1$. 
A small computation shows that we obtain yet another proof of Gessel's result stating that the Ap\'ery sequence
$$
\left(\sum_{k=0}^n\binom{n}{k}^2\binom{n+k}{k}\right)_{n\geq 0}
$$
satisfies the $p$-Lucas property for all primes $p$. 

Applying Proposition~\ref{prop: spe} to the same function but with the specialization given 
by $\mathbf{n}=(2,1)$, $b_1=-1$ and $b_2=2$, we get that 
$$
F_{{\bf u},{\bf v}}(-x^2,2x) =\sum_{n=0}^\infty\left(\sum_{k=0}^{\lfloor n/2\rfloor}(-1)^k2^{n-2k}\binom{n}{k}\binom{n-k}{n-2k}^2\right)x^n
$$
also satisfies  the $p$-Lucas property for all primes $p$.

\subsection{Proofs of Propositions \ref{theo Lucas Hypergeom} and \ref{prop: spe}}

We first introduce some $p$-adic tools that will be useful for proving these results.

\subsubsection{$p$-adic tools for the study of $\mathcal{Q}_{\mathbf{u},\mathbf{v}}$}\label{Section $p$-adic tools}

By Definition \ref{def: pkl} and Proposition \ref{prop: plucas}, proving that $F_{{\bf u},{\bf v}}({\bf x})$ belongs to $\mathfrak L_d(\mathcal{S})$ is the same as proving the $p^k$-Lucas 
congruence for the multivariate sequence $\mathcal{Q}_{\mathbf{u},\mathbf{v}}$, that is:
$$
\mathcal{Q}_{\mathbf{u},\mathbf{v}} ({\bf n}+{\bf m}p^k) \equiv \mathcal{Q}_{\mathbf{u},\mathbf{v}} ({\bf n})\mathcal{Q}_{\mathbf{u},\mathbf{v}} ({\bf m}) 
\bmod p\,,
$$
for all ${\bf n}$ and ${\bf m}$ in $\mathbb N^d$ and all primes $p$ in $S$. To that purpose, we shall use some classical techniques associated with 
Pochhammer symbols to compute the $p$-adic valuation 
of $\mathcal{Q}_{\mathbf{u},\mathbf{v}}(\mathbf{n})$. These techniques are reminiscent of works of Dwork \cite{Dwork}, Katz \cite{Katz}, Christol \cite{Christol} and, 
more recently, Delaygue, Rivoal, and Roques \cite{Delaygue4}. The following discussion should highlight the role played by the functions $\xi_a$ 
in the proof of Proposition \ref{theo Lucas Hypergeom}.

\medskip

Christol \cite{Christol} gave a useful formula to compute the $p$-adic valuation of $(a)_n$ where $n$ is a natural integer, and later Delaygue, Rivoal, and Roques 
\cite{Delaygue4} reformulated this result in order to deduce a formula which may seem closer to the classical one of Legendre:
$$
v_p(n!)=\sum_{\ell=1}^\infty\left\lfloor\frac{n}{p^\ell}\right\rfloor.
$$
We recall now this reformulation. We refer the reader to \cite[Section~3]{Delaygue4} for more details on the following definitions. 
For all primes $p$ and all $\alpha$ in $\mathbb{Z}_{p}\cap \mathbb Q$,  there is a unique element in $\mathbb{Z}_{p}$, denoted by $\mathfrak{D}_p(\alpha)$, 
such that  $p\mathfrak{D}_p(\alpha)-\alpha$ belongs to $\{0,\dots,p-1\}$. Hence, if $\kappa_0$ denotes the first digit in the Hensel expansion of $-\alpha$, then we have
$$
\mathfrak{D}_p(\alpha)=\frac{\alpha+\kappa_0}{p}\, \cdot
$$
The map $\mathfrak{D}_p$ was introduced by Dwork in \cite{Dwork}, where it is simply denoted by $a\mapsto a'$. 
For all positive integers $\ell$, we denote by $\delta_{p,\ell}(\alpha,\cdot)$ the step function defined, for all real numbers $x$, by
$$
\delta_{p,\ell}(\alpha,x)=\left\lfloor x-\mathfrak{D}_p^\ell(\alpha)-\frac{\lfloor 1-\alpha\rfloor}{p^\ell}\right\rfloor+1\,.
$$
By Proposition 4 in \cite{Delaygue4}, if $\alpha$ does not belong to $\mathbb{Z}_{\leq 0}$, then we get that
$$
v_p\big((\alpha)_{\mathbf{e}\cdot\mathbf{n}}\big)=\sum_{\ell=1}^\infty\delta_{p,\ell}
\left(\alpha,\frac{\mathbf{e}\cdot\mathbf{n}}{p^\ell}\right)=\sum_{\ell=1}^\infty
\delta_{p,\ell}\left(\alpha,\mathbf{e}\cdot\left\{\frac{\mathbf{n}}{p^\ell}\right\}\right)+\mathbf{e}\cdot\big(v_p(n_1!),\dots,v_p(n_d!)\big) \,.
$$
For all primes $p$ that does not divide $d_{\boldsymbol{\alpha},\boldsymbol{\beta}}$, we set
$$
\Delta^{p,\ell}_{\mathbf{u},\mathbf{v}}(\mathbf{x}):=
\sum_{i=1}^r\delta_{p,\ell}(\alpha_i,\mathbf{e}_i\cdot\mathbf{x})-\sum_{j=1}^s\delta_{p,\ell}(\beta_j,\mathbf{f}_j\cdot\mathbf{x}) \, ,
$$
so that
\begin{align*}
v_p\big(\mathcal{Q}_{\mathbf{u},\mathbf{v}}(\mathbf{n})\big)&=\sum_{\ell=1}^\infty\Delta^{p,\ell}_{\mathbf{u},\mathbf{v}}\left(\frac{\mathbf{n}}{p^\ell}\right)\\
&=\sum_{\ell=1}^\infty\Delta^{p,\ell}_{\mathbf{u},\mathbf{v}}\left(\left\{\frac{\mathbf{n}}{p^\ell}\right\}\right)
+\Bigg(\sum_{i=1}^r\mathbf{e}_i-\sum_{j=1}^s\mathbf{f}_j\Bigg)\cdot\big(v_p(n_1!),\dots,v_p(n_d!)\big) \,.
\end{align*}

This formula may seem complicated because the step functions $\Delta_{\mathbf{u},\mathbf{v}}^{p,\ell}$ \textit{a priori} highly depend 
on $p$ and $\ell$, but they actually only depend on the congruence class of $p^\ell$ modulo $d_{\boldsymbol{\alpha},\boldsymbol{\beta}}$. 
To be more precise, let $\boldsymbol{\alpha}$ and $\boldsymbol{\beta}$ be tuples of elements in $\mathbb{Q}\cap(0,1]$,  
$p>d_{\boldsymbol{\alpha},\boldsymbol{\beta}}$ be a prime, and let $a$ be such that $ap^\ell\equiv 1\mod d_{\boldsymbol{\alpha},\boldsymbol{\beta}}$. 
Then we have $\Delta_{\mathbf{u},\mathbf{v}}^{p,\ell}=\xi_a$. Indeed, if $\alpha$ is a rational number in $(0,1]$, then
we have $\lfloor 1-\alpha\rfloor=0$ and  Lemma 3 in \cite{Delaygue4} implies that $\mathfrak{D}_p^\ell(\alpha)=\langle a\alpha\rangle$, as expected. 
Let us denote by  $\iota(p^\ell)$ the unique element of $\{0,\dots,d_{\boldsymbol{\alpha},\boldsymbol{\beta}}\}$ 
such that $\iota(p^\ell)p^\ell\equiv 1\mod d_{\boldsymbol{\alpha},\boldsymbol{\beta}}$. 
We thus get the following formula:  
$$
v_p\big(\mathcal{Q}_{\mathbf{u},\mathbf{v}}(\mathbf{n})\big)=\sum_{\ell=1}^\infty\xi_{\iota(p^\ell)}\left(\frac{\mathbf{n}}{p^\ell}\right) \, ,
$$
which  is somewhat reminiscent of Legendre formula. 

\subsubsection{Proof of Proposition \ref{theo Lucas Hypergeom}} 
We are now ready to prove Propositions \ref{theo Lucas Hypergeom} and \ref{prop: spe}. 
Given a tuple of elements in $\mathbb{Z}_{(p)}\times\mathbb{N}^d$, say  
$\mathbf{w}=((\gamma_1,\mathbf{g}_1),\dots,(\gamma_t,\mathbf{g}_t))$, we set 
$$
\mathfrak{D}_p(\mathbf{w}):=\big(\big(\mathfrak{D}_p(\gamma_1),\mathbf{g}_1\big),\ldots,\big(\mathfrak{D}_p(\gamma_t),\mathbf{g}_t\big)\big).
$$
In order to prove Proposition \ref{theo Lucas Hypergeom}, we will need the following lemma.

\begin{lem}\label{Lemma DecompProd}
Let $p>d_{\boldsymbol{\alpha},\boldsymbol{\beta}}$ be a prime and let $\mathbf{a}$ in $\{0,\dots,p-1\}^d$ be such 
that $\mathbf{a}/p$ does not belong to $\mathcal{D}_{\mathbf{u},\mathbf{v}}^{\iota(p)}$. If $\sum_{i=1}^r\mathbf{e}_i=\sum_{j=1}^s\mathbf{f}_j$, then  we have
$$
\mathcal{Q}_{\mathbf{u},\mathbf{v}}(\mathbf{a}+\mathbf{m}p)\in\mathcal{Q}_{\mathbf{u},\mathbf{v}}(\mathbf{a})\mathcal{Q}_{\mathfrak{D}_p(\mathbf{u}),\mathfrak{D}_p(\mathbf{v})}(\mathbf{m})(1+p\mathbb{Z}_{(p)}) \, ,
$$
for all $\mathbf{m}$ in $\mathbb{N}^d$. 
\end{lem}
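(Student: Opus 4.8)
The plan is to reduce the statement to an exact factorization of each individual Pochhammer symbol occurring in $\mathcal{Q}_{\mathbf{u},\mathbf{v}}$, and then to multiply these factorizations together, using the balancing condition $\sum_i\mathbf{e}_i=\sum_j\mathbf{f}_j$ to make the spurious powers of $p$ and the spurious signs disappear. Throughout, since $p>d_{\boldsymbol{\alpha},\boldsymbol{\beta}}$, every parameter $\alpha$ lies in $\mathbb{Z}_{(p)}$ and $\mathfrak{D}_p(\alpha)=(\alpha+\kappa_0)/p$ is well defined, where $\kappa_0=\kappa_0(\alpha)\in\{0,\dots,p-1\}$ is the first digit of $-\alpha$, so that $\alpha+\kappa_0=p\mathfrak{D}_p(\alpha)$ and $j\mapsto\alpha+j$ is divisible by $p$ exactly when $j\equiv\kappa_0\pmod p$.

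First I would translate the hypothesis $\mathbf{a}/p\notin\mathcal{D}_{\mathbf{u},\mathbf{v}}^{\iota(p)}$. Recalling that $\mathfrak{D}_p(\alpha)=\langle\iota(p)\alpha\rangle$ for $\alpha\in(0,1]$, this hypothesis says that for every coordinate $(\alpha,\mathbf{e})$ of $\mathbf{u}$ or $\mathbf{v}$ one has $\mathbf{e}\cdot\mathbf{a}<p\mathfrak{D}_p(\alpha)=\alpha+\kappa_0(\alpha)$; since $\mathbf{e}\cdot\mathbf{a}$ is an integer and $\alpha\in(0,1]$, this forces $\mathbf{e}\cdot\mathbf{a}\le\kappa_0(\alpha)$. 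This is the only place the hypothesis is used, and it is exactly what makes the next step clean.

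Next, fixing one parameter $\alpha$ and writing $A:=\mathbf{e}\cdot\mathbf{a}$ and $B:=\mathbf{e}\cdot\mathbf{m}$ (so that $\mathbf{e}\cdot(\mathbf{a}+\mathbf{m}p)=A+Bp$), I would split the product $(\alpha)_{A+Bp}=\prod_{j=0}^{A+Bp-1}(\alpha+j)$ according to $p$-divisibility. Because $A\le\kappa_0$, none of the factors with $0\le j\le A-1$ is divisible by $p$, so this initial block contributes exactly $(\alpha)_A$; and the $p$-divisible factors among the remaining $Bp$ indices $\{A,\dots,A+Bp-1\}$ are precisely those with $j=\kappa_0+tp$ for $t=0,\dots,B-1$ (here the bound $A\le\kappa_0$ guarantees there are exactly $B$ of them, with no off-by-one). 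Using $\alpha+\kappa_0+tp=p(\mathfrak{D}_p(\alpha)+t)$ collects these into $p^{B}(\mathfrak{D}_p(\alpha))_B$. The product $V_\alpha$ of the remaining (unit) factors runs, modulo $p$, over $B$ copies of each nonzero residue class, so by Wilson's theorem $V_\alpha\equiv\big((p-1)!\big)^{B}\equiv(-1)^{B}\pmod{p\mathbb{Z}_{(p)}}$. This yields the exact identity
\[
(\alpha)_{A+Bp}=(\alpha)_A\cdot p^{B}(\mathfrak{D}_p(\alpha))_B\cdot V_\alpha,\qquad V_\alpha\in\mathbb{Z}_{(p)}^{\times},\ V_\alpha\equiv(-1)^{B}\pmod{p\mathbb{Z}_{(p)}}.
\]

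Finally I would apply this identity to each $\alpha_i$ (with $A_i=\mathbf{e}_i\cdot\mathbf{a}$, $B_i=\mathbf{e}_i\cdot\mathbf{m}$) and to each $\beta_j$, and form the quotient defining $\mathcal{Q}_{\mathbf{u},\mathbf{v}}(\mathbf{a}+\mathbf{m}p)$. The powers of $p$ contribute $p^{(\sum_i\mathbf{e}_i)\cdot\mathbf{m}}$ in the numerator and $p^{(\sum_j\mathbf{f}_j)\cdot\mathbf{m}}$ in the denominator, which cancel thanks to $\sum_i\mathbf{e}_i=\sum_j\mathbf{f}_j$; the same identity gives $\prod_iV_{\alpha_i}\big/\prod_jV_{\beta_j}\equiv(-1)^{(\sum_i\mathbf{e}_i-\sum_j\mathbf{f}_j)\cdot\mathbf{m}}=1\pmod{p\mathbb{Z}_{(p)}}$, so this unit lies in $1+p\mathbb{Z}_{(p)}$. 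Regrouping the surviving factors identifies $\prod_i(\alpha_i)_{A_i}\big/\prod_j(\beta_j)_{A_j}$ with $\mathcal{Q}_{\mathbf{u},\mathbf{v}}(\mathbf{a})$ and $\prod_i(\mathfrak{D}_p(\alpha_i))_{B_i}\big/\prod_j(\mathfrak{D}_p(\beta_j))_{B_j}$ with $\mathcal{Q}_{\mathfrak{D}_p(\mathbf{u}),\mathfrak{D}_p(\mathbf{v})}(\mathbf{m})$, which is the asserted membership. The main obstacle is the bookkeeping in the single-symbol factorization, namely obtaining exactly $B$ factors divisible by $p$ with no boundary error, which is precisely where the inequality $\mathbf{e}\cdot\mathbf{a}\le\kappa_0$ coming from $\mathbf{a}/p\notin\mathcal{D}_{\mathbf{u},\mathbf{v}}^{\iota(p)}$ is indispensable, together with the control of the unit $V_\alpha$ via Wilson's theorem.
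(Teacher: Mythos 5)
Your proof is correct and follows essentially the same route as the paper's: both arguments factor each Pochhammer symbol $(\alpha)_{\mathbf{e}\cdot\mathbf{a}+\mathbf{e}\cdot\mathbf{m}p}$ into the block $(\alpha)_{\mathbf{e}\cdot\mathbf{a}}$, the $p$-divisible factors producing $p^{\mathbf{e}\cdot\mathbf{m}}\big(\mathfrak{D}_p(\alpha)\big)_{\mathbf{e}\cdot\mathbf{m}}$, and a $p$-adic unit controlled by Wilson's theorem, and then cancel the powers of $p$ and the signs $(-1)^{\mathbf{e}\cdot\mathbf{m}}$ in the quotient via $\sum_{i}\mathbf{e}_i=\sum_{j}\mathbf{f}_j$. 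The only difference is bookkeeping: you peel off the leading $\mathbf{e}\cdot\mathbf{a}$ factors exactly and use the hypothesis (in the form $\mathbf{e}\cdot\mathbf{a}\leq\kappa_0$) to locate the multiples of $p$ in the trailing block of $\mathbf{e}\cdot\mathbf{m}p$ factors, whereas the paper peels off the leading block $(\alpha)_{\mathbf{e}\cdot\mathbf{m}p}$ and uses the hypothesis to show the trailing shifted factors lie in $(\alpha)_{\mathbf{e}\cdot\mathbf{a}}\big(1+p\mathbb{Z}_{(p)}\big)$.
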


\begin{proof}
Let $(\alpha,\mathbf{e})$ be a coordinate of either the vector $\mathbf{u}$ or $\mathbf{v}$, and let 
$p>d_{\boldsymbol{\alpha},\boldsymbol{\beta}}$ be a prime. 
Hence we have $\alpha\in\mathbb{Z}_{(p)}$ and $\mathfrak{D}_p(\alpha)$ is well defined. 
Let $\mathbf{m}$ in $\mathbb{N}^d$ and $\mathbf{a}$ in $\{0,\dots,p-1\}^d$ be such that $\mathbf{a}/p$ does not belong to 
$\mathcal{D}_{\mathbf{u},\mathbf{v}}^{\iota(p)}$. We have

\begin{equation}\label{Eq PochProd 1}
(\alpha)_{\mathbf{e}\cdot(\mathbf{a}+\mathbf{m}p)}=\left(\prod_{i=0}^{\mathbf{e}\cdot\mathbf{m}p-1}(\alpha+i)\right)
\left(\prod_{i=0}^{\mathbf{e}\cdot\mathbf{a}-1}(\alpha+i+\mathbf{e}\cdot\mathbf{m}p)\right)\,.
\end{equation}
Furthermore, we also have
\begin{align*}
\prod_{i=0}^{\mathbf{e}\cdot\mathbf{m}p-1}(\alpha+i)&=\prod_{k=0}^{p-1}\prod_{j=0}^{\mathbf{e}\cdot\mathbf{m}-1}(\alpha+k+jp)\\
&=p^{\mathbf{e}\cdot\mathbf{m}}\big(\mathfrak{D}_p(\alpha)\big)_{\mathbf{e}\cdot\mathbf{m}}
\underset{k\neq p\mathfrak{D}_p(\alpha)-\alpha}{\prod_{k=0}^{p-1}}(\alpha+k)^{\mathbf{e}\cdot\mathbf{m}}
\prod_{j=0}^{\mathbf{e}\cdot\mathbf{m}-1}\left(1+\frac{j}{\alpha+k}p\right).
\end{align*}
If $k\neq p\mathfrak{D}_p(\alpha)-\alpha$, then $\alpha+k\in\mathbb{Z}_{(p)}^\times$ and we obtain that
$$
1+\frac{j}{\alpha+k}p\in 1+p\mathbb{Z}_{(p)}\, ,
$$
for all non-negative integers $j$. 
By Wilson's theorem, it follows that 
$$
\underset{k\neq p\mathfrak{D}_p(\alpha)-\alpha}{\prod_{k=0}^{p-1}}(\alpha+k)\equiv (p-1)!\equiv -1\mod p\mathbb{Z}_{(p)} \,.
$$
This leads to
\begin{equation}\label{Eq PochProd 1.1}
\prod_{i=0}^{\mathbf{e}\cdot\mathbf{m}p-1}(\alpha+i)\in (-p)^{\mathbf{e}\cdot\mathbf{m}}
\big(\mathfrak{D}_p(\alpha)\big)_{\mathbf{e}\cdot\mathbf{m}}(1+p\mathbb{Z}_{(p)}) \,.
\end{equation}
In addition, since $\mathbf{a}/p$ is not in $\mathcal{D}_{\mathbf{u},\mathbf{v}}^{\iota(p)}$, we have $\mathbf{e}\cdot\mathbf{a}/p< \langle\iota(p)\alpha\rangle$,  
that is $\mathbf{e}\cdot\mathbf{a}<p\mathfrak{D}_p(\alpha)$. Since $\alpha\in(0,1]$ we obtain that $\mathbf{e}\cdot\mathbf{a}-1<p\mathfrak{D}_p(\alpha)-\alpha$. 
In particular, for all  $i$ satisfying $0\leq i\leq \mathbf{e}\cdot\mathbf{a}-1$, we have $\alpha+i\in\mathbb{Z}_{(p)}^\times$. This gives: 
\begin{equation}\label{Eq PochProd 1.2}
\prod_{i=0}^{\mathbf{e}\cdot\mathbf{a}-1}(\alpha+i+\mathbf{e}\cdot\mathbf{m}p)
=(\alpha)_{\mathbf{e}\cdot\mathbf{a}}\prod_{i=0}^{\mathbf{e}\cdot\mathbf{a}-1}
\left(1+\frac{\mathbf{e}\cdot\mathbf{m}}{\alpha+i}p\right)\in(\alpha)_{\mathbf{e}\cdot\mathbf{a}}(1+p\mathbb{Z}_{(p)}) \,. 
\end{equation}
We then infer from  \eqref{Eq PochProd 1}, \eqref{Eq PochProd 1.1} and \eqref{Eq PochProd 1.2} that
\begin{equation}\label{Eq PochProd mod}
(\alpha)_{\mathbf{e}\cdot(\mathbf{a}+\mathbf{m}p)}\in(\alpha)_{\mathbf{e}\cdot\mathbf{a}}(-p)^{\mathbf{e}\cdot\mathbf{m}}
\big(\mathfrak{D}_p(\alpha)\big)_{\mathbf{e}\cdot\mathbf{m}}(1+p\mathbb{Z}_{(p)}) \,.
\end{equation}
By applying \eqref{Eq PochProd mod} to all pairs in $\mathbf{u}$ and $\mathbf{v}$, and using that by assumption $\sum_{i=1}^r\mathbf{e}_i=\sum_{j=1}^s\mathbf{f}_j$, 
we finally deduce that 
$$
\mathcal{Q}_{\mathbf{u},\mathbf{v}}(\mathbf{a}+\mathbf{m}p)
\in\mathcal{Q}_{\mathbf{u},\mathbf{v}}(\mathbf{a})\mathcal{Q}_{\mathfrak{D}_p(\mathbf{u}),\mathfrak{D}_p(\mathbf{v})}(\mathbf{m})
(1+p\mathbb{Z}_{(p)}) \,,
$$
which ends the proof. 
\end{proof}

We are now ready to prove Proposition \ref{theo Lucas Hypergeom}. 

\begin{proof}[Proof of Proposition \ref{theo Lucas Hypergeom}] 
Let $A$ be a subset of $\{1,\dots,d_{\boldsymbol{\alpha},\boldsymbol{\beta}}\}$ which satisfies Assertions~(i) and (ii) in Proposition \ref{theo Lucas Hypergeom}. 
We recall that 
$$
\mathcal{S}:= \left \{ p \in \mathcal P \,:\, p\equiv a\mod d_{\boldsymbol{\alpha},\boldsymbol{\beta}} 
\mbox{ for some } a\in A, \mbox{ and } p> d_{\boldsymbol{\alpha},\boldsymbol{\beta}} \right\} \, . 
$$
Note that if $p$ belongs to $\mathcal{S}$ and $\ell$ is a positive integer, then there exists $a$ in $A$ such that 
$p^\ell\equiv a\mod d_{\boldsymbol{\alpha},\boldsymbol{\beta}}$. 

\medskip

We first prove that $F_{\mathbf{u},\mathbf{v}}(\mathbf{x})$ belongs to $\mathbb{Z}_{(p)}[[\mathbf{x}]]$ for all $p$ in $\mathcal{S}$.  
Let $p$ in $\mathcal{S}$ and $\mathbf{n}$ in $\mathbb{N}^d$ be fixed. 
By assumption, we have $\sum_{i=1}^r\mathbf{e}_i=\sum_{j=1}^s\mathbf{f}_j$ and the elements of $\boldsymbol{\alpha}$ and $\boldsymbol{\beta}$ 
belong to $(0,1]\cap \mathbb Z_{(p)}$. We thus infer from the discussion of Section \ref{Section $p$-adic tools} that 
\begin{equation}\label{val Delta}
v_p\big(\mathcal{Q}_{\mathbf{u},\mathbf{v}}(\mathbf{n})\big)
=\sum_{\ell=1}^\infty\xi_{\iota(p^\ell)}\left(\left\{\frac{\mathbf{n}}{p^\ell}\right\}\right)\,.
\end{equation}
For all non-negative integers $\ell$, we have $\iota(p^\ell)\in A$. By Assertion (ii) in Proposition \ref{theo Lucas Hypergeom} and 
since $\xi_{\iota(p^\ell)}(\mathbf{x})= 0$ 
outside $\mathcal{D}_{\mathbf{u},\mathbf{v}}^{\iota(p^\ell)}$, we deduce that 
$\xi_{\iota(p^\ell)}(\mathbf{x})\geq 0$, for all $\mathbf{x}$ in $[0,1)^d$. Hence, we obtain that 
$v_p\big(\mathcal{Q}_{\mathbf{u},\mathbf{v}}(\mathbf{n})\big)\geq 0$ and thus $F_{\mathbf{u},\mathbf{v}}(\mathbf{x})$ belongs to $\mathbb{Z}_{(p)}[[\mathbf{x}]]$, as claimed. 
 
\medskip

Let $k:=\mbox{\rm Card } A$. Let $p$ be a fixed prime in $\mathcal{S}$. We shall show that $F_{\mathbf{u},\mathbf{v}}$ has the $p^k$-Lucas property. 
Let $\mathbf{a}=\mathbf{a}_0+\mathbf{a}_1p+\cdots+\mathbf{a}_{k-1}p^{k-1}$ with $\mathbf{a}_i$ in $\{0,\dots,p-1\}^d$. We need to distinguish two cases.

\medskip

\begin{itemize}
\item[$\bullet$]  Case $1$: there exists an integer $i$, $0\leq i\leq k-1$, such that $\mathbf{a}_i/p$ belongs to $\mathcal{D}_{\mathbf{u},\mathbf{v}}^{\iota(p^{i+1})}$.
\end{itemize} 
\medskip

In that case, we are going to show that $\mathcal{Q}_{\mathbf{u},\mathbf{v}}(\mathbf{a}+\mathbf{m}p^k)\equiv 0\mod p\mathbb Z_{(p)}$, for all ${\bf m}$ in $\mathbb N^d$. 
We first note that since $\iota(p^\ell)$ belongs to $A$, one has $\xi_{\iota(p^\ell)}\geq 0$ on $\mathbb{R}^d$.  
We thus get that 
\begin{align*}
v_p\big(\mathcal{Q}_{\mathbf{u},\mathbf{v}}(\mathbf{a}+\mathbf{m}p^k)\big)& =
\sum_{\ell=1}^\infty\xi_{\iota(p^\ell)}\left(\left\{\frac{\mathbf{a}+\mathbf{n}p^k}{p^\ell}\right\}\right)\\ 
& \geq\xi_{\iota(p^{i+1})}\left(\frac{\mathbf{a}_0+
\mathbf{a}_1p+\cdots+\mathbf{a}_ip^i}{p^{i+1}}\right)\, ,
\end{align*}
for all $\mathbf{m}$ in $\mathbb{N}^d$. Now, since 
$$
\mathbf{x}:=\frac{\mathbf{a}_0+\mathbf{a}_1p+\cdots+\mathbf{a}_ip^i}{p^{i+1}}\geq\frac{\mathbf{a}_i}{p}\in\mathcal{D}_{\mathbf{u},\mathbf{v}}^{\iota(p^{i+1})} \, ,
$$
we obtain that $\mathbf{x}\in\mathcal{D}_{\mathbf{u},\mathbf{v}}^{\iota(p^{i+1})}$, and thus $\xi_{\iota(p^{i+1})}(\mathbf{x})\geq 1$ by Assertion (ii).  
This gives: 
$$
\mathcal{Q}_{\mathbf{u},\mathbf{v}}(\mathbf{a}+\mathbf{m}p^k)\equiv 0\mod p\mathbb Z_{(p)} \, ,
$$
for all ${\bf m}$ in $\mathbb N^d$. Choosing ${\bf m}=0$, we deduce that  
$$
\mathcal{Q}_{\mathbf{u},\mathbf{v}}(\mathbf{a}) \equiv 0 \mod p \mathbb Z_{(p)}
$$
and thus 
$$
\mathcal{Q}_{\mathbf{u},\mathbf{v}}(\mathbf{a}+\mathbf{m}p^k)\equiv 
\mathcal{Q}_{\mathbf{u},\mathbf{v}}(\mathbf{a})\mathcal{Q}_{\mathbf{u},\mathbf{v}}(\mathbf{m}) \mod p \mathbb Z_{(p)}\, ,
$$
for all ${\bf m}$ in $\mathbb N^d$. This ends the proof in that case.

\medskip

\begin{itemize}
\item[$\bullet$] Case $2$: for all integers $i$, $0\leq i\leq k-1$, $\mathbf{a}_i/p$ does not belong to $\mathcal{D}_{\mathbf{u},\mathbf{v}}^{\iota(p^{i+1})}$.
\end{itemize}
\medskip 

We will recursively apply Lemma \ref{Lemma DecompProd} $k$ times. 
We first observe that  $\mathcal{Q}_{\mathfrak{D}_p(\mathbf{u}),\mathfrak{D}_p(\mathbf{v})}({\bf n})$ belongs to $\mathbb Z_{(p)}$ 
for any ${\bf n}$ in $\mathbb N^d$. Indeed, for all $a$ in $A$ and all $\alpha$ in $\boldsymbol{\alpha}$ or $\boldsymbol{\beta}$, we have
$$
\langle a\mathfrak{D}_p(\alpha)\rangle=\big\langle a\langle\iota(p)\alpha\rangle\big\rangle=\langle a\iota(p)\alpha\rangle=\langle b\alpha\rangle \, ,
$$
for some $b$ in $A$ as a consequence of Assertion (i) in Proposition \ref{theo Lucas Hypergeom}. 
Hence the function $\xi_{\mathfrak{D}_p(\mathbf{u}),\mathfrak{D}_p(\mathbf{v})}^a=\xi_{\mathbf{u},\mathbf{v}}^b$ is non-negative on $\mathbb{R}^d$, 
and $\mathcal{D}_{\mathfrak{D}_p(\mathbf{u}),\mathfrak{D}_p(\mathbf{v})}^a=\mathcal{D}_{\mathbf{u},\mathbf{v}}^b$. 
We thus have that both $\mathcal{Q}_{\mathbf{u},\mathbf{v}}({\bf n})$ and $\mathcal{Q}_{\mathfrak{D}_p(\mathbf{u}),\mathfrak{D}_p(\mathbf{v})}({\bf n})$ 
belong to $\mathbb Z_{(p)}$  for all ${\bf n}$ in $\mathbb N^d$. 
Then we infer from Lemma \ref{Lemma DecompProd} that 
$$
\mathcal{Q}_{\mathbf{u},\mathbf{v}}(\mathbf{a}+\mathbf{m}p^k)\equiv
\mathcal{Q}_{\mathbf{u},\mathbf{v}}(\mathbf{a}_0)\mathcal{Q}_{\mathfrak{D}_p(\mathbf{u}),
\mathfrak{D}_p(\mathbf{v})}(\mathbf{a}_1+\cdots+\mathbf{a}_{k-1}p^{k-2}+\mathbf{m}p^{k-1})\mod p\mathbb Z_{(p)} \,,
$$
for all ${\bf m}$ in $\mathbb N^d$. 
Since $\xi_{\mathfrak{D}_p(\mathbf{u}),\mathfrak{D}_p(\mathbf{v})}^a=\xi_{\mathbf{u},\mathbf{v}}^b$  and 
$\mathcal{D}_{\mathfrak{D}_p(\mathbf{u}),\mathfrak{D}_p(\mathbf{v})}^a=\mathcal{D}_{\mathbf{u},\mathbf{v}}^b$, it follows that 
$\mathfrak{D}_p(\mathbf{u})$ and $\mathfrak{D}_p(\mathbf{v})$ satisfy the assumptions of Proposition \ref{theo Lucas Hypergeom} 
with the same set $A$. 
Furthermore, if $a=\iota(p^\ell)$, then we obtain that $b=\iota(p^{\ell+1})$, which leads to 
$\mathcal{D}_{\mathfrak{D}_p(\mathbf{u}),\mathfrak{D}_p(\mathbf{v})}^{\iota(p^\ell)}=\mathcal{D}_{\mathbf{u},\mathbf{v}}^{\iota(p^{\ell+1})}$. 
We can thus apply Lemma \ref{Lemma DecompProd} to $\mathfrak{D}_p(\mathbf{u})$ and $\mathfrak{D}_p(\mathbf{v})$,   
with $\mathbf{a}_1$ instead of $\mathbf{a}$. This yields 
$$
\mathcal{Q}_{\mathbf{u},\mathbf{v}}(\mathbf{a}+\mathbf{m}p^k)\equiv\mathcal{Q}_{\mathbf{u},\mathbf{v}}(\mathbf{a}_0)
\mathcal{Q}_{\mathfrak{D}_p(\mathbf{u}),\mathfrak{D}_p(\mathbf{v})}(\mathbf{a}_1)
\mathcal{Q}_{\mathfrak{D}_p^2(\mathbf{u}),\mathfrak{D}_p^2(\mathbf{v})}(\mathbf{a}_2+\cdots+\mathbf{a}_{k-1}p^{k-3}+\mathbf{m}p^{k-2})\mod p\mathbb Z_{(p)} \,,
$$
for all ${\bf m}$ in $\mathbb N^d$. 
By induction, we obtain that
$$
\mathcal{Q}_{\mathbf{u},\mathbf{v}}(\mathbf{a}+\mathbf{m}p^k)\equiv\left(\prod_{i=0}^{k-1}\mathcal{Q}_{\mathfrak{D}_p^i(\mathbf{u}),\mathfrak{D}_p^i(\mathbf{v})}(\mathbf{a}_i)\right)\mathcal{Q}_{\mathfrak{D}_p^k(\mathbf{u}),\mathfrak{D}_p^k(\mathbf{v})}(\mathbf{m})\mod p\mathbb Z_{(p)} \,,
$$
for all ${\bf m}$ in $\mathbb N^d$.  Choosing $\mathbf{m}=\mathbf{0}$, we deduce that 
$$
\mathcal{Q}_{\mathbf{u},\mathbf{v}}(\mathbf{a}+\mathbf{m}p^k)\equiv
\mathcal{Q}_{\mathbf{u},\mathbf{v}}(\mathbf{a})\mathcal{Q}_{\mathfrak{D}_p^k(\mathbf{u}),\mathfrak{D}_p^k(\mathbf{v})}(\mathbf{m})\mod p\mathbb Z_{(p)} \,,
$$
for all ${\bf m}$ in $\mathbb N^d$.  
Since $k=\mbox{\rm card } A$, one has $p^k\equiv 1\mod d_{\boldsymbol{\alpha},\boldsymbol{\beta}}$. 
Then for all $\alpha$ in $\boldsymbol{\alpha}$ or $\boldsymbol{\beta}$, we have 
$$
\mathfrak{D}_p^k(\alpha)=\langle\iota(p^k)\alpha\rangle=\langle\alpha\rangle=\alpha \,, 
$$ 
since $\alpha\in (0,1]$. 
Hence $\mathfrak{D}_p^k(\mathbf{u})=\mathbf{u}$ and  $\mathfrak{D}_p^k(\mathbf{v})=\mathbf{v}$, and it  follows that 
$$
\mathcal{Q}_{\mathbf{u},\mathbf{v}}(\mathbf{a}+\mathbf{m}p^k)\equiv\mathcal{Q}_{\mathbf{u},\mathbf{v}}(\mathbf{a})\mathcal{Q}_{\mathbf{u},\mathbf{v}}(\mathbf{m})
\mod p\mathbb Z_{(p)} \,,
$$
for all ${\bf m}$ in $\mathbb N^d$. This ends the proof. 
\end{proof}

\medskip

Now, we prove Proposition \ref{prop: spe}. 

\medskip

\begin{proof}[Proof of Proposition \ref{prop: spe}]  Set 
$$
\mathcal N := \big\{ \mathbf{n}\in\mathbb{N}^d \,:\, \forall a\in A, \forall \mathbf{x}\in [0,1)^d \mbox{ with } \mathbf{n}\cdot\mathbf{x}\geq 1,  \mbox{ one has } 
\xi_a(\mathbf{x})\geq 1\big\}\, .
$$
Let $\mathbf{n}=(n_1,\dots,n_d)$ be in $\mathcal N$, $(b_1,\dots,b_d)$ be a vector of non-zero rational numbers. Set
$$
S' := \left\{ p\in S \,:\, (b_1,\dots,b_d)\in\mathbb{Z}_{(p)}^d\right\}
$$
and let $p$ be a prime number in $S'$. We will simply write $F$ for $F_{\mathbf{u},\mathbf{v}}$ and $\mathcal{Q}$ for $\mathcal{Q}_{\mathbf{u},\mathbf{v}}$. 
By Proposition \ref{theo Lucas Hypergeom}, the sequence $\mathcal{Q}({\bf n})$ has the $p^k$-Lucas property, so that 
$$
F(\mathbf{x})\equiv\left(\sum_{\mathbf{0}\leq\mathbf{a}\leq(p^k-1)\mathbf{1}}
\mathcal{Q}(\mathbf{a})\mathbf{x}^{\mathbf{a}}\right)F(\mathbf{x}^{p^k})\mod p\mathbb{Z}_{(p)}[[\mathbf{x}]] \,.
$$
This gives:
\begin{align*}
F(b_1x^{n_1},\dots,b_dx^{n_d})&\equiv\left(\sum_{\mathbf{0}\leq\mathbf{a}\leq(p^k-1)\mathbf{1}}\mathbf{b}^{\mathbf{a}}\mathcal{Q}(\mathbf{a})x^{\mathbf{n}\cdot\mathbf{a}}\right)F\Big(b_1^{p^k}x^{n_1p^k},\dots,b_d^{p^k}x^{n_dp^k}\Big)\mod p\mathbb{Z}_{(p)}[[x]]\\
&\equiv\left(\sum_{\mathbf{0}\leq\mathbf{a}\leq(p^k-1)\mathbf{1}}\mathbf{b}^{\mathbf{a}}\mathcal{Q}(\mathbf{a})x^{\mathbf{n}\cdot\mathbf{a}}\right)
F\Big(b_1x^{n_1p^k},\dots,b_dx^{n_dp^k}\Big)\mod p\mathbb{Z}_{(p)}[[x]] \,,
\end{align*}
since $b_i^{p^k}\equiv b_i\mod p\mathbb{Z}_{(p)}$ for all $i$ in $\{1,\dots,d\}$. 
For all $\mathbf{a}$ in $\{0,\dots,p^k-1\}^d$ satisfying $\mathbf{n}\cdot\mathbf{a}\geq p^k$, we have 
$\mathbf{n}\cdot\mathbf{a}/p^k\geq 1$ and thus $\xi_a(\mathbf{a}/p^k)\geq 1$ for all $a$ in $A$. 
It follows that
$$
v_p\big(\mathcal{Q}(\mathbf{a})\big)=
\sum_{\ell=1}^\infty\xi_{\iota(p^\ell)}\left(\frac{\mathbf{a}}{p^\ell}\right)\geq\xi_{\iota(p^k)}\left(\frac{\mathbf{a}}{p^k}\right)\geq 1\, ,
$$
because $\xi_a$ is non-negative on $\mathbb{R}^d$  for every $a$ in $A$. 
Thus there is a polynomial $A(x)$ with coefficients in $\mathbb{Z}_{(p)}$ and of degree at most $p^k-1$ such that
$$
F(b_1x^{n_1},\dots,b_dx^{n_d})\equiv A(x)F\Big(b_1x^{n_1p^k},\dots,b_dx^{n_dp^k}\Big)\mod p\mathbb{Z}_{(p)}[[x]] \, .
$$
This shows that $F(b_1x^{n_1},\dots,b_dx^{n_d})$ satisfies the $p^k$-Lucas property, as expected. 
\end{proof}

\section{Lucas-type congruences among classical families of $G$-functions}\label{sec: appli}

In Section \ref{sec: mgh}, we gave a general condition involving some step functions $\xi_a({\bf x})$ 
that ensures a function in $\mathcal H_d$ satisfies Lucas-type congruences.  
Propositions \ref{prop: fuv} and \ref{theo Lucas Hypergeom} actually take a much simpler form when working with more specific families of $G$-functions.  We illustrate 
this fact by considering first two classical families: the generating series of factorial ratios and the generalized hypergeometric functions. Then we 
discuss the case of multivariate factorial ratios and show how their specializations lead to Lucas-type congruences for $G$-functions involving 
various sums and products of binomials, such as those associated with Ap\'ery, Franel, Domb, and Delannoy numbers. 

\subsection{Generating series of factorial ratios }\label{sec: fr}

Given two tuples of vectors of natural numbers, $e=(e_1,\dots, e_u)$ and $f=(f_1,\dots,f_v)$, the associated sequence of factorial ratio is defined by 
$$
\mathcal{Q}_{e,f}(n):=\frac{\prod_{i=1}^u(e_in)!}{\prod_{i=1}^v(f_in)!} \,\cdot
$$
The generating series of such a sequence is then denoted by 
$$
F_{e,f}(x):=\sum_{n\in\mathbb{N}}\mathcal{Q}_{e,f}(n)x^{n} \, .
$$
In order to study when $\mathcal{Q}_{e,f}(n)$ is integer valued, Landau introduced the following simple step function $\Delta_{e,f}$ 
defined from $\mathbb R$ to $\mathbb Z$ by:
$$
\Delta_{e,f}(x):=\sum_{i=1}^u\lfloor e_ix\rfloor-\sum_{j=1}^v\lfloor f_jx\rfloor \, .
$$
According to Landau's criterion \cite{Landau}, and Bober's refinement \cite{Bober}, we have the following dichotomy. 

\begin{itemize}

\medskip

\item[$\bullet$] If, for all $x$ in $[0,1]$, one has $\Delta_{e,f}(x)\geq 0$, then  
 $\mathcal{Q}_{e,f}(n)\in\mathbb{N}$,  for all $n\geq 0$.

\medskip

\item[$\bullet$] If there exists $x$ in $[0,1]$ such that $\Delta_{e,f}(x) < 0$, then there are only finitely many prime numbers $p$ 
such that $\mathcal{Q}_{e,f}(n)$ belongs to $\mathbb Z_{(p)}$ for all $n\geq 0$.
\end{itemize}

\medskip

In the sequel, we always assume that the sets $\{e_1,\ldots,e_u\}$ and $\{f_1,\ldots,f_v\}$ are disjoint. We set 
$\vert e\vert :=\sum_{i=1}^u e_i$, $\vert f\vert :=\sum_{i=1}^v f_i$, and 
$$
m_{e,f} := \left( \max \{e_1,\ldots,e_u,f_1,\ldots,f_v\}\right)^{-1} \, . 
$$
Note that the functions of type $F_{e,f}$ correspond to functions of type $F_{{\bf u},{\bf v}}$ for which the parameter $d$, as well as  all the parameters $\alpha_i$ and 
$\beta_i$, are equal to $1$.  Propositions \ref{prop: fuv} and \ref{theo Lucas Hypergeom} can now be restated in a single result that 
 takes the following simple form. 

\begin{prop}\label{prop: fr} 
Let us assume that $\vert e\vert =\vert f\vert$ and that   $\Delta_{e,f}(x)\geq 1$, for all real numbers $x$ such that $m_{e,f} \leq x<1$. Then 
$F_{e,f}(x)\in{\mathfrak{L}}_1(\mathcal{P})$. In other words, $F_{e,f}(x)$ satisfies the  $p$-Lucas property for all prime numbers $p$.
\end{prop}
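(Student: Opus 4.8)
The plan is to recognize $F_{e,f}$ as a one-variable specialization of the general hypergeometric series $F_{\mathbf{u},\mathbf{v}}$ treated in Proposition~\ref{prop: fuv}, and then merely to translate hypotheses and notation. First I would set $d=1$ and rewrite each factorial as a Pochhammer symbol, $(e_i n)! = (1)_{e_i n}$ and $(f_j n)! = (1)_{f_j n}$, so that $\mathcal{Q}_{e,f}(n) = \mathcal{Q}_{\mathbf{u},\mathbf{v}}(n)$ with $\mathbf{u} = ((1,(e_1)),\dots,(1,(e_u)))$ and $\mathbf{v} = ((1,(f_1)),\dots,(1,(f_v)))$. All the parameters $\alpha_i,\beta_j$ equal $1\in(0,1]$, and the balancing condition $\sum_i \mathbf{e}_i = \sum_j \mathbf{f}_j$ is precisely $|e|=|f|$; hence $F_{e,f}\in\mathcal{H}_1$. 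Since every $\alpha_i,\beta_j$ equals $1$, the quantity $d_{\boldsymbol{\alpha},\boldsymbol{\beta}}$ equals $1$, so the only admissible residue is $a=1$, the set of primes in Proposition~\ref{prop: fuv} is $\mathcal{S}=\{p : p\equiv 1 \bmod 1\}=\mathcal{P}$, and the conclusion will be the $p$-Lucas property (i.e.\ the $p^k$-Lucas property with $k=1$) for every prime.

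The core of the argument is the identification of the two step functions and of the relevant semi-algebraic set. Using $\lfloor e_i x - 1\rfloor = \lfloor e_i x\rfloor - 1$ for every real $x$, the defining sum for $\xi_1$ telescopes:
\begin{align*}
\xi_1(x) &= \sum_{i=1}^u \lfloor e_i x - 1\rfloor - \sum_{j=1}^v \lfloor f_j x - 1\rfloor + u - v \\
&= \sum_{i=1}^u \lfloor e_i x\rfloor - \sum_{j=1}^v \lfloor f_j x\rfloor = \Delta_{e,f}(x),
\end{align*}
so $\xi_1$ and $\Delta_{e,f}$ coincide on all of $\mathbb{R}$. Next I would compute $\mathcal{D}_{\mathbf{u},\mathbf{v}}^1$: its defining condition is that $c\,x\geq 1$ for some coefficient $c\in\{e_1,\dots,e_u,f_1,\dots,f_v\}$, which holds exactly when $x\geq 1/\max(c)=m_{e,f}$, so $\mathcal{D}_{\mathbf{u},\mathbf{v}}^1=[m_{e,f},1)$.

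With these two identifications in hand, the hypothesis of Proposition~\ref{prop: fr}, namely $\Delta_{e,f}(x)\geq 1$ for all $x$ with $m_{e,f}\leq x<1$, is word for word the hypothesis $\xi_1(x)\geq 1$ for all $x\in\mathcal{D}_{\mathbf{u},\mathbf{v}}^1$ required by Proposition~\ref{prop: fuv}. Applying that proposition then yields $F_{e,f}\in\mathfrak{L}_1(\mathcal{P})$ together with the $p$-Lucas property for every prime $p$, which is the assertion. There is essentially no hard step here: the content lies entirely in the two elementary verifications above, which translate the factorial-ratio data of Section~\ref{sec: fr} into the hypergeometric notation of Section~\ref{sec: mgh}. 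The only points demanding a little care are the bookkeeping of the constant $+u-v$ against the $-1$ shifts inside the floors when checking $\xi_1=\Delta_{e,f}$, and the observation that at $x=m_{e,f}$ the largest coefficient already forces $c\,x=1$, so the boundary point lies in $\mathcal{D}_{\mathbf{u},\mathbf{v}}^1$ and the ranges in the two hypotheses agree exactly.
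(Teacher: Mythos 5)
Your proof is correct and is exactly the paper's (implicit) argument: the paper states Proposition~\ref{prop: fr} as a restatement of Proposition~\ref{prop: fuv} in the case $d=1$ with all $\alpha_i=\beta_j=1$, which is precisely the specialization you carry out, including the two key verifications $\xi_1=\Delta_{e,f}$ and $\mathcal{D}_{\mathbf{u},\mathbf{v}}^1=[m_{e,f},1)$, and the observation that $d_{\boldsymbol{\alpha},\boldsymbol{\beta}}=1$ forces $\mathcal{S}=\mathcal{P}$ and $k=1$.
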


\begin{rem}
{\rm 
When all the $f_i$'s are equal to $1$, it becomes obvious that $\Delta_{e,f}(x)\geq 1$, for all real numbers $x$ such that $m_{e,f} \leq x<1$. 
This shows that all generating series of the form 
$$
\sum_{n=0}^{\infty} \frac{\prod_{i=1}^r(e_in)!}{(n!)^r} x^n \, ,
$$
where $e_1,\ldots,e_r$ are positive integers, satisfy the $p$-Lucas property for all prime numbers $p$. 
}
\end{rem}

We also prove the following refinement of Proposition \ref{prop: fr}. 

\begin{prop}\label{propo CritFacto}
The following assertions are equivalent. 

\medskip

\begin{enumerate}
\item[{\rm (i)}] There exists an infinite set of primes $\mathcal{S}$ such that $F_{e,f}(x)\in\mathfrak{L}_1(\mathcal{S})$.

\medskip

\item[{\rm (ii)}] The sequence $\mathcal{Q}_{e,f}$ is integer-valued and has the $p$-Lucas property for all primes $p$.

\medskip

\item[{\rm (iii)}]  We have $|e|=|f|$ and $\Delta_{e,f}(x)\geq 1$ for all real numbers $x$ such that $m_{e,f} \leq x<1$.
\end{enumerate}
\end{prop}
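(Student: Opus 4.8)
The plan is to prove the cycle of implications $\mathrm{(iii)}\Rightarrow\mathrm{(ii)}\Rightarrow\mathrm{(i)}\Rightarrow\mathrm{(iii)}$, the only substantial step being the last one. For $\mathrm{(iii)}\Rightarrow\mathrm{(ii)}$, I would observe that a factorial ratio is the specialization of $F_{\mathbf u,\mathbf v}$ in which $d=1$ and every Pochhammer parameter equals $1$; hence $d_{\boldsymbol\alpha,\boldsymbol\beta}=1$, a direct computation gives $\xi_1=\Delta_{e,f}$ and $\mathcal D^1_{\mathbf u,\mathbf v}=[m_{e,f},1)$, so the hypothesis in $\mathrm{(iii)}$ is exactly that of Proposition~\ref{prop: fr}. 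This yields the $p$-Lucas property for all primes. Integer-valuedness then follows from Landau's criterion~\cite{Landau}: since $\Delta_{e,f}$ vanishes on $[0,m_{e,f})$, is $\geq 1$ on $[m_{e,f},1)$, and satisfies $\Delta_{e,f}(1)=|e|-|f|=0$, it is nonnegative on all of $[0,1]$. The implication $\mathrm{(ii)}\Rightarrow\mathrm{(i)}$ is immediate: since $\mathcal Q_{e,f}(0)=1$ and $\mathcal Q_{e,f}$ enjoys the $p$-Lucas property for every prime, Proposition~\ref{prop: plucas} gives $F_{e,f}\in\mathfrak L_1(\mathcal P)$, so $\mathrm{(i)}$ holds with $\mathcal S=\mathcal P$.

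For $\mathrm{(i)}\Rightarrow\mathrm{(iii)}$, the main tool is the valuation formula of Section~\ref{Section $p$-adic tools}, which for factorial ratios reduces to $v_p(\mathcal Q_{e,f}(n))=\sum_{\ell\ge1}\Delta_{e,f}(\{n/p^\ell\})$. Fix $k$ with $F_{e,f}\in\mathfrak L_1(\mathcal S)$ realised by the $p^k$-Lucas property; by the pigeonhole principle I may assume, after passing to an infinite subset, that all $p\in\mathcal S$ are large and lie in a single residue class modulo $D:=\mathrm{lcm}(e_1,\dots,f_v)$. First I would extract $|e|=|f|$: taking $n=p^k$, the formula gives $v_p(\mathcal Q_{e,f}(p^k))=(|e|-|f|)(p^k-1)/(p-1)$ for $p$ large, so integrality forces $|e|\ge|f|$, while $|e|>|f|$ would make $\mathcal Q_{e,f}(p^k)\equiv 0$, contradicting the instance $\mathcal Q_{e,f}(p^k)\equiv\mathcal Q_{e,f}(0)\mathcal Q_{e,f}(1)=\mathcal Q_{e,f}(1)$ of the $p^k$-Lucas property (whose right-hand side is a $p$-adic unit). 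Next, $\Delta_{e,f}\ge0$ on $[0,1)$ is the Landau direction recovered from integrality: were $\Delta_{e,f}\equiv-c<0$ on some subinterval, choosing $n\in\{0,\dots,p-1\}$ with $n/p$ in that interval would give $v_p(\mathcal Q_{e,f}(n))=-c<0$.

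It remains to upgrade $\Delta_{e,f}\ge0$ to $\Delta_{e,f}\ge1$ on $[m_{e,f},1)$, the heart of the matter. Iterating the $p^k$-Lucas property shows $\mathcal Q_{e,f}(n)\equiv\prod_i\mathcal Q_{e,f}(c_i)\bmod p$, where the $c_i$ are the base-$p^k$ digits of $n$; in particular $v_p(\mathcal Q_{e,f}(n))\ge1$ if and only if some digit $c_i$ satisfies $v_p(\mathcal Q_{e,f}(c_i))\ge1$. I would argue by contraposition: assuming $\Delta_{e,f}(x_0)=0$ for some $x_0\in[m_{e,f},1)$, I build a two-digit integer $n=c_0+c_1p^k$ both of whose digits are $p$-adic units yet with $v_p(\mathcal Q_{e,f}(n))\ge1$, contradicting the displayed equivalence. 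Comparing the two valuations via the formula, the excess $v_p(\mathcal Q_{e,f}(n))-\sum_iv_p(\mathcal Q_{e,f}(c_i))$ is concentrated on the ``straddle'' levels $k<\ell\le 2k$, where $\{n/p^\ell\}$ equals a low-order fractional part of $c_1$ shifted by a carry of size $O(1/p)$ coming from $c_0$; the excess becomes positive precisely when this carry pushes the argument across an upward jump of $\Delta_{e,f}$ into a region where $\Delta_{e,f}\ge1$.

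The main obstacle is exactly this construction: the available carry is only of order $1/p$, and the requirement that $c_0,c_1$ be units confines their base-$p$ digits to the zero set $Z$ of $\Delta_{e,f}$. The crucial point is that $\mathrm{(iii)}$ fails precisely when $Z$ strictly contains $[0,m_{e,f})$, so that $Z$ supplies both a large unit digit $c_0$ (hence a carry comparable to $\sup Z$) and an upward jump of $\Delta_{e,f}$ to clear — for instance the jump at $m_{e,f}$, which is genuinely upward since $\Delta_{e,f}\ge0$ forces $\max\{e_i,f_j\}$ to be attained by some $e_i$. Making these size comparisons uniform is what motivates fixing the residue class of $p$ modulo $D$ above, and checking that some admissible starting digit lies within distance $(\sup Z)/p$ of an upward jump; this bookkeeping of jumps against carries is where the argument is most delicate.
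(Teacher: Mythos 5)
Your outer structure and the easy steps are fine: (iii)$\Rightarrow$(ii) via Proposition~\ref{prop: fr} plus Landau's criterion, (ii)$\Rightarrow$(i) via Proposition~\ref{prop: plucas}, and the first two stages of (i)$\Rightarrow$(iii) (getting $|e|=|f|$ from the digit decomposition $p^k=0+1\cdot p^k$, and $\Delta_{e,f}\geq 0$ from $p$-integrality) are all correct and run parallel to the paper, which cites \cite[Theorem 3]{Delaygue3} for (ii)$\Leftrightarrow$(iii) and proves (i)$\Rightarrow$(iii) directly.

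The gap is the heart of (i)$\Rightarrow$(iii), and it is not the ``delicate bookkeeping'' you defer: the contradiction you propose to build need not exist, because a failure of the Lucas congruences need not be visible in $p$-adic valuations at all. Your mechanism requires an integer all of whose base-$p^k$ digits are units while $v_p(\mathcal{Q}_{e,f}(n))\geq 1$. Consider $e=(8)$, $f=(4,3,1)$, i.e.\ $\mathcal{Q}_{e,f}(n)=\binom{8n}{4n}\binom{4n}{n}$. Then $\Delta_{e,f}\geq 0$ and its zero set on $[0,1)$ is $Z=[0,1/8)\cup[1/3,3/8)$, so (iii) fails; the upward jumps out of $Z$ sit at $\beta\in\{1/8,3/8\}$ and $\sup Z=3/8<1/2$. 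Suppose the given $\mathcal{S}$ lies in the class $p\equiv-1\pmod{24}$ (your pigeonhole step cannot avoid this; $\mathcal{S}$ is handed to you). Any carry acting at level $j$ of a digit is at most $(3/8+o(1))/p^j$, since it is produced by the digit below, whose top fractional part must itself lie in $Z$ for that digit to be a unit. For odd $j$ one has $\{\beta p^j\}=1-\beta\in\{5/8,7/8\}>3/8$, so no admissible carry can push a unit suffix across $\beta$; for even $j$ the bottom $j$ base-$p$ digits of the would-be gaining digit are forced to equal $\lfloor\beta p^j\rfloor$, whose suffix one level down is $\approx 1-\beta\in\{5/8,7/8\}$, a point where $\Delta_{e,f}\geq 1$, so that digit is not a unit. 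Hence every all-unit-digit integer has $v_p(\mathcal{Q}_{e,f}(n))=0$ exactly (and one checks similarly that a non-unit digit always forces $v_p\geq 1$), so no valuation-level contradiction of any kind is available. Yet the Lucas property genuinely fails for these $p$: taking $a_p=r_pp^{k-1}$ with $r_p/p\in[1/3,3/8)$, the multiplicative identity \eqref{inter1} together with \cite[Lemma 16]{Delaygue2} yields $\mathcal{Q}_{e,f}(a_p+p^{k})\equiv\tfrac{9}{4}\,\mathcal{Q}_{e,f}(a_p)\mathcal{Q}_{e,f}(1)\bmod p\mathbb{Z}_{(p)}$, all three quantities being $p$-adic units. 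The failure lives entirely in the unit parts, which your valuation formula discards; this is precisely why the paper's proof of (i)$\Rightarrow$(iii) works with congruences in $1+p\mathbb{Z}_{(p)}$ (the identity \eqref{inter1} and the explicit unit $\prod_k(1+1/\gamma_k)^{m_k}>1$) rather than with valuations. Your final step must be replaced by such a unit-level argument, not merely completed.
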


\begin{rem}{\rm 
The equivalence of Assertions (ii) and (iii) is contained in \cite[Theorem~3]{Delaygue3}. A consequence of Proposition \ref{propo CritFacto} is that 
$F_{e,f}(x)$ belongs to $\mathfrak{L}_1(\mathcal{S})$ for an infinite set of primes $\mathcal{S}$ if and only if all Taylor coefficients at the origin of the associated 
mirror map $z_{e,f}$ are integers (see Theorems $1$ and $3$ in \cite{Delaygue1}). It would be interesting to investigate in more details this 
intriguing connection. 
}
\end{rem}

\begin{proof}[Proof of Proposition \ref{propo CritFacto}]
Obviously, Assertion (ii) implies Assertion (i), and Assertions~(ii) and (iii) are shown to be equivalent in \cite[Theorem 3]{Delaygue3}. 
Hence it suffices to prove that (i) implies (iii). From now on, we assume that Assertion (i) holds. 
 
\medskip

First, we prove that $|e|=|f|$. Since $\mathcal{S}$ is infinite and $F_{e,f}(x)$ belongs to $\mathbb{Z}_p[[x]]$ for every prime $p$ in $\mathcal{S}$, 
Landau's criterion implies that  $\Delta_{e,f}(x)\geq 0$ for all $x$ in $[0,1]$. In particular, we obtain that 
$|e|-|f|=\Delta_{e,f}(1)\geq 0$. If $|e|>|f|$ then $\Delta_{e,f}(1)\geq 1$. Set $M_{e,f} :=  \max \{e_1,\ldots,e_u,f_1,\ldots,f_v\}$. 
Then, for all prime numbers $p>M_{e,f}$ and all positive integers $k$, we have 
\begin{align*}
v_p\big(\mathcal{Q}_{e,f}(1+p^k)\big) & =\sum_{\ell=1}^\infty\Delta_{e,f}\left(\frac{1+p^k}{p^\ell}\right) \\
& \geq\Delta_{e,f}\left(1+\frac{1}{p^k}\right)\\
&\geq 1 \, ,
\end{align*}
Our choice of $p$ ensures that $v_p(\mathcal{Q}_{e,f}(1))=0$. We thus deduce that, for almost all primes $p$ and all positive integers $k$, 
we have
$$
\mathcal{Q}_{e,f}(1+p^k)\not\equiv\mathcal{Q}_{e,f}(1)\mathcal{Q}_{e,f}(1)\mod p\mathbb{Z}_{(p)} \, ,
$$
which provides a contradiction with Assertion (i). Hence we get that $|e|=|f|$. 

\medskip

Now, we prove the following identity.  
For all prime numbers $p$, all positive integers $k$, all $a$ in $\{0,\dots,p^k-1\}$, and all natural integers $n$, we have
\begin{equation}\label{inter1}
\frac{\mathcal{Q}_{e,f}(a+np^k)}{\mathcal{Q}_{e,f}(a)\mathcal{Q}_{e,f}(n)}\in
\frac{\displaystyle\prod_{i=1}^u\prod_{j=1}^{\lfloor e_ia/p^k\rfloor}\left(1+\frac{e_i}{j}n\right)}
{\displaystyle\prod_{i=1}^v\prod_{j=1}^{\lfloor f_ia/p^k\rfloor}\left(1+\frac{f_i}{j}n\right)}\big(1+p\mathbb{Z}_{(p)}\big) \,.
\end{equation}
Indeed, we have
$$
\frac{\mathcal{Q}_{e,f}(a+np^k)}{\mathcal{Q}_{e,f}(a)\mathcal{Q}_{e,f}(n)}
=\frac{\mathcal{Q}_{e,f}(a+np^k)}{\mathcal{Q}_{e,f}(a)\mathcal{Q}_{e,f}(np^k)}
\prod_{j=0}^{k-1}\frac{\mathcal{Q}_{e,f}(np^{j+1})}{\mathcal{Q}_{e,f}(np^j)} \, \cdot
$$
Since $|e|=|f|$, we can apply \cite[Lemma $7$]{Delaygue2} 
\footnote{The proof of this lemma uses a lemma of Lang which contains an error. Fortunately, Lemma $7$ remains true. Details of this correction are presented in \cite[Section 2.4]{Delaygue4}.}
with $d=1$, $\mathbf{c}=0$, $\mathbf{m}=np^j$ and $s=0$ which leads to
$$
\frac{\mathcal{Q}_{e,f}(np^{j+1})}{\mathcal{Q}_{e,f}(np^j)}\in 1+p\mathbb{Z}_{(p)}\, .
$$
Furthermore, we have
\begin{align*}
\frac{\mathcal{Q}_{e,f}(a+np^k)}{\mathcal{Q}_{e,f}(a)\mathcal{Q}_{e,f}(np^k)}&=\frac{1}{\mathcal{Q}_{e,f}(a)}
\frac{\prod_{i=1}^u\prod_{j=1}^{e_ia}(j+e_inp^k)}{\prod_{i=1}^v\prod_{j=1}^{f_ia}(j+f_inp^k)}\\
&=\frac{\prod_{i=1}^u\prod_{j=1}^{e_ia}\left(1+\frac{e_inp^k}{j}\right)}{\prod_{i=1}^v\prod_{j=1}^{f_ia}\left(1+\frac{f_inp^k}{j}\right)}\\
&\in\frac{\prod_{i=1}^u\prod_{j=1}^{\lfloor e_ia/p^k\rfloor}\left(1+\frac{e_in}{j}\right)}
{\prod_{i=1}^v\prod_{j=1}^{\lfloor f_ia/p^k\rfloor}\left(1+\frac{f_in}{j}\right)}\big(1+p\mathbb{Z}_{(p)}\big) \, ,
\end{align*}
since, if $p^k$ does not divide $j$, then $1+(e_inp^k)/j$ belongs to $1+p\mathbb{Z}_{(p)}$. This ends the proof of Equation \eqref{inter1}.

\medskip

Now we assume that there exists $x$ in $[m_{e,f},1[$ such that $\Delta_{e,f}(x)=0$ and we argue by contradiction. 

By assumption, for all $p$ in $\mathcal{S}$, there exists a positive integer $k_p$, such that, for all $v$ in $\{0,\dots,p^{k_p}-1\}$ and all natural integers $m$, 
we have 
$$
\mathcal{Q}_{e,f}(v+mp^{k_p})\equiv\mathcal{Q}_{e,f}(v)\mathcal{Q}_{e,f}(m)\mod p\mathbb{Z}_{(p)} \,.
$$
 Let $\gamma_1<\cdots<\gamma_t$ 
denote the abscissa of the points of discontinuity of $\Delta_{e,f}$ on $[0,1[$. In particular, we have 
$\gamma_1=m_{e,f}$. There exists $i$ in $\{1,\dots,t-1\}$
such that $\Delta_{e,f}(x)=0$ for all $x$ in $[\gamma_i,\gamma_{i+1}[$. For all large enough prime numbers $p\in\mathcal{S}$, we choose $r_p$ in $\{0,\dots,p-1\}$ 
such that $r_p/p$ belongs to $[\gamma_i,\gamma_{i+1}[$ and we set $a_p=r_pp^{k_p-1}$. Hence $a_p/p^{k_p}$ belongs to $[\gamma_i,\gamma_{i+1}[$. 
Then, by applying \eqref{inter1} in combination with \cite[Lemma $16$]{Delaygue2} (with $\mathbf{E}=e$ and $\mathbf{F}=f$), there are integers $m_1,\dots,m_i$ such that we have  
$$
\frac{\mathcal{Q}_{e,f}(a_p+p^{k_p})}{\mathcal{Q}_{e,f}(a_p)\mathcal{Q}_{e,f}(1)}\in\prod_{k=1}^{i}\left(1+\frac{1}{\gamma_k}\right)^{m_k}\big(1+p\mathbb{Z}_{(p)}\big) \, 
$$
and 
$$
\prod_{k=1}^{i}\left(1+\frac{1}{\gamma_k}\right)^{m_k}>1\, , 
$$
because $\Delta_{e,f}$ is non-negative on $[0,1]$. 
For all large enough primes $p$ in $\mathcal{S}$, we thus deduce that 
$$
\prod_{k=1}^{i}\left(1+\frac{1}{\gamma_k}\right)^{m_k}\notin 1+p\mathbb{Z}_{(p)}\, .
$$
Furthermore, for all large enough $p$ in $\mathcal{S}$, 
we have $1/p<m_{e,f}$, and $(a_p+p^{k_p})/p^\ell<m_{e,f}$, for $\ell\geq k_p+1$. 
It follows that $v_p(\mathcal{Q}_{e,f}(1))=0$, while 
$$
v_p\big(\mathcal{Q}_{e,f}(a_p)\big)=\sum_{\ell=1}^{k_p}\Delta_{e,f}\left(\left\{\frac{a_p}{p^\ell}\right\}\right)=\Delta_{e,f}\left(\frac{r_p}{p}\right)=0\, ,
$$
and
$$
v_p\big(\mathcal{Q}_{e,f}(a_p+p^{k_p})\big)=\sum_{\ell=1}^{k_p}\Delta_{e,f}\left(\left\{\frac{a_p+p^{k_p}}{p^\ell}\right\}\right)=\Delta_{e,f}\left(\frac{r_p}{p}+1\right)=0 \, .
$$
Hence $\mathcal{Q}_{e,f}(a_p+p^{k_p})\not\equiv\mathcal{Q}_{e,f}(a_p)\mathcal{Q}_{e,f}(1)\mod p\mathbb{Z}_{(p)}$ 
which leads to a contradiction,  
and ends the proof of  Proposition \ref{propo CritFacto}.
\end{proof}

Let us remind to the reader  that one easily obtains the graph of $\Delta_{e,f}$ on $[0,1]$ by translating a factorial ratio into hypergeometric form. 
We illustrate this process with the following  example. We consider 
$$
F(x) := \sum_{n=0}^{\infty} \frac{(10n)!}{(5n)!(3n)!n!^2} x^n \, .
$$
We have
\begin{align*}
\frac{(10n)!}{(5n)!(3n)!n!^2}&=\frac{\prod_{k=0}^{n-1}\prod_{j=1}^{10}(10k+j)}{\prod_{k=0}^{n-1}\big(\prod_{j=1}^5(5k+j)\big)(3k+1)(3k+2)(3k+3)(k+1)^2}\\
&=\left(\frac{10^{10}}{5^53^3}\right)^n\frac{\prod_{k=0}^{n-1}\prod_{j=1}^{10}\big(k+\frac{j}{10}\big)}{\prod_{k=0}^{n-1}\big(\prod_{j=1}^5\big(k+\frac{j}{5}\big)\big)\left(k+\frac{1}{3}\right)\left(k+\frac{2}{3}\right)(k+1)^3}\\
&=\left(\frac{10^{10}}{5^53^3}\right)^n\frac{\prod_{j=1}^{10}(j/10)_n}{(1/3)_n(2/3)_n(1)_n^3\prod_{j=1}^5(j/5)_n}\\
&=\left(\frac{10^{10}}{5^53^3}\right)^n\frac{(1/10)_n(3/10)_n(1/2)_n(7/10)_n(9/10)_n}{(1/3)_n(2/3)_n(1)_n^3} \, \cdot
\end{align*}
Then we deduce that $\Delta_{e,f}$ has jumps of amplitude $1$ at $1/10,3/10,1/2,7/10$ and $9/10$, while the abscissa of its jumps of amplitude $-1$ 
are $1/3,2/3$. Furthermore, $\Delta_{e,f}$ has a jump of amplitude $-3$ at $1$. Since
$$
\frac{1}{10}<\frac{3}{10}<\frac{1}{3}<\frac{1}{2}<\frac{2}{3}<\frac{7}{10}<\frac{9}{10}<1\, ,
$$
we get that $\Delta_{e,f}\geq 1$ on $[1/10,1)$ and it follows form Proposition \ref{prop: fr} that the function $F(x)$ 
satisfies the $p$-Lucas property for all prime numbers. 
Along the same lines, one can prove for instance that the $G$-functions 
$$
\sum_{n=0}^\infty\frac{(5n)!(3n)!}{(2n)!^2n!^4}x^n\; \mbox{ and } \; \sum_{n=0}^\infty\frac{(4n)!}{(2n)!n!^2}x^n
$$
also satisfy the $p$-Lucas property for all prime numbers.

\subsection{Generalized hypergeometric series}\label{sec: hyp}

With two tuples $\boldsymbol{\alpha}:=(\alpha_1,\dots,\alpha_r)$ and $\boldsymbol{\beta}:=(\beta_1,\dots,\beta_s)$ 
of elements in $\mathbb{Q}\setminus\mathbb{Z}_{\leq 0}$, we can associate the generalized hypergeometric series
$$
{}_{r}F_{s}\left[\begin{array}{cc}\alpha_1,\dots,\alpha_r\\ \beta_1,\dots,\beta_s\end{array};x\right]
:=\sum_{n=0}^\infty\frac{(\alpha_1)_n\cdots(\alpha_r)_n}{(\beta_1)_n\cdots(\beta_s)_n}\frac{x^n}{n!} \,\cdot
$$
Here, we set 
$$
\mathcal{Q}_{\boldsymbol{\alpha},\boldsymbol{\beta}}(n):=\frac{(\alpha_1)_n\cdots(\alpha_r)_n}{(\beta_1)_n\cdots(\beta_s)_n}\quad\textup{and}\quad 
F_{\boldsymbol{\alpha},\boldsymbol{\beta}}(x):=\sum_{n=0}^\infty\mathcal{Q}_{\boldsymbol{\alpha},\boldsymbol{\beta}}(n)x^n \, ,
$$
so that 
$$
F_{\boldsymbol{\alpha},\boldsymbol{\beta}}(x)={}_{r+1}F_{s}\left[\begin{array}{cc}\alpha_1,\dots,\alpha_r,1\\ \beta_1,\dots,\beta_s\end{array};x\right] \, .
$$
Note that such series simply correspond to series of type $F_{{\bf u},{\bf v}}$ for which the parameter $d$, as well as all parameters $e_i$ and $f_i$,  
are equal to $1$.  
\medskip

To apply Proposition \ref{theo Lucas Hypergeom}, we have to compare the numbers $\langle a\alpha_i\rangle$ and $\langle a\beta_j\rangle$ for $a$ in $\{1,\dots,d_{\boldsymbol{\alpha},\boldsymbol{\beta}}\}$ coprime to $d_{\boldsymbol{\alpha},\boldsymbol{\beta}}$. Indeed, on $[0,1)$, $\xi_a$ have jumps of amplitude $1$ at $\langle a\alpha_i\rangle$ and jumps of amplitude $-1$ at $\langle a\beta_j\rangle$, where $\alpha_i$ and $\beta_j$ are not equal to $1$. For every $a$ in $\{1,\dots,d_{\boldsymbol{\alpha},\boldsymbol{\beta}}\}$ coprime to $d_{\boldsymbol{\alpha},\boldsymbol{\beta}}$, we set
$$
m_{\boldsymbol{\alpha},\boldsymbol{\beta}}(a):=\min\big(\langle a\alpha_1\rangle,\dots,\langle a\alpha_r\rangle,\langle a\beta_1\rangle,\dots,\langle a\beta_s\rangle\big),
$$
so that the corresponding set $\mathcal{D}_{\mathbf{u},\mathbf{v}}^a$ equals $[m_{\boldsymbol{\alpha},\boldsymbol{\beta}}(a),1)$.

\begin{exam} {\rm Let us illustrate Proposition \ref{theo Lucas Hypergeom} with few examples. 

$\bullet$ We first choose $\boldsymbol{\alpha}=(1/2,1/2)$ and $\boldsymbol{\beta}=(2/3,1)$. 
We thus have $d_{\boldsymbol{\alpha},\boldsymbol{\beta}}=6$ 
and $1/2<2/3$, so $\xi_1(x)\geq 1$ for $x$ in $[1/2,1)$. On the other hand, we have 
$$
\frac{1}{3}=\left\langle 5\cdot \frac{2}{3}\right\rangle< \left\langle 5\cdot \frac{1}{2}\right\rangle=\frac{1}{2}\, ,
$$
so $\xi_5(1/3)=-1$. 
The maximal set $A$ for which we can apply Proposition \ref{theo Lucas Hypergeom} is thus $A=\{1\}$. 
Hence we deduce that $F_{\boldsymbol{\alpha},\boldsymbol{\beta}}(x)$ has the $p$-Lucas properties for all primes $p\equiv 1\mod 6$. 
We stress that, according to Theorem A in \cite{Delaygue4} (which is a reformulation of Christol's result \cite[Proposition 1]{Christol}), 
the function 
$$F_{\boldsymbol{\alpha},\boldsymbol{\beta}}(x) = \sum_{n= 0}^{\infty} \frac{(1/2)_n^2}{(2/3)_n(1)_n} x^n 
$$ is not globally bounded, that is there is no $C$ in $\mathbb{Q}$ such that 
$F_{\boldsymbol{\alpha},\boldsymbol{\beta}}(Cx)$ belongs to $\mathbb{Z}[[x]]$. In particular, it cannot be expressed as the 
diagonal of a multivariate algebraic function. 

\medskip

$\bullet$ Let study another example by taking $\boldsymbol{\alpha}=(1/9,4/9,5/9)$ and $\boldsymbol{\beta}=(1/3,1,1)$. 
This choice of parameters was considered by  Christol in \cite{Christol}. 
We have $d_{\boldsymbol{\alpha},\boldsymbol{\beta}}=9$ and
$$
\frac{1}{9}<\frac{1}{3}<\frac{4}{9}<\frac{5}{9}\,.
$$
Hence $m_{\boldsymbol{\alpha},\boldsymbol{\beta}}(1)=1/9$ but $\xi_1(1/3)=0<1$, 
so we cannot just apply Proposition \ref{theo Lucas Hypergeom}. Indeed, any subgroup of $(\mathbb Z/9\mathbb Z)^{\times}$ must of course contain $1$.

\medskip

$\bullet$ For our last example, let us choose $\boldsymbol{\alpha}=(1/3,1/2)$ and $\boldsymbol{\beta}=(3/4,1)$. 
Hence we have $d_{\boldsymbol{\alpha},\boldsymbol{\beta}}=12$ and
$$
\frac{1}{3}<\frac{1}{2}<\frac{3}{4},\quad\left\langle\frac{5}{2}\right\rangle<\left\langle\frac{5}{3}\right\rangle<\left\langle\frac{15}{4}\right\rangle\, ,
$$
while
$$
\left\langle\frac{21}{4}\right\rangle<\left\langle\frac{7}{3}\right\rangle<\left\langle\frac{7}{2}\right\rangle\quad\textup{and}\quad\left\langle\frac{33}{4}\right\rangle<\left\langle\frac{11}{2}\right\rangle<\left\langle \frac{11}{3}\right\rangle\, .
$$
Observe furthermore that $5^2\equiv 1\mod 12$. 
This shows that the maximal set $A$ for which one can apply Proposition \ref{theo Lucas Hypergeom} is $A=\{1,5\}$. Hence 
the generalized hypergeometric series
$$
\sum_{n= 0}^{\infty} \frac{(1/3)_n(1/2)_n}{(3/4)_n(1)_n} x^n 
$$
satisfies the $p$-Lucas property for all primes congruent to $1$ mod $12$ and the $p^2$-Lucas property for all primes congruent to $5$ mod $12$.}
\end{exam}

\medskip

We end this section by observing that our condition is always satisfied in the classical case where  the hypergeometric differential equation associated with 
$F_{\boldsymbol{\alpha},\boldsymbol{\beta}}(x)$ 
has maximal unipotent monodromy at the origin. 

\begin{cor}\label{cor: hyp}
Let $\boldsymbol{\alpha}\in (\mathbb{Q}\cap(0,1))^r$ and $\boldsymbol{\beta}=(1,\dots,1)\in \mathbb Q^r$.  
Then the generalized hypergeometric series $F_{\boldsymbol{\alpha},\boldsymbol{\beta}}(x)$ belongs to ${\mathfrak{L}}_1(\mathcal{S})$, where $\mathcal{S}$ is the set of all 
primes larger than $d_{\boldsymbol{\alpha},\boldsymbol{\beta}}$. 
\end{cor}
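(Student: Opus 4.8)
The plan is to obtain the statement as a direct application of Proposition~\ref{theo Lucas Hypergeom}, taking for $A$ a set of representatives in $\{1,\dots,d_{\boldsymbol{\alpha},\boldsymbol{\beta}}\}$ of the \emph{entire} group of units $(\mathbb{Z}/d_{\boldsymbol{\alpha},\boldsymbol{\beta}}\mathbb{Z})^\times$. First I would record that $F_{\boldsymbol{\alpha},\boldsymbol{\beta}}$ belongs to $\mathcal{H}_1$: here $d=1$, all the vectors $\mathbf{e}_i,\mathbf{f}_j$ equal $1$, the parameters lie in $\mathbb{Q}\cap(0,1]$, and since $\boldsymbol{\beta}=(1,\dots,1)$ has the same length $r$ as $\boldsymbol{\alpha}$ one has $\sum_i\mathbf{e}_i=r=\sum_j\mathbf{f}_j$. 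Condition (i) of Proposition~\ref{theo Lucas Hypergeom} then holds because the whole unit group is trivially a subgroup of itself. (Choosing instead $A=\{1\}$, i.e.\ applying Proposition~\ref{prop: fuv}, would only produce primes $\equiv 1\bmod d_{\boldsymbol{\alpha},\boldsymbol{\beta}}$; reaching \emph{all} primes above $d_{\boldsymbol{\alpha},\boldsymbol{\beta}}$ forces one to treat every coprime residue class at once.)

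The heart of the matter is condition (ii), namely that $\xi_a(x)\geq 1$ for every $a$ coprime to $d_{\boldsymbol{\alpha},\boldsymbol{\beta}}$ and every $x\in\mathcal{D}^a$. The key simplification comes from $\boldsymbol{\beta}=(1,\dots,1)$: for an integer $a$ one has $\langle a\beta_j\rangle=\langle a\rangle=1$, so on the half-open interval $[0,1)$ each term $\lfloor x-\langle a\beta_j\rangle\rfloor=\lfloor x-1\rfloor$ equals $-1$. In other words, all the downward jumps of $\xi_a$ sit at $x=1$ and disappear from $[0,1)$. Writing out
$$
\xi_a(x)=\sum_{i=1}^r\big(\lfloor x-\langle a\alpha_i\rangle\rfloor+1\big)\qquad(x\in[0,1)),
$$
I would observe that $\xi_a$ is therefore a non-negative, non-decreasing step function on $[0,1)$ that counts precisely the number of indices $i$ with $\langle a\alpha_i\rangle\leq x$. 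Next I would pin down $\mathcal{D}^a$: since the coordinates coming from $\boldsymbol{\beta}$ impose the vacuous constraint $x\geq 1$, one gets $\mathcal{D}^a=[\,m_{\boldsymbol{\alpha},\boldsymbol{\beta}}(a),1)$ with $m_{\boldsymbol{\alpha},\boldsymbol{\beta}}(a)=\min_i\langle a\alpha_i\rangle$, so that $x\geq m_{\boldsymbol{\alpha},\boldsymbol{\beta}}(a)$ forces at least one $\langle a\alpha_i\rangle\leq x$ and hence $\xi_a(x)\geq 1$.

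The one genuine point requiring care, and the step I expect to be the only real obstacle, is to guarantee that $\langle a\alpha_i\rangle<1$ for all $i$, equivalently that $a\alpha_i\notin\mathbb{Z}$, so that the first jump of $\xi_a$ truly lies in $[0,1)$ rather than degenerating to the boundary. For this I would write $\alpha_i$ in lowest terms with denominator $q_i\mid d_{\boldsymbol{\alpha},\boldsymbol{\beta}}$; the coprimality of $a$ with $d_{\boldsymbol{\alpha},\boldsymbol{\beta}}$ gives $\gcd(a,q_i)=1$, and since $\alpha_i\in(0,1)$ forces $q_i\geq 2$, the product $a\alpha_i$ cannot be an integer. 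Thus $m_{\boldsymbol{\alpha},\boldsymbol{\beta}}(a)<1$ and condition (ii) is verified for every $a\in A$.

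Finally I would translate the conclusion. Proposition~\ref{theo Lucas Hypergeom} yields $F_{\boldsymbol{\alpha},\boldsymbol{\beta}}\in\mathfrak{L}_1(\mathcal{S})$ (with $k=\operatorname{Card}A=\varphi(d_{\boldsymbol{\alpha},\boldsymbol{\beta}})$) for
$$
\mathcal{S}=\{p\in\mathcal{P}:p\equiv a\bmod d_{\boldsymbol{\alpha},\boldsymbol{\beta}}\text{ for some }a\in A,\ p>d_{\boldsymbol{\alpha},\boldsymbol{\beta}}\}.
$$
Since $A$ exhausts the units modulo $d_{\boldsymbol{\alpha},\boldsymbol{\beta}}$, this is the set of primes $p>d_{\boldsymbol{\alpha},\boldsymbol{\beta}}$ with $\gcd(p,d_{\boldsymbol{\alpha},\boldsymbol{\beta}})=1$; but any prime larger than $d_{\boldsymbol{\alpha},\boldsymbol{\beta}}$ is automatically coprime to $d_{\boldsymbol{\alpha},\boldsymbol{\beta}}$, so $\mathcal{S}$ is exactly the set of all primes larger than $d_{\boldsymbol{\alpha},\boldsymbol{\beta}}$, which is the claim. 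Beyond the integrality argument above, the only bookkeeping to keep in mind is that enlarging $A$ from $\{1\}$ to the full unit group is both legitimate (it remains a subgroup) and necessary to capture every prime above $d_{\boldsymbol{\alpha},\boldsymbol{\beta}}$.
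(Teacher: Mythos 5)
Your proposal is correct and is essentially the paper's own argument: since $\boldsymbol{\beta}=(1,\dots,1)$, every negative jump $\langle a\beta_j\rangle$ of $\xi_a$ equals $1$ and so disappears from $[0,1)$, making condition (ii) of Proposition \ref{theo Lucas Hypergeom} automatic on each $\mathcal{D}_{\mathbf{u},\mathbf{v}}^a$, and taking $A$ to be the whole unit group $(\mathbb{Z}/d_{\boldsymbol{\alpha},\boldsymbol{\beta}}\mathbb{Z})^\times$ turns the resulting set of primes into all primes larger than $d_{\boldsymbol{\alpha},\boldsymbol{\beta}}$. The only remark worth making is that your careful check that $a\alpha_i\notin\mathbb{Z}$ is not strictly needed: any $x\in\mathcal{D}_{\mathbf{u},\mathbf{v}}^a$ must satisfy $x\geq\langle a\alpha_i\rangle$ for some $i$ (the $\beta$-constraints being vacuous on $[0,1)$), which already forces $\xi_a(x)\geq 1$.
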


\subsection{Multivariate factorial ratios and specializations}\label{sec: spe}

We consider now a class of multivariate power series which provides a higher-dimensional generalization of generating series associated 
with factorial ratios that we discussed in Section \ref{sec: fr}.  
Given two tuples of vectors in $\mathbb{N}^d$, $e=(\mathbf{e}_1,\dots,\mathbf{e}_u)$ and $f=(\mathbf{f}_1,\dots,\mathbf{f}_v)$, we write $|e|=\sum_{i=1}^u\mathbf{e}_i$ and, for all $\mathbf{n}$ in $\mathbb{N}^d$, we set
$$
\mathcal{Q}_{e,f}(\mathbf{n}):=\frac{\prod_{i=1}^u(\mathbf{e}_i\cdot\mathbf{n})!}{\prod_{i=1}^v(\mathbf{f}_i\cdot\mathbf{n})!}\quad\textup{and}\quad F_{e,f}(\mathbf{x}):=\sum_{\mathbf{n}\in\mathbb{N}^d}\mathcal{Q}_{e,f}(\mathbf{n})\mathbf{x}^{\mathbf{n}}.
$$

Such multivariate power series simply correspond to functions of type $F_{{\bf u},{\bf v}}$ for which all parameters  $\alpha_i$ and $\beta_i$ are equal to one.  
In particular, we have $d_{\boldsymbol{\alpha},\boldsymbol{\beta}} =1$.  Propositions \ref{prop: fuv} 
and \ref{theo Lucas Hypergeom} take the following much simpler form. 
As in Section \ref{sec: fr}, we consider the Landau function $\Delta_{e,f}$ 
defined from $\mathbb R^d$ to $\mathbb Z$ by:
$$
\Delta_{e,f}(\mathbf{x}):=\sum_{i=1}^u\lfloor\mathbf{e}_i\cdot\mathbf{x}\rfloor-\sum_{j=1}^v\lfloor\mathbf{f}_j\cdot\mathbf{x}\rfloor \, .
$$
Note that this function precisely corresponds in this setting to the function $\xi_1$ defined in Section \ref{sec: mgh}. 
We also recall that, as in the one-variable case, Landau's criterion \cite{Landau}, and Delaygue's refinement \cite{Delaygue2}, give the following dichotomy. 

\begin{itemize}

\medskip

\item[$\bullet$] If, for all $\mathbf{x}$ in $[0,1]^d$, one has $\Delta_{e,f}(\mathbf{x})\geq 0$, then  
 $\mathcal{Q}_{e,f}(\mathbf{n})$ is an integer for all $\mathbf{n}$ in $\mathbb{N}^d$.

\medskip

\item[$\bullet$] If there exists $\mathbf{x}$ in $[0,1]^d$ such that $\Delta_{e,f}(\mathbf{x}) < 0$, then there are only finitely many prime numbers $p$ 
such that $\mathcal{Q}_{e,f}({\bf n})$ belongs to $\mathbb Z_{(p)}$ for all $\mathbf{n}$ in $\mathbb{N}^d$.
\end{itemize}

\medskip

Set 
$$
\mathcal{D}_{e,f}:= \big\{ \mathbf{x}\in[0,1)^d \,:\, \mbox{ there is } {\bf d} \mbox{ in } \{{\bf e_1},\ldots,{\bf e}_u,{\bf f}_1,\ldots,{\bf f}_v\} \mbox{ such that } \mathbf{d}\cdot\mathbf{x}\geq 1\big\} \, .
$$ 
Note that $\mathcal{D}_{e,f}$ corresponds to the set $\mathcal D_{{\bf u},{\bf v}}^1$ in this setting. Propositions \ref{prop: fuv} and 
\ref{theo Lucas Hypergeom} can now be restated in a single result as follows.   

\begin{prop}\label{prop: gfr} 
Let us assume that $|e|=|f|$ and that $\Delta_{e,f}(\mathbf{x})\geq 1$ for all $\mathbf{x}$ in $\mathcal{D}_{e,f}$. Then 
$F_{e,f}(\mathbf{x})$ belongs to ${\mathfrak{L}}_d(\mathcal{P})$. More precisely, $F_{e,f}(\mathbf{x})$ satisfies the  $p$-Lucas property for all primes $p$.
\end{prop}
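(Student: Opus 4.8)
The plan is to recognize that $F_{e,f}(\mathbf{x})$ is nothing but a function of type $F_{\mathbf{u},\mathbf{v}}(\mathbf{x})$ with all Pochhammer parameters equal to $1$, and then to invoke Proposition \ref{prop: fuv} verbatim. Concretely, I would take $\mathbf{u}:=((1,\mathbf{e}_1),\dots,(1,\mathbf{e}_u))$ and $\mathbf{v}:=((1,\mathbf{f}_1),\dots,(1,\mathbf{f}_v))$, so that $r=u$, $s=v$, and every $\alpha_i$ and $\beta_j$ equals $1$. Since $(1)_k=k!$ for all $k\geq 0$, one gets $\mathcal{Q}_{\mathbf{u},\mathbf{v}}(\mathbf{n})=\mathcal{Q}_{e,f}(\mathbf{n})$ for every $\mathbf{n}\in\mathbb{N}^d$, whence $F_{\mathbf{u},\mathbf{v}}=F_{e,f}$.

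The next step is to check that $F_{e,f}$ satisfies the hypotheses of Proposition \ref{prop: fuv}. Membership in $\mathcal{H}_d$ is immediate: the parameters $1$ lie in $\mathbb{Q}\cap(0,1]$, and the balance condition $\sum_{i=1}^u\mathbf{e}_i=\sum_{j=1}^v\mathbf{f}_j$ is exactly the assumption $|e|=|f|$. I would then verify the dictionary between the two settings. For the step function, using $\lfloor x-1\rfloor=\lfloor x\rfloor-1$ one computes
$$
\xi_1(\mathbf{x})=\sum_{i=1}^u\lfloor\mathbf{e}_i\cdot\mathbf{x}-1\rfloor-\sum_{j=1}^v\lfloor\mathbf{f}_j\cdot\mathbf{x}-1\rfloor+u-v=\sum_{i=1}^u\lfloor\mathbf{e}_i\cdot\mathbf{x}\rfloor-\sum_{j=1}^v\lfloor\mathbf{f}_j\cdot\mathbf{x}\rfloor=\Delta_{e,f}(\mathbf{x}),
$$
so $\xi_1$ coincides with the Landau function $\Delta_{e,f}$. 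Likewise, since every coordinate of $\mathbf{u}$ and $\mathbf{v}$ has first entry $\alpha=1$, the semi-algebraic set $\mathcal{D}_{\mathbf{u},\mathbf{v}}^1=\{\mathbf{x}\in[0,1)^d:\mathbf{d}\cdot\mathbf{x}\geq 1\text{ for some }\mathbf{d}\in\{\mathbf{e}_i,\mathbf{f}_j\}\}$ is precisely $\mathcal{D}_{e,f}$. Consequently the standing hypothesis ``$\Delta_{e,f}(\mathbf{x})\geq 1$ for all $\mathbf{x}\in\mathcal{D}_{e,f}$'' is literally the hypothesis ``$\xi_1(\mathbf{x})\geq 1$ for all $\mathbf{x}\in\mathcal{D}_{\mathbf{u},\mathbf{v}}^1$'' required by Proposition \ref{prop: fuv}.

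Finally, because all the $\alpha_i$ and $\beta_j$ equal $1$, their denominators in lowest form are $1$, so $d_{\boldsymbol{\alpha},\boldsymbol{\beta}}=1$ and the set $\mathcal{S}=\{p\in\mathcal{P}:p\equiv 1\bmod d_{\boldsymbol{\alpha},\boldsymbol{\beta}}\}$ produced by Proposition \ref{prop: fuv} is the set $\mathcal{P}$ of all primes. Applying that proposition therefore yields $F_{e,f}(\mathbf{x})\in\mathfrak{L}_d(\mathcal{P})$, and in particular $F_{e,f}$ has the $p$-Lucas property for every prime $p$, which is the desired conclusion.

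I do not expect a genuine obstacle here: the result is a clean specialization of Proposition \ref{prop: fuv} (equivalently, of Proposition \ref{theo Lucas Hypergeom} with $A=\{1\}$). The only point deserving care is the floor-function bookkeeping identifying $\xi_1$ with $\Delta_{e,f}$ and the verification that the two index sets $\mathcal{D}_{\mathbf{u},\mathbf{v}}^1$ and $\mathcal{D}_{e,f}$ agree; both are routine once the convention $\alpha_i=\beta_j=1$ is fixed.
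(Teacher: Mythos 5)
Your proposal is correct and matches the paper's own treatment: the paper presents Proposition \ref{prop: gfr} precisely as the specialization of Proposition \ref{prop: fuv} (equivalently Proposition \ref{theo Lucas Hypergeom} with $A=\{1\}$) to the case where all parameters $\alpha_i,\beta_j$ equal $1$, noting exactly your dictionary $\xi_1=\Delta_{e,f}$, $\mathcal{D}_{\mathbf{u},\mathbf{v}}^1=\mathcal{D}_{e,f}$, and $d_{\boldsymbol{\alpha},\boldsymbol{\beta}}=1$ so that $\mathcal{S}=\mathcal{P}$. Your floor-function verification and the identification $\mathcal{Q}_{\mathbf{u},\mathbf{v}}=\mathcal{Q}_{e,f}$ via $(1)_k=k!$ are exactly the (routine) checks the paper leaves implicit.
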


This result is also proved by Delaygue in \cite[Theorem 3]{Delaygue3}.  
We give below a simple case of Proposition \ref{prop: gfr} that turns out to be especially useful for applications.   

\begin{cor}\label{cor: gfr}
For every $k$ in $\{1,\dots,d\}$, let us denote by $\mathbf{1}_k$ the vector of $\mathbb{N}^d$  whose $k$-th coordinate is one and all others  are zero.  
Let $e$ and $f=(\mathbf{1}_{k_1},\dots,\mathbf{1}_{k_v})$ be two disjoint tuples of non-zero vectors in $\mathbb{N}^d$ such that $|e|=|f|$ and 
$k_i\in\{1,\dots,d\}$, $1\leq i\leq v$.  
Then $F_{e,f}(\mathbf{x})$ satisfies the  $p$-Lucas property for all primes $p$. 
\end{cor}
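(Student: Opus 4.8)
The plan is to deduce this corollary directly from Proposition~\ref{prop: gfr}. Since $|e|=|f|$ is assumed, the only remaining thing to check is that the Landau function satisfies $\Delta_{e,f}(\mathbf{x})\geq 1$ for every $\mathbf{x}$ in $\mathcal{D}_{e,f}$; the conclusion then follows verbatim from that proposition.

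First I would exploit the special shape of $f$. Because each $\mathbf{f}_j$ equals a basis vector $\mathbf{1}_{k_j}$, one has $\mathbf{f}_j\cdot\mathbf{x}=x_{k_j}$ for every $\mathbf{x}=(x_1,\dots,x_d)$. On the cube $[0,1)^d$ every coordinate lies in $[0,1)$, so $\lfloor\mathbf{f}_j\cdot\mathbf{x}\rfloor=\lfloor x_{k_j}\rfloor=0$ for all $j$. Consequently, for $\mathbf{x}\in[0,1)^d$ the negative part of $\Delta_{e,f}$ vanishes and
$$
\Delta_{e,f}(\mathbf{x})=\sum_{i=1}^u\lfloor\mathbf{e}_i\cdot\mathbf{x}\rfloor.
$$

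Next I would analyse membership in $\mathcal{D}_{e,f}$. By the same computation, no $\mathbf{f}_j$ can realise the inequality $\mathbf{d}\cdot\mathbf{x}\geq 1$ defining $\mathcal{D}_{e,f}$, since $\mathbf{f}_j\cdot\mathbf{x}=x_{k_j}<1$. Hence for $\mathbf{x}\in\mathcal{D}_{e,f}$ there must exist an index $i_0$ with $\mathbf{e}_{i_0}\cdot\mathbf{x}\geq 1$, so that $\lfloor\mathbf{e}_{i_0}\cdot\mathbf{x}\rfloor\geq 1$. Since each $\mathbf{e}_i\in\mathbb{N}^d$ and $\mathbf{x}$ has nonnegative entries, every summand $\lfloor\mathbf{e}_i\cdot\mathbf{x}\rfloor$ is $\geq 0$, whence
$$
\Delta_{e,f}(\mathbf{x})=\sum_{i=1}^u\lfloor\mathbf{e}_i\cdot\mathbf{x}\rfloor\geq\lfloor\mathbf{e}_{i_0}\cdot\mathbf{x}\rfloor\geq 1.
$$

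This verifies the hypothesis of Proposition~\ref{prop: gfr}, and the corollary follows immediately. There is no genuine obstacle here: the result is a transparent specialization, and the only point requiring care is the elementary observation that basis-vector denominators contribute neither to the floors in $\Delta_{e,f}$ nor to the defining inequality of $\mathcal{D}_{e,f}$ on $[0,1)^d$, so that the burden of forcing $\Delta_{e,f}(\mathbf{x})\geq 1$ falls entirely on the numerator vectors $\mathbf{e}_i$.
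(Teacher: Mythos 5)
Your proof is correct and follows essentially the same route as the paper: both arguments note that since each $\mathbf{f}_j=\mathbf{1}_{k_j}$ gives $\lfloor\mathbf{f}_j\cdot\mathbf{x}\rfloor=0$ on $[0,1)^d$, any $\mathbf{d}$ with $\mathbf{d}\cdot\mathbf{x}\geq 1$ must be a coordinate of $e$, whence $\Delta_{e,f}(\mathbf{x})=\sum_{i=1}^u\lfloor\mathbf{e}_i\cdot\mathbf{x}\rfloor\geq 1$ on $\mathcal{D}_{e,f}$ and Proposition~\ref{prop: gfr} applies.
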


\begin{proof}
Let $\mathbf{x}$ be in $\mathcal{D}_{e,f}$. By assumption, there is a coordinate $\mathbf{d}$ of either $e$ or $f$ such that 
$\mathbf{d}\cdot\mathbf{x}\geq 1$. But, since $\mathbf{x}$ belongs to $[0,1)^d$ and $f=(\mathbf{1}_{k_1},\dots,\mathbf{1}_{k_v})$, $\mathbf{d}$ has to be a coordinate of the vector 
$e$ so that 
\begin{align*}
\Delta_{e,f}(\mathbf{x}) &=
\sum_{i=1}^u\lfloor\mathbf{e}_i\cdot\mathbf{x}\rfloor-\sum_{j=1}^v\lfloor\mathbf{1}_{k_j}\cdot\mathbf{x}\rfloor \\
&=\sum_{i=1}^u\lfloor\mathbf{e}_i\cdot\mathbf{x}\rfloor \\
&\geq 1 \,.
\end{align*}
Proposition \ref{prop: gfr} then applies to conclude the proof. 
\end{proof}

\subsubsection{Specializations of factorial ratios}
Our main interest when working with multivariate power series in this setting is to benefit from the following general philosophy:  
interesting classical power series in one variable can be produced as simple specializations of simple multivariate power series. 
In particular, we claim that specializations of functions of type $F_{e,f}$ lead to many classical examples of generating functions arising in combinatorics and number theory.  
As already mentioned in Section \ref{sec: mgh},  the generating function of Ap\'ery's numbers 
$$
f(x)=\sum_{n=0}^\infty\left(\sum_{k=0}^n\binom{n}{k}^2\binom{n+k}{k}\right)x^n
$$
can be for instance obtained as the specialization $f(x)=F_{e,f}(x,x)$ of the two-variate  generating series of factorial ratios
\begin{equation}\label{example1}
F_{e,f}(x_1,x_2)=\sum_{(n_1,n_2)\in\mathbb{N}^2}\frac{(2n_1+n_2)!(n_1+n_2)!}{n_1!^3n_2!^2}x_1^{n_1}x_2^{n_2},
\end{equation}
 corresponding to the choice 
$$
e=\big((2,1),(1,1)\big)\quad\textup{and}\quad f=\big((1,0),(1,0),(1,0),(0,1),(0,1)\big) \, . 
$$

In order to support our claim, we gather in the following table some  classical sequences for which we prove that they satisfy  the $p$-Lucas property for all primes~$p$. 
Indeed, they all arise from specialization in $(x,x)$ of bivariate power series $F_{e,f}(x_1,x_2)$ that belong to $\mathfrak L_2(\mathcal P)$. The fact these bivariate power 
series belong to $\mathfrak L_2(\mathcal P)$ is a direct consequence of  Corollary \ref{cor: gfr}. Proposition \ref{prop: spe} then implies that 
the specialization $F_{e,f}(x,x)$ belong to $\mathfrak L_1(\mathcal P)$. 

\begin{small}
$$
{\renewcommand{\arraystretch}{2.5}
\begin{array}{|c|c|c|}
  \hline
  \textup{Sequence} & \mathcal{Q}_{e,f}(n_1,n_2)  & \textup{Reference from OEIS }\\
  \hline
    \displaystyle{\binom{2n}{n} = \sum_{k=0}^n\binom{n}{k}^2 }& \displaystyle{\frac{(n_1+n_2)!^2}{n_1!^2n_2!^2}} 
    &  \textup{Central binomial coefficients (A000984)} \\\hline
  \displaystyle{\sum_{k=0}^n\binom{n}{k}^2\binom{n+k}{k}^2} & \displaystyle{\frac{(2n_1+n_2)!^2}{n_1!^4n_2!^2}} 
  &  \textup{Ap\'ery numbers (A005259)} \\\hline
  \displaystyle{\sum_{k=0}^n\binom{n}{k}^2\binom{n+k}{k}} & \displaystyle{\frac{(2n_1+n_2)!(n_1+n_2)!}{n_1!^3n_2!^2}} 
  &  \textup{Ap\'ery numbers (A005258)} \\\hline
  
  \displaystyle{\sum_{k=0}^n\binom{n}{k}^3} & \displaystyle{\frac{(n_1+n_2)!^3}{n_1!^3n_2!^3}} 
  &  \textup{Franel numbers (A000172)} \\\hline
  \displaystyle{\sum_{k=0}^n\binom{n}{k}^4} & \displaystyle{\frac{(n_1+n_2)!^4}{n_1!^4n_2!^4}} 
  &  \textup{(A005260)} \\\hline
 
  \displaystyle{\sum_{k=0}^n\binom{n}{k}\binom{2k}{k}\binom{2(n-k)}{n-k}} & \displaystyle{\frac{(n_1+n_2)!(2n_1)!(2n_2)!}{n_1!^3n_2!^3}} 
  &  \textup{ (A081085)} \\\hline
  
  \displaystyle{\sum_{k=0}^n\binom{n}{k}^2\binom{2k}{k}} & \displaystyle{\frac{(n_1+n_2)!^2(2n_1)!}{n_1!^4n_2!^2}} 
  &  \textup{\parbox{5cm}{\centering Number of abelian squares \\ of length $2n$ over an alphabet \\ with $3$ letters (A002893)}} \\\hline
  \displaystyle{\sum_{k=0}^n\binom{n}{k}^2\binom{2k}{k}\binom{2(n-k)}{n-k}} & \displaystyle{\frac{(n_1+n_2)!^2(2n_1)!(2n_2)!}{n_1!^4n_2!^4}} 
  &  \textup{Domb numbers (A002895)} \\\hline
   \displaystyle{\sum_{k=0}^n\binom{n}{k}\binom{n+k}{k}} & \displaystyle{\frac{(2n_1+n_2)!}{n_1!^2n_2!}} 
   &  \textup{Central Delannoy numbers (A001850)} 
   \\\hline
     \displaystyle{\sum_{k=0}^n\binom{2k}{k}^2\binom{2(n-k)}{n-k}^2} & \displaystyle{\frac{(2n_1)!^2(2n_2)!^2}{n_1!^4n_2!^4}} 
     &  \textup{(A036917)} \\\hline

\end{array}
}
$$
\end{small}

\medskip

Let us end this section with an example of a different type, that is for which $f$ is not of the form $(\mathbf{1}_{k_1},\dots,\mathbf{1}_{k_v})$. 
Set 
$$
F_{e,f}(x_1,x_2):=\sum_{(n_1,n_2)\in\mathbb{N}^2}\frac{(3n_1+2n_2)!}{(n_1+n_2)!n_1!^2n_2!}x_1^{n_1}x_2^{n_2} \, .
$$
In that case, we obtain $\mathcal{D}_{e,f}=\{(x,y)\in[0,1)^2\,:\,3x_1+2x_2\geq 1\}$. 
When $(x_1,x_2)$ belongs to $\mathcal{D}_{e,f}$, we get that 
$$
\Delta_{e,f}(x_1,x_2)=\lfloor 3x_1+2x_2\rfloor-\lfloor x_1+x_2\rfloor\geq 1\,.
$$
By Proposition \ref{prop: gfr}, it follows that $F_{e,f}(x_1,x_2)$ has the $p$-Lucas property for all primes. 
Using specializations in $(-x,x)$ and $(2x^3,3x^2)$, we then infer from Proposition \ref{prop: spe} that both sequences   
$$
\sum_{k=0}^n(-1)^k\binom{2n+k}{n}\binom{n+k}{k}\binom{n}{k}
\quad\textup{and}\quad
\underset{k\equiv n\mod 2}{\sum_{k=0}^{\lfloor n/3\rfloor}}2^k3^{\frac{n-3k}{2}}\binom{n}{k}\binom{n-k}{\frac{n-k}{2}}\binom{\frac{n-k}{2}}{k}
$$
also satisfy the $p$-Lucas property for all prime numbers $p$.

\subsection{Examples from differential equations of Calabi-Yau type}\label{section: tables}

Motivated by the search for differential operators associated with particular families of Calabi-Yau varieties, Almkvist \textit{et al.} \cite{AESZ} gave a list of more than $400$ differential operators satisfying some algebraic conditions \cite[Section 1]{AESZ}. In particular, a condition is that the associated differential equation admits a unique power series solution near $z=0$ with constant term $1$ and that this power series has integral Taylor coefficients. In most of the cases, this solution is also given in \cite{AESZ}. It turns out that our method enables us to prove that most of these solutions have the $p$-Lucas property for infinitely many primes $p$. 
\medskip

By studying the integrality of the Taylor coefficients of mirror maps, Kratthentaler and Rivoal in \cite{Tanguy} and Delaygue in \cite[Section 10.2]{Delaygue0} showed that the power series solutions near $z=0$ of $143$ equations in Table \cite{AESZ} are specializations of series $F_{\mathbf{u},\mathbf{v}}(\mathbf{x})$ where $\boldsymbol{\alpha}$ and $\boldsymbol{\beta}$ are tuples of $1$'s. Furthermore, they showed that in $140$ cases the associated functions $\xi_1$ are greater than or equal to $1$ on $\mathcal{D}_{\mathbf{u},\mathbf{v}}^1$. Hence, to prove that these specializations have the $p$-Lucas property for all primes $p$, it suffices to show that the specialization is given by a vector $\mathbf{n}$ such that if $\mathbf{x}\in[0,1)^d$ and $\mathbf{n}\cdot\mathbf{x}\geq 1$, then $\xi_1(\mathbf{x})\geq 1$.
\medskip

Following this method, we checked that $212$ cases have the $p$-Lucas property for infinitely many primes $p$, namely Cases $1$--$25$, $29$, $3^\ast$, $4^\ast$, $4^{\ast\ast}$, $6^\ast$--$10^\ast$, $7^{\ast\ast}$--$10^{\ast\ast}$, $13^\ast$, $13^{\ast\ast}$, $\hat{1}$--$\hat{14}$, $30$, $31$, $34$--$41$, $43$--$83$, $85$--$108$, $110$--$116$, $119$--$122$, $124$--$132$, $145$--$153$, $155$--$172$, $180$, $185$, $188$, $190$--$192$, $197$, $208$, $209$, $212$, $232$, $233$, $237$--$241$, $243$, $278$, $284$, $288$, $292$, $307$, $323$, $330$, $337$, $338$, $340$, $367$, $369$--$372$, $377$, $380$, $398$.  

Among the cases not covered in \cite{Tanguy} nor \cite{Delaygue0}, we explain Cases $4^\ast$ and $31$ to give examples. In Case $4^\ast$, the power series solution near $z=0$ is 
$$
f(z)=\sum_{n=0}^\infty\left(\sum_{k=0}^n27^n\binom{2n}{n}\binom{-1/3}{k}^2\binom{-2/3}{n-k}^2\right)z^n.
$$
Hence $f(z)=F(27z,27z)$ where
$$
F(x,y)=\sum_{n_1,n_2\geq 0}\frac{(2n_1+2n_2)!(1/3)_{n_1}^2(2/3)_{n_2}^2}{(n_1+n_2)!^2n_1!^2n_2!^2}x^{n_1}y^{n_2},
$$ 
which is a series $F_{\mathbf{u},\mathbf{v}}(x,y)$. We have $d_{\boldsymbol{\alpha},\boldsymbol{\beta}}=3$, and, for every $(x,y)$ in $[0,1)^2$, we have
$$
\xi_1(x,y)=\xi_2(x,y)=\lfloor 2x+2y\rfloor+2\lfloor x-1/3\rfloor+2\lfloor y-2/3\rfloor-\lfloor x+y\rfloor+4.
$$
Furthermore, we have 
$$
\mathcal{D}_{\mathbf{u},\mathbf{v}}^1=\mathcal{D}_{\mathbf{u},\mathbf{v}}^2=\big\{(x,y)\in[0,1)^2\,:\,x\geq 1/3\textup{ or }y\geq 2/3\big\}.
$$
If $(x,y)$ belongs to $\mathcal{D}_{\mathbf{u},\mathbf{v}}^1$, then we have $\lfloor x-1/3\rfloor+\lfloor y-2/3\rfloor\in\{-1,0\}$ so that $\xi_1(x,y)\geq 2$. In addition, if $x+y\geq 1$, then we have $(x,y)\in\mathcal{D}_{\mathbf{u},\mathbf{v}}^1$ and $\xi_1(x,y)\geq 1$. By Propositions \ref{theo Lucas Hypergeom} and \ref{prop: spe}, we obtain that $f(z)$ has the $p$-Lucas property for all primes $p>3$.
\medskip

In Case $31$, the power series solution near $z=0$ can be rewritten as 
$$
f(z)=\sum_{n=0}^\infty\left(\sum_{k=0}^n\sum_{i=0}^k4^{3n}4^{2(n-k)}4^{k-i}(-1)^i\binom{2k}{k}\binom{2i}{i}\frac{(1/4)_n(1/4)_{n-k}(3/4)_k}{n!(n-k)!(k-i)!i!}\right)z^n.
$$
Hence $f(z)=F(4^5z,4^4z,-4^3z)$ where $F(x,y,z)$ is
$$
\sum_{n_1,n_2,n_3\geq 0}\binom{2(n_2+n_3)}{n_2+n_3}\binom{2n_3}{n_3}\frac{(1/4)_{n_1+n_2+n_3}(1/4)_{n_1}(3/4)_{n_2+n_3}}{(n_1+n_2+n_3)!n_1!n_2!n_3!}x^{n_1}y^{n_2}z^{n_3},
$$ 
which is a series $F_{\mathbf{u},\mathbf{v}}(x,y,z)$. We have $d_{\boldsymbol{\alpha},\boldsymbol{\beta}}=4$, and, for every $(x,y,z)$ in $[0,1)^3$, we have
$$
\xi_1(x,y,z)=\lfloor 2y+2z\rfloor+\lfloor 2z\rfloor+\lfloor x+y+z-1/4\rfloor+\lfloor x-1/4\rfloor+\lfloor y+z-3/4\rfloor-2\lfloor y+z\rfloor-\lfloor x+y+z\rfloor+3.
$$
Furthermore, we have 
$$
\mathcal{D}_{\mathbf{u},\mathbf{v}}^1=\big\{(x,y,z)\in[0,1)^3\,:\,x+y+z\geq 1/4\big\}.
$$
Let $(x,y,z)$ be in $\mathcal{D}_{\mathbf{u},\mathbf{v}}^1$. We have $\lfloor 2y+2z\rfloor-2\lfloor y+z\rfloor\geq 0$. If $x+y+z<1$ then we easily obtain that $\xi_1(x,y,z)\geq 1$. If $1\leq x+y+z<2$, then we have $x\geq 1/4$ or $y+z\geq 3/4$ which yields $\xi_1(x,y,z)\geq 1$ again. Finally, if $2\leq x+y+z<3$, then we have $\lfloor x+y+z-1/4\rfloor\geq 1$ and $\xi_1(x,y,z)\geq 1$ as expected. Hence, by Propositions \ref{theo Lucas Hypergeom} and \ref{prop: spe}, we obtain that $f(z)$ has the $p$-Lucas property for all primes $p\equiv 1\mod 4$.


\section{Algebraic independence of $G$-functions: a few examples}\label{sec: ex}

In this last section, we gather various examples of statements concerning algebraic independence of  
$G$-functions that follows from simple applications of our method. 

\subsection{Factorial ratios}\label{sec: exfr}

Given two tuples of vectors of natural numbers, $ e=(e_1,\dots, e_u)$ and $f=(f_1,\dots,f_v)$, 
we recall that the associated sequence of factorial ratios is defined by 
$$
\mathcal{Q}_{e,f}(n):=\frac{\prod_{i=1}^u(e_in)!}{\prod_{i=1}^v(f_in)!} 
$$
and that the generating series of such a sequence is denoted by 
$$
F_{e,f}(x):=\sum_{n=0}^{\infty}\mathcal{Q}_{e,f}(n)z^{n} \, .
$$
In the sequel, we always assume that the sets $\{e_1,\ldots,e_u\}$ and $\{f_1,\ldots,f_v\}$ are disjoint. We also recall that 
$\vert e\vert :=\sum_{i=1}^u e_i$, $\vert f\vert :=\sum_{i=1}^v f_i$, and 
$$
m_{e,f} := \left( \max \{e_1,\ldots,e_u,f_1,\ldots,f_v\}\right)^{-1} \, . 
$$
We set
$$
C_{e,f}:=\frac{\prod_{i=1}^ue_i^{e_i}}{\prod_{i=1}^vf_i^{f_i}} \, \cdot 
$$
From Stirling's formula, we deduce the following general asymptotics:  
\begin{equation}\label{Stirling}
\mathcal{Q}_{e,f}(n)\underset{n\rightarrow\infty}{\sim}C_{e,f}^n\big(\sqrt{2\pi n}\big)^{u-v}\sqrt{\frac{\prod_{i=1}^ue_i}{\prod_{i=1}^vf_i}} \, \cdot
\end{equation}

We give the following consequence of Proposition \ref{prop: sing1}.  

\begin{prop}\label{propo 2}
Let $s$ be a positive integer and, for all $i$ in $\{1,\dots,s\}$, let $e_i:=(e_{i,1},\ldots,e_{i,u_i})$ and $f_i:=(f_{i,1},\ldots,f_{i,v_i})$ be disjoint tuples of positive integers. 
Let us assume  that the following hold. 
\medskip

\begin{itemize}

\item[{\rm(i)}]  For all $i$ in $\{1,\dots,s\}$, one has $v_i-u_i\geq 2$.

\medskip

\item[{\rm(ii)}] The rational numbers $C_{e_1,f_1},\dots,C_{e_s,f_s}$ are pairwise distinct.

\medskip

\item[{\rm(iii)}]  For all $i$ in $\{1,\dots,s\}$, $\mathcal{Q}_{e_i,f_i}$ satisfies the $p$--Lucas property for all primes $p$. 
\end{itemize}

\medskip

Then $F_{e_1,f_1}(z),\dots,F_{e_s,f_s}(z)$ 
are algebraically independent over $\mathbb{C}(z)$.
\end{prop}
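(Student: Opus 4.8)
The plan is to realize each $F_{e_i,f_i}$ as an element of $\mathcal{L}(\mathcal{P})\cap\mathfrak{W}$ with pairwise distinct radii of convergence, and then invoke Proposition \ref{prop: sing1} directly. Throughout I work over $R=\mathbb{Z}$, $K=\mathbb{Q}$, and take $\mathcal{S}=\mathcal{P}$, which is infinite. First I would record that each generating series lies in the right set: by hypothesis (iii) each sequence $\mathcal{Q}_{e_i,f_i}$ satisfies the $p$-Lucas property for all primes, and since $\mathcal{Q}_{e_i,f_i}(0)=1$, Proposition \ref{prop: plucas} shows that $F_{e_i,f_i}\in\mathfrak{L}_1(\mathcal{P})\subseteq\mathcal{L}(\mathcal{P})$. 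In particular the coefficients are nonnegative integers, being positive rationals lying in $\mathbb{Z}_{(p)}$ for every $p$.

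Next I would pin down the radii of convergence. The asymptotic \eqref{Stirling} gives $\mathcal{Q}_{e_i,f_i}(n)\sim C_{e_i,f_i}^n(\sqrt{2\pi n})^{u_i-v_i}\kappa_i$ for some positive constant $\kappa_i$, so that $|\mathcal{Q}_{e_i,f_i}(n)|^{1/n}\to C_{e_i,f_i}$ and hence $\rho_{F_{e_i,f_i}}=C_{e_i,f_i}^{-1}$. Since $C_{e,f}=\prod_i e_i^{e_i}/\prod_j f_j^{f_j}$ is a positive rational and hypothesis (ii) makes the $C_{e_i,f_i}$ pairwise distinct, the radii $\rho_{F_{e_i,f_i}}$ are finite, positive, and pairwise distinct.

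I would then verify membership in $\mathfrak{W}$ through Remark \ref{rem: W}. The coefficients are nonnegative, and hypothesis (i) gives $u_i-v_i\leq -2$, whence $(\sqrt{2\pi n})^{u_i-v_i}=O(n^{-1})$; combined with \eqref{Stirling} this yields $\mathcal{Q}_{e_i,f_i}(n)=O(C_{e_i,f_i}^n/n)=O(\rho_{F_{e_i,f_i}}^{-n}/n)$. Remark \ref{rem: W} therefore places each $F_{e_i,f_i}$ in $\mathfrak{W}$.

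With all the $F_{e_i,f_i}$ belonging to $\mathcal{L}(\mathcal{P})\cap\mathfrak{W}$, the $\rho_{F_{e_i,f_i}}$ pairwise distinct, and $\mathcal{P}$ infinite, Proposition \ref{prop: sing1} yields the algebraic independence of $F_{e_1,f_1}(z),\dots,F_{e_s,f_s}(z)$ over $\mathbb{C}(z)$. The only genuinely delicate step is the verification of $\mathfrak{W}$-membership: hypothesis (i) is precisely what forces the $1/n$ decay required by Remark \ref{rem: W}, and it is here that the bound $v_i-u_i\geq 2$ (rather than merely $\geq 1$) is essential, since for $v_i-u_i=1$ the coefficients decay only like $n^{-1/2}\rho_{F_{e_i,f_i}}^{-n}$ and the meromorphy argument underlying Remark \ref{rem: W} would break down.
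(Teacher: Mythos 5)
Your proof is correct and follows essentially the same route as the paper: both deduce membership in $\mathfrak{L}(\mathcal P)\subseteq\mathcal L(\mathcal P)$ from hypothesis (iii), use \eqref{Stirling} with (i) and (ii) to get pairwise distinct radii and $\mathfrak W$-membership via Remark \ref{rem: W}, and conclude by Proposition \ref{prop: sing1}. Your explicit appeal to Proposition \ref{prop: plucas} and the nonnegativity of the coefficients merely spells out steps the paper leaves implicit.
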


\begin{proof}
Since $v_i-u_i\geq 2$ for all $i$, we first infer from \eqref{Stirling} that $\mathcal{Q}_{e_i,f_i}(n)=O(C_{e_i,f_i}^n/n)$ and thus, by Remark \ref{rem: W},  we see that 
all $F_{e_i,f_i}(z)$ belong to $\mathfrak W$. 
We also infer from \eqref{Stirling} that  the radius of convergence of $F_{e_i,f_i}(z)$ is $1/C_{e_i,f_i}$.  Hence 
the functions $F_{e_1,f_1}(z),\dots,F_{e_s,f_s}(z)$   have distinct radius of convergence. Since by assumption they all belong  to $\mathfrak{ L}(\mathcal P)$, 
the result follows from Proposition \ref{prop: sing1}. 
\end{proof}

As an illustration, let us give the following result. 

\begin{thm} The functions 
$$
\sum_{n=0}^\infty\frac{(3n)!}{n!^3}z^n,\;\sum_{n=0}^\infty\frac{(5n)!(3n)!}{(2n)!^2n!^4}z^n\quad\textup{and}\quad\sum_{n=0}^\infty\frac{(10n)!}{(5n)!(3n)!n!^2}z^n
$$
are algebraically independent over $\mathbb{C}(z)$. 
\end{thm}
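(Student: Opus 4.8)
The plan is to recognize the three functions as generating series of factorial ratios and to apply Proposition \ref{propo 2} directly. Writing them as $F_{e_1,f_1}$, $F_{e_2,f_2}$, $F_{e_3,f_3}$, I would record the tuples
$$e_1=(3),\ f_1=(1,1,1);\qquad e_2=(5,3),\ f_2=(2,2,1,1,1,1);\qquad e_3=(10),\ f_3=(5,3,1,1).$$
In each case the sets $\{e_{i,j}\}$ and $\{f_{i,j}\}$ are disjoint, as required, so it remains to verify the three hypotheses of Proposition \ref{propo 2}.

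For hypothesis (i) I compute $v_i-u_i$: here $(u_1,v_1)=(1,3)$, $(u_2,v_2)=(2,6)$ and $(u_3,v_3)=(1,4)$, so $v_i-u_i$ equals $2$, $4$ and $3$ respectively, all at least $2$. For hypothesis (ii) I evaluate the constants $C_{e_i,f_i}=\prod_j e_{i,j}^{e_{i,j}}/\prod_j f_{i,j}^{f_{i,j}}$, obtaining
$$C_{e_1,f_1}=3^3=27,\qquad C_{e_2,f_2}=\frac{5^5 3^3}{2^4}=\frac{84375}{16},\qquad C_{e_3,f_3}=\frac{10^{10}}{5^5 3^3}=\frac{10^{10}}{84375},$$
which are three pairwise distinct rationals (of strictly increasing size), so (ii) holds; via the asymptotics \eqref{Stirling} this is precisely what forces the three radii of convergence to be distinct inside the proof of Proposition \ref{propo 2}.

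The only hypothesis carrying genuine content is (iii), the $p$-Lucas property for all primes $p$, and this is exactly what Section \ref{sec: fr} supplies. For $F_{e_3,f_3}$ the required inequality $\Delta_{e_3,f_3}(x)\ge 1$ on $[1/10,1)$ was already verified there by translating the ratio into hypergeometric form and reading off the interlacing of the jump points; for $F_{e_2,f_2}$ the same verification is announced there, and it amounts to checking $\lfloor 5x\rfloor+\lfloor 3x\rfloor-2\lfloor 2x\rfloor\ge 1$ on $[1/5,1)$; and for $F_{e_1,f_1}$ every denominator exponent equals $1$, so the remark following Proposition \ref{prop: fr} gives the property immediately. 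Hence each $\mathcal{Q}_{e_i,f_i}$ satisfies the $p$-Lucas property for every prime by Proposition \ref{prop: fr}. With (i)--(iii) in hand, Proposition \ref{propo 2} yields the algebraic independence over $\mathbb{C}(z)$. I expect the main (and only mildly nontrivial) step to be the step-function check in (iii) for $F_{e_2,f_2}$, which is nonetheless routine once the factorial ratio is put in hypergeometric form and the ordering $1/5<1/3<2/5<1/2<3/5<2/3<4/5$ of the discontinuities is recorded.
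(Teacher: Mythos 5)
Your proposal is correct and follows exactly the paper's own route: the paper proves this theorem by the same one-line appeal to Proposition \ref{propo 2}, with hypotheses (i) and (ii) checked directly and hypothesis (iii) supplied by Proposition \ref{prop: fr} and the computations of Section \ref{sec: fr}. Your verification of the tuples, the constants $C_{e_i,f_i}$, and the step-function inequality $\lfloor 5x\rfloor+\lfloor 3x\rfloor-2\lfloor 2x\rfloor\geq 1$ on $[1/5,1)$ merely fills in details the paper leaves to the reader, and all of them are accurate.
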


\begin{proof}
It is easy to see that (i) and (ii) are satisfied, while Proposition \ref{prop: fr} can be used to prove that (iii) holds too.  
\end{proof}

Using Proposition \ref{prop: sing2}, we can also obtain the following result.

\begin{prop}\label{prop: fr2}
Let $e_1$ and $f_1$, respectively $e_2$ and $f_2$, be disjoint tuples of positive integers such that the following hold. 

\medskip

\begin{itemize}

\item[{\rm(i)}]  $v_1-u_1= 2$.

\medskip

\item[{\rm(ii)}] $v_2-u_2\geq 3$.

\medskip

\item[{\rm(iii)}]  $\mathcal{Q}_{e_1,f_1}$ and $\mathcal{Q}_{e_2,f_2}$ satisfy the $p$--Lucas property for all primes $p$. 
\end{itemize}

\medskip

Then $F_{e_1,f_1}(z)$ and $F_{e_2,f_2}(z)$ are algebraically independent over $\mathbb{C}(z)$. 
\end{prop}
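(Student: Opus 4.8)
The plan is to reduce the statement to the two singularity criteria established above, treating separately the cases where the two radii of convergence coincide or not. Write $C_i := C_{e_i,f_i}$, so that $\rho_i := 1/C_i$ is the radius of convergence of $F_{e_i,f_i}$. From Stirling's asymptotics \eqref{Stirling} one gets
$$\mathcal{Q}_{e_i,f_i}(n)\underset{n\to\infty}{\sim} K_i\, C_i^n\, n^{(u_i-v_i)/2},$$
with $K_i>0$, so that the exponent of $n$ equals $-1$ for $i=1$ and is at most $-3/2$ for $i=2$. I would first record that both functions are transcendental over $\mathbb{C}(z)$: by hypothesis (iii) they lie in $\mathfrak{L}(\mathcal{P})\subset\mathcal{L}(\mathcal{P})$, so if one of them were algebraic, Proposition \ref{prop: alg2} would force it to be of the form $P(z)^{-1/a}$ with $P$ a polynomial satisfying $P(\mathbf{0})=1$; singularity analysis then gives coefficients whose growth has an exponent of $n$ strictly larger than $-1$ (coming from a dominant singularity of multiplicity at least $1$), which is incompatible with the exponents $-1$ and $\leq -3/2$ found above.

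Next I would check that both $F_{e_i,f_i}$ lie in $\mathfrak{W}$. Their coefficients are non-negative and, by the displayed asymptotics, satisfy $\mathcal{Q}_{e_i,f_i}(n)=O(\rho_i^{-n}/n)$, since $(u_i-v_i)/2\leq -1$; Remark \ref{rem: W} then gives $F_{e_i,f_i}\in\mathfrak{W}$. Combined with (iii), this places both functions in $\mathcal{L}(\mathcal{P})\cap\mathfrak{W}$. In the case $C_1\neq C_2$ the radii $\rho_1,\rho_2$ are distinct, and Proposition \ref{prop: sing1} applies directly to yield the algebraic independence.

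It remains to treat the case $C_1=C_2=:C$, so that $\rho:=1/C$ is the common radius; here I would invoke Proposition \ref{prop: sing2} at the point $z_0=\rho$, which is a singularity of both functions by Pringsheim's theorem. For $f_1:=F_{e_1,f_1}$, the hypothesis $v_1-u_1=2$ gives $\mathcal{Q}_{e_1,f_1}(n)\sim K_1\, C^n/n$, whence $f_1(t\rho)\sim -K_1\log(1-t)$ as $t\to 1^-$; a logarithmic blow-up cannot be equivalent to any $C'(t-1)^\alpha$ with $\alpha\in\mathbb{Q}$, so Assumption (i) of Proposition \ref{prop: sing2} holds. For $f_2:=F_{e_2,f_2}$, the exponent $(u_2-v_2)/2\leq -3/2$ makes $\sum_n\mathcal{Q}_{e_2,f_2}(n)\rho^n$ convergent with a finite strictly positive sum (the terms are positive and $\mathcal{Q}_{e_2,f_2}(0)=1$), so $\lim_{t\to 1^-}f_2(t\rho)=\ell\in\mathbb{C}^*$ and Assumption (ii) holds. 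Proposition \ref{prop: sing2} then gives the algebraic independence, finishing the proof. The delicate point is the verification of Assumption (i): one must argue that the logarithmic singularity produced by the exponent $-1$ genuinely rules out every power-type local behaviour, which is where the transfer between coefficient asymptotics and the local form of the singularity is used.
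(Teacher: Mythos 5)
Your proof is correct and follows essentially the same route as the paper: split on whether $C_{e_1,f_1}=C_{e_2,f_2}$, apply Proposition \ref{prop: sing1} (the paper uses its factorial-ratio specialization, Proposition \ref{propo 2}) when the radii differ, and apply Proposition \ref{prop: sing2} via the Stirling asymptotics when they coincide. The only minor deviation is the transcendence step: the paper obtains it by applying Proposition \ref{propo 2} to a single function, whereas you route through Proposition \ref{prop: alg2} and an asymptotic comparison with $P(z)^{-1/a}$; both work, though the paper's route avoids the slightly delicate analysis of coefficient asymptotics of such roots of polynomials.
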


\begin{proof}
We first observe that if $C_{e_1,f_1}\not= C_{e_2,f_2}$, we can use 
Proposition \ref{propo 2} to conclude. We can thus assume that $C_{e_1,f_1}= C_{e_2,f_2}=:C$ and we will use Proposition  \ref{prop: sing2}. 

We first remark that $F_{e_1,f_1}(z)$ and $F_{e_2,f_2}(z)$ are both transcendental. This follows for instance from applying Proposition \ref{propo 2} twice with a single function. 
Now, by Pringsheim's theorem, $F_{e_1,f_1}(z)$ and $F_{e_2,f_2}(z)$ have a singularity at $1/C$.  Using \eqref{Stirling}, 
we infer from the assumption $v_1-u_1=2$ that $F_{e_1,f_1}$ satisfies Condition (i) of Proposition \ref{prop: sing2}, while we infer from the assumption $v_2-u_2\geq 3$ 
that  $F_{e_2,f_2}$ satisfies Condition (ii) of Proposition \ref{prop: sing2}.  
Since, by assumption, $F_{e_1,f_1}$ and $F_{e_2,f_2}$ both belong to $\mathfrak L(\mathcal P)$, we can apply Proposition \ref{prop: sing2} to conclude the proof. 
\end{proof}

\begin{rem}{\rm 
Using the discussion in \cite{Villegas}, one can actually show that the only case where 
$F_{e,f}(z)$ both belongs to $\mathfrak L(\mathcal P)$ and  is an algebraic function corresponds to 
$e=(2)$ and $f=(1,1)$, that is  to 
$$
F_{e,f}(z) = \frac{1}{\sqrt{1-z}} \, \cdot
$$}
\end{rem}

We give the following illustration of Proposition \ref{prop: fr2}. 

\begin{thm}The functions 
$$
\sum_{n=0}^\infty\frac{(4n)!}{(2n)!n!^2}z^n\quad\textup{and}\quad\sum_{n=0}^\infty\frac{(2n)!^3}{n!^6}z^n
$$
 are algebraically independent over $\mathbb{C}(z)$. 
\end{thm}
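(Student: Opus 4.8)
The plan is to recognize both series as generating functions of factorial ratios and then simply to verify the three hypotheses of Proposition \ref{prop: fr2}. First I would read off the tuples: the first function is $F_{e_1,f_1}$ with $e_1=(4)$ and $f_1=(2,1,1)$, so $u_1=1$, $v_1=3$ and $v_1-u_1=2$, giving Assumption (i); the second function equals $\binom{2n}{n}^3$ since $\frac{(2n)!^3}{n!^6}=\binom{2n}{n}^3$, hence it is $F_{e_2,f_2}$ with $e_2=(2,2,2)$ and $f_2=(1,1,1,1,1,1)$, so $u_2=3$, $v_2=6$ and $v_2-u_2=3\geq 3$, giving Assumption (ii). In both cases the two tuples are disjoint and $|e_i|=|f_i|$, as required.

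Next I would establish Assumption (iii), that both $\mathcal{Q}_{e_1,f_1}$ and $\mathcal{Q}_{e_2,f_2}$ enjoy the $p$-Lucas property for every prime $p$. For the second ratio this is immediate: $\binom{2n}{n}$ has the $p$-Lucas property by Lucas' theorem, so its Hadamard cube $\binom{2n}{n}^3$ does too by Proposition \ref{prop: special2}(i). For the first ratio --- which is precisely the example treated at the end of Section \ref{sec: fr} --- I would invoke Proposition \ref{prop: fr}: one checks $|e_1|=|f_1|=4$ and that the Landau function $\Delta_{e_1,f_1}(x)=\lfloor 4x\rfloor-\lfloor 2x\rfloor-2\lfloor x\rfloor$ satisfies $\Delta_{e_1,f_1}(x)\geq 1$ on $[1/4,1)$, a finite verification on the subintervals $[1/4,1/2)$, $[1/2,3/4)$ and $[3/4,1)$. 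Once (i)--(iii) are in place, Proposition \ref{prop: fr2} yields at once the algebraic independence of the two functions over $\mathbb{C}(z)$.

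The step I expect to matter most is understanding why the cruder Proposition \ref{propo 2} does not apply here, since that is exactly what dictates the use of Proposition \ref{prop: fr2}. A one-line computation gives the growth constants $C_{e_1,f_1}=\frac{4^4}{2^2}=64$ and $C_{e_2,f_2}=\frac{(2^2)^3}{1}=64$, so by \eqref{Stirling} the two series share the common radius of convergence $\rho=1/64$, and the distinct-radii argument is unavailable. The genuine content is therefore the singularity analysis inside Proposition \ref{prop: fr2}, carried out through Proposition \ref{prop: sing2}: from \eqref{Stirling} the coefficients of $F_{e_1,f_1}$ behave like $C_{e_1,f_1}^n/n$, producing a logarithmic singularity at $\rho$ that cannot match any asymptotic of the form $C(t-1)^\alpha$ (this verifies Assumption (i) of Proposition \ref{prop: sing2}), whereas $v_2-u_2\geq 3$ forces the coefficients of $F_{e_2,f_2}$ to decay like $C_{e_2,f_2}^n/n^{3/2}$, so that $F_{e_2,f_2}(tz_0)$ converges to a finite nonzero limit as $t\to 1$ (Assumption (ii)). The transcendence of both functions needed to apply Proposition \ref{prop: fr2} follows, for instance, from Proposition \ref{propo 2} applied to a single function. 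Everything else is routine bookkeeping.
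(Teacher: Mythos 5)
Your proposal is correct and follows essentially the same route as the paper: identify $e_1=(4)$, $f_1=(2,1,1)$ and $e_2=(2,2,2)$, $f_2=(1,1,1,1,1,1)$, verify hypotheses (i)--(iii) of Proposition \ref{prop: fr2}, and apply it (your observation that $C_{e_1,f_1}=C_{e_2,f_2}=64$ rules out Proposition \ref{propo 2} is exactly why the paper invokes Proposition \ref{prop: fr2}, whose proof handles equal radii via Proposition \ref{prop: sing2}). The only cosmetic difference is that for $\binom{2n}{n}^3$ you obtain the $p$-Lucas property from Lucas' theorem together with Proposition \ref{prop: special2}(i), whereas the paper checks $\Delta_{e_2,f_2}(x)=3\lfloor 2x\rfloor\geq 1$ on $[1/2,1)$ via Proposition \ref{prop: fr}; both verifications are immediate and equally valid.
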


\begin{proof}
Here we have $e_1=(4)$ and $f_1=(2,1,1)$, so that $v_1-u_1=2$. Furthermore, for all $x$ in $[1/4,1)$, we have 
$$
\Delta_{e_1,f_1}(x)=\lfloor 4x\rfloor-\lfloor 2x\rfloor\geq 1\, , 
$$
which shows that $\mathcal{Q}_{e_1,f_1}$ satisfies the $p$-Lucas property for all primes $p$. 
On the other hand, we also have $e_2=(2,2,2)$ and $f_2=(1,1,1,1,1,1)$,  so that $v_2-u_2=3$. Furthermore,  for all $x$ in $[1/2,1)$, we have 
$$
\Delta_{e_2,f_2}(x)=3\lfloor 2x\rfloor\geq 1\, ,
$$
which shows that $\mathcal{Q}_{e_2,f_2}$ also satisfies the $p$-Lucas property for all primes $p$. Then the result follows from Proposition \ref{prop: fr2}.  
\end{proof}

\subsection{Generalized hypergeometric functions}\label{sec: exhyp}
Using Stirling formula, it is easy to give a general asymptotic for the coefficients of generalized hypergeometric functions. 
Indeed, it implies that 
$$
\Gamma(x)\underset{x\rightarrow\infty}{\sim}x^{x-\frac{1}{2}}e^{-x}\sqrt{2\pi} \,,
$$
and hence
$$
(\alpha)_n=\frac{\Gamma(\alpha+n)}{\Gamma(\alpha)}\underset{n\rightarrow\infty}{\sim}(\alpha+n)^{\alpha-\frac{1}{2}+n}e^{-\alpha-n}\frac{\sqrt{2\pi}}{\Gamma(\alpha)} \, \cdot
$$
Let us recall that 
$$
 \mathcal{Q}_{\boldsymbol{\alpha},\boldsymbol{\beta}}(n)
:=\frac{(\alpha_1)_n\cdots(\alpha_r)_n}{(\beta_1)_n\cdots(\beta_s)_n} \, 
\quad\mbox{and} \quad
F_{\boldsymbol{\alpha},\boldsymbol{\beta}}(x):=\sum_{n=0}^\infty\mathcal{Q}_{\boldsymbol{\alpha},\boldsymbol{\beta}}(n)x^n \, .
$$
When $r=s$, that is when $F_{\boldsymbol{\alpha},\boldsymbol{\beta}}(x)$ is a $G$-function, we thus obtain that 
$$\displaystyle
\mathcal{Q}_{\boldsymbol{\alpha},\boldsymbol{\beta}}(n) \underset{n\rightarrow\infty}{\sim}n^{\sum_{i=1}^r(\alpha_i-\beta_i)}\left(\frac{\prod_{i=1}^r
(\alpha_i+n)}{\prod_{j=1}^r(\beta_j+n)}\right)^ne^{\sum_{i=1}^r(\beta_i-\alpha_i)}\frac{\prod_{i=1}^r\Gamma(\alpha_i)}{\prod_{j=1}^r\Gamma(\beta_j)}
$$
which leads to the following simple asymptotics: 
\begin{equation}\label{asymhyp}
\displaystyle
\mathcal{Q}_{\boldsymbol{\alpha},\boldsymbol{\beta}}(n) \underset{n\rightarrow\infty}{\sim}\left(\frac{\prod_{i=1}^r
\Gamma(\alpha_i)}{\prod_{j=1}^r\Gamma(\beta_j)}\right) n^{\sum_{i=1}^r(\alpha_i-\beta_i)} \,.
\end{equation}

Note that it is usually easy to detect from such asymptotics when the hypergeometric function $F_{\boldsymbol{\alpha},\boldsymbol{\beta}}(z)$ 
is transcendental by comparison with asymptotics of coefficients of algebraic functions.  
Otherwise, one can always use the beautiful criterion of Beukers and Heckman \cite{BH}.  For instance, one can use our asymptotics to 
show that the hypergeometric function 
$$
\mathfrak f(z) := \sum_{n=0}^\infty\frac{(1/5)_n(4/5)_n}{n!^2}z^n
$$
is transcendental and belong to $\mathfrak W$ (see the proof of Theorem \ref{thm: indhyp}). Furthermore, Corollary \ref{cor: hyp} shows it has the $p$-Lucas property for all primes larger than $5$. 
We can thus 
apply Proposition~\ref{prop: sing1} to deduce the following result.

\begin{thm} The functions 
$\mathfrak f(z), \mathfrak f(2z), \mathfrak f(3z)\ldots$ are algebraically independent over $\mathbb C(z)$.  
\end{thm}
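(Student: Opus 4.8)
The plan is to verify that, for any finite set of distinct positive integers $i_1<\cdots<i_k$, the functions $\mathfrak f(i_1z),\dots,\mathfrak f(i_kz)$ lie in $\mathcal L(\mathcal S)\cap\mathfrak W$ for a common infinite set $\mathcal S$ of primes and have pairwise distinct radii of convergence; then Proposition~\ref{prop: sing1} applies verbatim and yields their algebraic independence over $\mathbb C(z)$. Since algebraic independence of the family $\{\mathfrak f(z),\mathfrak f(2z),\dots\}$ means precisely algebraic independence of each of its finite subfamilies, this suffices.

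First I would extract the analytic data from the asymptotics \eqref{asymhyp}. With $\boldsymbol\alpha=(1/5,4/5)$ and $\boldsymbol\beta=(1,1)$ one has $\sum_i(\alpha_i-\beta_i)=(1/5-1)+(4/5-1)=-1$, so the coefficients $a(n):=(1/5)_n(4/5)_n/n!^2$ of $\mathfrak f$ satisfy $a(n)\sim\Gamma(1/5)\Gamma(4/5)\,n^{-1}$. In particular $\mathfrak f$ has radius of convergence $\rho_{\mathfrak f}=1$ and $a(n)=O(\rho_{\mathfrak f}^{-n}/n)$; moreover each $a(n)$ is a positive real number since all the Pochhammer symbols involved are positive. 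Remark~\ref{rem: W} then gives $\mathfrak f\in\mathfrak W$, whence $\mathfrak f$ is transcendental. (The transcendence can also be seen directly from Flajolet's asymptotic for algebraic power series, the forbidden exponent $s=-1$ ruling out algebraicity, or from the transcendence of $\Gamma(1/5)\Gamma(4/5)=\pi/\sin(\pi/5)$.)

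Next I would record membership in $\mathcal L$ and its persistence under scaling. Since $\boldsymbol\alpha\in(\mathbb Q\cap(0,1))^2$, $\boldsymbol\beta=(1,1)$ and $d_{\boldsymbol\alpha,\boldsymbol\beta}=5$, Corollary~\ref{cor: hyp} gives $\mathfrak f\in\mathfrak L_1(\mathcal S)\subseteq\mathcal L(\mathcal S)$, where $\mathcal S$ is the infinite set of primes larger than $5$. To pass to $\mathfrak f(iz)$ I would apply Proposition~\ref{prop: special}(i) with $d=1$, $a_1=i\in\mathbb Q^{*}$ and $n_1=1$: as every integer lies in $\mathbb Z_{(p)}$ for all $p$, the resulting set of primes is again all of $\mathcal S$, so $\mathfrak f(iz)\in\mathcal L(\mathcal S)$ for every $i\geq1$. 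The substitution multiplies the $n$-th coefficient by $i^n$ and divides the radius by $i$, so $\rho_{\mathfrak f(iz)}=1/i$ and $i^na(n)\sim\Gamma(1/5)\Gamma(4/5)\,\rho_{\mathfrak f(iz)}^{-n}/n$ with coefficients still nonnegative; Remark~\ref{rem: W} then gives $\mathfrak f(iz)\in\mathfrak W$ too.

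With these verifications in place, the radii $\rho_{\mathfrak f(i_jz)}=1/i_j$ are pairwise distinct, and Proposition~\ref{prop: sing1} finishes the argument. The only genuinely delicate point is the membership in $\mathfrak W$, i.e.\ that the coefficient growth is truly $O(\rho^{-n}/n)$ and not larger; this is exactly what the identity $\sum_i(\alpha_i-\beta_i)=-1$ in \eqref{asymhyp} secures, and it is both what produces the logarithmic-type singularity that obstructs meromorphy of any power of $\mathfrak f$ and what drives the singularity-based contradiction inside Proposition~\ref{prop: sing1}.
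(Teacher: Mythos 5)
Your proof is correct and takes essentially the same route as the paper, whose own argument is just the terse sentence preceding the theorem (asymptotics give $\mathfrak f\in\mathfrak W$, Corollary \ref{cor: hyp} gives the Lucas property, then Proposition \ref{prop: sing1}); you usefully make explicit the steps the paper leaves implicit, namely the scaling $\mathfrak f(z)\mapsto\mathfrak f(iz)$ via Proposition \ref{prop: special}(i), the resulting radii $1/i$, and the stability of membership in $\mathfrak W$ under that scaling. One small quibble: your clause ``whence $\mathfrak f$ is transcendental'' does not follow from membership in $\mathfrak W$ alone (an algebraic function such as $(1-z)^{1/2}+(1-z)^{1/3}$ lies in $\mathfrak W$), but this is harmless since Proposition \ref{prop: sing1} never requires transcendence, and your parenthetical via Flajolet's asymptotic gives a valid transcendence argument anyway.
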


Let us give another kind of example derived from Proposition \ref{prop: sing2}.

\begin{thm}\label{thm: indhyp} The hypergeometric functions 
$$
\mathfrak{f}_1(z)=\sum_{n=0}^\infty\frac{(1/5)_n(4/5)_n}{n!^2}z^n\quad\textup{and}\quad\mathfrak{f}_2(z)=\sum_{n=0}^\infty\frac{(1/3)_n(1/2)_n^2}{(2/3)_nn!^2}z^n
$$
are algebraically independent over $\mathbb{C}(z)$. 
\end{thm}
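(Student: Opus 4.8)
The plan is to apply Proposition \ref{prop: sing2} to $f_1:=\mathfrak{f}_1$ and $f_2:=\mathfrak{f}_2$, so the first task is to record the coefficient asymptotics. Writing $\mathfrak{f}_1=F_{\boldsymbol{\alpha},\boldsymbol{\beta}}$ with $\boldsymbol{\alpha}=(1/5,4/5)$, $\boldsymbol{\beta}=(1,1)$, and $\mathfrak{f}_2=F_{\boldsymbol{\alpha}',\boldsymbol{\beta}'}$ with $\boldsymbol{\alpha}'=(1/3,1/2,1/2)$, $\boldsymbol{\beta}'=(2/3,1,1)$, formula \eqref{asymhyp} gives
$$\mathcal{Q}_{\boldsymbol{\alpha},\boldsymbol{\beta}}(n)\underset{n\to\infty}{\sim}\Gamma(1/5)\Gamma(4/5)\,n^{-1}\quad\textup{ and }\quad\mathcal{Q}_{\boldsymbol{\alpha}',\boldsymbol{\beta}'}(n)\underset{n\to\infty}{\sim}\frac{\pi\,\Gamma(1/3)}{\Gamma(2/3)}\,n^{-4/3},$$
since $\sum(\alpha_i-\beta_i)=-1$ and $\sum(\alpha'_i-\beta'_i)=-4/3$. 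In particular both series have radius of convergence $\rho=1$, and as all coefficients are positive, Pringsheim's theorem puts a singularity of each at the common point $z_0=1$. The exponent $-1$ means $\mathfrak{f}_1$ has a logarithmic singularity: a standard singularity-analysis estimate (see \cite{FS}) gives $\mathfrak{f}_1(t)\sim-\Gamma(1/5)\Gamma(4/5)\log(1-t)$ as $t\to 1^-$, so no asymptotic of the shape $C(t-1)^\alpha$ with $(C,\alpha)\in\mathbb{C}^*\times\mathbb{Q}$ can hold; this is precisely Assumption~(i) of Proposition \ref{prop: sing2}. Since $4/3>1$ the coefficients of $\mathfrak{f}_2$ are summable, whence $\mathfrak{f}_2(t)\to l:=\sum_{n}\mathcal{Q}_{\boldsymbol{\alpha}',\boldsymbol{\beta}'}(n)\in\mathbb{R}_{>0}$ as $t\to 1^-$, which is Assumption~(ii).

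Next I would verify membership in a common set $\mathcal{L}(\mathcal{S})$. For $\mathfrak{f}_1$ the parameters satisfy the hypotheses of Corollary \ref{cor: hyp} ($\boldsymbol{\alpha}\in(\mathbb{Q}\cap(0,1))^2$, $\boldsymbol{\beta}=(1,1)$, $d_{\boldsymbol{\alpha},\boldsymbol{\beta}}=5$), so $\mathfrak{f}_1\in\mathfrak{L}_1(\mathcal{S}_1)$ with $\mathcal{S}_1=\{p>5\}$. For $\mathfrak{f}_2$ I would instead use Proposition \ref{prop: fuv} with $A=\{1\}$: here $d_{\boldsymbol{\alpha}',\boldsymbol{\beta}'}=6$ and $\mathcal{D}^1=[1/3,1)$, and computing the step function on the subintervals cut out by $1/3,1/2,2/3$ yields $\xi_1(x)\in\{1,3,2\}$ for $x\in[1/3,1)$, hence $\xi_1\geq 1$ on $\mathcal{D}^1$. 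Thus $\mathfrak{f}_2\in\mathfrak{L}_1(\mathcal{S}_2)$ with $\mathcal{S}_2=\{p\equiv 1\bmod 6\}$. Setting $\mathcal{S}:=\{p\equiv 1\bmod 6\}$, which is infinite and contained in $\mathcal{S}_1\cap\mathcal{S}_2$, places both functions in $\mathfrak{L}_1(\mathcal{S})\subseteq\mathcal{L}(\mathcal{S})$.

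It remains to prove transcendence, the one hypothesis of Proposition \ref{prop: sing2} not yet checked, and here the cleanest route is Proposition \ref{prop: alg2}: were $\mathfrak{f}_i$ algebraic it would equal $P(z)^{-1/a}$ for some $P\in\mathbb{Q}[z]$ with $P(0)=1$. Since the radius is $1$ and the coefficients are positive, Pringsheim forces $P(1)=0$, so $|\mathfrak{f}_i(t)|=|P(t)|^{-1/a}\to+\infty$ as $t\to 1^-$; this contradicts the sub-power (logarithmic) growth of $\mathfrak{f}_1$ and the finite limit $l$ of $\mathfrak{f}_2$, so both are transcendental. With every hypothesis in place, Proposition \ref{prop: sing2} delivers the algebraic independence of $\mathfrak{f}_1$ and $\mathfrak{f}_2$ over $\mathbb{C}(z)$. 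The main obstacle is not the criterion but assembling its inputs coherently: pinning down the exact analytic type of the two singularities at $z_0=1$ (logarithmic versus convergent), and, above all, exhibiting a \emph{single} infinite set of primes on which both series lie in $\mathcal{L}(\mathcal{S})$, which is what forces the $\xi_1$-computation for $\mathfrak{f}_2$ to be carried out over the class $p\equiv 1\bmod 6$ rather than over all large primes.
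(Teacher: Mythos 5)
Your proposal is correct, and its overall architecture is exactly that of the paper: the same asymptotics from \eqref{asymhyp}, the same memberships ($\mathfrak{f}_1\in\mathfrak{L}_1(\{p>5\})$ via Corollary \ref{cor: hyp}, and $\mathfrak{f}_2\in\mathfrak{L}_1(\{p\equiv 1\bmod 6\})$ via the step-function computation — your values $\xi_1\in\{1,3,2\}$ on $[1/3,1/2)$, $[1/2,2/3)$, $[2/3,1)$ check out), the same common infinite prime set $\{p\equiv 1\bmod 6\}$, and the same final appeal to Proposition \ref{prop: sing2} with the logarithmic singularity of $\mathfrak{f}_1$ against the finite limit of $\mathfrak{f}_2$ at $z_0=1$.

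The one point where you genuinely diverge is the transcendence step. The paper obtains transcendence of $\mathfrak{f}_1$ by comparing the $\Gamma(1/5)\Gamma(4/5)/n$ asymptotic with Flajolet's theorem on coefficients of algebraic power series (the exponent $s=-1$ lies in $\mathbb{Z}_{<0}$, which is excluded for algebraic functions), and transcendence of $\mathfrak{f}_2$ by applying Proposition \ref{prop: sing1} to the single function $\mathfrak{f}_2\in\mathfrak{W}\cap\mathfrak{L}(\mathcal{S}_1)$. You instead derive both from Proposition \ref{prop: alg2}: an algebraic element of $\mathfrak{L}_1(\mathcal{S})$ for infinite $\mathcal{S}$ must equal $P(z)^{-1/a}$ with $P(0)=1$; Pringsheim then forces $P(1)=0$, so the function blows up like a positive power of $(1-t)^{-1}$ at $t=1$, contradicting the logarithmic growth of $\mathfrak{f}_1$ and the finite limit of $\mathfrak{f}_2$. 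Both routes are sound; yours has the modest advantages of treating the two functions uniformly and of staying entirely inside the paper's Lucas-congruence machinery, with no appeal to Flajolet's asymptotic theorem. The only blemish is cosmetic: Proposition \ref{prop: fuv} does not take a set $A$ — what you invoke is Proposition \ref{theo Lucas Hypergeom} in its special case $A=\{1\}$, which is precisely Proposition \ref{prop: fuv} — and this affects nothing in the argument.
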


\begin{proof}
For $\boldsymbol{\alpha}=(1/5,4/5)$ and $\boldsymbol{\beta}=(1,1)$, we infer from Equation \eqref{asymhyp}  that
\begin{equation}\label{eq1}
\mathcal{Q}_{\boldsymbol{\alpha},\boldsymbol{\beta}}(n)\underset{n\rightarrow\infty}{\sim}\frac{\Gamma(1/5)\Gamma(4/5)}{n}
\end{equation}
which is the asymptotic of a transcendental series. For $\boldsymbol{\alpha}=(1/3,1/2,1/2)$ and $\boldsymbol{\beta}=(2/3,1,1)$, we have 
\begin{equation}\label{eq2}
\mathcal{Q}_{\boldsymbol{\alpha},\boldsymbol{\beta}}(n)\underset{n\rightarrow\infty}{\sim}\frac{\Gamma(1/3)\Gamma(1/2)^2}{\Gamma(2/3)}\frac{1}{n^{4/3}} \,,
\end{equation}
so that $\mathfrak{f}_2$ belongs to $\mathfrak{W}$. By Corollary \ref{cor: hyp}, we first get that $\mathfrak{f}_1$ belongs to $\mathfrak L(\mathcal S_0)$, where $\mathcal S_0$ is the 
set of primes larger than $5$,   
while Proposition \ref{theo Lucas Hypergeom}  
implies that $\mathfrak{f}_2$ belongs to $\mathfrak L(\mathcal S_1)$, 
where $\mathcal S_1 = \{p\in \mathcal P \,:\, p \equiv 1 \mod 6\}$. In particular, both belong to  
$\mathfrak L(\mathcal S_1)$. The series $\mathfrak{f}_2$ belongs to $\mathfrak{W}\cap\mathfrak{L}(\mathcal{S}_1)$ so is transcendental over $\mathbb{C}(z)$ by Proposition \ref{prop: sing1}. 
Note that these functions are hypergeometric and thus have 
 the same radius of convergence $1$. Then \eqref{eq1} and \eqref{eq2} show, as earlier, that one can apply Proposition \ref{prop: sing2},  
 which ends the proof. 
\end{proof}

\subsection{Sums and products of binomials}\label{sec: exspe} 
We give below an application of Proposition~\ref{prop: sing1} to generating series associated 
with Ap\'ery numbers, Franel numbers, and some of their generalizations. 

\begin{thm}\label{thm: appli1}
Let $\mathcal F$ be the set formed by the union of the three following sets: 
$$
\left\{\sum_{n=0}^\infty\left(\sum_{k=0}^n\binom{n}{k}^r\right)z^n\,: \,r\geq 3\right\},\;\left\{\sum_{n=0}^\infty
\left(\sum_{k=0}^n\binom{n}{k}^r\binom{n+k}{k}^r\right)z^n\,: \,r\geq 2\right\}
$$
and
$$
\left\{\sum_{n=0}^\infty\left(\sum_{k=0}^n\binom{n}{k}^{2r}\binom{n+k}{k}^r\right)z^n\,: \,r\geq 1\right\} \, .
$$
Then all elements of $\mathcal F$ are algebraically independent over $\mathbb C(z)$.
\end{thm}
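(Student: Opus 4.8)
The plan is to deduce this from Proposition \ref{prop: sing1}, exactly along the lines of the proof of Theorem \ref{thm: 2nn}. Thus I must establish two things about the countable family $\mathcal F$: that every one of its members lies in $\mathcal L(\mathcal P)\cap\mathfrak W$, and that the radii of convergence of its members are pairwise distinct.

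\emph{Membership in $\mathcal L(\mathcal P)$.} Each of the three families is a specialization $x_1=x_2=x$ of a bivariate generating series of factorial ratios. Writing $n_1=k$, $n_2=n-k$ and using $\binom nk=\frac{(n_1+n_2)!}{n_1!\,n_2!}$ and $\binom{n+k}{k}=\frac{(2n_1+n_2)!}{n_1!\,(n_1+n_2)!}$, one checks that the three families correspond respectively to $\mathcal Q_{e,f}(n_1,n_2)=\frac{(n_1+n_2)!^{r}}{n_1!^{r}n_2!^{r}}$, to $\frac{(2n_1+n_2)!^{r}}{n_1!^{2r}n_2!^{r}}$, and to $\frac{(n_1+n_2)!^{r}(2n_1+n_2)!^{r}}{n_1!^{3r}n_2!^{2r}}$. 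In each case $|e|=|f|$ and $f$ is a tuple of standard basis vectors $\mathbf 1_k$, so Corollary \ref{cor: gfr} gives $F_{e,f}\in\mathfrak L_2(\mathcal P)$, and Proposition \ref{prop: spe} applied with $\mathbf n=(1,1)$ and $b_1=b_2=1$ shows that the specialization $F_{e,f}(x,x)\in\mathfrak L_1(\mathcal P)\subset\mathcal L(\mathcal P)$.

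\emph{Membership in $\mathfrak W$ and the role of the bounds on $r$.} By Remark \ref{rem: W} it suffices to observe that the coefficients are nonnegative (clear) and to verify that they are $O(\rho^{-n}/n)$. Using Stirling's formula together with Laplace's method applied to the inner sum, or equivalently McIntosh's asymptotics for such binomial multisums \cite{McIntosh}, the $n$-th coefficient of the $r$-th member of each family is asymptotic to a constant times $\mu^{rn}/n^{c_r}$, where $\mu$ is the exponential growth rate of the underlying base sequence and $c_r$ the relevant polynomial exponent. For the Franel-type sums the base $\binom nk$ peaks at $k\sim n/2$ with rate $\mu=2$ and $c_r=(r-1)/2$; for the second family the base $\binom nk\binom{n+k}k$ peaks at $k\sim n/\sqrt2$ with rate $\mu=3+2\sqrt2$ and $c_r=r-\tfrac12$; for the third the base $\binom nk^{2}\binom{n+k}k$ peaks at $k\sim\frac{\sqrt5-1}2\,n$ with rate $\mu=\frac{11+5\sqrt5}2$ and $c_r=\frac32 r-\tfrac12$. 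The restrictions $r\geq3$, $r\geq2$, $r\geq1$ are precisely what force $c_r\geq1$, hence the bound $O(\rho^{-n}/n)$ and membership in $\mathfrak W$; the excluded values are exactly the three algebraic functions displayed after the statement.

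\emph{Distinct radii and conclusion.} The radii of convergence are the reciprocals of the growth rates, namely $2^{-r}$, $(3+2\sqrt2)^{-r}$ and $\bigl(\tfrac{11+5\sqrt5}2\bigr)^{-r}$. Within each family they are distinct, being distinct powers of a fixed real base greater than $1$. Across families I would argue using number fields: $2\in\mathbb Q$, while $3+2\sqrt2$ is an irrational unit of $\mathbb Z[\sqrt2]$ (norm $1$) and $\tfrac{11+5\sqrt5}2$ an irrational unit of the ring of integers of $\mathbb Q(\sqrt5)$ (norm $-1$); all their positive powers remain irrational and lie in $\mathbb Q(\sqrt2)$, respectively $\mathbb Q(\sqrt5)$, whose intersection is $\mathbb Q$. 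Hence no two of these powers can coincide, so all radii are pairwise distinct, and Proposition \ref{prop: sing1} gives the algebraic independence of $\mathcal F$. The main obstacle is this last cross-family comparison together with the exact identification of the growth constants $3+2\sqrt2$ and $\tfrac{11+5\sqrt5}2$: the former requires the saddle-point (variational) determination of the dominant term of each inner sum, the latter the algebraic-number argument excluding multiplicative coincidences among the three bases.
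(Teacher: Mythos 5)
Your proposal is correct and follows essentially the same route as the paper's proof: reduce to Proposition \ref{prop: sing1} by (a) obtaining the $p$-Lucas property for all primes from bivariate factorial-ratio series via Corollary \ref{cor: gfr} and Proposition \ref{prop: spe} specialized at $(x,x)$, and (b) invoking McIntosh's asymptotics to place every member of $\mathcal F$ in $\mathfrak W$ (this is exactly where the bounds $r\geq 3$, $r\geq 2$, $r\geq 1$ enter) and to identify the radii $2^{-r}$, $(3+2\sqrt2)^{-r}$, $\bigl((11+5\sqrt5)/2\bigr)^{-r}$. The only point where you go beyond the paper is the explicit algebraic-number argument for the cross-family distinctness of the radii (powers of the units $3+2\sqrt2$ and $(11+5\sqrt5)/2$ stay irrational, and $\mathbb{Q}(\sqrt2)\cap\mathbb{Q}(\sqrt5)=\mathbb{Q}$), a detail the paper asserts without proof; this is a useful clarification rather than a different method.
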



\begin{proof}
McIntosh \cite{McIntosh} proves general asymptotics for sequences of the form 
$$
S(n):=\sum_{k=0}^n\prod_{j=0}^m\binom{n+jk}{k}^{r_j},
$$
where $m,r_0,\dots,r_m$ are natural integers. 
Indeed, he shows that 
$$
S(n)\underset{n\rightarrow\infty}{\sim}\frac{\mu^{n+1/2}}{\sqrt{\nu(2\pi\lambda n)^{r-1}}} \, ,
$$
with $r=r_0+\cdots+r_m$,  and where $\lambda$, $0<\lambda<1$, is defined by
$$
\prod_{j=0}^m\left(\frac{(1+j\lambda)^j}{\lambda(1+j\lambda-\lambda)^{j-1}}\right)^{r_j}=1\, , 
$$
and where $\mu$ and $\nu$ are respectively defined by 
$$
\mu=\prod_{j=0}^m\left(\frac{1+j\lambda}{1+j\lambda-\lambda}\right)^{r_j}
$$
and
$$
\nu=\sum_{j=0}^m\frac{r_j}{(1+j\lambda-\lambda)(1+j\lambda)} \, \cdot
$$ 
The particular case we are interested in is considered in \cite{McIntosh}. For a positive integer $r$, we then obtain the following asymptotics: 
$$
\sum_{k=0}^n\binom{n}{k}^r\underset{n\rightarrow\infty}{\sim}\frac{2^{rn}}{\sqrt{r(\pi n/2)^{r-1}}} \, ,
$$
$$
\sum_{k=0}^n\binom{n}{k}^r\binom{n+k}{k}^r\underset{n\rightarrow\infty}{\sim}\frac{(1+\sqrt{2})^{2nr+r}}{\sqrt{4r(\pi n\sqrt{2})^{2r-1}}}\, ,
$$
and 
$$
\sum_{k=0}^n\binom{n}{k}^{2r}\binom{n+k}{k}^r\underset{n\rightarrow\infty}{\sim}\frac{\big((1+\sqrt{5})/2\big)^{5nr+4r}}{\sqrt{(5+2\sqrt{5})r(2\pi n)^{3r-1}}} \, \cdot
$$
These asymptotics show that all  functions in $\mathcal F$ have distinct radius of convergence. 
Furthermore, we infer from Remark \ref{rem: W} and these asymptotics that they all belong to $\mathfrak W$ since $r\geq 3$ for the first family, 
$r\geq 2$ for the second, and $r\geq 1$ for the third.  
On the other hand, we already proved in Section \ref{sec: spe} that all functions in $\mathcal F$  belong to $\mathfrak L(\mathcal P)$.  
The result thus follows directly from Proposition \ref{prop: sing1}. 
\end{proof}

\subsection{A mixed example} One special feature of our approach is that one can easily mix functions of rather different type without having to 
consider all their derivatives and finding a common differential equation for them.   
We illustrate this claim with the following simple example. 

\begin{thm}\label{thm: prop3}
The functions 
$$
f(z) :=\sum_{n=0}^\infty\frac{(4n)!}{(2n)!n!^2}z^n, \; g(z):=\sum_{n=0}^\infty\left(\sum_{k=0}^n\binom{n}{k}^2\binom{n+k}{k}^2\right)z^n, \; 
h(z) := \sum_{n=0}^\infty\frac{(1/6)_n(1/2)_n}{(2/3)_nn!}z^n, 
$$
and
$$
i(z):=\sum_{n=0}^\infty\frac{(1/5)_n^3}{(2/7)_nn!^2}z^n 
$$
are algebraically independent over $\mathbb{C}(z)$.  
\end{thm}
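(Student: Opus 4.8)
The plan is to run the three–stage strategy of Sections \ref{sec: ldks}--\ref{sec: sing}: first place $f,g,h,i$ in a common set $\mathcal L(\mathbb Z,\mathcal S)$ for an infinite $\mathcal S$, then use the Kolchin-like criterion of Theorem \ref{thm: ind} to collapse any algebraic relation to a single monomial identity, and finally destroy that identity by a singularity analysis. For membership, $f=F_{e,f}$ with $e=(4)$, $f=(2,1,1)$ satisfies $\Delta_{e,f}(x)=\lfloor 4x\rfloor-\lfloor 2x\rfloor\ge 1$ on $[1/4,1)$, so $f\in\mathfrak L(\mathcal P)$ by Proposition \ref{prop: fr}; $g$ is the generating series of the Ap\'ery numbers $\sum_{k}\binom{n}{k}^2\binom{n+k}{k}^2$, hence $g\in\mathfrak L(\mathcal P)$ by Corollary \ref{cor: gfr} and Proposition \ref{prop: spe}, exactly as in Section \ref{sec: spe}. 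The series $h$ is hypergeometric with $\boldsymbol\alpha=(1/6,1/2)$, $\boldsymbol\beta=(2/3,1)$ and $d_{\boldsymbol\alpha,\boldsymbol\beta}=6$; one checks directly that $\xi_1\ge 1$ on $\mathcal D^1=[1/6,1)$, so Proposition \ref{prop: fuv} gives $h\in\mathfrak L(\{p\equiv 1\bmod 6\})$. Similarly $i$ has $\boldsymbol\alpha=(1/5,1/5,1/5)$, $\boldsymbol\beta=(2/7,1,1)$, $d_{\boldsymbol\alpha,\boldsymbol\beta}=35$, and since $1/5<2/7$ one finds $\xi_1\ge 1$ on $\mathcal D^1=[1/5,1)$, whence $i\in\mathfrak L(\{p\equiv 1\bmod 35\})$. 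Taking $\mathcal S=\{p\in\mathcal P:p\equiv 1\bmod 210\}$, which is infinite by Dirichlet, all four functions lie in $\mathcal L(\mathbb Z,\mathcal S)$.

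Next I would record the analytic data via Stirling's formula and the hypergeometric asymptotic \eqref{asymhyp}. One has $(4n)!/((2n)!\,n!^2)\sim 2^{6n}/(\pi n)$, so $\rho_f=2^{-6}$ and $f$ has a logarithmic singularity; McIntosh's estimate gives $\sum_{k}\binom{n}{k}^2\binom{n+k}{k}^2\sim C(1+\sqrt{2})^{4n}n^{-3/2}$, so $\rho_g=(1+\sqrt{2})^{-4}$ with a square-root singularity. For $h$, \eqref{asymhyp} yields a coefficient $\sim C/n$, so $\rho_h=1$ and $h$ has a logarithmic singularity; the exponent $-1$ being a negative integer is excluded for algebraic functions by Flajolet's theorem, so $h$ is transcendental. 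For $i$, \eqref{asymhyp} gives a coefficient $\sim C\,n^{-59/35}$, so $\rho_i=1$, the series converges at $z=1$ to $i(1)\in\mathbb R_{>0}$, and $i$ carries a branch singularity of type $(1-z)^{24/35}$. Remark \ref{rem: W} places $f,g,h,i$ in $\mathfrak W$ (non-negative coefficients decaying like $O(\rho^{-n}/n)$), and $i\in\mathfrak W\cap\mathcal L(\mathcal S)$ forces $i$ transcendental as well.

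Now assume a nontrivial algebraic relation over $\mathbb C(z)$. By Proposition \ref{prop: descente} it descends to $\mathbb Q(z)$, and Theorem \ref{thm: ind} produces integers $a_1,\dots,a_4$, not all zero, with $f^{a_1}g^{a_2}h^{a_3}i^{a_4}=r(z)\in\mathbb Q(z)$. Since $\rho_f=2^{-6}<\rho_g<\rho_h=\rho_i=1$, at the point $\rho_f$ the functions $g,h,i$ are analytic while no power of $f$ is meromorphic; rewriting $f^{a_1}=r\,g^{-a_2}h^{-a_3}i^{-a_4}$ exhibits a meromorphic right-hand side at $\rho_f$, which forces $a_1=0$. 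The same peeling at $\rho_g$, where $h,i$ are analytic and $g\in\mathfrak W$, forces $a_2=0$, leaving the residual identity $h^{a_3}i^{a_4}=r(z)$.

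The residual relation is where the genuine difficulty lies: $h$ and $i$ share the radius $\rho_h=\rho_i=1$, so the distinct-radii Proposition \ref{prop: sing1} no longer applies and one must invoke the finer Proposition \ref{prop: sing2} at $z_0=1$. The logarithmic blow-up of $h$ rules out any asymptotic $h(t)\sim C(t-1)^{\alpha}$ with $(C,\alpha)\in\mathbb C^*\times\mathbb Q$ (hypothesis (i)), while $\lim_{t\to 1^-}i(t)=i(1)\in\mathbb C^*$ (hypothesis (ii)); hence $h$ and $i$ are algebraically independent, contradicting $h^{a_3}i^{a_4}=r$ with $(a_3,a_4)\neq(0,0)$. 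Therefore every $a_j$ vanishes, contradicting the hypothesis, and $f,g,h,i$ are algebraically independent. I expect the points needing the most care to be the exact determination of the singularity types at the coincident radius $z=1$ (logarithmic for $h$, a non-integral power for $i$), which are precisely what feeds Proposition \ref{prop: sing2}, together with the $\xi_1$-verifications that place $h$ and $i$ in $\mathfrak L$.
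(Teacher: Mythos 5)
Your proposal is correct and follows essentially the same route as the paper's own proof: membership of $f,g,h,i$ in $\mathfrak L(\mathcal S_1)$ with $\mathcal S_1=\{p\equiv 1\bmod 210\}$, the Kolchin-type reduction via Proposition \ref{prop: descente} and Theorem \ref{thm: ind}, peeling off $f$ and then $g$ at their (distinct, smallest) radii using membership in $\mathfrak W$, and finally applying Proposition \ref{prop: sing2} to the residual relation $h^{a_3}i^{a_4}=r(z)$ at $z_0=1$, with $h$ playing the logarithmic role (i) and $i$ the finite-nonzero-limit role (ii). The only deviations are cosmetic (you route $g$ through Corollary \ref{cor: gfr} and Proposition \ref{prop: spe} rather than the diagonal argument of Section \ref{sec: Allouche}, and your aside about a $(1-z)^{24/35}$ branch type for $i$ is unneeded), so nothing of substance differs.
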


Note that the two last functions are not globally bounded so they cannot be obtained as  the diagonal of some rational functions.  

\begin{proof} 
On the one hand, we already shown in Section \ref{sec: fr} that $f(z)$ belongs to  $\mathfrak L(\mathcal P)$,  
in Section \ref{sec: Allouche} that $g(z)$ belongs to $\mathfrak L(\mathcal P)$, in Section 
\ref{sec: hyp} that $h(z)$ belongs to $\mathfrak L(\mathcal S)$, 
where $\mathcal S = \{p\in \mathcal P \,:\, p \equiv 1 \mod 6\}$, and it follows from Proposition \ref{theo Lucas Hypergeom} that $i(z)$ 
belongs to  $\mathfrak L(\mathcal S')$, where  $\mathcal S' = \{p\in \mathcal P \,:\, p \equiv 1 \mod 35\}$.   
 Hence all belong to $\mathfrak L(\mathcal S_1)$, where $\mathcal S_1 = \{p\in \mathcal P \,:\, p \equiv 1 \mod 210\}$.  
 
On the other hand, we infer from Remark \ref{rem: W} and asymptotics for the coefficients of these functions (see Sections \ref{sec: exfr},  \ref{sec: exhyp}, and \ref{sec: exspe}) 
 that they all belong to $\mathfrak W$ and that $\rho_f=4^{-3}$, $\rho_g:= (1+\sqrt 2)^{-4}$, $\rho_h=1$, and $\rho_i=1$.  

Now, let us assume that $f,g,h,i$ are algebraically dependent over $\mathbb C(z)$. Then by Theorem~\ref{thm: ind}, there should exist integers $a,b,c,d$, not all zero, such that 
$$
f(z)^{a}g(z)^bh(z)^ci(z)^d= r(z) \, ,
$$
where $r(z)$ is a rational fraction. If $a\not=0$, we infer from the equality 
$$
f(z)^a = r(z) g(z)^{-b}h(z)^{-c}i(z)^{-d} 
$$
that $f(z)^a$ is meromorphic at $\rho_f$, which provides a contradiction with the fact that $f(z)$ belongs to $\mathfrak W$. Thus $a$ equals $0$. 
Now, if $b\not =0$, we obtain in the same way that $g(z)^b$ is meromorphic at $\rho_g$, which provides a contradiction with the fact that $g(z)$ belongs to $\mathfrak W$. Thus $b=0$ 
and we have
$$
h(z)^{c}i(z)^d= r(z) \, ,
$$
with $c$ and $d$ not all zero.   
This means that $h$ and $i$ are algebraically dependent. However, using the asymptotics of Section \ref{sec: exhyp}, we see that 
$h$ and $i$ satisfy the assumption of Proposition \ref{prop: sing2} and are thus algebraically independent. 
We thus get a contradiction, concluding the proof. 
\end{proof}


\end{document}